\newtheorem{theorem}{Theorem}[section]
\newtheorem{lemma}[theorem]{Lemma}
\newtheorem{proposition}[theorem]{Proposition}
\newtheorem{corollary}[theorem]{Corollary}
\newtheorem{definition}[theorem]{Definition}
\newtheorem{Remark}[theorem]{Remark}
\begin{document}
\title{Characterization of triangular matrix categories via recollements}

\author[M. L. S. Sandoval-Miranda, V. Santiago, E. O. Velasco]{Martha Lizbeth Shaid Sandoval-Miranda,\\ Valente Santiago-Vargas\\ Edgar Omar Velasco-P\'aez}

\thanks{The authors would like to thank the SECIHTI Project CBF-2023-2024-2630, and the Research Support Office of the Metropolitan Autonomous University (DAI UAM) for the support granted.}
\subjclass{2000]{Primary 18A25, 18E05; Secondary 16D90,16G10}}

\keywords{Functor categories, Recollements, Triangular Matrix category}
\dedicatory{}
\maketitle

\begin{abstract}
In this paper we study  triangular matrix categories using the theory of recollements of abelian categories.  Given a triangular matrix category we construct two  canonical recollements. We show that if certain funtors of these recollements are exact then the category appearing in the middle term  is actually a triangular matrix category. This result is a generalization of one given by  Liping Li in \cite{LipingLi}. We also show that if $\mathrm{Mod}(\mathcal{C})$ admits a nontrivial torsion pair by abelian categories then $\mathcal{C}$ is equivalent to a triangular matrix category.
\end{abstract}

\section{Introduction}
Rings of the form $\left[\begin{smallmatrix}
T & 0 \\ 
M & U
\end{smallmatrix}\right]$ where $T$ and $U$ are rings and $M$ is a $T$-$U$-bimodule have appeared often in the study of the representation theory of artin rings and algebras (see for example \cite{AusPlatRei},\cite{Chase}, \cite{Green2}). Such  rings, called {\it{triangular matrix rings}}, appear naturally in the study of homomorphic images of hereditary  artin algebras and in the study of the decomposition of algebras and the direct sum of two rings. Triangular matrix rings and their homological properties have been widely studied,
(see for example \cite{Happel}, \cite{Haghany}, \cite{Michelena}). 
The so-called one-point extension is a special case of the triangular matrix algebra and this types of algebras have been studied in several contexts. \\
Following Mitchell's philosophy, in \cite{LGOS1} the authors defined the analogous of the triangular matrix algebra to the context of rings with several objects. Given two preadditive categories $\mathcal{U}$ and $\mathcal{T}$ and an additive  functor $M$ from $ \mathcal{U}\otimes \mathcal{T}^{op}$ to the category of abelian groups $\mathbf{Ab}$, $M\in\mathrm{Mod}(\mathcal{U}\otimes \mathcal{T}^{op})$ for short,  they construct the triangular matrix category $\mathbf{\Lambda}:=\left[\begin{smallmatrix}
\mathcal{T} & 0 \\ 
M & \mathcal{U}
\end{smallmatrix}\right]$  and several properties of $\mathrm{Mod}(\mathbf{\Lambda})$, \ the\   category of \ \ additive functors from 
$\mathbf{\Lambda}$ to $\mathbf{Ab}$,   are studied. In \cite{LGOS2} the authors proved that given an ideal of a category $\mathcal{C}$ there exists a canonical recollement. \\
In this paper we continue the study of triangular matrix categories as initiated in \cite{LGOS1} and we proved that a triangular matrix category produces certain type of recollements between abelian categories (see Propositions \ref{primerrecolle} and \ref{segundorecolle}).  In this paper we prove that these types of recollements  characterizes triangular matrix categories (see Theorems \ref{primerequivalencia}  and \ref{segundorsult}). We also prove that  if $\mathrm{Mod}(\mathcal{C})$ has a torsion pair given by non trivial abelian categories hence  $\mathcal{C}$ is equivalent to a triangular matrix category (see Theorem \ref{mismoteorema}).\\
Finally, we include an appendix that recollects certain basic facts that are needed in this paper and which are folklore and well known for experts but we could not find a reference for the proofs of such a results.

\section{Preliminaries}
\subsection{Categorical foundations}
An arbitrary category $\mathcal{C}$ is $\textbf{skeletally}$ $\textbf{small}$ if there is a full subcategory $\mathcal{C}'$ of $\mathcal{C}$ such that the class of objects of $\mathcal{C}'$ is a set and every object of $\mathcal{C}$ is isomorphic to an object in $\mathcal{C}'$.
We recall that a category $\mathcal{C}$ together with an abelian group structure on each of the sets of morphisms $\mathcal{C}(C_{1},C_{2})$ is called  \textbf{preadditive category}  provided all the composition maps
$\mathcal{C}(C,C')\times \mathcal{C}(C',C'')\longrightarrow \mathcal{C}C(C,C'')$
in $ \mathcal{C} $ are bilinear maps of abelian groups. A covariant functor $ F:\mathcal{C}_{1}\longrightarrow \mathcal{C}_{2} $ between  preadditive categories $ \mathcal{C}_{1} $ and $ \mathcal{C}_{2} $ is said to be \textbf{additive} if for each pair of objects $ C $ and $ C' $ in $ \mathcal{C}_{1}$, the map $ F:\mathcal{C}_{1}(C,C')\longrightarrow \mathcal{C}_{2}(F(C),F(C')) $ is a morphism of abelian groups. Let $\mathcal{C}$ and $\mathcal{D}$  be preadditive categories and $\mathbf{Ab}$ the category of abelian groups. A functor $F: \mathcal{C}\times \mathcal{D}\rightarrow \mathbf{Ab}$ is called \textbf{biadditive} if  $F(f+f',g)=F(f,g)+F(f',g)$ $\forall f,f'\in \mathcal{C}(C,C'), \forall g\in \mathcal{D}(D,D')$ and $F(f,g+g')=F(f,g)+F(f,g')$ $\forall f\in \mathcal{C}(C,C'),\forall g,g'\in \mathcal{D}(D,D')$.\\
If $ \mathcal{C} $ is a  preadditive category we always considerer its opposite  category $ \mathcal{C}^{op}$ as a preadditive category by letting $\mathcal{C}^{op} (C',C)= \mathcal{C}(C,C') $. We follow the usual convention of identifying each contravariant  functor $F$ from a category $ \mathcal{C} $ to $ \mathcal{D} $ with the covariant functor $F$ from  $ \mathcal{C}^{op} $ to $ \mathcal{D}$.  An $\textbf{additive}$ $\textbf{category}$ is a preadditive category $\mathcal{C}$ such that every finite family of objects in $\mathcal{C}$ has a coproduct.

\subsection{The category $\mathrm{Mod}(\mathcal{C})$}
Throughout this section $\mathcal{C}$ will be an arbitrary skeletally small preadditive category, and $\mathrm{Mod}(\mathcal{C})$ will denote the \textit{category of covariant functors} from $\mathcal{C}$ to  the category of abelian groups $\mathbf{Ab}$, called the category of $\mathcal{C}$-modules. This category has as objects  the functors from $\mathcal{C}$ to $\mathbf{Ab}$, and a morphism $f:M_{1}\longrightarrow M_{2}$ of $\mathcal{C}$-modules is a natural transformation.  We sometimes we will write for short, $\mathcal{C}(-,\text{?})$
instead of $\mathrm{Hom}_{\mathcal{C}}(-,?)$ and when it is clear from the context we will use just $(-,?).$\\
We now recall some of properties of the category $\mathrm{Mod}(\mathcal{C})$, for more details consult  \cite{AuslanderRep1}. The category $\mathrm{Mod}(\mathcal{C})$ is an abelian with the following properties:
For each $C$ in $\mathcal{C} $, the $\mathcal{C}$-module $(C,-)$ given by $(C,-)(X)=\mathcal{C}(C,X)$ for each $X$ in $\mathcal{C}$, has the property that for each $\mathcal{C}$-module $M$, the map $\left( (C,-),M\right)\longrightarrow M(C)$ given by $f\mapsto f_{C}(1_{C})$ for each $\mathcal{C}$-morphism $f:(C,-)\longrightarrow M$ is an isomorphism of abelian groups. We will often consider this isomorphism an identification.
Hence
\begin{enumerate}
\item The functor $ P:\mathcal{C}\longrightarrow \mathrm{Mod}(\mathcal{C}) $ given by $ P(C)=(C,-) $ is fully faithful.
\item For each family $\lbrace  C_{i}\rbrace _{i\in I}$ of objects in $ \mathcal{C} $, the $ \mathcal{C}$-module $ \underset{i\in I}\amalg P(C_{i}) $ is a projective $ \mathcal{C}$-module.
\item Given a $ \mathcal{C}$-module $ M $, there is a family $ \lbrace C_{i}\rbrace_{i\in I} $ of objects in $ \mathcal{C}$ such that there is an epimorphism $ \underset{i\in I}\amalg P(C_{i})\longrightarrow M\longrightarrow 0 $. We say that $M$ is finitely generated if such family is finite.
\item A $\textbf{finitely generated projective}$ $\mathcal{C}$-module is a direct summand of $ \underset{i\in I}\amalg P(C_{i})$ for some finite family of objects $ \lbrace C_{i}\rbrace_{i\in I} $ in $\mathcal{C}$.
\end{enumerate}

\subsection{Change of Categories}

The results that appears in this subsection  are directly taken from \cite{AuslanderRep1}. Let $ \mathcal{C} $ be a skeletally small preadditive category. There is a unique (up to isomorphism)  functor $ \otimes_{\mathcal{C}}:\mathrm{Mod}(\mathcal{C}^{op})\times \mathrm{Mod}(\mathcal{C})\longrightarrow \mathbf{Ab} $ called the \textbf{tensor product}. The abelian group $\otimes_{\mathcal{C}}(A,B) $  is denoted by $A\otimes_{\mathcal{C}} B $ for all $\mathcal{C}^{op}$-modules $A$ and all $\mathcal{C}$-modules $B$.
\begin{proposition}\label{AProposition0} The tensor product has the following properties:
\begin{enumerate}
\item
\begin{enumerate}
\item For each $ \mathcal{C}$-module $B$, the functor $\otimes_{\mathcal{C}}B:\mathrm{Mod}(\mathcal{C}^{op})\longrightarrow \mathbf{Ab}$ given by $(\otimes_{\mathcal{C}}B)(A)=A\otimes_{\mathcal{C}}B$ for all $\mathcal{C}^{op}$-modules $A$ is right exact.
\item For each $ \mathcal{C}^{op}$-module $A$, the functor $ A\otimes_{\mathcal{C}}:\mathrm{Mod}(\mathcal{C})\longrightarrow \mathbf{Ab}$ given by $(A\otimes_{\mathcal{C}})(B)=A\otimes_{\mathcal{C}}B $ for all $ \mathcal{C}$-modules $B$ is right exact.
\end{enumerate}

\item For each $ \mathcal{C}^{op}$-module $A$ and each $ \mathcal{C}$-module $B$, the functors $A\otimes_{\mathcal{C}}$ and $\otimes_{\mathcal{C}}B $ preserve arbitrary sums.

\item For each object $C$ in $ \mathcal{C} $ we have $ A\otimes_{\mathcal{C}}(C,-) =A(C)$ and $(-,C)\otimes_{\mathcal{C}}B=B(C)$ for all $\mathcal{C}^{op}$-modules $A$ and all $\mathcal{C}$-modules $B$.
\end{enumerate}
\end{proposition}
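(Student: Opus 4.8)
The plan is to work with the explicit generators-and-relations model of the tensor product (equivalently, its description as a coend over $\mathcal{C}$) and to deduce the ``$\mathcal{C}^{op}$-half'' of each item from its ``$\mathcal{C}$-half'' by symmetry. Concretely, for $A\in\mathrm{Mod}(\mathcal{C}^{op})$ and $B\in\mathrm{Mod}(\mathcal{C})$ I would take
\[
A\otimes_{\mathcal{C}}B \;=\; \Bigl(\,\coprod_{C\in\mathcal{C}} A(C)\otimes_{\mathbb{Z}} B(C)\,\Bigr)\Big/ R ,
\]
where $R$ is generated by the elements $A(f)(a)\otimes b - a\otimes B(f)(b)$ for $f\colon C\to C'$ in $\mathcal{C}$, $a\in A(C')$, $b\in B(C)$; by the uniqueness in the text this agrees, up to natural isomorphism, with the functor named there. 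From this model one reads off a natural isomorphism $A\otimes_{\mathcal{C}}B\cong B\otimes_{\mathcal{C}^{op}}A$ (the group tensor product $\otimes_{\mathbb{Z}}$ is symmetric and the two defining relations are interchanged). Under this isomorphism statement (1)(b) is exactly (1)(a) applied to the category $\mathcal{C}^{op}$, and the two identities in (3) are swapped; so it suffices to prove (1)(a), (2), and the single isomorphism $(-,C)\otimes_{\mathcal{C}}B\cong B(C)$.

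For (1)(a) I would fix $B$ and take an exact sequence $A_{1}\xrightarrow{\,u\,}A_{2}\xrightarrow{\,v\,}A_{3}\to 0$ in $\mathrm{Mod}(\mathcal{C}^{op})$. Exactness in a functor category with values in $\mathbf{Ab}$ is detected objectwise, so $A_{1}(C)\to A_{2}(C)\to A_{3}(C)\to 0$ is exact for each $C$; tensoring over $\mathbb{Z}$ with $B(C)$ preserves right exactness, forming the coproduct over the objects of $\mathcal{C}$ preserves exactness, and passing to the quotient by the balancing relations is a cokernel and hence again right exact (one checks that the relation subgroup for $A_{2}$ is the sum of the image of that for $A_{1}$ and a choice of lifts of that for $A_{3}$). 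This gives exactness of $A_{1}\otimes_{\mathcal{C}}B\to A_{2}\otimes_{\mathcal{C}}B\to A_{3}\otimes_{\mathcal{C}}B\to 0$. A slicker alternative, which I would mention, is that $-\otimes_{\mathcal{C}}B$ is left adjoint to the functor $\mathbf{Ab}\to\mathrm{Mod}(\mathcal{C}^{op})$, $G\mapsto\mathrm{Hom}_{\mathbf{Ab}}(B(-),G)$, and so preserves all colimits, which yields right exactness and item (2) simultaneously. In any case (2) is immediate from the model: $\bigl(\coprod_{\alpha}A_{\alpha}\bigr)(C)=\coprod_{\alpha}A_{\alpha}(C)$ objectwise, the functors $-\otimes_{\mathbb{Z}}B(C)$ preserve coproducts, coproducts commute with coproducts, and the relations are formed compatibly; the case of $A\otimes_{\mathcal{C}}$ then follows from the symmetry of the first paragraph.

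For (3) I would define $\varphi\colon (-,C)\otimes_{\mathcal{C}}B\to B(C)$ on generators by $\varphi\bigl([g\otimes b]\bigr)=B(g)(b)$ for $g\colon C'\to C$ and $b\in B(C')$. Functoriality of $B$ shows $\varphi$ respects the defining relations, so it is a well-defined natural transformation in $B$, and it is surjective since $\varphi\bigl([1_{C}\otimes b]\bigr)=b$. To see injectivity I would exhibit the inverse: let $\psi\colon B(C)\to (-,C)\otimes_{\mathcal{C}}B$ be $\psi(b)=[1_{C}\otimes b]$; then $\varphi\psi=\mathrm{id}_{B(C)}$, while for a generator $[g\otimes b]$ the defining relation applied to $g$ (with $a=1_{C}$, and using $A(g)(1_{C})=g$ for $A=(-,C)$) gives $[g\otimes b]=[A(g)(1_{C})\otimes b]=[1_{C}\otimes B(g)(b)]=\psi\varphi\bigl([g\otimes b]\bigr)$, so $\psi\varphi=\mathrm{id}$ too. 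Hence $(-,C)\otimes_{\mathcal{C}}B\cong B(C)$ naturally, and running the same computation with the two variables and the roles of $\mathcal{C}$ and $\mathcal{C}^{op}$ interchanged gives $A\otimes_{\mathcal{C}}(C,-)\cong A(C)$.

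The one step carrying genuine content is the injectivity in (3): one must be sure that after forcing every generator into the slot indexed by $C$ nothing further collapses — i.e.\ that $\varphi$ is actually bijective and not merely a surjection — which is the functor-category form of the identity $R\otimes_{R}M\cong M$ and is settled by the explicit $\psi$ above. Everything else is routine: keeping the variances straight so that the balancing relation reads $A(f)(a)\otimes b - a\otimes B(f)(b)$ with the correct objects, together with the standard exactness facts for $\otimes_{\mathbb{Z}}$, for coproducts, and for functor categories into $\mathbf{Ab}$. If one prefers to avoid the coend model, the adjunction remark makes (1) and (2) purely formal and isolates (3) as the only hands-on computation. This is the argument of \cite{AuslanderRep1}, to which we refer for the suppressed details.
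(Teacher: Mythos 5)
The paper does not prove Proposition \ref{AProposition0}; it is stated among the results ``directly taken from \cite{AuslanderRep1},'' so there is no in-text argument to compare against. Your proof is correct and follows the standard route: present $A\otimes_{\mathcal{C}}B$ by generators and relations (the coend), use the symmetry $A\otimes_{\mathcal{C}}B\cong B\otimes_{\mathcal{C}^{op}}A$ to halve the work, obtain (1) and (2) from the adjunction of $-\otimes_{\mathcal{C}}B$ with $G\mapsto\mathrm{Hom}_{\mathbf{Ab}}(B(-),G)$ (a left adjoint preserves all colimits), and verify (3) by the explicit mutually inverse pair $\varphi,\psi$, which is the co-Yoneda computation. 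One small tightening: the parenthetical in your first, hands-on argument for (1)(a) about ``the relation subgroup for $A_{2}$'' is phrased loosely; the crisp way to put it is that
\begin{equation*}
\coprod_{f\colon C\to C'} A(C')\otimes_{\mathbb{Z}} B(C)\longrightarrow\coprod_{C} A(C)\otimes_{\mathbb{Z}} B(C)\longrightarrow A\otimes_{\mathcal{C}}B\longrightarrow 0
\end{equation*}
exhibits $A\otimes_{\mathcal{C}}B$ as the cokernel of a natural transformation between two functors of $A$ that are each right exact (objectwise exactness in $\mathrm{Mod}(\mathcal{C}^{op})$, right exactness of $\otimes_{\mathbb{Z}}$, exactness of coproducts in $\mathbf{Ab}$), and a cokernel of a natural transformation between right exact functors is right exact by a straightforward diagram chase. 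Since you also give the adjunction argument, which subsumes (1) and (2) at once, this is cosmetic rather than a gap, and the variance bookkeeping in your relation $A(f)(a)\otimes b-a\otimes B(f)(b)$ and in the check of (3) is all correct.
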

Suppose now that $ \mathcal{C}' $ is a subcategory of the skeletally small preadditive category $ \mathcal{C} $.
 We use the tensor product of $ \mathcal{C}'$-modules, to describe the left adjoint $ \mathcal{C}\otimes_{\mathcal{C}'} $ of the restriction functor $ \mathrm{res}_{\mathcal{C}'}:\mathrm{Mod}(\mathcal{C})\longrightarrow \mathrm{Mod}(\mathcal{C}')$.\\
Define the functor $ \mathcal{C}\otimes_{\mathcal{C}'}:\mathrm{Mod}\left( \mathcal{C}'\right) \longrightarrow \mathrm{Mod}\left( \mathcal{C}\right)  $ by $ (\mathcal{C}\otimes_{\mathcal{C}'}M)\left( C\right) =\left(-,C\right) \mid_{\mathcal{C}'}\otimes_{\mathcal{C}'} M $ for all $ M\in \mathrm{Mod}\left(\mathcal{C}'\right)  $ and $ C\in\mathcal{C}.$ Using the properties of the tensor product it is not difficult to establish the following proposition.

\begin{proposition}
$\textnormal{\cite[Proposition 3.1]{AuslanderRep1}}$ \label{Rproposition1}
Let $ \mathcal{C}' $ a subcategory of the skeletally small preadditive category $\mathcal{C}$. Then the functor $ \mathcal{C}\otimes_{\mathcal{C}'}:\mathrm{Mod}\left( \mathcal{C}'\right) \longrightarrow \mathrm{Mod}\left( \mathcal{C}\right)  $ satisfies:

\begin{enumerate}
\item $\mathcal{C}\otimes_{\mathcal{C}'}$ is right exact and preserves sums;

\item The composition $ \mathrm{Mod}\left( \mathcal{C}'\right) \overset{\mathcal{C}\otimes_{\mathcal{C}'}}\longrightarrow \mathrm{Mod}\left( \mathcal{C}\right)\overset{\mathrm{res}_{\mathcal{C}'}}\longrightarrow \mathrm{Mod}\left( \mathcal{C}'\right)    $ is the identity on $ \mathrm{Mod}\left( \mathcal{C}'\right); $

\item For each object $ C'\in \mathcal{C}' $, we have $ \mathcal{C}\otimes_{\mathcal{C}'}\mathcal{C}'\left(C',-\right) =\mathcal{C}\left(C', -\right); $

\item For each $\mathcal{C'}$-module $M$ and each $ \mathcal{C} $-module $ N $, the restriction map \[ \mathcal{C}\left( \mathcal{C}\otimes_{\mathcal{C}'}M,N\right)\longrightarrow \mathcal{C}'\left( M,N\mid_{\mathcal{C}'}\right)\] is an isomorphism;
\item $ \mathcal{C}\otimes_{\mathcal{C}'} $ is a fully faithful functor.
\end{enumerate}
\end{proposition}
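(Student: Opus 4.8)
The plan is to prove the five assertions in the order (3), (1), (2), (4), (5), each feeding into the next, with (4) the only one that requires genuine work. First I would dispatch (3). Unwinding the definition, $\left(\mathcal{C}\otimes_{\mathcal{C}'}\mathcal{C}'(C',-)\right)(C)=(-,C)\mid_{\mathcal{C}'}\otimes_{\mathcal{C}'}\mathcal{C}'(C',-)$, and since $(-,C)\mid_{\mathcal{C}'}$ is a $(\mathcal{C}')^{op}$-module, the first identity in Proposition \ref{AProposition0}(3) identifies this group with $\left((-,C)\mid_{\mathcal{C}'}\right)(C')=\mathcal{C}(C',C)$. Checking that this identification is natural in $C$ yields $\mathcal{C}\otimes_{\mathcal{C}'}\mathcal{C}'(C',-)=\mathcal{C}(C',-)$. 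For (1), recall that colimits in $\mathrm{Mod}(\mathcal{C})$ are computed objectwise, so it suffices to observe that for every fixed $C\in\mathcal{C}$ the functor $M\mapsto (-,C)\mid_{\mathcal{C}'}\otimes_{\mathcal{C}'}M$ is right exact and preserves arbitrary sums; this is precisely Proposition \ref{AProposition0}(1)(b) and (2) applied to the $(\mathcal{C}')^{op}$-module $A=(-,C)\mid_{\mathcal{C}'}$.

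Next (2): for $C'\in\mathcal{C}'$ the $(\mathcal{C}')^{op}$-module $(-,C')\mid_{\mathcal{C}'}$ is (using that $\mathcal{C}'$ is full in $\mathcal{C}$) the representable functor $\mathcal{C}'(-,C')$, so the second identity in Proposition \ref{AProposition0}(3) gives $(\mathcal{C}\otimes_{\mathcal{C}'}M)(C')=\mathcal{C}'(-,C')\otimes_{\mathcal{C}'}M=M(C')$. Verifying naturality in both $C'$ and $M$ shows $\mathrm{res}_{\mathcal{C}'}\circ(\mathcal{C}\otimes_{\mathcal{C}'})=\mathrm{id}_{\mathrm{Mod}(\mathcal{C}')}$; in particular the unit $\eta_M\colon M\to \mathrm{res}_{\mathcal{C}'}(\mathcal{C}\otimes_{\mathcal{C}'}M)$ of the prospective adjunction is an isomorphism, which is what makes the \emph{restriction map} in (4) well defined: restrict a morphism $\mathcal{C}\otimes_{\mathcal{C}'}M\to N$ to $\mathcal{C}'$ and precompose with $\eta_M$.

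The heart of the matter is (4). I would first handle $M=\mathcal{C}'(C',-)$: by (3) together with the Yoneda identification recalled in the Preliminaries, $\mathrm{Hom}_{\mathrm{Mod}(\mathcal{C})}(\mathcal{C}\otimes_{\mathcal{C}'}\mathcal{C}'(C',-),N)=\mathrm{Hom}_{\mathrm{Mod}(\mathcal{C})}(\mathcal{C}(C',-),N)=N(C')=\left(N\mid_{\mathcal{C}'}\right)(C')=\mathrm{Hom}_{\mathrm{Mod}(\mathcal{C}')}(\mathcal{C}'(C',-),N\mid_{\mathcal{C}'})$, and chasing $1_{C'}$ through these identifications shows the composite is the restriction map. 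For an arbitrary $\mathcal{C}'$-module $M$, choose a projective presentation $\coprod_{j}\mathcal{C}'(C'_j,-)\longrightarrow\coprod_{i}\mathcal{C}'(C'_i,-)\longrightarrow M\longrightarrow 0$; applying the right exact, sum-preserving functor $\mathcal{C}\otimes_{\mathcal{C}'}(-)$ and using (1) and (3) produces an exact sequence $\coprod_{j}\mathcal{C}(C'_j,-)\longrightarrow\coprod_{i}\mathcal{C}(C'_i,-)\longrightarrow\mathcal{C}\otimes_{\mathcal{C}'}M\longrightarrow 0$ in $\mathrm{Mod}(\mathcal{C})$. Applying $\mathrm{Hom}_{\mathrm{Mod}(\mathcal{C})}(-,N)$ and $\mathrm{Hom}_{\mathrm{Mod}(\mathcal{C}')}(-,N\mid_{\mathcal{C}'})$ to these two presentations yields two left exact sequences of abelian groups joined by the restriction map, which is an isomorphism on the (co)product terms because both Hom-functors convert the coproducts into products and the representable case is already settled. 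The five lemma then forces the restriction map to be an isomorphism for $M$. I expect the bookkeeping here---the naturality of the representable-case isomorphism and the verification that the comparison really is the restriction map at every stage---to be the only genuinely nontrivial part of the whole proposition.

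Finally, (5) is formal. Taking $N=\mathcal{C}\otimes_{\mathcal{C}'}M'$ in (4) and composing with the inverse of the isomorphism $\eta_M$ from (2) gives a bijection $\mathrm{Hom}_{\mathrm{Mod}(\mathcal{C})}(\mathcal{C}\otimes_{\mathcal{C}'}M,\mathcal{C}\otimes_{\mathcal{C}'}M')\cong\mathrm{Hom}_{\mathrm{Mod}(\mathcal{C}')}(M,M')$, and unwinding the identifications shows it is precisely the map induced by the functor $\mathcal{C}\otimes_{\mathcal{C}'}$ on Hom-sets; hence $\mathcal{C}\otimes_{\mathcal{C}'}$ is fully faithful.
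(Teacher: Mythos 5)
The paper does not actually supply a proof of this proposition; it states it with the remark that it follows ``from the properties of the tensor product'' and cites Auslander directly, so there is nothing in the paper itself to compare your argument against. On its own terms your proof is correct and is the standard one: (3), (1), and (2) fall out of Proposition~\ref{AProposition0} applied objectwise to the $(\mathcal{C}')^{op}$-module $(-,C)\mid_{\mathcal{C}'}$; (4) is the Yoneda computation on representables followed by a free presentation, the right-exactness and sum-preservation from (1), and the five lemma; and (5) is the formal consequence of (4) together with the naturality of the identification in (2), which is needed to see that the restriction map is a two-sided inverse to the map on Hom-sets induced by $\mathcal{C}\otimes_{\mathcal{C}'}$. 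One remark on hypotheses: as printed the proposition asks only that $\mathcal{C}'$ be a subcategory, but (2), (4), and (5) genuinely require $\mathcal{C}'$ to be \emph{full} in $\mathcal{C}$ --- exactly where you invoke it, identifying $(-,C')\mid_{\mathcal{C}'}$ with the representable $\mathcal{C}'(-,C')$. Since fullness is assumed everywhere else in the paper (in the definition of $\mathcal{C}'(\mathcal{C},-)$ and throughout the recollement constructions), this is an omission in the paper's statement rather than a gap in your argument, and you were right to make the assumption explicit.
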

Having described the left adjoint $ \mathcal{C}\otimes_{\mathcal{C}'} $ of the restriction functor $ \mathrm{res}_{\mathcal{C}'}:\mathrm{Mod}\left(\mathcal{C}\right) \longrightarrow \mathrm{Mod}\left( \mathcal{C}'\right),$ we now describe its right adjoint.\\
Let $ \mathcal{C}' $ be a full subcategory of the category $ \mathcal{C} $. Define the functor $ \mathcal{C}'\left( \mathcal{C},-\right):\mathrm{Mod}\left( \mathcal{C}'\right) \longrightarrow \mathrm{Mod}\left( \mathcal{C}\right)$ by $\mathcal{C}'\left( \mathcal{C},M\right)\left( X\right) =\mathcal{C}'\left( \left(X,-\right) \mid_{\mathcal{C}'},M\right)    $ for all $ \mathcal{C}' $-modules $ M $ and all objects $ X $ in $ \mathcal{C}.$ We have  the following proposition.

\begin{proposition} 
$\textnormal{\cite[Proposition 3.4]{AuslanderRep1}}$ 
\label{Rproposition2}
Let $ \mathcal{C}' $ a subcategory of the skeletally small preadditive category $\mathcal{C}$. Then the functor $ \mathcal{C}'\left( \mathcal{C},-\right) :\mathrm{Mod}\left( \mathcal{C}'\right) \longrightarrow \mathrm{Mod}\left( \mathcal{C}\right)$ has the following properties:
\begin{enumerate}

\item $ \mathcal{C}'\left( \mathcal{C},-\right)  $ is left exact and preserves inverse limits;

\item The composition $\mathrm{Mod}\left( \mathcal{C}'\right) \overset{\mathcal{C}'\left( \mathcal{C},-\right) }\longrightarrow \mathrm{Mod}\left( \mathcal{C}\right)\overset{\mathrm{res}_{\mathcal{C}'}}\longrightarrow \mathrm{Mod}\left( \mathcal{C}'\right)    $ is the identity on $\mathrm{ Mod}\left( \mathcal{C}'\right);$

\item For each $\mathcal{C}' $-module $M $ and $\mathcal{C}$-module $N$, the restriction map 
$$\mathcal{C}\left( N,\mathcal{C}'\left( \mathcal{C},M\right) \right) \longrightarrow \mathcal{C}'\left( N\mid_{\mathcal{C}'},M\right)$$ is an isomorphism;

\item $\mathcal{C}'\left( \mathcal{C},-\right)  $ is a fully faithful functor.
\end{enumerate}
\end{proposition}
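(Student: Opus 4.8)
The plan is to prove part (3) first — the adjunction isomorphism $\mathrm{Hom}_{\mathrm{Mod}(\mathcal{C})}(N,\mathcal{C}'(\mathcal{C},M))\cong \mathrm{Hom}_{\mathrm{Mod}(\mathcal{C}')}(N\mid_{\mathcal{C}'},M)$ — since parts (1), (2) and (4) will follow from it together with the Yoneda lemma. The isomorphism is built by unwinding the definition $\mathcal{C}'(\mathcal{C},M)(X)=\mathrm{Hom}_{\mathrm{Mod}(\mathcal{C}')}\big((X,-)\mid_{\mathcal{C}'},M\big)$. In one direction, a morphism $f\colon N\to \mathcal{C}'(\mathcal{C},M)$ is restricted to $\mathcal{C}'$ and composed with the canonical identification of $\mathcal{C}'(\mathcal{C},M)\mid_{\mathcal{C}'}$ with $M$ coming from part (2). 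In the other direction, given $g\colon N\mid_{\mathcal{C}'}\to M$ I would define $\widehat{g}\colon N\to\mathcal{C}'(\mathcal{C},M)$ by declaring, for $X\in\mathcal{C}$ and $n\in N(X)$, that $\widehat{g}_X(n)$ is the natural transformation $(X,-)\mid_{\mathcal{C}'}\Rightarrow M$ whose component at $X'\in\mathcal{C}'$ sends $\alpha\in\mathcal{C}(X,X')$ to $g_{X'}\big(N(\alpha)(n)\big)\in M(X')$.

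The substance of part (3) is then the routine but multi-layered verification that $\widehat{g}_X(n)$ is natural in $X'$, that $\widehat{g}$ is natural in $X$, that the two assignments $f\mapsto g$ and $g\mapsto\widehat{g}$ are mutually inverse — this last step is a short computation using the naturality of $f$ and the Yoneda lemma, e.g. $(\widehat{g}_X(n))_{X'}(\alpha)=g_{X'}(N(\alpha)(n))=(f_X(n))_{X'}(\alpha)$ after rewriting via the naturality square of $f$ — and that the bijection is natural in both $M$ and $N$.

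Parts (1), (2) and (4) then come essentially for free. For (2), when $X\in\mathcal{C}'$ we have $(X,-)\mid_{\mathcal{C}'}=\mathcal{C}'(X,-)$ because $\mathcal{C}'$ is a full subcategory, so $\mathrm{res}_{\mathcal{C}'}\big(\mathcal{C}'(\mathcal{C},M)\big)(X)=\mathrm{Hom}_{\mathrm{Mod}(\mathcal{C}')}(\mathcal{C}'(X,-),M)\cong M(X)$ by Yoneda, and these isomorphisms are natural in $X$ and $M$, i.e. $\mathrm{res}_{\mathcal{C}'}\circ\mathcal{C}'(\mathcal{C},-)\cong \mathrm{id}_{\mathrm{Mod}(\mathcal{C}')}$. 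For (1), a functor admitting a left adjoint preserves all limits that exist, so (3) immediately gives left exactness and preservation of inverse limits; alternatively one argues directly, since limits in $\mathrm{Mod}(\mathcal{C})$ are computed objectwise and $M\mapsto\mathrm{Hom}_{\mathrm{Mod}(\mathcal{C}')}\big((X,-)\mid_{\mathcal{C}'},M\big)$ is left exact and continuous for each fixed $X$. For (4), a right adjoint is fully faithful exactly when the counit of the adjunction is an isomorphism, and that counit is precisely the natural isomorphism of (2); equivalently, combining (3) with (2) gives $\mathrm{Hom}_{\mathrm{Mod}(\mathcal{C})}\big(\mathcal{C}'(\mathcal{C},M'),\mathcal{C}'(\mathcal{C},M)\big)\cong\mathrm{Hom}_{\mathrm{Mod}(\mathcal{C}')}(M',M)$.

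The only real obstacle is bookkeeping: in part (3) one must keep straight four interlocking levels of naturality (in the object $X$ of $\mathcal{C}$, in the object $X'$ of $\mathcal{C}'$, in $M$, and in $N$) and check that the two constructions cancel. There is no conceptual difficulty — this is the standard Yoneda-style proof of a restriction/coinduction adjunction, parallel to Proposition \ref{Rproposition1} — so the risk is purely that of index or composition-order slips inside the nested natural transformations.
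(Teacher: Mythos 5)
Your proposal is correct, and it is the standard proof of this coinduction/restriction adjunction; note that the paper does not prove this proposition itself but cites it directly to Auslander, so there is no in-paper proof to compare against. The only small wrinkle is organizational: you announce that (2) will follow from (3), but your forward (``restriction'') map in (3) already invokes the Yoneda identification $\mathcal{C}'(\mathcal{C},M)\mid_{\mathcal{C}'}\cong M$, which is precisely the content of (2) and relies on $\mathcal{C}'$ being a \emph{full} subcategory (as the paper's preceding paragraph stipulates, even though the word ``full'' is dropped in the proposition statement). Since that identification is just the Yoneda lemma and does not depend on the adjunction, there is no circularity — but the cleaner order is (2), then (3), then (1) and (4), and you should say explicitly that fullness is what makes $(X',-)\mid_{\mathcal{C}'}=\mathcal{C}'(X',-)$ for $X'\in\mathcal{C}'$, a fact used both in (2) and in the ``mutual inverse'' computation of (3).
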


\subsection{Varietes and Krull-Schmidt Categories}

Let $\mathcal{C}$ be a preadditive category, we denote by $\mathbf{proj}(\mathrm{Mod}(\mathcal{C}))$ the full subcategory of $\mathrm{Mod}(\mathcal{C})$ consisting of all finitely generated projective $\mathcal{C}$-modules. Let $\mathcal{C}$ be an additive category, it is said that $\mathcal{C}$ is a category in which $\textbf{idempotents split}$ if given $e:C\longrightarrow C$ an idempotent endomorphism of an object $C\in \mathcal{C}$, then $e$ has a kernel in $\mathcal{C}$. It is well known that for a preadditive category $\mathcal{C}$  the category $\mathrm{proj}(\mathrm{Mod}(\mathcal{C}))$ is a skeletally small additive category in which idempotents split, the functor $P:\mathcal{C}\rightarrow \mathrm{proj}(\mathrm{Mod}(\mathcal{C}))$ given by $P(C)=\mathcal{C}(C,-)$, is fully faithful and induces by restriction an equivalence $\mathrm{Mod}(\mathrm{proj}(\mathrm{Mod}(\mathcal{C}))^{op})\simeq \mathrm{Mod}(\mathcal{C})$. We recall the following notion given by Auslander in \cite{AuslanderRep1}.  A $\textbf{variety}$  is a skeletally small, additive category in which idempotents split.\\
Given a ring $R$, we denote by $\mathrm{Mod}(R)$ the category of left $R$-modules and by $\mathrm{mod}(R)$ the full subcategory of $\mathrm{Mod}(R)$ consisting of the finitely generated left $R$-modules.\\
To fix the notation, we recall known results on functors and categories that we use through the paper, referring for the proofs to the papers by Auslander and Reiten \cite{Aus, AuslanderRep1, AusVarietyI}.

\begin{definition}
Let $R$ be a commutative ring.  An $\textbf{R}$$\textbf{-category}$ is a preadditive category such that $\mathcal{C}(C_{1},C_{2})$ is an $R$-module for all pair objects $C_{1}$, $C_{2}$; and the composition is $R$-bilinear. 
An $R$-category $\mathcal{C}$ is $\mathbf{Hom}$-\textbf{finite}, if for each pair of objects $C_{1},C_{2}$ in $\mathcal{C},$ the $R$-module $\mathcal{C}(C_{1},C_{2})$ is finitely generated. An $\textbf{R}$$\textbf{-variety}$ $\mathcal{C}$, is a variety which is an $R$-category. 
\end{definition}

Let $\mathcal{C}$ be an $R$-category. We say that a functor $F:\mathcal{C}\longrightarrow \mathrm{Mod}(R)$ is an $R$-functor  if the induced morphisms $\mathrm{Hom}_{\mathcal{C}}(X,Y)\longrightarrow \mathrm{Hom}_{\mathrm{Mod}(R)}(F(X),F(Y))$ are morphism of $R$-modules for all $X,Y\in \mathcal{C}$. We denote  by  $\mathrm{Fun}_{R}(\mathcal{C},\mathrm{Mod}(R))$ the category whose objects are the  covariant $R$-functors and the morphisms are the natural transformations.\\
Suppose $\mathcal{C}$ is an $R$-category. If $M:\mathcal{C}\longrightarrow \mathbf{Ab}$ is a $\mathcal{C}$-module, then for each $C\in \mathcal{C}$ the abelian group $M(C)$ has a structure of $\mathrm{End}_{\mathcal{C}}(C)^{op}$-module and hence as an $R$-module since $\mathrm{End}_{\mathcal{C}}(C)$ is an $R$-algebra. Further if $f:M\longrightarrow M'$ is a morphism of $\mathcal{C}$-modules it is easy to show that $f_{C}:M(C)\longrightarrow M'(C)$ is a morphism of $R$-modules for each $C\in \mathcal{C}$. Then, is easy to see that there is an isomorphism 
$$\mathrm{Mod}(\mathcal{C})\cong \mathrm{Fun}_{R}(\mathcal{C},\mathrm{Mod}(R)).$$
Let $\mathcal{C}$ be a Hom-finite $R$-variety.  We say that $M\in \mathrm{Mod}(\mathcal{C})$ is \textbf{finitely presented functor} if there exists an exact sequence in $\mathrm{Mod}(\mathcal{C})$
$$\xymatrix{P_{1}\ar[r] & P_{0}\ar[r] & M\ar[r] & 0,}$$
where $P_{1}$ and $P_{0}$ are finitely generated projective $\mathcal{C}$-modules. 
It is easy to see that $P_{0}$ and $P_{1}$  can be chosen as representable functors. Then
a functor $M$ is finitely presented if there exists an exact
sequence
$$\xymatrix{\mathcal{C}(C_1,- )\ar[r] &  \mathcal{C}(C_0,- )\ar[r] &  M\ar[r] & 0,}$$
with $C_{0},C_{1}\in \mathcal{C}$.

\begin{definition}
An additive category $\mathcal{C}$ is \textbf{Krull-Schmidt}, if every
object in $\mathcal{C}$ decomposes in a finite sum of objects whose
endomorphism ring is local.
\end{definition}

\subsection{Tensor Product of Categories}
If $\mathcal{C}$ and $\mathcal{D}$ are $R$-categories, B. Mitchell defined in \cite{Mitchell} the  $\textbf{tensor product}$  $\mathcal{C}\otimes_{R}\mathcal{D} $, whose objects are those of $\mathcal{C}\times \mathcal{D}$ and the morphisms from $(C,D)$ to $(C',D')$ is the ordinary tensor product of $R$-modules $\mathcal{C}(C,C')\otimes_{R}\mathcal{D}(D,D')$. Since that the tensor product is associative and commutative and the composition in $\mathcal{C}$ and $\mathcal{D}$ is $R$-bilinear then the $R$-bilinear composition in $\mathcal{C}\otimes_{R} \mathcal{D}$ is given as follows:
\begin{equation*}
  (f_{2}\otimes g_{2})\circ (f_{1}\otimes g_{1}):=(f_{2}\circ f_{1})\otimes(g_{2}\circ g_{1})
\end{equation*}
for all $f_{1}\otimes g_1\in \mathcal{C}(C,C')\otimes_{R} \mathcal{D}(D,D')$ and  $f_{2}\otimes g_2\in\mathcal{C}(C',C'')\otimes_{R} \mathcal{D}(D',D'')$.
We will sometimes write $\mathcal{C}\otimes \mathcal{D}$ instead of $\mathcal{C}\otimes_{R}\mathcal{D}$.

\begin{definition}
Let $\mathcal{C}$ be an $R$-category.  The $\textbf{enveloping category}$ of $\mathcal{C}$, denoted by $\mathcal{C}^{e}$ is defined as: $\mathcal{C}^{e}:=\mathcal{C}^{op}\otimes_{R}\mathcal{C}.$
\end{definition}

\subsection{Quotient category}

A  $\textbf{two}$ $\textbf{sided}$ $\textbf{ideal}$  $I(-,?)$ is an additive subfunctor of the two variable functor $\mathcal{C}(-,?):\mathcal{C}^{op}\otimes\mathcal{C}\rightarrow\mathbf{Ab}$ such that: (a) if $f\in I(X,Y)$ and $g\in\mathcal C(Y,Z)$, then  $gf\in I(X,Z)$; and (b)
if $f\in I(X,Y)$ and $h\in\mathcal C(U,X)$, then  $fh\in I(U,Z)$. If $I$ is a two-sided ideal, then we can form the $\textbf{quotient category}$  $\mathcal{C}/I$ whose objects are those of $\mathcal{C}$, and where $(\mathcal{C}/I)(X,Y):=\mathcal{C}(X,Y)/I(X,Y)$. Finally the composition is induced by that of $\mathcal{C}$ (see \cite{Mitchell}). There is a canonical projection functor $\pi:\mathcal{C}\rightarrow \mathcal{C}/I$ such that:
\begin{enumerate}

\item $\pi(X)=X$, for all $X\in \mathcal{C}$.

\item For all $f\in \mathcal{C}(X,Y)$, $\pi(f)=f+I(X,Y):=\bar{f}.$
\end{enumerate}





\section{Recollements in functor categories and triangular matrix categories}\label{sec3}
\subsection{Recollements}

We recall some basic definitions. Consider functors $F:\mathcal{C}\rightarrow\mathcal D$ and $G:\mathcal{D}\rightarrow\mathcal{C}$. We say that $F$ is $\textbf{left adjoint}$ to 
$G$ or that $G$ is $\textbf{right adjoint}$ to $F$, and  that $(F,G)$ is an adjoint pair if there is a natural equivalence
$$\eta =\Big\{\eta _{X,Y}:\eta _{X,Y}:\mathrm{Hom}_{\mathcal D}(FX,Y)\rightarrow \mathrm{Hom}_{\mathcal C}(X,GY)\Big\}_{X\in\mathcal C,Y\in\mathcal D }$$
between the functors $\mathrm{Hom}_{\mathcal D}(F(-),-)$ and $\mathrm{Hom}_{\mathcal C}(-,G(-))$.

\begin{definition}\label{recolldef}
Let $\mathcal{A}$, $\mathcal{B}$ and $\mathcal{C}$ be abelian categories. Then the diagram

$$\xymatrix{\mathcal{B}\ar[rr]|{i_{\ast}=i_{!}}  &  &\mathcal{A}\ar[rr]|{j^{!}=j^{\ast}}\ar@<-2ex>[ll]_{i^{\ast}}\ar@<2ex>[ll]^{i^{!}}  &   &\mathcal{C}\ar@<-2ex>[ll]_{j_{!}}\ar@<2ex>[ll]^{j_{\ast}}}$$
is called a $\textbf{recollement}$, if the additive functors $i^{\ast},i_{\ast}=i_{!}, i^{!},j_{!},j^{!}=j^{*}$ and $j_{*}$ satisfy the following conditions:
\begin{itemize}
\item[(R1)] $(i^{*},i_{*}=i_{!},i^{!})$ and $(j_{!},j^{!}=j^{*},j_{*})$ are adjoint  triples, i.e. $(i^{*},i_{*})$, $(i_{!},i^{!})$  $(j_{!},j^{!})$ and $(j^{*},j_{*})$ are adjoint pairs;
\item[(R2)] $\mathrm{Ker}(j^{*})=\mathrm{Im}(i_{*})$;
\item[(R3)] $i_{*}, j_{!},j_{*}$ are full embedding functors.
\end{itemize}
\end{definition}

Let $\mathcal{C}$ be an additive category and $\mathcal{B}$ be a full additive subcategory of $\mathcal{C}$.  Maurice Auslander introduced  in \cite{AuslanderRep1} three functors such  that,  according with Propositions \ref{Rproposition1} and \ref{Rproposition2}, together form an adjoint triple
$(\mathcal{C}\otimes_{\mathcal{B}},\mathrm{res}_{\mathcal{B}},\mathcal{B}(\mathcal{C},-) )$

\begin{equation}\label{opo}
\xymatrix{\mathrm{Mod}(\mathcal{C})\ar[rrr]|{\mathrm{res}_{\mathcal{B}}}  & &   &\mathrm{Mod}(\mathcal{B}).\ar@<-2ex>[lll]_{\mathcal{C}\otimes_{\mathcal{B}}-}\ar@<2ex>[lll]^{\mathcal{B}(\mathcal{C},-)}}
\end{equation}

Let $\mathcal{B}$ a full additive  subcategory of $ \mathcal{C} $. For all pair of objects $ C,C'\in\mathcal{C} $ we denote by $ I_{\mathcal{B}}\left( C,C'\right)$ the abelian subgroup of $ \mathcal{C}\left( C,C'\right)  $ whose elements are morphism $ f:C\longrightarrow C' $ which factor through $ \mathcal{B}.$ It is not hard to see that under these conditions $ I_{\mathcal{B}}\left( -,?\right)  $ is a two sided ideal of $\mathcal{C}\left( -,?\right) $ (here we use that $\mathcal{B}$ have finite coproducts).
Thus we can considerer the quotient category $ \mathcal{C}/I_{\mathcal{B}} $.\\
The canonical functor $ \pi:\mathcal{C}\longrightarrow \mathcal{C}/I_{\mathcal{B}} $ induces an exact functor  by restriction $  \mathrm{res}_{ \mathcal{C}}:\mathrm{Mod} \left( \mathcal{C}/I_{\mathcal{ B}}\right)\longrightarrow \mathrm{Mod} \left( \mathcal{C}\right)   $ defined by $  \mathrm{res}_{ \mathcal{C}}(F):=F\circ\pi $ for all $ F\in \mathrm{Mod} \left( \mathcal{C}/I_{\mathcal{ B}}\right) $. \\

In order to construct a recollement we will recall the following two functors which will be used throught all this paper, for more details see \cite{LGOS2}.

\begin{Remark}\label{2funtores}
We have a functor $ \frac{\mathcal{C}}{I_{\mathcal{B}}}\otimes_{\mathcal{C}}:\mathrm{Mod}(\mathcal{C})\longrightarrow\mathrm{Mod}(\mathcal{C}/I_{\mathcal{B}}) $ defined as follows: 
$\left( \frac{\mathcal{C}}{I_{\mathcal{B}}}\otimes _{\mathcal{C}}M\right)(C):= \frac{\mathcal{C}(-,C)}{I_{\mathcal{B}}(-,C)}\otimes_{\mathcal{C}}M$  for all $M\in \mathrm{Mod}(\mathcal{C})$
and  $\left( \frac{\mathcal{C}}{I_{\mathcal{B}}}\otimes _{\mathcal{C}}M\right)(\overline{f})=\frac{\mathcal{C}}{I_{\mathcal{B}}}(-,f)\otimes_{\mathcal{C}}M$ for all $\overline{f}=f+I_{\mathcal{B}}(C,C')\in \frac{\mathcal{C}(C,C')}{I_{\mathcal{B}}(C,C')}$.\\
We have a functor $\mathcal{C}(\frac{\mathcal{C}}{I_{\mathcal{B}}},-):\mathrm{Mod}\left(\mathcal{C}\right) \longrightarrow \mathrm{Mod}\left(\mathcal{C}/I_{\mathcal{B}}\right) $ given by: 
$\mathcal{C}(\frac{\mathcal{C}}{I_{\mathcal{B}}},M)(C)=\mathcal{C}\left(\frac{\mathcal{C}(C,-)}{I_{\mathcal{B}}(C,-)},M\right)$ $\forall$ $ M\in \mathrm{Mod}(\mathcal{C})$ and  $C\in \mathcal{C}/I_{\mathcal{B}}$ and 
$\mathcal{C}(\frac{\mathcal{C}}{I_{\mathcal{B}}},M)(\overline{f})=\mathcal{C}\left( \frac{\mathcal{C}}{I_{\mathcal{B}}}(f,-),M\right) 
$ for all $\overline{f}=f+I_{\mathcal{B}}(C,C')\in \mathcal{C}(C,C')/I_{\mathcal{B}}(C,C') $ with $C,C'$ in $\mathcal{C}$.
\end{Remark}

The following proposition is well known, see example 3.12 in \cite{Psaro3} and Theorem 3.6 in \cite{LGOS2}.

\begin{proposition}\label{recollementfunctor}
Let $\mathcal{C}$ be an additive category and $\mathcal{B}$ be a full additive subcategory of $\mathcal{C}$. Then there is a recollement:
$$\xymatrix{\mathrm{Mod}(\mathcal{C}/\mathcal{I}_{\mathcal{B}})\ar[rrr]|{\mathrm{res}_{\mathcal{C}}}  &  &&\mathrm{Mod}(\mathcal{C})\ar[rrr]|{\mathrm{res}_{\mathcal{B}}}
\ar@<-2ex>[lll]_{\mathcal{C}/\mathcal{I}_{\mathcal{B}}\otimes_{\mathcal{C}}-}\ar@<2ex>[lll]^{\mathcal{C}(\mathcal{C}/\mathcal{I}_{\mathcal{B}},-)}  & &   &\mathrm{Mod}(\mathcal{B}).\ar@<-2ex>[lll]_{\mathcal{C}\otimes_{\mathcal{B}}-}\ar@<2ex>[lll]^{\mathcal{B}(\mathcal{C},-)}}$$
\end{proposition}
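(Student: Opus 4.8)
The plan is to verify the three axioms (R1), (R2), (R3) of Definition \ref{recolldef} for the displayed diagram, glueing together the two adjoint triples we already have at our disposal. The right-hand adjoint triple $(\mathcal{C}\otimes_{\mathcal{B}}-,\ \mathrm{res}_{\mathcal{B}},\ \mathcal{B}(\mathcal{C},-))$ on $\mathrm{Mod}(\mathcal{C})\to\mathrm{Mod}(\mathcal{B})$ is precisely (\ref{opo}), assembled from Propositions \ref{Rproposition1} and \ref{Rproposition2}. For the left-hand side I would first establish that $(\frac{\mathcal{C}}{\mathcal{I}_{\mathcal{B}}}\otimes_{\mathcal{C}}-,\ \mathrm{res}_{\mathcal{C}})$ and $(\mathrm{res}_{\mathcal{C}},\ \mathcal{C}(\frac{\mathcal{C}}{\mathcal{I}_{\mathcal{B}}},-))$ are adjoint pairs. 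This is the ``relative'' version of Propositions \ref{Rproposition1} and \ref{Rproposition2}: one applies those statements not to an inclusion of a subcategory but to the projection functor $\pi:\mathcal{C}\to\mathcal{C}/\mathcal{I}_{\mathcal{B}}$, using that $\mathrm{res}_{\mathcal{C}}(F)=F\circ\pi$ and the description of the two functors in Remark \ref{2funtores}. Concretely, the adjunction isomorphisms
$$\mathrm{Hom}_{\mathrm{Mod}(\mathcal{C})}\!\Big(\tfrac{\mathcal{C}}{\mathcal{I}_{\mathcal{B}}}\otimes_{\mathcal{C}}M,\ \mathrm{res}_{\mathcal{C}}(F)\Big)\ \cong\ \mathrm{Hom}_{\mathrm{Mod}(\mathcal{C}/\mathcal{I}_{\mathcal{B}})}(M,F)$$
and the dual one for the right adjoint follow from the tensor-hom adjunction of Proposition \ref{AProposition0} together with the universal property of the quotient category; I would spell out the unit/counit on representables and invoke additivity and right/left exactness to extend. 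Since this is ``folklore'', I expect to be able to cite \cite{Psaro3} and \cite{LGOS2} for the bulk of it, but I would still record the essential naturality checks.

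Next, (R2): I must show $\mathrm{Ker}(\mathrm{res}_{\mathcal{B}})=\mathrm{Im}(\mathrm{res}_{\mathcal{C}})$ inside $\mathrm{Mod}(\mathcal{C})$. A functor $N\in\mathrm{Mod}(\mathcal{C})$ lies in $\mathrm{Ker}(\mathrm{res}_{\mathcal{B}})$ iff $N(B)=0$ for every $B\in\mathcal{B}$. On the other hand $N$ is in the image of $\mathrm{res}_{\mathcal{C}}$ iff it factors through $\pi$, i.e. iff every morphism in $\mathcal{I}_{\mathcal{B}}(C,C')$ acts as zero on $N$. The inclusion $\supseteq$ is clear since a morphism factoring through an object of $\mathcal{B}$ is killed once $N$ vanishes on $\mathcal{B}$ — wait, more precisely one shows directly that $N\circ\pi$ is well defined precisely when $N$ annihilates $\mathcal{I}_{\mathcal{B}}$, and $N(B)=0$ forces this because $1_B\in\mathcal{I}_{\mathcal{B}}(B,B)$ acts as the identity of $N(B)=0$, so every morphism through $B$ is killed. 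For $\subseteq$, if $N$ annihilates $\mathcal{I}_{\mathcal{B}}$ then in particular $N(1_B)=N(1_B)\circ N(1_B)$ and $N(1_B)=0$, giving $N(B)=0$; hence $N\in\mathrm{Ker}(\mathrm{res}_{\mathcal{B}})$, and conversely such an $N$ visibly factors as $\mathrm{res}_{\mathcal{C}}$ of the induced $\mathcal{C}/\mathcal{I}_{\mathcal{B}}$-module. I would write this argument out cleanly, as it is the conceptual heart of the statement.

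Finally (R3): I must check that $\mathrm{res}_{\mathcal{C}}$, $\mathcal{C}\otimes_{\mathcal{B}}-$ and $\mathcal{B}(\mathcal{C},-)$ are full embeddings. Fullness and faithfulness of $\mathcal{C}\otimes_{\mathcal{B}}-$ and of $\mathcal{B}(\mathcal{C},-)$ are exactly items (5) of Propositions \ref{Rproposition1} and \ref{Rproposition2}. For $\mathrm{res}_{\mathcal{C}}$ I would use that it has both a left and a right adjoint whose composites with it are the identity on $\mathrm{Mod}(\mathcal{C}/\mathcal{I}_{\mathcal{B}})$ (the relative analogues of item (2) in those propositions); a functor with a fully faithful left (or right) adjoint section in this sense is itself fully faithful — equivalently the unit $M\to\mathrm{res}_{\mathcal{B}}(\mathcal{C}/\mathcal{I}_{\mathcal{B}}\otimes_\mathcal{C}\ \mathrm{res}_{\mathcal{C}}M)$ — rather, the relevant counit/unit is an isomorphism, so $\mathrm{res}_{\mathcal{C}}$ is fully faithful. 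I would also note $\mathrm{res}_{\mathcal{C}}$ is exact since $\pi$ is the identity on objects. The main obstacle I anticipate is purely bookkeeping: carefully matching the variance conventions (the ideal $\mathcal{I}_{\mathcal{B}}(-,?)$ uses $\mathcal{C}(-,?)$ with the first slot contravariant) so that the tensor and hom functors of Remark \ref{2funtores} land in the right module categories and the adjunctions compose correctly; once the conventions are pinned down, each individual verification is short.
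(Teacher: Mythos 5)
The paper does not actually prove this proposition; it records it as well known and cites \cite{Psaro3} (Example 3.12) and \cite{LGOS2} (Theorem 3.6). Your direct verification is therefore the natural self-contained substitute, and in outline it is sound: (R1) on the right is literally Propositions \ref{Rproposition1} and \ref{Rproposition2}; (R1) on the left is the same tensor/hom mechanism applied to $\pi\colon\mathcal{C}\to\mathcal{C}/\mathcal{I}_{\mathcal{B}}$; (R2) reduces to identifying both sides with the $\mathcal{C}$-modules that annihilate $\mathcal{I}_{\mathcal{B}}$; and (R3) follows because each embedding has an adjoint whose composite with it is the identity.

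Two slips need repair before this would read as a proof. The displayed adjunction isomorphism has its Hom sets in the wrong categories: $\tfrac{\mathcal{C}}{\mathcal{I}_{\mathcal{B}}}\otimes_{\mathcal{C}}M$ is a $\mathcal{C}/\mathcal{I}_{\mathcal{B}}$-module while $M$ is a $\mathcal{C}$-module, so the formula expressing that $\tfrac{\mathcal{C}}{\mathcal{I}_{\mathcal{B}}}\otimes_{\mathcal{C}}-$ is left adjoint to $\mathrm{res}_{\mathcal{C}}$ must read
$$\mathrm{Hom}_{\mathrm{Mod}(\mathcal{C}/\mathcal{I}_{\mathcal{B}})}\!\Big(\tfrac{\mathcal{C}}{\mathcal{I}_{\mathcal{B}}}\otimes_{\mathcal{C}}M,\ F\Big)\ \cong\ \mathrm{Hom}_{\mathrm{Mod}(\mathcal{C})}\!\big(M,\ \mathrm{res}_{\mathcal{C}}(F)\big),$$
whereas what you wrote puts a $\mathcal{C}/\mathcal{I}_{\mathcal{B}}$-module and a $\mathcal{C}$-module into the same Hom. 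In the (R2) paragraph the two inclusions are argued in the right spirit but labelled backwards, and the observation "$N(1_B)=N(1_B)\circ N(1_B)$" is beside the point; the relevant fact is simply that $1_B\in\mathcal{I}_{\mathcal{B}}(B,B)$ (it trivially factors through $B\in\mathcal{B}$), so if $N$ kills $\mathcal{I}_{\mathcal{B}}$ then $1_{N(B)}=N(1_B)=0$, hence $N(B)=0$; conversely if $N(B)=0$ for all $B\in\mathcal{B}$, then any $f\in\mathcal{I}_{\mathcal{B}}(C,C')$ factors as $g\circ h$ through some $B$, so $N(f)=N(g)\circ N(h)$ factors through $N(B)=0$ and vanishes. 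With those two corrections the outline is a valid proof of (R1)--(R3).
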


\subsection{Triangular matrix categories}

We consider the triangular matrix category  $\Lambda:=\left[\begin{smallmatrix}
\mathcal{T} & 0 \\ 
M & \mathcal{U}
\end{smallmatrix}\right]$ constructed in the article \cite{LGOS1} and defined as follows.
\begin{definition}$\textnormal{\cite[Definition 3.5]{LGOS1}}$ \label{defitrinagularmat}
Let $\mathcal{U}$ and $\mathcal{T}$ be two $R$-categories and consider an additive $R$-functor  $M$ from the tensor product category  $\mathcal{U}\otimes_{R} \mathcal{T}^{op}$ to the category $\mathrm{Mod}(R)$.
The \textbf{triangular matrix category}
$\Lambda=\left[ \begin{smallmatrix}
\mathcal{T} & 0 \\ M & \mathcal{U}
\end{smallmatrix}\right]$ is defined as follows.
\begin{enumerate}
\item [(a)] The class of objects of this category are matrices $ \left[
\begin{smallmatrix}
T & 0 \\ M & U
\end{smallmatrix}\right]  $ with $ T\in \mathrm{obj}(\mathcal{T}) $ and $ U\in \mathrm{obj}(\mathcal{U}) $.

\item [(b)] For objects in
$\left[ \begin{smallmatrix}
T & 0 \\
M & U
\end{smallmatrix} \right] ,  \left[ \begin{smallmatrix}
T' & 0 \\
M & U'
\end{smallmatrix} \right]$ in
$\Lambda$ we define $\mathrm{ Hom}_{\Lambda}\left (\left[ \begin{smallmatrix}
T & 0 \\
M & U
\end{smallmatrix} \right] ,  \left[ \begin{smallmatrix}
T' & 0 \\
M & U'
\end{smallmatrix} \right]  \right)  := \left[ \begin{smallmatrix}
\mathrm{Hom}_{\mathcal{T}}(T,T') & 0 \\
M(U',T) & \mathrm{Hom}_{\mathcal{U}}(U,U')
\end{smallmatrix} \right].$
\end{enumerate}
The composition is given by
\begin{eqnarray*}
\circ&:&\left[  \begin{smallmatrix}
{\mathcal{T}}(T',T'') & 0 \\
M(U'',T') & {\mathcal{U}}(U',U'')
\end{smallmatrix}  \right] \times \left[
\begin{smallmatrix}
{\mathcal{T}}(T,T') & 0 \\
M(U',T) & {\mathcal{U}}(U,U')
\end{smallmatrix} \right]\longrightarrow\left[
\begin{smallmatrix}
{\mathcal{T}}(T,T'') & 0 \\
M(U'',T) & {\mathcal{U}}(U,U'')\end{smallmatrix} \right] \\
&& \left( \left[ \begin{smallmatrix}
t_{2} & 0 \\
m_{2} & u_{2}
\end{smallmatrix} \right], \left[
\begin{smallmatrix}
t_{1} & 0 \\
m_{1} & u_{1}
\end{smallmatrix} \right]\right)\longmapsto\left[
\begin{smallmatrix}
t_{2}\circ t_{1} & 0 \\
m_{2}\bullet t_{1}+u_{2}\bullet m_{1} & u_{2}\circ u_{1}
\end{smallmatrix} \right].
\end{eqnarray*}
\end{definition}
We recall that $ m_{2}\bullet t_{1}:=M(1_{U''}\otimes t_{1}^{op})(m_{2})$ and
$u_{2}\bullet m_{1}=M(u_{2}\otimes 1_{T})(m_{1})$.
Then, it is clear that $\Lambda $ is an $R$-category since
$\mathcal{T} $ and $\mathcal{U}$ are $R$-categories and $M(U',T)$ is an $R$-module.\\
We define a functor $\Phi:\Lambda\longrightarrow \mathcal{U},$
as follows: $\Phi\Big(\left[\begin{smallmatrix}
 T & 0 \\ 
M & U
\end{smallmatrix}\right]\Big):=U$ and for 
 $\left[\begin{smallmatrix}
\alpha & 0 \\ 
m & \beta
\end{smallmatrix}\right]:\left[\begin{smallmatrix}
 T & 0 \\ 
M & U
\end{smallmatrix}\right]\longrightarrow \left[\begin{smallmatrix}
T' & 0 \\ 
M & U'
\end{smallmatrix}\right]$ we set $\Phi\Big(\left[\begin{smallmatrix}
\alpha & 0 \\ 
m & \beta
\end{smallmatrix}\right]\Big)=\beta$.\\
In this case $\mathcal{U}(-,-)\circ (\Phi^{op}\otimes\Phi)\in\mathrm{Mod}(\Lambda^{e})$ and we can think $\mathcal{U}$ as a $\mathrm{Mod}(\Lambda^{e})$-module. \\
For simplicity we will write,
$\mathfrak{M}=\left[\begin{smallmatrix}
 T & 0 \\ 
M & U
\end{smallmatrix}\right]\in \Lambda$.
Hence, we have morphism in $\mathrm{Mod}(\Lambda^{e})$: $\xymatrix{\Lambda\ar[rr]^(.3){\Gamma(\Phi)} & & \mathcal{U}(-,-)\circ (\Phi^{op}\otimes\Phi)}$ where
$$[\Gamma(\Phi)]_{(\mathfrak{M},\mathfrak{M}')}:\Lambda(\mathfrak{M},\mathfrak{M}')\longrightarrow \mathcal{U}(\Phi(\mathfrak{M}),\Phi(\mathfrak{M}'))$$
is given as $[\Gamma(\Phi)]_{(\mathfrak{M},\mathfrak{M}')}\Big(\left[\begin{smallmatrix}
\alpha & 0 \\ 
m & \beta
\end{smallmatrix}\right]\Big):=\Phi\Big(\left[\begin{smallmatrix}
\alpha & 0 \\ 
m & \beta
\end{smallmatrix}\right]\Big)$ for all $\left[\begin{smallmatrix}
\alpha & 0 \\ 
m & \beta
\end{smallmatrix}\right]\in \Lambda(\mathfrak{M},\mathfrak{M}')$.

\begin{lemma}\label{exacseuqnimpor}
 There exists an exact sequence
in $\mathrm{Mod}(\Lambda^{e})$
$$\xymatrix{0\ar[r] & \mathcal{I}\ar[r] & \Lambda\ar[rr]^(.3){\Gamma(\Phi)} & & \mathcal{U}(-,-)\circ (\Phi^{op}\otimes\Phi)\ar[r] & 0}$$
where for objects $\mathfrak{M}=\left[\begin{smallmatrix}
 T & 0 \\ 
M & U
\end{smallmatrix}\right]$ and $\mathfrak{M}'=\left[\begin{smallmatrix}
 T' & 0 \\ 
M & U'
\end{smallmatrix}\right]$ in $\Lambda$ the ideal $\mathcal{I}$ is given as  $\mathcal{I}\big(\mathfrak{M},\mathfrak{M}'\big)=\mathrm{Ker}\left([\Gamma (\Phi)]_{\big(\mathfrak{M},\mathfrak{M}'\big)}\right)=\left[\begin{smallmatrix}
 \mathcal{T}(T,T') & 0 \\ 
M(U',T) &  0
\end{smallmatrix}\right]$.
\end{lemma}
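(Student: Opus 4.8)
The plan is to compute the kernel of $\Gamma(\Phi)$ componentwise, to check that $\Gamma(\Phi)$ is an epimorphism in $\mathrm{Mod}(\Lambda^e)$, and then to observe that exactness of the three-term sequence follows formally, since $\mathrm{Mod}(\Lambda^e)$ is abelian and kernels (as well as images and cokernels) in a functor category with values in $\mathbf{Ab}$ are computed objectwise.

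First I would recall the standard fact that, for any skeletally small preadditive category $\mathcal{D}$, a morphism $f$ in $\mathrm{Mod}(\mathcal{D})$ is a monomorphism (respectively an epimorphism) precisely when every component $f_{D}$ is injective (respectively surjective), and that $\mathrm{Ker}(f)$ is the subfunctor $D\mapsto\mathrm{Ker}(f_{D})$. Taking $\mathcal{D}=\Lambda^{e}$ and $f=\Gamma(\Phi)$, everything reduces to the $R$-linear maps $[\Gamma(\Phi)]_{(\mathfrak{M},\mathfrak{M}')}\colon\Lambda(\mathfrak{M},\mathfrak{M}')\to\mathcal{U}(U,U')$ which, by their very definition, send $\left[\begin{smallmatrix}\alpha & 0\\ m & \beta\end{smallmatrix}\right]$ to $\beta$. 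Hence $[\Gamma(\Phi)]_{(\mathfrak{M},\mathfrak{M}')}\left(\left[\begin{smallmatrix}\alpha & 0\\ m & \beta\end{smallmatrix}\right]\right)=0$ if and only if $\beta=0$, so $\mathrm{Ker}\big([\Gamma(\Phi)]_{(\mathfrak{M},\mathfrak{M}')}\big)=\left[\begin{smallmatrix}\mathcal{T}(T,T') & 0\\ M(U',T) & 0\end{smallmatrix}\right]$, which is exactly the claimed description of $\mathcal{I}(\mathfrak{M},\mathfrak{M}')$. For surjectivity, given $\beta\in\mathcal{U}(U,U')$ the matrix $\left[\begin{smallmatrix}0 & 0\\ 0 & \beta\end{smallmatrix}\right]$ is a morphism in $\Lambda(\mathfrak{M},\mathfrak{M}')$ and is sent to $\beta$; thus every component of $\Gamma(\Phi)$ is surjective and $\Gamma(\Phi)$ is an epimorphism. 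Consequently the sequence $0\to\mathcal{I}\to\Lambda\to\mathcal{U}(-,-)\circ(\Phi^{op}\otimes\Phi)\to 0$, with $\mathcal{I}\hookrightarrow\Lambda$ the canonical inclusion, is exact in $\mathrm{Mod}(\Lambda^{e})$.

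The one point that deserves a short verification — and which I regard as the only real content beyond the routine computation above — is that the family $\mathcal{I}(\mathfrak{M},\mathfrak{M}')=\left[\begin{smallmatrix}\mathcal{T}(T,T') & 0\\ M(U',T) & 0\end{smallmatrix}\right]$ genuinely assembles into a sub-$\Lambda^{e}$-module of $\Lambda$, equivalently a two-sided ideal of $\Lambda$ in the sense recalled above. This is automatic once $\mathcal{I}$ is identified with $\mathrm{Ker}(\Gamma(\Phi))$, but it is also visible directly from the composition rule of Definition \ref{defitrinagularmat}: composing a matrix of the form $\left[\begin{smallmatrix}\alpha & 0\\ m & 0\end{smallmatrix}\right]$ with an arbitrary morphism $\left[\begin{smallmatrix}t & 0\\ n & u\end{smallmatrix}\right]$ of $\Lambda$ on either side yields a matrix whose lower-right entry is $u\circ 0=0$ or $0\circ u=0$, hence again in $\mathcal{I}$; since moreover the lower-right entries form an additive subgroup, this exhibits $\mathcal{I}$ as a two-sided ideal and $\mathcal{I}\hookrightarrow\Lambda$ as a morphism in $\mathrm{Mod}(\Lambda^{e})$. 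Finally, that $\Gamma(\Phi)$ itself is a morphism in $\mathrm{Mod}(\Lambda^{e})$ — i.e. a natural transformation of bifunctors compatible with the $\Lambda$-bimodule structures — is precisely what was recorded in the paragraph preceding the statement, so nothing further is required.
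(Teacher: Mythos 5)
Your proof is correct and is precisely the componentwise verification that the paper waves away with ``Straightforward'': kernels and epimorphisms in $\mathrm{Mod}(\Lambda^{e})$ are tested on each component $[\Gamma(\Phi)]_{(\mathfrak{M},\mathfrak{M}')}$, the kernel description drops out of the formula $\left[\begin{smallmatrix}\alpha&0\\m&\beta\end{smallmatrix}\right]\mapsto\beta$, surjectivity is witnessed by $\left[\begin{smallmatrix}0&0\\0&\beta\end{smallmatrix}\right]$, and the ideal property follows either from being a kernel or from the lower-right entry in the composition formula being $u\circ 0$ or $0\circ u$. Nothing is missing; this is the same argument the authors intend, just written out.
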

\begin{proof}
Straightforward.
\end{proof}

\subsection{Torsion pairs}
Let $\mathcal{A}$ be an arbitrary category and $\mathcal{B}$ a full subcategory of $\mathcal{A}$. We define $${}^{\perp_{0}}\mathcal{B}:=\{X\in \mathcal{A}\mid \mathrm{Hom}_{\mathcal{A}}(X,B)=0\,\,\forall B\in \mathcal{B}\}.$$
Dually, we define $\mathcal{B}^{\perp_{0}}$.
\begin{definition}
Let $\mathcal{A}$ be complete and cocomplete abelian category. Let $\mathcal{T}$ and $\mathcal{F}$ subcategories of $\mathcal{A}$. It is said that $(\mathcal{T},\mathcal{F})$ is a \textbf{torsion pair} if
$\mathcal{T}={}^{\perp_{0}}\mathcal{F}$ and $\mathcal{F}=\mathcal{T}^{\perp_{0}}$.
\end{definition}

We refer to chapter VI of  Stenstrom's book \cite{Stentrom}, for basic results on torsion pairs. It is well known that if $(\mathcal{T},\mathcal{F})$ is a torsion pair, hence $\mathcal{T}$ is closed under quotients, extensions and coproducts; and  $\mathcal{F}$ is closed under subobjects, extensions and products.\\
Conversely, if $\mathcal{T}$ is a subcategory of $\mathcal{A}$ which is closed under quotients, extensions and coproducts, there exists subcategory $\mathcal{F}$ such that $(\mathcal{T},\mathcal{F})$ is a torsion pair. Dually, if $\mathcal{F}$ is a subcategory of $\mathcal{A}$ which is closed under subobjects, extensions and products, there exists subcategory $\mathcal{T}$ such that $(\mathcal{T},\mathcal{F})$ is a torsion pair.\\
Now, if $(\mathcal{T},\mathcal{F})$ is a torsion pair of $\mathcal{A}$, there exists a preradical $\tau$ associated to such a torsion pair; and for every $A\in \mathcal{A}$ there exists a canonical exact sequence (which is unique up to isomorphism of exact sequences)
$$\xymatrix{0\ar[r] & \tau(A)\ar[r] & A\ar[r] & \frac{A}{\tau(A)}\ar[r] & 0}$$
where $\tau(A)\in \mathcal{T}$ and $\frac{A}{\tau(A)}\in \mathcal{F}$.

\begin{definition}
Let $\mathcal{A}$ be complete and cocomplete abelian category. Let $(\mathcal{T},\mathcal{F})$  be a torsion pair.
It is said that $(\mathcal{T},\mathcal{F})$ is \textbf{hereditary} if $\mathcal{T}$ is closed under subobjects. It is said that $(\mathcal{T},\mathcal{F})$ is \textbf{cohereditary} if $\mathcal{F}$ is closed under quotients. We say that a triple $(\mathcal{W},\mathcal{T},\mathcal{F})$ is a \textbf{TTF-triple} if $(\mathcal{W},\mathcal{T})$ and $(\mathcal{T},\mathcal{F})$ are torsion pairs.
\end{definition}

We recall the following result.
\begin{proposition}\label{recolleTTF}
Consider a recollement of abelian categories.
$$\xymatrix{\mathcal{B}\ar[rr]|{i_{\ast}=i_{!}}  &  &\mathcal{A}\ar[rr]|{j^{!}=j^{\ast}}\ar@<-2ex>[ll]_{i^{\ast}}\ar@<2ex>[ll]^{i^{!}}  &   &\mathcal{C}\ar@<-2ex>[ll]_{j_{!}}\ar@<2ex>[ll]^{j_{\ast}}}$$
Hence $\Big(\mathrm{Ker}(i^{\ast}), \mathrm{Im}(i_{\ast}),\mathrm{Ker}(i^{!})\Big)$ is a TTF-triple.
\end{proposition}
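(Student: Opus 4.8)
The plan is to realize $\mathrm{Im}(i_{*})$ at once as a torsion class and as a torsion-free class, and then to identify the two complementary classes with $\mathrm{Ker}(i^{!})$ and $\mathrm{Ker}(i^{*})$ by plain adjunction arguments.

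First I would record the consequences of the recollement axioms that the proof needs. Since $i_{*}=i_{!}$ is a full embedding, the counit $i^{*}i_{*}\to \mathrm{id}_{\mathcal{B}}$ and the unit $\mathrm{id}_{\mathcal{B}}\to i^{!}i_{*}$ are isomorphisms. The functor $j^{!}=j^{*}$ has both a left adjoint $j_{!}$ and a right adjoint $j_{*}$, hence it is exact, so its kernel $\mathrm{Ker}(j^{*})$ is a Serre subcategory of $\mathcal{A}$: it is closed under subobjects, quotients and extensions. By (R2), $\mathrm{Im}(i_{*})=\mathrm{Ker}(j^{*})$, so $\mathrm{Im}(i_{*})$ has all of these closure properties. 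Finally $i_{*}$ is simultaneously a left adjoint (of $i^{!}$) and a right adjoint (of $i^{*}$), so it preserves all colimits and limits; thus $\mathrm{Im}(i_{*})$ is also closed under coproducts and products.

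Now for the pair $(\mathrm{Im}(i_{*}),\mathrm{Ker}(i^{!}))$: as $\mathrm{Im}(i_{*})$ is closed under quotients, extensions and coproducts, the existence criterion for torsion pairs recalled above (see \cite{Stentrom}) produces a torsion pair $(\mathrm{Im}(i_{*}),\mathcal{F})$ with $\mathcal{F}=\mathrm{Im}(i_{*})^{\perp_{0}}$. One then checks $\mathcal{F}=\mathrm{Ker}(i^{!})$: if $i^{!}Y=0$, then $\mathrm{Hom}_{\mathcal{A}}(i_{*}B,Y)\cong\mathrm{Hom}_{\mathcal{B}}(B,i^{!}Y)=0$ for all $B$, so $Y\in\mathrm{Im}(i_{*})^{\perp_{0}}$; conversely, if $Y\in\mathrm{Im}(i_{*})^{\perp_{0}}$, testing on $B=i^{!}Y$ gives $\mathrm{Hom}_{\mathcal{B}}(i^{!}Y,i^{!}Y)\cong\mathrm{Hom}_{\mathcal{A}}(i_{*}i^{!}Y,Y)=0$, hence $i^{!}Y=0$. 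Dually, since $\mathrm{Im}(i_{*})$ is closed under subobjects, extensions and products, there is a torsion pair $(\mathcal{W},\mathrm{Im}(i_{*}))$ with $\mathcal{W}={}^{\perp_{0}}\mathrm{Im}(i_{*})$, and the analogous computation with the adjunction $(i^{*},i_{*})$, testing on $B=i^{*}X$, gives $\mathcal{W}=\mathrm{Ker}(i^{*})$. Since both $(\mathrm{Ker}(i^{*}),\mathrm{Im}(i_{*}))$ and $(\mathrm{Im}(i_{*}),\mathrm{Ker}(i^{!}))$ are then torsion pairs, $(\mathrm{Ker}(i^{*}),\mathrm{Im}(i_{*}),\mathrm{Ker}(i^{!}))$ is a TTF-triple by definition.

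There is no essentially hard step; the argument is bookkeeping with adjunctions. The two points needing a little care are that $j^{*}$ is exact — which is precisely where one uses that in a recollement $j^{*}=j^{!}$ lies between the adjoints $j_{!}$ and $j_{*}$ — and that the ambient category be complete and cocomplete so that the torsion-pair existence theorems apply, which holds in all cases treated here, where $\mathcal{A}=\mathrm{Mod}(\mathcal{C})$. If one prefers to avoid the completeness hypothesis, one can instead build the required functorial short exact sequences by hand: for $A\in\mathcal{A}$ the units and counits yield exact sequences $j_{!}j^{*}A\to A\to i_{*}i^{*}A\to 0$ and $0\to i_{*}i^{!}A\to A\to j_{*}j^{*}A$, and since $i^{*}j_{!}=0$ and $i^{!}j_{*}=0$ these are exactly the canonical sequences attached to the two torsion pairs $(\mathrm{Ker}(i^{*}),\mathrm{Im}(i_{*}))$ and $(\mathrm{Im}(i_{*}),\mathrm{Ker}(i^{!}))$.
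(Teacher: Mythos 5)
Your proof is correct. The paper itself does not give an argument for this proposition; it simply cites \cite[Theorem 4.3]{Psaromodules}, so there is no ``paper's proof'' to compare against in detail. What you have written is the standard self-contained argument and, in spirit, it is the one in the cited source: you use that $j^{*}$ has both adjoints to make $\mathrm{Ker}(j^{*})=\mathrm{Im}(i_{*})$ a Serre subcategory closed under (co)products, invoke Stenstr\"om's existence criterion twice to get torsion pairs $({}^{\perp_0}\mathrm{Im}(i_*),\mathrm{Im}(i_*))$ and $(\mathrm{Im}(i_*),\mathrm{Im}(i_*)^{\perp_0})$, and then identify the perpendicular classes with $\mathrm{Ker}(i^{*})$ and $\mathrm{Ker}(i^{!})$ by the adjunctions $(i^{*},i_{*})$ and $(i_{*},i^{!})$, testing on $B=i^{*}X$ (resp.\ $B=i^{!}Y$) to get the reverse inclusion. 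All of these steps are sound; in particular the ``$\mathrm{Hom}(i^{!}Y,i^{!}Y)=0$ forces $i^{!}Y=0$'' trick is exactly what is needed. The one thing worth flagging, which you do yourself, is that the Stenstr\"om criterion requires completeness and cocompleteness of $\mathcal{A}$; in the paper's stated generality (``a recollement of abelian categories'') this is an implicit hypothesis, but it holds in every instance the paper uses, where $\mathcal{A}=\mathrm{Mod}(\mathcal{C})$. Your closing remark, constructing the canonical short exact sequences directly from the unit/counit sequences $j_{!}j^{*}A\to A\to i_{*}i^{*}A\to 0$ and $0\to i_{*}i^{!}A\to A\to j_{*}j^{*}A$ together with $i^{*}j_{!}=0$ and $i^{!}j_{*}=0$, is a genuine alternative that avoids the completeness hypothesis; it is slightly terse as written (one must still check, e.g., that the image of $j_{!}j^{*}A\to A$ lies in $\mathrm{Ker}(i^{*})$ using right exactness of $i^{*}$, and that $A/i_{*}i^{!}A\in\mathrm{Ker}(i^{!})$ using left exactness of $i^{!}$), but those checks are routine and the route is valid.
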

\begin{proof}
See proof of  \cite[Theorem 4.3]{Psaromodules}.
\end{proof}

\section{Two recollements induced by triangular matrix categories}

We recall the following notions. Let $\mathcal{A}$ be an arbitrary category and $\mathcal{B}$ a full subcategory in $\mathcal{A}$.  The full subcategory $\mathcal{B}$ is $\textbf{contravariantly finite}$ if for every $A\in \mathcal{A}$ there exists a morphism $f_{A}:B\longrightarrow A$ with $B\in \mathcal{B}$ such that if $f':B'\longrightarrow A$ is other morphism with $B'\in \mathcal{B}$, then there exist a morphism $g:B'\longrightarrow B$ such that $f'=f_{A}\circ g$. Dually, is defined the notion of $\textbf{covariantly finite}$. We say that $\mathcal{B}$ is $\textbf{functorially finite}$ if $\mathcal{B}$ is contravariantly finite and covariantly finite.
Consider $\mathcal{U}$ and $\mathcal{T}$  two $R$-categories.

\begin{proposition}\label{basicos}
Let $\mathcal{U}$ and $\mathcal{T}$ be two $R$-categories and  $M\in \mathrm{Mod}(\mathcal{U}\otimes \mathcal{T}^{op})$. Let $\mathcal{T}':=\left[ \begin{smallmatrix}
 \mathcal{T} & 0 \\ 
M &  0
\end{smallmatrix}\right]$ and $\mathcal{U}'=\left[ \begin{smallmatrix}
0 & 0 \\ 
M &  \mathcal{U}
\end{smallmatrix}\right]$ full  additive subcategories of $\Lambda=\left[ \begin{smallmatrix}
 \mathcal{T} & 0 \\ 
M &  \mathcal{U}
\end{smallmatrix}\right]$. Hence
\begin{enumerate}
\item [(a)] $\mathrm{Hom}_{\Lambda}(\mathcal{U}',\mathcal{T}')=0$.
\item [(b)] $\mathrm{Hom}_{\Lambda}\Big(\left[ \begin{smallmatrix}
T & 0 \\ 
M &  0
\end{smallmatrix}\right],\left[ \begin{smallmatrix}
 0 & 0 \\ 
M &  U
\end{smallmatrix}\right]\Big)=M(U,T)$ for $\left[ \begin{smallmatrix}
T & 0 \\ 
M &  0
\end{smallmatrix}\right]\in \mathcal{T'}$ and $\left[ \begin{smallmatrix}
 0 & 0 \\ 
M &  U
\end{smallmatrix}\right]\in \mathcal{U}'$.

\item [(c)] $\mathcal{T}'$ is covariantly finite and $\mathcal{U}'$ is contravariantly finite in $\Lambda$.

\item [(d)] Every object  in $\Lambda$ satisfies that
$$\left[ \begin{smallmatrix}
T & 0 \\ 
M &  U
\end{smallmatrix}\right]=\left[ \begin{smallmatrix}
T & 0 \\ 
M &  0
\end{smallmatrix}\right]\oplus \left[ \begin{smallmatrix}
0 & 0 \\ 
M &  U
\end{smallmatrix}\right]$$

\item [(e)] ${}^{\perp_{0}}(\mathcal{T}')=\mathcal{U}'$ and $(\mathcal{U}')^{\perp_{0}}=\mathcal{T}'.$
\end{enumerate}
\end{proposition}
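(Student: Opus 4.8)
The plan is to verify each item essentially by unwinding the definition of $\Lambda$, of the two subcategories $\mathcal{T}'$ and $\mathcal{U}'$, and of the $\mathrm{Hom}$-spaces of the triangular matrix category (Definition \ref{defitrinagularmat}). The key observation driving everything is that an object $\left[\begin{smallmatrix} T & 0 \\ M & U\end{smallmatrix}\right]$ of $\Lambda$ lies in $\mathcal{T}'$ precisely when $U=0$ and lies in $\mathcal{U}'$ precisely when $T=0$, and that by Definition \ref{defitrinagularmat}(b),
$$\mathrm{Hom}_{\Lambda}\Big(\left[\begin{smallmatrix} T & 0 \\ M & U\end{smallmatrix}\right],\left[\begin{smallmatrix} T' & 0 \\ M & U'\end{smallmatrix}\right]\Big)=\left[\begin{smallmatrix} \mathrm{Hom}_{\mathcal{T}}(T,T') & 0 \\ M(U',T) & \mathrm{Hom}_{\mathcal{U}}(U,U')\end{smallmatrix}\right].$$

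First I would do (a) and (b) together. For (a), take a source object in $\mathcal{U}'$, so of the form $\left[\begin{smallmatrix} 0 & 0 \\ M & U\end{smallmatrix}\right]$, and a target in $\mathcal{T}'$, of the form $\left[\begin{smallmatrix} T' & 0 \\ M & 0\end{smallmatrix}\right]$; plugging into the $\mathrm{Hom}$-formula gives $\left[\begin{smallmatrix}\mathrm{Hom}_{\mathcal{T}}(0,T') & 0 \\ M(0,0) & \mathrm{Hom}_{\mathcal{U}}(U,0)\end{smallmatrix}\right]$, and since $\mathcal{T}$, $\mathcal{U}$ are additive (hence have a zero object) and $M$ is additive in each variable (so $M(0,0)=0$), every entry vanishes, giving (a). For (b), plug $T$, $U=0$ on the left and $T'=0$, $U'=U$ on the right into the same formula; the matrix collapses to $\left[\begin{smallmatrix} 0 & 0 \\ M(U,T) & 0\end{smallmatrix}\right]$, which as an abelian group is $M(U,T)$, giving (b). Next, (d) is immediate from the biproduct decomposition in the additive category $\Lambda$: since $\left[\begin{smallmatrix} T & 0 \\ M & U\end{smallmatrix}\right]=\left[\begin{smallmatrix} T & 0 \\ M & 0\end{smallmatrix}\right]\oplus\left[\begin{smallmatrix} 0 & 0 \\ M & U\end{smallmatrix}\right]$ componentwise (this is how coproducts in $\Lambda$ are built from those of $\mathcal{T}$ and $\mathcal{U}$, as in \cite{LGOS1}), and I would just record the canonical inclusions and projections explicitly to make the splitting exact.

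For (c), I would use (d): given any $\left[\begin{smallmatrix} T & 0 \\ M & U\end{smallmatrix}\right]\in\Lambda$, the canonical split monomorphism $\left[\begin{smallmatrix} T & 0 \\ M & 0\end{smallmatrix}\right]\hookrightarrow\left[\begin{smallmatrix} T & 0 \\ M & U\end{smallmatrix}\right]$ is a left $\mathcal{T}'$-approximation: any morphism from an object $\left[\begin{smallmatrix} T'' & 0 \\ M & 0\end{smallmatrix}\right]\in\mathcal{T}'$ into $\left[\begin{smallmatrix} T & 0 \\ M & U\end{smallmatrix}\right]$ has, by the $\mathrm{Hom}$-formula, zero $\mathcal{U}$-component, so it factors (uniquely) through the summand $\left[\begin{smallmatrix} T & 0 \\ M & 0\end{smallmatrix}\right]$; that is exactly the covariant-finiteness condition stated in the text. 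Dually, the split epimorphism $\left[\begin{smallmatrix} T & 0 \\ M & U\end{smallmatrix}\right]\twoheadrightarrow\left[\begin{smallmatrix} 0 & 0 \\ M & U\end{smallmatrix}\right]$ is a right $\mathcal{U}'$-approximation, since any morphism into an object of $\mathcal{U}'$ kills the $\mathcal{T}$-component. Finally, (e) follows by combining (a) and (b): by (a), $\mathcal{U}'\subseteq{}^{\perp_0}(\mathcal{T}')$; conversely if $\left[\begin{smallmatrix} T & 0 \\ M & U\end{smallmatrix}\right]\in{}^{\perp_0}(\mathcal{T}')$, then in particular its $\mathrm{Hom}$ into $\left[\begin{smallmatrix} T & 0 \\ M & 0\end{smallmatrix}\right]\in\mathcal{T}'$ vanishes, and this $\mathrm{Hom}$-group contains $\mathrm{Hom}_{\mathcal{T}}(T,T)\ni 1_T$, forcing $T=0$, so the object lies in $\mathcal{U}'$; the equality $(\mathcal{U}')^{\perp_0}=\mathcal{T}'$ is symmetric, using that $\mathrm{Hom}_{\mathcal{U}}(U,U)\ni 1_U$ together with part (b) to detect when $U$ must be $0$ (one must also check $M(U,T)=0$ for all $U$ forces nothing extra since we only need $U=0$; here I would be slightly careful: a priori an object with $T=0$ but $U\neq 0$ could still have all $\mathrm{Hom}$'s from $\mathcal{U}'$ zero only if $\mathrm{Hom}_{\mathcal{U}}(U',U)=0$ for all $U'$, impossible by taking $U'=U$, so indeed $(\mathcal{U}')^{\perp_0}=\mathcal{T}'$).

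None of the steps present a genuine obstacle — the whole proposition is a direct unwinding of definitions — but the point requiring the most care is (e): one must make sure that the orthogonality is tested against \emph{all} objects of the relevant subcategory (not just one), and that the presence of identity morphisms $1_T$, $1_U$ inside the diagonal $\mathrm{Hom}$-entries is what pins down the vanishing of $T$ or $U$. I would therefore state (e)'s proof by explicitly exhibiting the identity as the element of the $\mathrm{Hom}$-group that cannot vanish, which is the cleanest way to close the argument.
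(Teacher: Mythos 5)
Your proposal is correct and follows essentially the same approach as the paper: each item is a direct unwinding of the Hom-formula from Definition \ref{defitrinagularmat}, with explicit inclusion/projection morphisms for the biproduct in (d), the split morphisms serving as approximations in (c), and the identity morphisms $1_T$, $1_U$ living inside the diagonal Hom-entries forcing $T=0$ (resp.\ $U=0$) in (e). The only difference is cosmetic — you derive (c) from (d) rather than writing the approximation diagram out directly, but the underlying factorization argument is the same.
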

\begin{proof}
$(a)$. Let $\left[ \begin{smallmatrix}
 0 & 0 \\ 
M &  U
\end{smallmatrix}\right]\in \mathcal{U}'$ and $\left[ \begin{smallmatrix}
 T & 0 \\ 
M &  0
\end{smallmatrix}\right]\in \mathcal{T}'$. Hence $\mathrm{Hom}_{\Lambda}\Big(\left[ \begin{smallmatrix}
 0 & 0 \\ 
M &  U
\end{smallmatrix}\right],\left[ \begin{smallmatrix}
 T & 0 \\ 
M &  0
\end{smallmatrix}\right]\Big)= \left[ \begin{smallmatrix}
\mathrm{Hom}_{\mathcal{T}}(0,T) & 0 \\
M(0,0) & \mathrm{Hom}_{\mathcal{U}}(U,0)
\end{smallmatrix} \right]=\left[ \begin{smallmatrix}
 0 & 0 \\ 
0 &  0
\end{smallmatrix}\right].$ Proving that $\mathrm{Hom}_{\Lambda}(\mathcal{U}',\mathcal{T}')=0$.\\
$(b)$.  $\mathrm{Hom}_{\Lambda}\Big(\left[ \begin{smallmatrix}
T & 0 \\ 
M &  0
\end{smallmatrix}\right],\left[ \begin{smallmatrix}
 0 & 0 \\ 
M &  U
\end{smallmatrix}\right]\Big)=\left[ \begin{smallmatrix}
\mathrm{Hom}_{\mathcal{T}}(T,0) & 0 \\
M(U,T) & \mathrm{Hom}_{\mathcal{U}}(0,U)
\end{smallmatrix} \right]=\left[ \begin{smallmatrix}
0 & 0 \\
M(U,T) & 0
\end{smallmatrix} \right]$.\\
$(c)$ Let  $\left[ \begin{smallmatrix}
T & 0 \\
M & U
\end{smallmatrix} \right]\in \Lambda$. Hence, we have the morphism $\left[ \begin{smallmatrix}
1_{T} & 0 \\
0 & 0
\end{smallmatrix} \right]:\left[ \begin{smallmatrix}
T & 0 \\
M & U
\end{smallmatrix} \right]\longrightarrow \left[ \begin{smallmatrix}
T & 0 \\
M & 0
\end{smallmatrix} \right]$ with $\left[ \begin{smallmatrix}
T & 0 \\
M & 0
\end{smallmatrix} \right] \in \mathcal{T}'$.
Let $\left[ \begin{smallmatrix}
\alpha & 0 \\
0 & 0
\end{smallmatrix} \right]:\left[ \begin{smallmatrix}
T & 0 \\
M & U
\end{smallmatrix} \right]\longrightarrow \left[ \begin{smallmatrix}
T' & 0 \\
M & 0
\end{smallmatrix} \right]$ an arbitrary morphism with $\left[ \begin{smallmatrix}
T' & 0 \\
M & 0
\end{smallmatrix} \right]\in \mathcal{T}'$. Hence, we have the following commutative diagram

$$\xymatrix{{\left[ \begin{smallmatrix}
T & 0 \\
M & U
\end{smallmatrix} \right]}\ar[rr]^{{\left[ \begin{smallmatrix}
\alpha & 0 \\
0 & 0
\end{smallmatrix} \right]}}\ar[rd]_{{\left[ \begin{smallmatrix}
1_{T} & 0 \\
0 & 0
\end{smallmatrix} \right]}} & & {\left[ \begin{smallmatrix}
T' & 0 \\
M & 0
\end{smallmatrix} \right]}\\
 & {\left[ \begin{smallmatrix}
T & 0 \\
M & 0
\end{smallmatrix} \right]}\ar[ur]_{{\left[ \begin{smallmatrix}
\alpha & 0 \\
0 & 0
\end{smallmatrix} \right]}} & }$$
since 
$$\left[ \begin{smallmatrix}
\alpha & 0 \\
0 & 0
\end{smallmatrix} \right]\circ \left[ \begin{smallmatrix}
1_{T} & 0 \\
0 & 0
\end{smallmatrix} \right]=\left[ \begin{smallmatrix}
\alpha & 0 \\
0\bullet 1_{T}+0\bullet 0 & 0
\end{smallmatrix} \right]=\left[ \begin{smallmatrix}
\alpha & 0 \\
0 & 0
\end{smallmatrix} \right]$$
This proves that $\mathcal{T}'$ is covariantly finite.\\
Let  $\left[ \begin{smallmatrix}
T & 0 \\
M & U
\end{smallmatrix} \right]\in \Lambda$. Hence, we have the morphism $\left[ \begin{smallmatrix}
0 & 0 \\
0 & 1_{U}
\end{smallmatrix} \right]:\left[ \begin{smallmatrix}
0 & 0 \\
M & U
\end{smallmatrix} \right]\longrightarrow \left[ \begin{smallmatrix}
T & 0 \\
M & U
\end{smallmatrix} \right]$ with $\left[ \begin{smallmatrix}
0 & 0 \\
M & U
\end{smallmatrix} \right] \in \mathcal{U}'$.
Let $\left[ \begin{smallmatrix}
0 & 0 \\
0 & \alpha
\end{smallmatrix} \right]:\left[ \begin{smallmatrix}
0 & 0 \\
M & U'
\end{smallmatrix} \right]\longrightarrow \left[ \begin{smallmatrix}
T & 0 \\
M & U
\end{smallmatrix} \right]$ an arbitrary morphism with $\left[ \begin{smallmatrix}
0 & 0 \\
M & U'
\end{smallmatrix} \right]\in \mathcal{U}'$. Hence, we have the following commutative diagram

$$\xymatrix{{\left[ \begin{smallmatrix}
0 & 0 \\
M & U'
\end{smallmatrix} \right]}\ar[rr]^{{\left[ \begin{smallmatrix}
0 & 0 \\
0 & \alpha
\end{smallmatrix} \right]}}\ar[rd]_{{\left[ \begin{smallmatrix}
0 & 0 \\
0 & \alpha
\end{smallmatrix} \right]}} & & {\left[ \begin{smallmatrix}
T & 0 \\
M & U
\end{smallmatrix} \right]}\\
 & {\left[ \begin{smallmatrix}
0 & 0 \\
M & U
\end{smallmatrix} \right]}\ar[ur]_{{\left[ \begin{smallmatrix}
0 & 0 \\
0 & 1_{U}
\end{smallmatrix} \right]}} & }$$
since 
$$\left[ \begin{smallmatrix}
0 & 0 \\
0 & 1_{U}
\end{smallmatrix} \right]\circ \left[ \begin{smallmatrix}
0 & 0 \\
0 & \alpha
\end{smallmatrix} \right]=\left[ \begin{smallmatrix}
0& 0 \\
0\bullet 0+1_{U}\bullet 0 & \alpha
\end{smallmatrix} \right]=\left[ \begin{smallmatrix}
0 & 0 \\
0 & \alpha
\end{smallmatrix} \right]$$
Hence $\mathcal{U}'$ is contravariantly finite.\\
$(d)$. We have morphisms 
$$u=\left[ \begin{smallmatrix}
1_{T} & 0 \\
0 & 0
\end{smallmatrix} \right]:\left[ \begin{smallmatrix}
T & 0 \\
M & 0
\end{smallmatrix} \right]\longrightarrow \left[ \begin{smallmatrix}
T & 0 \\
M & U
\end{smallmatrix} \right],\quad  v=\left[ \begin{smallmatrix}
0& 0 \\
0 & 1_{U}
\end{smallmatrix} \right]:\left[ \begin{smallmatrix}
0 & 0 \\
M & U
\end{smallmatrix} \right]\longrightarrow \left[ \begin{smallmatrix}
T & 0 \\
M & U
\end{smallmatrix} \right]$$

$$p=\left[ \begin{smallmatrix}
1_{T} & 0 \\
0 & 0
\end{smallmatrix} \right]:\left[ \begin{smallmatrix}
T & 0 \\
M & U
\end{smallmatrix} \right]\longrightarrow \left[ \begin{smallmatrix}
T & 0 \\
M & 0
\end{smallmatrix} \right],\quad  q=\left[ \begin{smallmatrix}
0 & 0 \\
0 & 1_{U}
\end{smallmatrix} \right]:\left[ \begin{smallmatrix}
T & 0 \\
M & U
\end{smallmatrix} \right]\longrightarrow \left[ \begin{smallmatrix}
0 & 0 \\
M & U
\end{smallmatrix} \right]$$
We note that
$$pu=\left[ \begin{smallmatrix}
1_{T} & 0 \\
0 & 0
\end{smallmatrix} \right]\left[ \begin{smallmatrix}
1_{T} & 0 \\
0 & 0
\end{smallmatrix} \right]=\left[ \begin{smallmatrix}
1_{T} & 0 \\
0 & 0
\end{smallmatrix} \right]=1_{\left[ \begin{smallmatrix}
T & 0 \\
M & 0
\end{smallmatrix} \right]}:\left[ \begin{smallmatrix}
T & 0 \\
M & 0
\end{smallmatrix} \right]\longrightarrow \left[ \begin{smallmatrix}
T& 0 \\
M & 0
\end{smallmatrix} \right],$$

$$qv=\left[ \begin{smallmatrix}
0 & 0 \\
0 & 1_{U}
\end{smallmatrix} \right]\left[ \begin{smallmatrix}
0& 0 \\
0 & 1_{U}
\end{smallmatrix} \right]=\left[ \begin{smallmatrix}
0 & 0 \\
0 & 1_{U}
\end{smallmatrix} \right]=1_{\left[ \begin{smallmatrix}
0 & 0 \\
M & U
\end{smallmatrix} \right]}:\left[ \begin{smallmatrix}
0 & 0 \\
M & U
\end{smallmatrix} \right]\longrightarrow \left[ \begin{smallmatrix}
0 & 0 \\
M & U
\end{smallmatrix} \right].$$
Moreover we have that
$$up+vq=\left[ \begin{smallmatrix}
1_{T} & 0 \\
0 & 0
\end{smallmatrix} \right]\left[ \begin{smallmatrix}
1_{T} & 0 \\
0 & 0
\end{smallmatrix} \right]+\left[ \begin{smallmatrix}
0 & 0 \\
0 & 1_{U}
\end{smallmatrix} \right]\left[ \begin{smallmatrix}
0& 0 \\
0 & 1_{U}
\end{smallmatrix} \right]=\left[ \begin{smallmatrix}
1_{T} & 0 \\
0 & 0
\end{smallmatrix} \right]+\left[ \begin{smallmatrix}
0 & 0 \\
0 & 1_{U}
\end{smallmatrix} \right]=\left[ \begin{smallmatrix}
1_{T} & 0 \\
0 & 1_{U}
\end{smallmatrix} \right]=1_{\left[ \begin{smallmatrix}
T & 0 \\
M & U
\end{smallmatrix} \right]}$$
Therefore,
$$\left[ \begin{smallmatrix}
T & 0 \\ 
M &  U
\end{smallmatrix}\right]=\left[ \begin{smallmatrix}
T & 0 \\ 
M &  0
\end{smallmatrix}\right]\oplus \left[ \begin{smallmatrix}
0 & 0 \\ 
M &  U
\end{smallmatrix}\right]$$
$(e)$. By item (a), we have that $\mathcal{U}'\subseteq {}^{\perp_{0}}(\mathcal{T}')$.
Now, consider $\left[ \begin{smallmatrix}
T & 0 \\ 
M &  U
\end{smallmatrix}\right]\in {}^{\perp_{0}}(\mathcal{T}')$.  Hence

$$\mathrm{Hom}_{\Lambda}\Big(\left[ \begin{smallmatrix}
T & 0 \\ 
M &  U
\end{smallmatrix}\right],\left[ \begin{smallmatrix}
T' & 0 \\ 
M &  0
\end{smallmatrix}\right]\Big)=0$$ for all $\left[ \begin{smallmatrix}
T' & 0 \\ 
M &  0
\end{smallmatrix}\right]\in \mathcal{T}'$. In particular, $\mathrm{Hom}_{\Lambda}\Big(\left[ \begin{smallmatrix}
T & 0 \\ 
M &  U
\end{smallmatrix}\right],\left[ \begin{smallmatrix}
T & 0 \\ 
M &  0
\end{smallmatrix}\right]\Big)=0$, but we have $\left[ \begin{smallmatrix}
1_{T} & 0 \\ 
0 &  0
\end{smallmatrix}\right]\in  \mathrm{Hom}_{\Lambda}\Big(\left[ \begin{smallmatrix}
T & 0 \\ 
M &  U
\end{smallmatrix}\right],\left[ \begin{smallmatrix}
T & 0 \\ 
M &  0
\end{smallmatrix}\right])$. Hence $0=1_{T}$, then $T=0$ and thus
$\left[ \begin{smallmatrix}
T & 0 \\ 
M &  U
\end{smallmatrix}\right]=\left[ \begin{smallmatrix}
0 & 0 \\ 
M &  U
\end{smallmatrix}\right]\in \mathcal{U}'$. This proves that ${}^{\perp_{0}}(\mathcal{T}')\subseteq \mathcal{U}'$. Therefore
$${}^{\perp_{0}}(\mathcal{T}')=\mathcal{U}'.$$
By item (a), we have that $\mathcal{T}'\subseteq (\mathcal{U}')^{\perp_{0}}$.
Now, consider $\left[ \begin{smallmatrix}
T & 0 \\ 
M &  U
\end{smallmatrix}\right]\in (\mathcal{U}')^{\perp_{0}}$. Then
$$\mathrm{Hom}_{\Lambda}\Big(\left[ \begin{smallmatrix}
0 & 0 \\ 
M &  U'
\end{smallmatrix}\right],\left[ \begin{smallmatrix}
T & 0 \\ 
M &  U
\end{smallmatrix}\right]\Big)=0$$ for all $\left[ \begin{smallmatrix}
0 & 0 \\ 
M &  U'
\end{smallmatrix}\right]\in \mathcal{U}'$. In particular, $\mathrm{Hom}_{\Lambda}\Big(\left[ \begin{smallmatrix}
0 & 0 \\ 
M &  U
\end{smallmatrix}\right],\left[ \begin{smallmatrix}
T & 0 \\ 
M &  U
\end{smallmatrix}\right]\Big)=0$, but we have $\left[ \begin{smallmatrix}
0 & 0 \\ 
0 &  1_{U}
\end{smallmatrix}\right]\in  \mathrm{Hom}_{\Lambda}\Big(\left[ \begin{smallmatrix}
0 & 0 \\ 
M &  U
\end{smallmatrix}\right],\left[ \begin{smallmatrix}
T & 0 \\ 
M &  U
\end{smallmatrix}\right]\Big)$. Hence $0=1_{U}$, then $U=0$ and thus $\left[ \begin{smallmatrix}
T & 0 \\ 
M &  U
\end{smallmatrix}\right]=\left[ \begin{smallmatrix}
T & 0 \\ 
M &  0
\end{smallmatrix}\right]\in \mathcal{T}'$
This proves that $(\mathcal{U}')^{\perp_{0}}\subseteq \mathcal{T}'$ and hence
$$(\mathcal{U}')^{\perp_{0}}=\mathcal{T}'.$$
\end{proof}

\begin{proposition}
Let $\Lambda=\left[ \begin{smallmatrix}
 \mathcal{T} & 0 \\ 
M &  \mathcal{U}
\end{smallmatrix}\right]$ be a triangular matrix category with $M\in \mathrm{Mod}(\mathcal{U}\otimes \mathcal{T}^{op})$.
Consider the canonical exact sequence in
$\mathrm{Mod}(\Lambda^{e})$
$$\xymatrix{0\ar[r] & \mathcal{I}\ar[r] & \Lambda\ar[rr]^(.3){\Gamma(\Phi)} & & \mathcal{U}(-,-)\circ (\Phi^{op}\otimes\Phi)\ar[r] & 0}$$
where for objects $\mathfrak{M}'=\left[\begin{smallmatrix}
 T' & 0 \\ 
M & U'
\end{smallmatrix}\right]$ and $\mathfrak{M}=\left[\begin{smallmatrix}
 T & 0 \\ 
M & U
\end{smallmatrix}\right]$ in $\Lambda$ the ideal $\mathcal{I}$ is given as  $\mathcal{I}\big(\mathfrak{M},\mathfrak{M}'\big)=\mathrm{Ker}\left([\Gamma (\Phi)]_{\big(\mathfrak{M},\mathfrak{M}'\big)}\right)=\left[\begin{smallmatrix}
 \mathcal{T}(T,T') & 0 \\ 
M(U',T) &  0
\end{smallmatrix}\right]$.
Hence, $\mathcal{I}=I_{\mathcal{T}'}.$
\end{proposition}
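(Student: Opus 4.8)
The plan is to prove the equality of the two ideals by establishing both inclusions, using the explicit matrix descriptions together with the idempotent decomposition of Proposition \ref{basicos}.

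\textbf{The inclusion $I_{\mathcal{T}'}\subseteq \mathcal{I}$.} First recall that $\Phi\left[\begin{smallmatrix} T & 0\\ M & 0\end{smallmatrix}\right]=0$ for every object of $\mathcal{T}'$. Let $f\in I_{\mathcal{T}'}(\mathfrak{M},\mathfrak{M}')$ and write $f=h\circ g$ with $g\colon\mathfrak{M}\to X$, $h\colon X\to\mathfrak{M}'$ and $X\in\mathcal{T}'$. Since $\Phi$ is a functor, $\Phi(f)=\Phi(h)\circ\Phi(g)$ factors through $\Phi(X)=0$, so $[\Gamma(\Phi)]_{(\mathfrak{M},\mathfrak{M}')}(f)=\Phi(f)=0$, i.e. $f\in\mathrm{Ker}\big([\Gamma(\Phi)]_{(\mathfrak{M},\mathfrak{M}')}\big)=\mathcal{I}(\mathfrak{M},\mathfrak{M}')$. (Equivalently, one reads off from the composition law of $\Lambda$ that the lower–right entry of $h\circ g$ equals $0\circ 0=0$, which by Lemma \ref{exacseuqnimpor} characterizes membership in $\mathcal{I}$.)

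\textbf{The inclusion $\mathcal{I}\subseteq I_{\mathcal{T}'}$.} Fix $f=\left[\begin{smallmatrix}\alpha & 0\\ m & 0\end{smallmatrix}\right]\in\mathcal{I}(\mathfrak{M},\mathfrak{M}')$, where $\mathfrak{M}=\left[\begin{smallmatrix}T & 0\\ M & U\end{smallmatrix}\right]$. Consider the morphisms $p=\left[\begin{smallmatrix}1_{T} & 0\\ 0 & 0\end{smallmatrix}\right]\colon\mathfrak{M}\to\left[\begin{smallmatrix}T & 0\\ M & 0\end{smallmatrix}\right]$ and $u=\left[\begin{smallmatrix}1_{T} & 0\\ 0 & 0\end{smallmatrix}\right]\colon\left[\begin{smallmatrix}T & 0\\ M & 0\end{smallmatrix}\right]\to\mathfrak{M}$ from Proposition \ref{basicos}(d), whose composite $u\circ p=\left[\begin{smallmatrix}1_{T} & 0\\ 0 & 0\end{smallmatrix}\right]$ is the idempotent of $\mathfrak{M}$ projecting onto its $\mathcal{T}'$–summand. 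A direct computation with the composition formula of Definition \ref{defitrinagularmat}, using $m\bullet 1_{T}=M(1_{U'}\otimes 1_{T}^{op})(m)=m$, gives
\[
f\circ(u\circ p)=\left[\begin{smallmatrix}\alpha\circ 1_{T} & 0\\ m\bullet 1_{T}+0\bullet 0 & 0\circ 0\end{smallmatrix}\right]=\left[\begin{smallmatrix}\alpha & 0\\ m & 0\end{smallmatrix}\right]=f .
\]
Hence $f=(f\circ u)\circ p$ factors through $\left[\begin{smallmatrix}T & 0\\ M & 0\end{smallmatrix}\right]\in\mathcal{T}'$, so $f\in I_{\mathcal{T}'}(\mathfrak{M},\mathfrak{M}')$. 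Combining the two inclusions yields $\mathcal{I}=I_{\mathcal{T}'}$.

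The argument is essentially routine; the only point requiring attention is the bookkeeping of the $\bullet$–operations in the composition law of $\Lambda$, and there is no genuine obstacle — morally, the statement just records that the $\mathcal{T}'$–summand of an object absorbs precisely those morphisms whose $\mathcal{U}$–component (i.e. the $\beta$–entry, equivalently the image under $\Phi$) vanishes.
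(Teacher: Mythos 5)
Your proof is correct and follows essentially the same two-inclusion strategy as the paper, with the same explicit factorization $f = f\circ\left[\begin{smallmatrix}1_T & 0\\ 0 & 0\end{smallmatrix}\right]$ through $\left[\begin{smallmatrix}T & 0\\ M & 0\end{smallmatrix}\right]\in\mathcal{T}'$ for $\mathcal{I}\subseteq I_{\mathcal{T}'}$. For the reverse inclusion your appeal to functoriality of $\Phi$ (every morphism factoring through $\mathcal{T}'$ is killed by $\Phi$ since $\Phi$ vanishes on $\mathcal{T}'$) is a cleaner phrasing of the paper's direct matrix computation $\beta=0\circ 0=0$, but it is the same idea in substance.
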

\begin{proof}
Consider arbitrary morphism $\left[\begin{smallmatrix}
\alpha & 0 \\ 
m & 0
\end{smallmatrix}\right]:\left[\begin{smallmatrix}
 T & 0 \\ 
M & U
\end{smallmatrix}\right]\longrightarrow \left[\begin{smallmatrix}
 T' & 0 \\ 
M & U'
\end{smallmatrix}\right]$ in $\mathcal{I}\Big(\left[\begin{smallmatrix}
 T & 0 \\ 
M & U
\end{smallmatrix}\right], \left[\begin{smallmatrix}
 T' & 0 \\ 
M & U'
\end{smallmatrix}\right]\Big)$ with $m\in M(U',T)$. We have the following commutative diagram
$$\xymatrix{{\left[\begin{smallmatrix}
 T & 0 \\ 
M & U
\end{smallmatrix}\right]}\ar[rr]^{{\left[\begin{smallmatrix}
 \alpha & 0 \\ 
m & 0
\end{smallmatrix}\right]}}\ar[dr]_{{\left[\begin{smallmatrix}
1_{T} & 0 \\ 
0 & 0
\end{smallmatrix}\right]}} & & {\left[\begin{smallmatrix}
 T' & 0 \\ 
M & U'
\end{smallmatrix}\right]}\\
&  {\left[\begin{smallmatrix}
 T & 0 \\ 
M & 0
\end{smallmatrix}\right]}\ar[ur]_{{\left[\begin{smallmatrix}
 \alpha & 0 \\ 
m & 0
\end{smallmatrix}\right] }} & }$$
with  $\left[\begin{smallmatrix}
 T & 0 \\ 
M & 0
\end{smallmatrix}\right]\in \mathcal{T}'$,  since $\left[\begin{smallmatrix}
 \alpha & 0 \\ 
m & 0
\end{smallmatrix}\right] \left[\begin{smallmatrix}
 1_{T} & 0 \\ 
0 & 0
\end{smallmatrix}\right]=\left[\begin{smallmatrix}
 \alpha & 0 \\ 
m & 0
\end{smallmatrix}\right]$. This proves that

$$\mathcal{I}\Big(\left[\begin{smallmatrix}
 T & 0 \\ 
M & U
\end{smallmatrix}\right], \left[\begin{smallmatrix}
 T' & 0 \\ 
M & U'
\end{smallmatrix}\right]\Big)\subseteq I_{\mathcal{T}'}\Big(\left[\begin{smallmatrix}
 T & 0 \\ 
M & U
\end{smallmatrix}\right], \left[\begin{smallmatrix}
 T' & 0 \\ 
M & U'
\end{smallmatrix}\right]\Big).$$

Now, let $\left[\begin{smallmatrix}
\alpha & 0 \\ 
m & \beta
\end{smallmatrix}\right]:\left[\begin{smallmatrix}
 T & 0 \\ 
M & U
\end{smallmatrix}\right]\longrightarrow \left[\begin{smallmatrix}
 T' & 0 \\ 
M & U'
\end{smallmatrix}\right]$ an arbitrary morphism with $m\in M(U',T)$, which factors throught and object of $\mathcal{T}'$. Hence we have a commutative diagram
$$\xymatrix{{\left[\begin{smallmatrix}
 T & 0 \\ 
M & U
\end{smallmatrix}\right]}\ar[rr]^{{\left[\begin{smallmatrix}
 \alpha & 0 \\ 
m & \beta
\end{smallmatrix}\right]}}\ar[dr]_{{\left[\begin{smallmatrix}
\alpha_{1} & 0 \\ 
0 & 0
\end{smallmatrix}\right]}} & & {\left[\begin{smallmatrix}
 T' & 0 \\ 
M & U'
\end{smallmatrix}\right]}\\
&  {\left[\begin{smallmatrix}
 T'' & 0 \\ 
M & 0
\end{smallmatrix}\right]}\ar[ur]_{{\left[\begin{smallmatrix}
 \alpha_{2} & 0 \\ 
x & 0
\end{smallmatrix}\right] }} & }$$
with $x\in M(U',T'')$.
Hence $\beta=0\circ 0=0$. Hence $\left[\begin{smallmatrix}
\alpha & 0 \\ 
m & \beta
\end{smallmatrix}\right]=\left[\begin{smallmatrix}
\alpha & 0 \\ 
m & 0
\end{smallmatrix}\right]\in \mathcal{I}\Big(\left[\begin{smallmatrix}
 T & 0 \\ 
M & U
\end{smallmatrix}\right], \left[\begin{smallmatrix}
 T' & 0 \\ 
M & U'
\end{smallmatrix}\right]\Big)$. 

This proves that 
$$\mathcal{I}\Big(\left[\begin{smallmatrix}
 T & 0 \\ 
M & U
\end{smallmatrix}\right], \left[\begin{smallmatrix}
 T' & 0 \\ 
M & U'
\end{smallmatrix}\right]\Big)= I_{\mathcal{T}'}\Big(\left[\begin{smallmatrix}
 T & 0 \\ 
M & U
\end{smallmatrix}\right], \left[\begin{smallmatrix}
 T' & 0 \\ 
M & U'
\end{smallmatrix}\right]\Big).$$
Hence $\mathcal{I}=I_{\mathcal{T}'}$.

\end{proof}

Recall that $\mathcal{U}(-,-)\circ (\Phi^{op}\otimes\Phi)\in\mathrm{Mod}(\Lambda^{e})$ and we can think $\mathcal{U}$ as a $\mathrm{Mod}(\Lambda^{e})$-module. \\

\begin{lemma}\label{Kproy}
Let $H:=\mathcal{U}(-,-)\circ (\Phi^{op}\otimes\Phi)$ be and consider the exact sequence in $\mathrm{Mod}(\Lambda^{e})$:
$$\xymatrix{0\ar[r] & \mathcal{I}\ar[r] & \Lambda\ar[r]^{\Gamma(\Phi)} & H\ar[r] & 0.}$$
Hence,   for $\left[\begin{smallmatrix}
 T & 0 \\ 
M & U
\end{smallmatrix}\right]\in \mathrm{Mod}(\Lambda^{op})$ the following exact sequence splits in $\mathrm{Mod}(\Lambda)$:

$$\xymatrix{0\ar[r] & \mathcal{I}\Big({\left[\begin{smallmatrix}
 T & 0 \\ 
M & U
\end{smallmatrix}\right]},-\Big)\ar[r] & \Lambda\Big({\left[\begin{smallmatrix}
 T & 0 \\ 
M & U
\end{smallmatrix}\right]},-\Big)\ar[r]^{\Psi} & H\Big({\left[\begin{smallmatrix}
 T & 0 \\ 
M & U
\end{smallmatrix}\right]},-\Big)\ar[r] & 0}$$
with $\Psi=[\Gamma (\Phi)]_{\Big(\left[\begin{smallmatrix}
 T & 0 \\ 
M & U
\end{smallmatrix}\right],-\Big)}$.
In particular, $\mathcal{I}\Big({\left[\begin{smallmatrix}
 T & 0 \\ 
M & U
\end{smallmatrix}\right]},-\Big)$  and $H\Big({\left[\begin{smallmatrix}
 T & 0 \\ 
M & U
\end{smallmatrix}\right]},-\Big)$ are projective in $\mathrm{Mod}(\Lambda)$.
\end{lemma}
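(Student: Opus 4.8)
The plan is to realize the displayed sequence, up to isomorphism, as the splitting of the representable functor $\Lambda(\mathfrak{M},-)$ induced by the decomposition of $\mathfrak{M}$ from Proposition \ref{basicos}(d). Write $\mathfrak{M}=\left[\begin{smallmatrix}T&0\\M&U\end{smallmatrix}\right]$, $\mathfrak{M}_{\mathcal{T}}:=\left[\begin{smallmatrix}T&0\\M&0\end{smallmatrix}\right]\in\mathcal{T}'$ and $\mathfrak{M}_{\mathcal{U}}:=\left[\begin{smallmatrix}0&0\\M&U\end{smallmatrix}\right]\in\mathcal{U}'$, and recall from the proof of Proposition \ref{basicos}(d) the morphisms $u,p$ between $\mathfrak{M}$ and $\mathfrak{M}_{\mathcal{T}}$ and $v,q$ between $\mathfrak{M}$ and $\mathfrak{M}_{\mathcal{U}}$, satisfying $pu=1_{\mathfrak{M}_{\mathcal{T}}}$, $qv=1_{\mathfrak{M}_{\mathcal{U}}}$ and $up+vq=1_{\mathfrak{M}}$. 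Applying the additive, fully faithful Yoneda functor $\Lambda(-,?)\colon\Lambda^{op}\longrightarrow\mathrm{Mod}(\Lambda)$, $\mathfrak{M}'\mapsto\Lambda(\mathfrak{M}',-)$, the endomorphisms $\Lambda(up,-)=\Lambda(p,-)\Lambda(u,-)$ and $\Lambda(vq,-)=\Lambda(q,-)\Lambda(v,-)$ of $\Lambda(\mathfrak{M},-)$ are orthogonal idempotents summing to the identity, so they split $\Lambda(\mathfrak{M},-)$ as an internal direct sum of $\mathrm{Im}\,\Lambda(up,-)\cong\Lambda(\mathfrak{M}_{\mathcal{T}},-)$ and $\mathrm{Im}\,\Lambda(vq,-)\cong\Lambda(\mathfrak{M}_{\mathcal{U}},-)$; since $\mathfrak{M}_{\mathcal{T}}$ and $\mathfrak{M}_{\mathcal{U}}$ are objects of $\Lambda$, both summands are representable, hence projective in $\mathrm{Mod}(\Lambda)$.

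It remains to match these two summands with $\mathcal{I}(\mathfrak{M},-)$ and $H(\mathfrak{M},-)$ and to check compatibility with the maps of the sequence; this is a short computation with the composition rule of Definition \ref{defitrinagularmat}. For the kernel: since $\mathcal{U}(0,U')=0$, every element of $\Lambda(\mathfrak{M}_{\mathcal{T}},\mathfrak{M}')$ has the form $\left[\begin{smallmatrix}\alpha&0\\m&0\end{smallmatrix}\right]$, and $\left[\begin{smallmatrix}\alpha&0\\m&0\end{smallmatrix}\right]\circ p=\left[\begin{smallmatrix}\alpha&0\\m&0\end{smallmatrix}\right]$; hence $\Lambda(p,-)$ is a natural isomorphism of $\Lambda(\mathfrak{M}_{\mathcal{T}},-)$ onto the subfunctor $\mathfrak{M}'\mapsto\left[\begin{smallmatrix}\mathcal{T}(T,T')&0\\M(U',T)&0\end{smallmatrix}\right]$ of $\Lambda(\mathfrak{M},-)$, which is precisely $\mathcal{I}(\mathfrak{M},-)$, and the composite $\Psi\circ\Lambda(p,-)$ vanishes. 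For the cokernel: since $\mathcal{T}(0,T')=0$ and $M(U',0)=0$, every element of $\Lambda(\mathfrak{M}_{\mathcal{U}},\mathfrak{M}')$ has the form $\left[\begin{smallmatrix}0&0\\0&\beta\end{smallmatrix}\right]$ with $\beta\in\mathcal{U}(U,U')$, and $\left[\begin{smallmatrix}0&0\\0&\beta\end{smallmatrix}\right]\circ q=\left[\begin{smallmatrix}0&0\\0&\beta\end{smallmatrix}\right]$, while $\Psi=[\Gamma(\Phi)]_{(\mathfrak{M},-)}$ sends $\left[\begin{smallmatrix}0&0\\0&\beta\end{smallmatrix}\right]$ to $\beta$; since $H(\mathfrak{M},\mathfrak{M}')=\mathcal{U}(\Phi(\mathfrak{M}),\Phi(\mathfrak{M}'))=\mathcal{U}(U,U')$, the composite $\Psi\circ\Lambda(q,-)\colon\Lambda(\mathfrak{M}_{\mathcal{U}},-)\to H(\mathfrak{M},-)$ is the natural bijection $\left[\begin{smallmatrix}0&0\\0&\beta\end{smallmatrix}\right]\mapsto\beta$, hence an isomorphism.

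Putting these together gives $\Lambda(\mathfrak{M},-)=\mathcal{I}(\mathfrak{M},-)\oplus\mathrm{Im}\,\Lambda(q,-)$ with $\Psi$ killing the first summand and restricting to an isomorphism of $\mathrm{Im}\,\Lambda(q,-)$ onto $H(\mathfrak{M},-)$; hence $\Lambda(q,-)\circ(\Psi\circ\Lambda(q,-))^{-1}\colon H(\mathfrak{M},-)\to\Lambda(\mathfrak{M},-)$ is a section of $\Psi$ and $\mathrm{Ker}\,\Psi=\mathcal{I}(\mathfrak{M},-)$ is a direct summand, so the displayed sequence splits (its exactness in $\mathrm{Mod}(\Lambda)$ is just the evaluation at $\mathfrak{M}$ in the first variable of the exact sequence in $\mathrm{Mod}(\Lambda^{e})$, exactness there being tested objectwise). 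Finally $\mathcal{I}(\mathfrak{M},-)$ and $H(\mathfrak{M},-)$ are direct summands of the projective functor $\Lambda(\mathfrak{M},-)$ — indeed isomorphic to the representables $\Lambda(\mathfrak{M}_{\mathcal{T}},-)$ and $\Lambda(\mathfrak{M}_{\mathcal{U}},-)$ — hence projective in $\mathrm{Mod}(\Lambda)$.

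There is no genuine difficulty here: the content of the lemma is simply that the two non-obvious terms of the sequence are representable functors, arising from the summands $\mathfrak{M}_{\mathcal{T}}$ and $\mathfrak{M}_{\mathcal{U}}$ of $\mathfrak{M}$. The only point requiring care is the bookkeeping that identifies the concrete subfunctor $\mathcal{I}(\mathfrak{M},-)\subseteq\Lambda(\mathfrak{M},-)$ and the concrete map $\Psi$ with the abstract summand and projection produced by the Yoneda image of $1_{\mathfrak{M}}=up+vq$, which amounts to the handful of $2\times 2$ matrix products displayed above.
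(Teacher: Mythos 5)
Your proof is correct and complete. The paper's own proof is much shorter: it simply writes down the section $\Theta_{\mathfrak{M}'}(\beta) := \left[\begin{smallmatrix}0&0\\0&\beta\end{smallmatrix}\right]$ of $\Psi$ and notes that $\Psi\circ\Theta = 1$, which is immediate since $[\Gamma(\Phi)]\big(\left[\begin{smallmatrix}0&0\\0&\beta\end{smallmatrix}\right]\big)=\beta$. What you do instead is derive this section from the decomposition $\mathfrak{M}\cong\mathfrak{M}_{\mathcal{T}}\oplus\mathfrak{M}_{\mathcal{U}}$ of Proposition \ref{basicos}(d), pushing the idempotent system $1_{\mathfrak{M}}=up+vq$ through the contravariant Yoneda embedding. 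The two proofs produce literally the same section --- your $\Lambda(q,-)\circ(\Psi\circ\Lambda(q,-))^{-1}$ unwinds to $\beta\mapsto\left[\begin{smallmatrix}0&0\\0&\beta\end{smallmatrix}\right]$ --- but your route has the advantage of making explicit the isomorphisms $\mathcal{I}(\mathfrak{M},-)\cong\Lambda(\mathfrak{M}_{\mathcal{T}},-)$ and $H(\mathfrak{M},-)\cong\Lambda(\mathfrak{M}_{\mathcal{U}},-)$ with representables, so the final projectivity claim is conceptually transparent rather than merely a formal consequence of being a direct summand of a representable. The paper buys concision; you buy the explicit identifications, which in fact anticipate how these functors are exploited in Propositions \ref{primerrecolle} and \ref{segundorecolle}.
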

\begin{proof}
We define $\Theta: H\Big({\left[\begin{smallmatrix}
 T & 0 \\ 
M & U
\end{smallmatrix}\right]},-\Big)\longrightarrow \Lambda\Big({\left[\begin{smallmatrix}
 T & 0 \\ 
M & U
\end{smallmatrix}\right]},-\Big)$ as follows: for $\left[\begin{smallmatrix}
 T_{1} & 0 \\ 
M  & U_{1}
\end{smallmatrix}\right]\in \Lambda$ we set
$$\Theta_{\left[\begin{smallmatrix}
 T_{1} & 0 \\ 
M  & U_{1}
\end{smallmatrix}\right]}: H\Big({\left[\begin{smallmatrix}
 T & 0 \\ 
M & U
\end{smallmatrix}\right]}, \left[\begin{smallmatrix}
 T_{1} & 0 \\ 
M  & U_{1}
\end{smallmatrix}\right]\Big)=\mathcal{U}(U,U_{1})\longrightarrow \Lambda\Big({\left[\begin{smallmatrix}
 T & 0 \\ 
M & U
\end{smallmatrix}\right]},\left[\begin{smallmatrix}
 T_{1} & 0 \\ 
M  & U_{1}
\end{smallmatrix}\right]\Big)$$
given as  $\Theta_{\left[\begin{smallmatrix}
 T_{1} & 0 \\ 
M  & U_{1}
\end{smallmatrix}\right]}(\beta)=\left[\begin{smallmatrix}
 0 & 0 \\ 
0  & \beta 
\end{smallmatrix}\right]$  $\forall \beta\in \mathcal{U}(U,U_{1})$. It is easy to see that $\Psi\circ\Theta=1$. 
\end{proof}

Now, we are ready to prove the first recollement induced by a triangular matrix category.

\begin{proposition}\label{primerrecolle}
Let $\Lambda=\left[ \begin{smallmatrix}
 \mathcal{T} & 0 \\ 
M &  \mathcal{U}
\end{smallmatrix}\right]$ be a triangular matrix category with $M\in \mathrm{Mod}(\mathcal{U}\otimes \mathcal{T}^{op})$. Then there exists a recollement
$$\xymatrix{\mathrm{Mod}(\mathcal{U})=\mathrm{Mod}(\Lambda/I_{\mathcal{T}'})\ar[rr]|(.65){\mathrm{res}_{\Lambda}}  &  & \mathrm{Mod}(\Lambda)\ar[rr]|(.4){\mathrm{res}_{\mathcal{T}'}}\ar@<-2ex>[ll]_(.35){\Lambda/I_{\mathcal{T}'}\otimes_{\Lambda}-}\ar@<2ex>[ll]^(.35){\Lambda(\Lambda/I_{\mathcal{T}'},-)} &   & \mathrm{Mod}(\mathcal{T}')=\mathrm{Mod}(\mathcal{T})\ar@<-2ex>[ll]_(.6){\Lambda\otimes_{\mathcal{T}'}}\ar@<2ex>[ll]^(.6){\mathcal{T}'(\Lambda,-)}}$$
where $\Lambda(\Lambda/I_{\mathcal{T}'},-)$ is an exact functor.
\end{proposition}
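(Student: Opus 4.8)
The strategy is to apply Proposition~\ref{recollementfunctor} to the pair $\mathcal{T}'\subseteq\Lambda$ and then identify all six terms and functors with the ones appearing in the statement. First I would note that Proposition~\ref{recollementfunctor} gives, verbatim, a recollement of $\mathrm{Mod}(\Lambda/I_{\mathcal{T}'})$, $\mathrm{Mod}(\Lambda)$ and $\mathrm{Mod}(\mathcal{T}')$ with the functors $\mathrm{res}_{\mathcal{T}'}$, $\Lambda\otimes_{\mathcal{T}'}-$, $\mathcal{T}'(\Lambda,-)$ on the right and $\mathrm{res}_{\Lambda}$, $\Lambda/I_{\mathcal{T}'}\otimes_{\Lambda}-$, $\Lambda(\Lambda/I_{\mathcal{T}'},-)$ on the left. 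Here I use that $\mathcal{T}'$ is a full additive subcategory of $\Lambda$ closed under finite coproducts (it is, being of the form $\left[\begin{smallmatrix}\mathcal{T}&0\\M&0\end{smallmatrix}\right]$), so the two-sided ideal $I_{\mathcal{T}'}$ and the quotient $\Lambda/I_{\mathcal{T}'}$ make sense.

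The identification $\mathrm{Mod}(\mathcal{T}')=\mathrm{Mod}(\mathcal{T})$ follows because $\Phi$-type considerations — more precisely, Proposition~\ref{basicos}(d) shows $\mathcal{T}'$ is the additive closure of $\left[\begin{smallmatrix}T&0\\M&0\end{smallmatrix}\right]$'s, and the functor $\mathcal{T}\to\mathcal{T}'$, $T\mapsto\left[\begin{smallmatrix}T&0\\M&0\end{smallmatrix}\right]$, is fully faithful by Definition~\ref{defitrinagularmat}(b) (since $\mathrm{Hom}_{\Lambda}\left(\left[\begin{smallmatrix}T&0\\M&0\end{smallmatrix}\right],\left[\begin{smallmatrix}T'&0\\M&0\end{smallmatrix}\right]\right)=\left[\begin{smallmatrix}\mathcal{T}(T,T')&0\\M(0,T)&0\end{smallmatrix}\right]=\mathcal{T}(T,T')$, using $M(0,-)=0$) and dense up to the idempotent-completion that $\mathrm{Mod}$ does not see — hence induces an equivalence $\mathrm{Mod}(\mathcal{T}')\simeq\mathrm{Mod}(\mathcal{T})$. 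The identification $\mathrm{Mod}(\Lambda/I_{\mathcal{T}'})=\mathrm{Mod}(\mathcal{U})$ is the key algebraic point: by the previous Proposition we have $\mathcal{I}=I_{\mathcal{T}'}$, and by Lemma~\ref{exacseuqnimpor} together with the displayed exact sequence $0\to\mathcal{I}\to\Lambda\xrightarrow{\Gamma(\Phi)}\mathcal{U}(-,-)\circ(\Phi^{op}\otimes\Phi)\to 0$ in $\mathrm{Mod}(\Lambda^e)$, the quotient category $\Lambda/I_{\mathcal{T}'}$ is isomorphic to the image of $\Gamma(\Phi)$, which is the full subcategory of $\mathcal{U}$ on objects $\Phi(\mathfrak{M})=U$; since $\Phi$ is surjective on objects, $\Lambda/I_{\mathcal{T}'}\cong\mathcal{U}$ as $R$-categories, whence $\mathrm{Mod}(\Lambda/I_{\mathcal{T}'})=\mathrm{Mod}(\mathcal{U})$.

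Finally I would prove the exactness of $\Lambda(\Lambda/I_{\mathcal{T}'},-)$. In a recollement this functor $i_!$ is always left exact and preserves inverse limits (Proposition~\ref{Rproposition2}(1)); I need right exactness, equivalently that it preserves epimorphisms. The cleanest route is Lemma~\ref{Kproy}: it shows that for every representable $\Lambda\left(\left[\begin{smallmatrix}T&0\\M&U\end{smallmatrix}\right],-\right)$ the canonical sequence $0\to\mathcal{I}\left(\left[\begin{smallmatrix}T&0\\M&U\end{smallmatrix}\right],-\right)\to\Lambda\left(\left[\begin{smallmatrix}T&0\\M&U\end{smallmatrix}\right],-\right)\to H\left(\left[\begin{smallmatrix}T&0\\M&U\end{smallmatrix}\right],-\right)\to 0$ splits in $\mathrm{Mod}(\Lambda)$; hence $\mathcal{I}=I_{\mathcal{T}'}$ is a \emph{projective} object of $\mathrm{Mod}(\Lambda^{e})$ when restricted appropriately, and more to the point $\mathrm{res}_{\mathcal{T}'}$ kills nothing essential on the kernel side. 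Concretely, $\Lambda(\Lambda/I_{\mathcal{T}'},-)$ applied to an $N\in\mathrm{Mod}(\Lambda)$ is, at an object $U$ of $\mathcal{U}\cong\Lambda/I_{\mathcal{T}'}$, the group $\Lambda\left(\Lambda\left(\left[\begin{smallmatrix}-&0\\M&U\end{smallmatrix}\right],-\right)/I_{\mathcal{T}'}(\cdots),N\right)$, and by the splitting of Lemma~\ref{Kproy} this quotient $\Lambda(-,-)/I_{\mathcal{T}'}(-,-)$ is a \emph{projective} $\Lambda$-module (a direct summand of a representable), so $\mathrm{Hom}_{\Lambda}(-,N)$ out of it is exact in $N$; therefore $\Lambda(\Lambda/I_{\mathcal{T}'},-)$ is exact. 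The main obstacle is bookkeeping: making the identifications $\mathrm{Mod}(\Lambda/I_{\mathcal{T}'})=\mathrm{Mod}(\mathcal{U})$ and $\mathrm{Mod}(\mathcal{T}')=\mathrm{Mod}(\mathcal{T})$ compatible with the four adjoint functors, i.e.\ checking that under these equivalences the abstract functors from Proposition~\ref{recollementfunctor} become the ones written in the statement; this is routine but requires care with the direction of $\Phi^{op}\otimes\Phi$ and with the formulas in Remark~\ref{2funtores}.
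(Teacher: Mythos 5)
Your proposal follows the same route as the paper: invoke Proposition~\ref{recollementfunctor} for $\mathcal{T}'\subseteq\Lambda$, use Lemma~\ref{Kproy} together with the formula in Remark~\ref{2funtores} to see that $\Lambda(\Lambda/I_{\mathcal{T}'},-)$ is $\mathrm{Hom}$ out of the projective module $\Lambda(\mathfrak{M},-)/I_{\mathcal{T}'}(\mathfrak{M},-)$ and hence exact, and then identify $\Lambda/I_{\mathcal{T}'}\simeq\mathcal{U}$ and $\mathcal{T}'\simeq\mathcal{T}$. The only superfluous worry is the parenthetical about idempotent completion for $\mathcal{T}\to\mathcal{T}'$: this functor is surjective on objects by construction, so it is a genuine equivalence and no completion is involved.
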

\begin{proof}
By Proposition \ref{recollementfunctor}, we have the recollement

$$\xymatrix{\mathrm{Mod}(\Lambda/I_{\mathcal{T}'})\ar[rr]|{\mathrm{res}_{\Lambda}}  &  & \mathrm{Mod}(\Lambda)\ar[rr]|{\mathrm{res}_{\mathcal{T}'}}\ar@<-2ex>[ll]_{\Lambda/I_{\mathcal{T}'}\otimes_{\Lambda}-}\ar@<2ex>[ll]^{\Lambda(\Lambda/I_{\mathcal{T}'},-)} &   & \mathrm{Mod}(\mathcal{T}')\ar@<-2ex>[ll]_{\Lambda\otimes_{\mathcal{T}'}}\ar@<2ex>[ll]^{\mathcal{T}'(\Lambda,-)}}$$
We note that for $\mathfrak{M}=\left[\begin{smallmatrix}
 T & 0 \\ 
M & U
\end{smallmatrix}\right]\in \Lambda$, we have that: 
$$H\Big(\mathfrak{M},-\Big)=\mathrm{Hom}_{\Lambda/\mathcal{I}}(\mathfrak{M},-)\simeq \frac{\Lambda(\mathfrak{M},-)}{\mathcal{I}(\mathfrak{M},-)}=\frac{\Lambda(\mathfrak{M},-)}{I_{\mathcal{T}'}(\mathfrak{M},-)}.$$
By Lemma \ref{Kproy}, we conclude that 
$\frac{\Lambda(\mathfrak{M},-)}{I_{\mathcal{T}'}(\mathfrak{M},-)}$ is a projective $\Lambda$-modulo. Now, by Remark \ref{2funtores}, for $M\in \mathrm{Mod}(\Lambda)$ the functor $\Lambda(\Lambda/I_{\mathcal{T}'},M)\in \mathrm{Mod}(\Lambda/I_{\mathcal{T}'})$ is given as follows: for $\mathfrak{M}=\left[\begin{smallmatrix}
 T & 0 \\ 
M & U
\end{smallmatrix}\right]\in \Lambda/I_{\mathcal{T}'}$ we have that
$$\Lambda(\Lambda/I_{\mathcal{T}'},M)(\mathfrak{M})=\mathrm{Hom}_{\mathrm{Mod}(\Lambda)}\Bigg(\frac{\Lambda(\mathfrak{M},-)}{I_{\mathcal{T}'}(\mathfrak{M},-)},M\Bigg)$$
Then, $\Lambda(\Lambda/I_{\mathcal{T}'},-)$ is an exact functor since $\frac{\Lambda(\mathfrak{M},-)}{I_{\mathcal{T}'}(\mathfrak{M},-)}$ is a projective $\Lambda$-modulo. Finally, note that $\Lambda /I_{\mathcal{T}'}=\Lambda/\mathcal{I}\simeq \mathcal{U}'\simeq \mathcal{U}$  and $\mathcal{T}'\simeq \mathcal{T}$. Then we have the required recollement. 
\end{proof}



In order to construct another recollement associated to a triangular matrix category, we  define a functor $\Phi':\Lambda\longrightarrow \mathcal{T},$
as follows: $\Phi'\Big(\left[\begin{smallmatrix}
 T & 0 \\ 
M & U
\end{smallmatrix}\right]\Big):=T$ and for 
 $\left[\begin{smallmatrix}
\alpha & 0 \\ 
m & \beta
\end{smallmatrix}\right]:\left[\begin{smallmatrix}
 T & 0 \\ 
M & U
\end{smallmatrix}\right]\longrightarrow \left[\begin{smallmatrix}
T' & 0 \\ 
M & U'
\end{smallmatrix}\right]$ we set $\Phi'\Big(\left[\begin{smallmatrix}
\alpha & 0 \\ 
m & \beta
\end{smallmatrix}\right]\Big)=\alpha$.\\
In this case $\mathcal{T}(-,-)\circ ((\Phi')^{op}\otimes\Phi')\in\mathrm{Mod}(\Lambda^{e})$ and we can think $\mathcal{T}$ as a $\mathrm{Mod}(\Lambda^{e})$-module. \\
Recall, that for simplicity we write,
$\mathfrak{M}=\left[\begin{smallmatrix}
 T & 0 \\ 
M & U
\end{smallmatrix}\right]\in \Lambda$.
Hence, we have morphism in $\mathrm{Mod}(\Lambda^{e})$: $\xymatrix{\Lambda\ar[rr]^(.3){\Gamma(\Phi')} & & \mathcal{T}(-,-)\circ ((\Phi')^{op}\otimes\Phi')}$ where
$$[\Gamma(\Phi')]_{(\mathfrak{M},\mathfrak{M}')}:\Lambda(\mathfrak{M},\mathfrak{M}')\longrightarrow \mathcal{T}(\Phi(\mathfrak{M}),\Phi(\mathfrak{M}'))$$
is given as $[\Gamma(\Phi')]_{(\mathfrak{M},\mathfrak{M}')}\Big(\left[\begin{smallmatrix}
\alpha & 0 \\ 
m & \beta
\end{smallmatrix}\right]\Big):=\Phi'\Big(\left[\begin{smallmatrix}
\alpha & 0 \\ 
m & \beta
\end{smallmatrix}\right]\Big)$ for all $\left[\begin{smallmatrix}
\alpha & 0 \\ 
m & \beta
\end{smallmatrix}\right]\in \Lambda(\mathfrak{M},\mathfrak{M}')$.

Hence, we have the following result.
\begin{lemma}\label{Kproy2}
 There exists an exact sequence
in $\mathrm{Mod}(\Lambda^{e})$
$$\xymatrix{0\ar[r] & \mathcal{J}\ar[r] & \Lambda\ar[rr]^(.3){\Gamma(\Phi')} & & \mathcal{T}(-,-)\circ ((\Phi')^{op}\otimes\Phi')\ar[r] & 0}$$
where for objects $\mathfrak{M}'=\left[\begin{smallmatrix}
 T' & 0 \\ 
M & U'
\end{smallmatrix}\right]$ and $\mathfrak{M}=\left[\begin{smallmatrix}
 T & 0 \\ 
M & U
\end{smallmatrix}\right]$ in $\Lambda$ the ideal $\mathcal{J}$ is given as  $\mathcal{J}\big(\mathfrak{M},\mathfrak{M}'\big)=\mathrm{Ker}\left([\Gamma (\Phi')]_{\big(\mathfrak{M},\mathfrak{M}'\big)}\right)=\left[\begin{smallmatrix}
0 & 0 \\ 
M(U',T) &   \mathcal{U}(U,U') 
\end{smallmatrix}\right]$. Moreover, we have that 
\begin{enumerate}
\item [(a)] $\mathcal{J}=I_{\mathcal{U}'}$

\item [(b)] Let $L:=\mathcal{T}(-,-)\circ ((\Phi')^{op}\otimes\Phi')$ be. The following exact sequence splits in $\mathrm{Mod}(\Lambda^{op})$ for every $\mathfrak{M}\in \Lambda$:
$$\xymatrix{0\ar[r] & \mathcal{J}(-,\mathfrak{M})\ar[r] & \Lambda(-,\mathfrak{M})\ar[rr] & & L(-,\mathfrak{M})\ar[r] & 0}.$$
In particular, $\mathcal{J}(-,\mathfrak{M})$ and $L(-,\mathfrak{M})$ are $\Lambda^{op}$-projective modules.
\end{enumerate}
\end{lemma}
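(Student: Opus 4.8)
```latex
The plan is to mimic, on the ``$\mathcal{T}$-side'', the argument already carried out for $\Phi$ in Lemma \ref{exacseuqnimpor} and Lemma \ref{Kproy}, exploiting the fact that $\Lambda$ is, by Definition \ref{defitrinagularmat}, lower triangular, so that the roles of $\mathcal{T}'$ and $\mathcal{U}'$ are dual but \emph{not} symmetric: one obtains a splitting of contravariant representable functors rather than covariant ones. First I would establish the exact sequence and the description of $\mathcal{J}$. Since $\Phi'$ is an $R$-functor (it just reads off the upper-left entry) and composition in $\Lambda$ sends $\left[\begin{smallmatrix}\alpha&0\\ m&\beta\end{smallmatrix}\right]$ to its $\alpha$-component, $\Gamma(\Phi')$ is a morphism in $\mathrm{Mod}(\Lambda^{e})$ and it is a (pointwise) epimorphism because every $\alpha\in\mathcal{T}(T,T')$ lifts to $\left[\begin{smallmatrix}\alpha&0\\ 0&0\end{smallmatrix}\right]$. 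Its kernel at $(\mathfrak{M},\mathfrak{M}')$ is exactly the set of matrices with zero upper-left entry, i.e. $\left[\begin{smallmatrix}0&0\\ M(U',T)&\mathcal{U}(U,U')\end{smallmatrix}\right]$; this is the claimed $\mathcal{J}$, and it is visibly a two-sided ideal (it is an additive subfunctor closed under pre- and post-composition, since the composition rule only feeds the upper-left entry from $\mathcal{T}$-data).

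For part (a), $\mathcal{J}=I_{\mathcal{U}'}$, I would argue by double inclusion exactly as in the Proposition preceding Lemma \ref{Kproy}. Given $\left[\begin{smallmatrix}0&0\\ m&\beta\end{smallmatrix}\right]\colon\mathfrak{M}\to\mathfrak{M}'$ in $\mathcal{J}$, factor it through $\left[\begin{smallmatrix}0&0\\ M&U'\end{smallmatrix}\right]\in\mathcal{U}'$ via $\mathfrak{M}\xrightarrow{q}\left[\begin{smallmatrix}0&0\\ M&U'\end{smallmatrix}\right]$? — more carefully, use the factorization $\mathfrak{M}\xrightarrow{\left[\begin{smallmatrix}0&0\\ 0&1_U\end{smallmatrix}\right]}\left[\begin{smallmatrix}0&0\\ M&U\end{smallmatrix}\right]\xrightarrow{\left[\begin{smallmatrix}0&0\\ m&\beta\end{smallmatrix}\right]}\mathfrak{M}'$ (one checks the composite reproduces $\left[\begin{smallmatrix}0&0\\ m&\beta\end{smallmatrix}\right]$ using $0\bullet 1_U+1_U\bullet 0$ type identities as in Proposition \ref{basicos}(c)), so $\mathcal{J}\subseteq I_{\mathcal{U}'}$. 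Conversely, if $\left[\begin{smallmatrix}\alpha&0\\ m&\beta\end{smallmatrix}\right]$ factors through some $\left[\begin{smallmatrix}0&0\\ M&U''\end{smallmatrix}\right]\in\mathcal{U}'$, then applying $\Phi'$ and using that $\Phi'$ kills objects of $\mathcal{U}'$ (their upper-left entry is $0$) forces $\alpha=0$, hence the morphism lies in $\mathcal{J}$; so $I_{\mathcal{U}'}\subseteq\mathcal{J}$.

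For part (b) I would define a splitting $\Theta'\colon L(-,\mathfrak{M})\to\Lambda(-,\mathfrak{M})$ by $\Theta'_{\mathfrak{M}_1}(\alpha)=\left[\begin{smallmatrix}\alpha&0\\ 0&0\end{smallmatrix}\right]$ for $\alpha\in\mathcal{T}(T_1,T)=L(\mathfrak{M}_1,\mathfrak{M})$, the exact analogue of $\Theta$ in Lemma \ref{Kproy}. Naturality in $\mathfrak{M}_1$ is a direct check using the composition rule of Definition \ref{defitrinagularmat} (precomposing with $\left[\begin{smallmatrix}t&0\\ *&*\end{smallmatrix}\right]$ gives $\left[\begin{smallmatrix}\alpha\circ t&0\\ 0&0\end{smallmatrix}\right]$, matching the $L$-action), and $[\Gamma(\Phi')]\circ\Theta'=1$ because $\Phi'\left[\begin{smallmatrix}\alpha&0\\ 0&0\end{smallmatrix}\right]=\alpha$. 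Hence the sequence $0\to\mathcal{J}(-,\mathfrak{M})\to\Lambda(-,\mathfrak{M})\to L(-,\mathfrak{M})\to 0$ splits in $\mathrm{Mod}(\Lambda^{op})$; since $\Lambda(-,\mathfrak{M})$ is projective in $\mathrm{Mod}(\Lambda^{op})$ (representable) and direct summands of projectives are projective, both $\mathcal{J}(-,\mathfrak{M})$ and $L(-,\mathfrak{M})$ are projective $\Lambda^{op}$-modules. The main (and only mild) obstacle is bookkeeping: one must be attentive that here the relevant representable functors are the contravariant ones $\Lambda(-,\mathfrak{M})$ — the triangular shape makes the ``$\mathcal{T}$-corner'' split off covariantly at the level of $\mathrm{Mod}(\Lambda^{op})$, dual to the $\mathcal{U}$-corner splitting in $\mathrm{Mod}(\Lambda)$ from Lemma \ref{Kproy} — and that all computations with $\bullet$ are performed with the correct variances as recorded after Definition \ref{defitrinagularmat}; none of this presents a genuine difficulty.
```
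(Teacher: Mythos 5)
Your overall strategy is the intended one (the paper's own proof is omitted as ``Straightforward,'' and the argument is exactly the dual of the $\mathcal{T}'$-side computations), and your treatment of the exact sequence, the reverse inclusion $I_{\mathcal{U}'}\subseteq\mathcal{J}$, and part (b) is all correct. However, there is a genuine error in the forward inclusion $\mathcal{J}\subseteq I_{\mathcal{U}'}$: the factorization you settle on does not reproduce the given morphism.

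You first (correctly) suggest factoring through $\left[\begin{smallmatrix}0&0\\ M&U'\end{smallmatrix}\right]$, then second-guess and switch to
$$\mathfrak{M}\xrightarrow{\left[\begin{smallmatrix}0&0\\ 0&1_U\end{smallmatrix}\right]}\left[\begin{smallmatrix}0&0\\ M&U\end{smallmatrix}\right]\xrightarrow{\left[\begin{smallmatrix}0&0\\ m&\beta\end{smallmatrix}\right]}\mathfrak{M}'.$$
This fails for two reasons. First, the displayed right-hand arrow is not even a morphism: by Definition \ref{defitrinagularmat}(b), its lower-left entry must lie in $M(U',0)$, and $M(U',0)=0$ since $M$ is additive, so that entry is forced to be $0$, not $m\in M(U',T)$. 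Second, once you replace it by the only legal map $\left[\begin{smallmatrix}0&0\\ 0&\beta\end{smallmatrix}\right]$, the composite is $\left[\begin{smallmatrix}0&0\\ 0\bullet 0+\beta\bullet 0&\beta\end{smallmatrix}\right]=\left[\begin{smallmatrix}0&0\\ 0&\beta\end{smallmatrix}\right]$, which loses $m$. Your parenthetical claim about ``$0\bullet 1_U+1_U\bullet 0$ type identities'' does not help, as that expression is $0$, not $m$.

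The correct factorization is the one you abandoned, dual to the one used in the paper for $I_{\mathcal{T}'}$: with the $\mathcal{U}'$-object carrying the \emph{target's} $U'$, take
$$\mathfrak{M}\xrightarrow{\left[\begin{smallmatrix}0&0\\ m&\beta\end{smallmatrix}\right]}\left[\begin{smallmatrix}0&0\\ M&U'\end{smallmatrix}\right]\xrightarrow{\left[\begin{smallmatrix}0&0\\ 0&1_{U'}\end{smallmatrix}\right]}\mathfrak{M}'.$$
Both arrows are legal (the lower-left entry of the first lies in $M(U',T)$, which is exactly where $m$ lives, and its upper-left entry is forced to $0$ by $\mathcal{T}(T,0)=0$), and the composite is $\left[\begin{smallmatrix}0&0\\ 0\bullet 0+1_{U'}\bullet m&1_{U'}\beta\end{smallmatrix}\right]=\left[\begin{smallmatrix}0&0\\ m&\beta\end{smallmatrix}\right]$, since $1_{U'}\bullet m=M(1_{U'}\otimes 1_T)(m)=m$. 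Note the asymmetry with the $\mathcal{T}'$-case in the preceding proposition, where one factors through the object built from the \emph{source's} $T$ with the identity on the first leg; here, because of how $\bullet$ interacts with the lower-triangular shape, the identity must go on the second leg and the object must carry the target's $U'$. With that fix, the rest of your argument for (a) and (b) goes through.
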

\begin{proof}
Straightforward.
\end{proof}

For the following definition see p. 48 in \cite{Mitchell}. 

\begin{definition}
Let $\mathcal{C}$ a preadditive category.  We say that $M\in \mathrm{Mod}(\mathcal{C}^{op})$ is $\textbf{flat}$ if
$M\otimes_{\mathcal{C}}-:\mathrm{Mod}(\mathcal{C})\longrightarrow \text{Ab}$ is an exact functor.
\end{definition}

\begin{proposition}\label{segundorecolle}
Let $\Lambda=\left[ \begin{smallmatrix}
 \mathcal{T} & 0 \\ 
M &  \mathcal{U}
\end{smallmatrix}\right]$ be a triangular matrix category with $M\in \mathrm{Mod}(\mathcal{U}\otimes \mathcal{T}^{op})$.
We have recollement
$$\xymatrix{\mathrm{Mod}(\mathcal{T})=\mathrm{Mod}(\Lambda/I_{\mathcal{U}'})\ar[rr]|(.65){\mathrm{res}_{\Lambda}}  &  & \mathrm{Mod}(\Lambda)\ar[rr]|(.4){\mathrm{res}_{\mathcal{U}'}}\ar@<-2ex>[ll]_(.35){\Lambda/I_{\mathcal{U}'}\otimes_{\Lambda}-}\ar@<2ex>[ll]^(.35){\Lambda(\Lambda/I_{\mathcal{U}'},-)} &   & \mathrm{Mod}(\mathcal{U}')=\mathrm{Mod}(\mathcal{U})\ar@<-2ex>[ll]_(.6){\Lambda\otimes_{\mathcal{U}'}}\ar@<2ex>[ll]^(.6){\mathcal{U}'(\Lambda,-)}}$$

where $\Lambda/I_{\mathcal{U}'}\otimes_{\Lambda}-$ is an exact functor.

\end{proposition}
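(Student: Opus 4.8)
The plan is to obtain the recollement of Proposition \ref{recollementfunctor} applied to the full additive subcategory $\mathcal{B}=\mathcal{U}'$ of $\mathcal{C}=\Lambda$ (note $\mathcal{U}'$ is full and additive in $\Lambda$ by Proposition \ref{basicos}), namely
$$\xymatrix{\mathrm{Mod}(\Lambda/I_{\mathcal{U}'})\ar[rr]|{\mathrm{res}_{\Lambda}}  &  & \mathrm{Mod}(\Lambda)\ar[rr]|{\mathrm{res}_{\mathcal{U}'}}\ar@<-2ex>[ll]_{\Lambda/I_{\mathcal{U}'}\otimes_{\Lambda}-}\ar@<2ex>[ll]^{\Lambda(\Lambda/I_{\mathcal{U}'},-)} &   & \mathrm{Mod}(\mathcal{U}')\ar@<-2ex>[ll]_{\Lambda\otimes_{\mathcal{U}'}-}\ar@<2ex>[ll]^{\mathcal{U}'(\Lambda,-)}}$$
and then to (i) identify the outer terms and (ii) prove that $\Lambda/I_{\mathcal{U}'}\otimes_{\Lambda}-$ is exact. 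For (i): by Lemma \ref{Kproy2}(a) we have $I_{\mathcal{U}'}=\mathcal{J}$, so for objects $\mathfrak{M}=\left[\begin{smallmatrix}T & 0\\ M & U\end{smallmatrix}\right]$ and $\mathfrak{M}'=\left[\begin{smallmatrix}T' & 0\\ M & U'\end{smallmatrix}\right]$ of $\Lambda$ the quotient hom-group $\Lambda(\mathfrak{M},\mathfrak{M}')/I_{\mathcal{U}'}(\mathfrak{M},\mathfrak{M}')$ is isomorphic to $\mathcal{T}(T,T')$; hence $\Phi'$ induces an equivalence $\Lambda/I_{\mathcal{U}'}\simeq\mathcal{T}$, while a direct computation with Definition \ref{defitrinagularmat} shows that $\Phi$ restricts to an equivalence $\mathcal{U}'\simeq\mathcal{U}$. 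This accounts for the identifications $\mathrm{Mod}(\mathcal{T})=\mathrm{Mod}(\Lambda/I_{\mathcal{U}'})$ and $\mathrm{Mod}(\mathcal{U}')=\mathrm{Mod}(\mathcal{U})$ in the statement.

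For (ii): being a left adjoint, $\Lambda/I_{\mathcal{U}'}\otimes_{\Lambda}-$ is right exact, so it suffices to show it preserves monomorphisms, which can be tested objectwise in $\mathrm{Mod}(\Lambda/I_{\mathcal{U}'})$. By Remark \ref{2funtores}, for $N\in\mathrm{Mod}(\Lambda)$ and $\mathfrak{M}\in\Lambda$ one has $\big(\Lambda/I_{\mathcal{U}'}\otimes_{\Lambda}N\big)(\mathfrak{M})=\big(\Lambda(-,\mathfrak{M})/I_{\mathcal{U}'}(-,\mathfrak{M})\big)\otimes_{\Lambda}N$, naturally in $\mathfrak{M}$, so $\Lambda/I_{\mathcal{U}'}\otimes_{\Lambda}-$ is exact if and only if the $\Lambda^{op}$-module $\Lambda(-,\mathfrak{M})/I_{\mathcal{U}'}(-,\mathfrak{M})$ is flat for every $\mathfrak{M}\in\Lambda$. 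Now restrict the exact sequence $0\to\mathcal{J}\to\Lambda\to L\to 0$ of $\mathrm{Mod}(\Lambda^{e})$ from Lemma \ref{Kproy2} in its second variable to $\mathfrak{M}$: using $\mathcal{J}=I_{\mathcal{U}'}$ this yields an isomorphism $\Lambda(-,\mathfrak{M})/I_{\mathcal{U}'}(-,\mathfrak{M})\cong L(-,\mathfrak{M})$, and Lemma \ref{Kproy2}(b) says $L(-,\mathfrak{M})$ is projective in $\mathrm{Mod}(\Lambda^{op})$. Finally, projective functors are flat (representables are flat by Proposition \ref{AProposition0}(3), arbitrary direct sums of flat functors are flat by Proposition \ref{AProposition0}(2), and direct summands of flat functors are flat), so $L(-,\mathfrak{M})$ is flat; hence $\Lambda/I_{\mathcal{U}'}\otimes_{\Lambda}-$ is exact, which finishes the proof.

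The argument is thus the mirror image of the proof of Proposition \ref{primerrecolle}, with $\mathrm{Mod}(\Lambda)$ replaced throughout by $\mathrm{Mod}(\Lambda^{op})$ and Lemma \ref{Kproy} replaced by Lemma \ref{Kproy2}. I expect the only genuinely nonformal step — the place where a careless argument could go wrong — to be the reduction of the exactness of the endofunctor $\Lambda/I_{\mathcal{U}'}\otimes_{\Lambda}-$ to objectwise flatness of $\Lambda(-,\mathfrak{M})/I_{\mathcal{U}'}(-,\mathfrak{M})$: one must use the naturality in $\mathfrak{M}$ of the formula of Remark \ref{2funtores} together with the fact that kernels, cokernels and exactness in $\mathrm{Mod}(\Lambda/I_{\mathcal{U}'})$ are computed objectwise, and then invoke the standard implication \emph{projective $\Rightarrow$ flat} in a functor category.
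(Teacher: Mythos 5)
Your proposal follows exactly the paper's own proof: both invoke Proposition \ref{recollementfunctor} with $\mathcal{B}=\mathcal{U}'$, use Lemma \ref{Kproy2} to see that $\Lambda(-,\mathfrak{M})/I_{\mathcal{U}'}(-,\mathfrak{M})\cong L(-,\mathfrak{M})$ is a finitely generated projective (hence flat) $\Lambda^{op}$-module, conclude exactness of $\Lambda/I_{\mathcal{U}'}\otimes_\Lambda -$ via the objectwise tensor formula of Remark \ref{2funtores}, and finally identify $\Lambda/I_{\mathcal{U}'}\simeq\mathcal{T}$ and $\mathcal{U}'\simeq\mathcal{U}$. The only cosmetic difference is that the paper cites Mitchell's Corollary 10.2 for ``finitely generated projective $\Rightarrow$ flat'' where you spell that implication out from first principles.
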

\begin{proof}
By Proposition \ref{recollementfunctor}, we have recollement
$$\xymatrix{\mathrm{Mod}(\Lambda/I_{\mathcal{U}'})\ar[rr]|{\mathrm{res}_{\Lambda}}  &  & \mathrm{Mod}(\Lambda)\ar[rr]|{\mathrm{res}_{\mathcal{U}'}}\ar@<-2ex>[ll]_{\Lambda/I_{\mathcal{U}'}\otimes_{\Lambda}-}\ar@<2ex>[ll]^{\Lambda(\Lambda/I_{\mathcal{U}'},-)} &   & \mathrm{Mod}(\mathcal{U}')\ar@<-2ex>[ll]_{\Lambda\otimes_{\mathcal{U}'}}\ar@<2ex>[ll]^{\mathcal{U}'(\Lambda,-)}}$$
We note that for $\mathfrak{M}=\left[\begin{smallmatrix}
 T & 0 \\ 
M & U
\end{smallmatrix}\right]\in \Lambda$, we have that: 
$$L\Big(-,\mathfrak{M}\Big)=\mathrm{Hom}_{\Lambda/\mathcal{J}}(-,\mathfrak{M})\simeq \frac{\Lambda(-,\mathfrak{M})}{\mathcal{J}(-,\mathfrak{M})}=\frac{\Lambda(-,\mathfrak{M})}{I_{\mathcal{U}'}(-,\mathfrak{M})}.$$
By Lemma \ref{Kproy2}, we conclude that 
$\frac{\Lambda(-,\mathfrak{M})}{I_{\mathcal{U}'}(-,\mathfrak{M})}$ is a finitely generated projective $\Lambda^{op}$-modulo. Hence $\frac{\Lambda(-,\mathfrak{M})}{I_{\mathcal{U}'}(-,\mathfrak{M})}$  is a  flat $\Lambda^{op}$-modulo (see \cite[Corollary 10.2]{Mitchell}).\\
Now, by Remark \ref{2funtores}, for $M\in \mathrm{Mod}(\Lambda)$ the functor $\Lambda/I_{\mathcal{U}'}\otimes_{\Lambda}M\in \mathrm{Mod}(\Lambda/I_{\mathcal{U}'})$ is given as follows: for $\mathfrak{M}=\left[\begin{smallmatrix}
 T & 0 \\ 
M & U
\end{smallmatrix}\right]\in \Lambda/I_{\mathcal{U}'}$ we have that
$$(\Lambda/I_{\mathcal{U}'}\otimes_{\Lambda}M)(\mathfrak{M})=\frac{\Lambda(-,\mathfrak{M})}{I_{\mathcal{U}'}(-,\mathfrak{M})}\otimes_{\Lambda}M$$
Then, $\Lambda/I_{\mathcal{U}'}\otimes_{\Lambda}-$ is an exact functor since $\frac{\Lambda(-,\mathfrak{M})}{I_{\mathcal{U}'}(-,\mathfrak{M})}$ is a  flat $\Lambda^{op}$-modulo.\\
Finally, we have that $\Lambda /I_{\mathcal{U}'}=\Lambda/\mathcal{J}\simeq \mathcal{T}'\simeq \mathcal{T}$ and $\mathcal{U}'\simeq \mathcal{U}$, then we have the required recollement.
\end{proof}

\section{Characterizing triangular matrix categories}\label{sec:1}

In all this section, $\mathcal{C}$ will be a Krull-Schmidt additive category with splitiing idempotents and $\mathcal{B}$ an additive subcategory of $\mathcal{C}$ such that $\mathcal{B}=\mathrm{add}(\mathcal{B})$. Consider the ideal $\mathcal{I}_{\mathcal{B}}$ the ideal of morphism which factor through objects in $\mathcal{B}$.


We recall the following well-known result

\begin{lemma}\label{splitidem}
Let $\mathcal{C}$ be a  category in which the idempotents split. Let $\theta':C\longrightarrow C$ be an idempotent. Then
$$C=\mathrm{Ker}(1-\theta')\oplus \mathrm{Ker}(\theta').$$
\end{lemma}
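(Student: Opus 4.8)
The statement to prove is Lemma~\ref{splitidem}: if $\mathcal{C}$ is a category in which idempotents split and $\theta'\colon C\to C$ is an idempotent endomorphism, then $C=\mathrm{Ker}(1-\theta')\oplus\mathrm{Ker}(\theta')$. The plan is to exploit that $\theta'$ and $1-\theta'$ are \emph{complementary} idempotents, so that each one, being idempotent, has a kernel by hypothesis, and then to show these two kernels together realize $C$ as a biproduct.

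First I would record the elementary algebraic facts: $(1-\theta')^2 = 1 - 2\theta' + (\theta')^2 = 1-\theta'$ since $(\theta')^2=\theta'$, so $1-\theta'$ is again idempotent; and $\theta'(1-\theta') = \theta' - (\theta')^2 = 0 = (1-\theta')\theta'$. By the splitting hypothesis applied to $\theta'$ and separately to $1-\theta'$, there exist kernels $k_1\colon K_1 \to C$ of $1-\theta'$ and $k_0\colon K_0\to C$ of $\theta'$. The second step is to observe that $\theta'$ itself, composed with $k_1$, is a map $C\to K_1$: indeed $(1-\theta')\circ\theta' = 0$, so $\theta'$ factors (uniquely) through $k_1$ as $\theta' = k_1\circ r_1$ for some $r_1\colon C\to K_1$; dually $1-\theta' = k_0\circ r_0$ for some $r_0\colon C\to K_0$.

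The heart of the argument is then to check that $(k_1, k_0)$ and $(r_1, r_0)$ exhibit $C \cong K_1\oplus K_0$ as a biproduct, i.e.\ that $r_1 k_1 = 1_{K_1}$, $r_0 k_0 = 1_{K_0}$, $r_1 k_0 = 0$, $r_0 k_1 = 0$, and $k_1 r_1 + k_0 r_0 = 1_C$. The last identity is immediate: $k_1 r_1 + k_0 r_0 = \theta' + (1-\theta') = 1_C$. For $r_1 k_1 = 1_{K_1}$ I would compute $k_1(r_1 k_1) = (k_1 r_1)k_1 = \theta' k_1$; but $\theta' k_1 = (1 - (1-\theta'))k_1 = k_1 - (1-\theta')k_1 = k_1 - 0 = k_1$, since $k_1$ is the kernel of $1-\theta'$. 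As $k_1$ is monic, $r_1 k_1 = 1_{K_1}$; symmetrically $r_0 k_0 = 1_{K_0}$. For the cross terms, $k_1(r_1 k_0) = (k_1 r_1) k_0 = \theta' k_0 = 0$ because $k_0$ is the kernel of $\theta'$, and $k_1$ monic gives $r_1 k_0 = 0$; similarly $r_0 k_1 = 0$. Having verified all the biproduct identities, $C = K_1 \oplus K_0 = \mathrm{Ker}(1-\theta')\oplus\mathrm{Ker}(\theta')$, which is the claim.

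The only genuinely delicate point is the step producing the retractions $r_1, r_0$ via the universal property of the kernel — one must be careful that these factorizations are through the correct kernel (using the vanishing $\theta'(1-\theta')=0$ with the roles in the right order) and that uniqueness of the factorization (monic-ness of $k_1, k_0$) is what licenses cancelling $k_1$, $k_0$ in the subsequent computations. Everything else is a short diagram chase in a preadditive category; since we never need cokernels or exactness, the hypothesis "idempotents split" is exactly what is used and nothing more. I expect no real obstacle beyond bookkeeping the left/right composition order correctly.
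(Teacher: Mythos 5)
Your proof is correct and takes essentially the same route as the paper: split $\theta'$ and $1-\theta'$ via their kernels, use $\theta'(1-\theta')=0=(1-\theta')\theta'$ to obtain the factorizations $\theta'=k_1r_1$ and $1-\theta'=k_0r_0$, and conclude the biproduct decomposition. You simply verify explicitly the biproduct identities ($r_1k_1=1$, $r_0k_0=1$, $r_1k_0=0$, $r_0k_1=0$, $k_1r_1+k_0r_0=1$) where the paper only states ``it is easy to see,'' so yours is a more detailed rendering of the same argument.
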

\begin{proof}
Consider $\mu':C'\longrightarrow C$ the kernel of $1-\theta'$. Since $(1-\theta')\theta'=0$, there exists $p':C\longrightarrow C'$ such that the following diagram commutes
$$\xymatrix{0\ar[r] & C'\ar[r]^{\mu'} & C\ar[r]^{1-\theta'} & C\\
& C\ar[ur]_{\theta'}\ar[u]^{p'} & }$$
Similarly, let $\mu'':C''\longrightarrow C$ the kernel of $\theta'$. Since $\theta'(1-\theta')=0$, there exists $p'':C\longrightarrow C''$ such that the following diagram commutes
$$\xymatrix{0\ar[r] & C''\ar[r]^{\mu''} & C\ar[r]^{\theta'} & C\\
& C\ar[ur]_{1-\theta'}\ar[u]^{p''} & }$$
It is easy to see that $C\simeq C'\oplus C''$ where $\mu':C'\longrightarrow C$  and $\mu'':C''\longrightarrow C$ are the inclusions, $p':C\longrightarrow C'$  $p'':C\longrightarrow C''$  are the projections.
\end{proof}

We shall use the following definition.

\begin{definition}\label{defisplitting}
Let $\mathcal{C}$ be a Krull-Schmidt category and $(\mathcal{U}, \mathcal{T})$ a pair of additive full subcategories of $\mathcal{C}$. It is said that $(\mathcal{U}, \mathcal{T})$ is a $\textbf{splitting torsion pair}$  if 
\begin{itemize}
\item[(i)] For all $X\in \mathrm{ind}(\mathcal{C})$, then either $X\in \mathcal{U}$ or $X\in \mathcal{T}$.
\item[(ii)] $\mathrm{Hom}_{\mathcal{C}}(X,-)\mid_{\mathcal{T}}=0$ for all $X\in\mathcal{U}$.
\end{itemize}
\end{definition}

\begin{Remark}
If  $(\mathcal{Y},\mathcal{X})$ is an splitting torsion pair in $\mathcal{C}$. Then $\mathcal{Y}= {}^{\perp_{0}}\mathcal{X}$ and $\mathcal{X}=\mathcal{Y}^{\perp_{0}}$.
\end{Remark}
\begin{proof}
Since $\mathrm{Hom}_{\mathcal{C}}(\mathcal{Y},\mathcal{X})=0$ we have that $\mathcal{Y}\subseteq {}^{\perp_{0}} \mathcal{X}$. Now, let $C=X'\oplus Y'\in \mathcal{C}$ with $X'\in \mathcal{X}$ and $Y'\in \mathcal{Y}$  such that 
$$\mathrm{Hom}_{\mathcal{C}}(C,X)=0$$ for all $X\in \mathcal{X}$. Considering the projection $\pi:X'\oplus Y'\longrightarrow X'$ we have that $\pi\in \mathrm{Hom}_{\mathcal{C}}(C,X')=0$. Hence we conclude that $X'=0$ and thus $C=Y'\in\mathcal{Y}$. Proving that $\mathcal{Y}= {}^{\perp_{0}}\mathcal{X}$\\
Now, since $\mathrm{Hom}_{\mathcal{C}}(\mathcal{Y},\mathcal{X})=0$ we have that $\mathcal{X}\subseteq \mathcal{Y}^{\perp_{0}}$. Now, let   $C=X'\oplus Y'\in \mathcal{C}$ with $X'\in \mathcal{X}$ and $Y'\in \mathcal{Y}$  such that 
$$\mathrm{Hom}_{\mathcal{C}}(Y,C)=0$$ for all $Y\in \mathcal{Y}$.  Considering the inclusion $\mu:Y'\longrightarrow X'\oplus Y'$ we have that $\mu\in \mathrm{Hom}_{\mathcal{C}}(Y',C)=0$. Hence we conclude that $Y'=0$, thus $C=X'\in\mathcal{X}$. Proving that $\mathcal{X}=\mathcal{Y}^{\perp_{0}}$.
\end{proof}

We have our first result that characterizes triangular matrix categories via recollements.

\begin{theorem}\label{primerequivalencia}
Suppose we have a recollement
$$\xymatrix{\mathrm{Mod}(\mathcal{C}/I_{\mathcal{B}})\ar[rr]|{\mathrm{res}_{\mathcal{C}}}  &  & \mathrm{Mod}(\mathcal{C})\ar[rr]|{\mathrm{res}_{\mathcal{B}}}\ar@<-2ex>[ll]_{\mathcal{C}/I_{\mathcal{B}}\otimes_{\mathcal{C}}-}\ar@<2ex>[ll]^{\mathcal{C}(\mathcal{C}/I_{\mathcal{B}},-)} &   & \mathrm{Mod}(\mathcal{B})\ar@<-2ex>[ll]_{\mathcal{C}\otimes_{\mathcal{B}}}\ar@<2ex>[ll]^{\mathcal{B}(\mathcal{C},-)}}$$
such that $\mathcal{C}(\mathcal{C}/I_{\mathcal{B}},-) $ is an exact functor. Then, we have an equivalence of categories
$$\mathcal{C}\simeq \left[ \begin{smallmatrix}
 \mathcal{B} & 0 \\ 
\overline{\mathrm{Hom}} &  {}^{\perp_{0}}\mathcal{B}
\end{smallmatrix}\right]$$
\end{theorem}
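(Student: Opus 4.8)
The plan is to extract from the exactness of $\mathcal{C}(\mathcal{C}/I_{\mathcal{B}},-)$ an object-wise direct sum decomposition of every object of $\mathcal{C}$ into a summand in $\mathcal{B}$ and a summand in ${}^{\perp_{0}}\mathcal{B}$, and then to recognise the resulting structure as the triangular matrix category $\left[\begin{smallmatrix}\mathcal{B}&0\\ \overline{\mathrm{Hom}}&{}^{\perp_{0}}\mathcal{B}\end{smallmatrix}\right]$, where $\overline{\mathrm{Hom}}(U,B):=\mathrm{Hom}_{\mathcal{C}}(B,U)$ is the restriction of the bifunctor $\mathrm{Hom}_{\mathcal{C}}(-,-)$, viewed as an object of $\mathrm{Mod}\big(({}^{\perp_{0}}\mathcal{B})\otimes\mathcal{B}^{op}\big)$.

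First I would reformulate the hypothesis. By Remark \ref{2funtores}, $\mathcal{C}(\mathcal{C}/I_{\mathcal{B}},M)(C)=\mathrm{Hom}_{\mathrm{Mod}(\mathcal{C})}\!\big(\tfrac{\mathcal{C}(C,-)}{I_{\mathcal{B}}(C,-)},\,M\big)$, and since exactness in $\mathrm{Mod}(\mathcal{C}/I_{\mathcal{B}})$ is tested objectwise while $\mathrm{Hom}(-,-)$ is always left exact in the second variable, the functor $\mathcal{C}(\mathcal{C}/I_{\mathcal{B}},-)$ is exact if and only if $\tfrac{\mathcal{C}(C,-)}{I_{\mathcal{B}}(C,-)}$ is a projective object of $\mathrm{Mod}(\mathcal{C})$ for every $C\in\mathcal{C}$. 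Thus the canonical sequence $0\to I_{\mathcal{B}}(C,-)\to\mathcal{C}(C,-)\to\tfrac{\mathcal{C}(C,-)}{I_{\mathcal{B}}(C,-)}\to 0$ splits, producing an idempotent endomorphism $e$ of the representable $\mathcal{C}(C,-)$ with $\mathrm{Im}(e)=I_{\mathcal{B}}(C,-)$. By Yoneda, $e$ is precomposition with an idempotent $\theta\colon C\to C$, and applying $e$ to $1_{C}\in\mathcal{C}(C,C)$ gives $\theta\in I_{\mathcal{B}}(C,C)$; in particular $\theta$ factors as $\theta=\beta\alpha$ with $\alpha\colon C\to B$, $\beta\colon B\to C$ and $B\in\mathcal{B}$.

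Next I would split $\theta$ inside $\mathcal{C}$. Since idempotents split, Lemma \ref{splitidem} yields $C=C_{1}\oplus C_{2}$ with $C_{1}=\mathrm{Im}(\theta)$ and $C_{2}=\mathrm{Ker}(\theta)$, and the corresponding summand decomposition of $\mathcal{C}(C,-)$ identifies $\mathcal{C}(C_{1},-)\cong I_{\mathcal{B}}(C,-)$ and $\mathcal{C}(C_{2},-)\cong\tfrac{\mathcal{C}(C,-)}{I_{\mathcal{B}}(C,-)}$. Writing $i\colon C_{1}\to C$, $p\colon C\to C_{1}$ for the structure maps, so $pi=1_{C_{1}}$ and $ip=\theta$, the relation $\theta i=i$ gives $1_{C_{1}}=pi=p\theta i=(p\beta)(\alpha i)$, a factorization through $B$; hence $C_{1}$ is a retract of an object of $\mathcal{B}=\mathrm{add}(\mathcal{B})$, so $C_{1}\in\mathcal{B}$. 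For $C_{2}$, note that every morphism $C\to B'$ with $B'\in\mathcal{B}$ factors through $\mathcal{B}$, so $I_{\mathcal{B}}(C,B')=\mathcal{C}(C,B')$ and therefore $\mathcal{C}(C_{2},B')\cong\big(\tfrac{\mathcal{C}(C,-)}{I_{\mathcal{B}}(C,-)}\big)(B')=0$; thus $C_{2}\in{}^{\perp_{0}}\mathcal{B}$. So every object of $\mathcal{C}$ is the sum of an object of $\mathcal{B}$ and an object of ${}^{\perp_{0}}\mathcal{B}$, and since $\mathrm{Hom}_{\mathcal{C}}({}^{\perp_{0}}\mathcal{B},\mathcal{B})=0$ by definition, the pair $({}^{\perp_{0}}\mathcal{B},\mathcal{B})$ is a splitting torsion pair in the sense of Definition \ref{defisplitting} (for (i), an indecomposable must equal one of its two summands).

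Finally I would assemble the equivalence. Put $\mathcal{U}:={}^{\perp_{0}}\mathcal{B}$, which is an additive $R$-subcategory of $\mathcal{C}$, form $\Lambda:=\left[\begin{smallmatrix}\mathcal{B}&0\\ \overline{\mathrm{Hom}}&\mathcal{U}\end{smallmatrix}\right]$, and define $F\colon\Lambda\to\mathcal{C}$ on objects by $\left[\begin{smallmatrix}B&0\\ M&U\end{smallmatrix}\right]\mapsto B\oplus U$ and on a morphism $\left[\begin{smallmatrix}\alpha&0\\ m&\beta\end{smallmatrix}\right]$ by the matrix morphism $B\oplus U\to B'\oplus U'$ with entries $\alpha$, $m\in\mathrm{Hom}_{\mathcal{C}}(B,U')$, $\beta$ and zero $(1,2)$-entry. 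Using $\mathrm{Hom}_{\mathcal{C}}(U,B')=0$ one checks that $\mathrm{Hom}_{\mathcal{C}}(B\oplus U,B'\oplus U')$ coincides, as an abelian group, with $\mathrm{Hom}_{\Lambda}\!\big(\left[\begin{smallmatrix}B&0\\ M&U\end{smallmatrix}\right],\left[\begin{smallmatrix}B'&0\\ M&U'\end{smallmatrix}\right]\big)$, and that ordinary matrix composition in $\mathcal{C}$ matches the $\bullet$-twisted composition of $\Lambda$, since the maps $\overline{\mathrm{Hom}}(1_{U''}\otimes t^{op})$ and $\overline{\mathrm{Hom}}(u\otimes 1_{B})$ are exactly pre- and post-composition in $\mathcal{C}$; hence $F$ is fully faithful, and it is essentially surjective by the decomposition $C\cong C_{1}\oplus C_{2}=F\big(\left[\begin{smallmatrix}C_{1}&0\\ M&C_{2}\end{smallmatrix}\right]\big)$, so $F$ is an equivalence. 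I expect the main obstacle to be the second step: turning projectivity of $\tfrac{\mathcal{C}(C,-)}{I_{\mathcal{B}}(C,-)}$ into a decomposition of $C$ with the summands correctly located in $\mathcal{B}$ and in ${}^{\perp_{0}}\mathcal{B}$ requires careful tracking of the Yoneda correspondence and of which summand of $\mathcal{C}(C,-)$ equals $I_{\mathcal{B}}(C,-)$; the remaining verifications against Definition \ref{defitrinagularmat} are routine.
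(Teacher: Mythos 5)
Your proposal is correct and follows essentially the same route as the paper: reformulate exactness of $\mathcal{C}(\mathcal{C}/I_{\mathcal{B}},-)$ as projectivity of $\tfrac{\mathcal{C}(C,-)}{I_{\mathcal{B}}(C,-)}$, split the idempotent in $\mathcal{C}$ to get $C\cong C_{1}\oplus C_{2}$ with $C_{1}\in\mathcal{B}$ and $C_{2}\in{}^{\perp_{0}}\mathcal{B}$, then deduce a splitting torsion pair and conclude. There are two small variations worth noting: where the paper goes through the isomorphism $\delta\colon\mathcal{C}(C',-)\to I_{\mathcal{B}}(C,-)$ and applies Yoneda to conclude $p'\in I_{\mathcal{B}}(C,C')$, you observe directly that $\theta=e_{C}(1_{C})\in I_{\mathcal{B}}(C,C)$ and get $1_{C_{1}}=p\theta i$ factoring through $\mathcal{B}$, which is a slightly tighter packaging of the same computation; and at the end you construct the equivalence $\Lambda\to\mathcal{C}$ by hand (checking that the $\bullet$-composition matches matrix composition in $\mathcal{C}$) whereas the paper simply cites \cite[Proposition 7.1]{LGOS1} once the splitting torsion pair $({}^{\perp_{0}}\mathcal{B},\mathcal{B})$ is in place. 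Both of these are fine and do not change the substance of the argument.
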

\begin{proof}
By Remark \ref{2funtores}, the functor $\mathcal{C}(\frac{\mathcal{C}}{I_{\mathcal{B}}},-):\mathrm{Mod}\left(\mathcal{C}\right) \longrightarrow \mathrm{Mod}\left(\mathcal{C}/I_{\mathcal{B}}\right) $ is given as follows: 
$\mathcal{C}(\frac{\mathcal{C}}{I_{\mathcal{B}}},M)(C)=\mathcal{C}\left(\frac{\mathcal{C}(C,-)}{I_{\mathcal{B}}(C,-)},M\right)$ for all $ M\in \mathrm{Mod}(\mathcal{C})$ and  $C\in \mathcal{C}/I_{\mathcal{B}}$ and 
$\mathcal{C}(\frac{\mathcal{C}}{I_{\mathcal{B}}},M)(\overline{f})=\mathcal{C}\left( \frac{\mathcal{C}}{I_{\mathcal{B}}}(f,-),M\right) 
$ for all $\overline{f}=f+I_{\mathcal{B}}(C,C')\in \mathcal{C}(C,C')/I_{\mathcal{B}}(C,C') $ with $C,C'$ in $\mathcal{C}$.\\
If $\mathcal{C}(\mathcal{C}/I_{\mathcal{B}},-) $ is an exact functor, we have that $\frac{\mathcal{C}(C,-)}{I_{\mathcal{B}}(C,-)}$ is a projective $\mathcal{C}$-module. Then we have an  split exact sequence
$$\xymatrix{0\ar[r] & I_{\mathcal{B}}(C,-)\ar[r]^{\psi} & \mathcal{C}(C,-)\ar[r]^{\rho} &  \frac{\mathcal{C}(C,-)}{I_{\mathcal{B}}(C,-)}\ar[r] & 0}$$
Consider $\eta: \mathcal{C}(C,-)\rightarrow  I_{\mathcal{B}}(C,-)$ such that $\eta \psi=1$.
We have the exact sequence 
$$\xymatrix{\mathcal{C}(C,-)\ar[r]^{\psi\eta} & \mathcal{C}(C,-)\ar[r]^{\rho} &  \frac{\mathcal{C}(C,-)}{I_{\mathcal{B}}(C,-)}\ar[r] & 0}$$
Since $\eta \psi=1$, we get that $\theta=\psi\eta:  \mathcal{C}(C,-)\rightarrow \mathcal{C}(C,-)$ is idempotent. Then, there exists an idempotent morphism $\theta':C\rightarrow C$ such that $\theta=\mathcal{C}(\theta',-)$.\\
By Lemma \ref{splitidem}, we have an split exact sequences
$$\xymatrix{0\ar[r] & C''\ar[r]^{\mu''} & C\ar[r]^{p'} & C'\ar[r] & 0}$$
$$\xymatrix{0\ar[r] & C'\ar[r]^{\mu'} & C\ar[r]^{p''} & C''\ar[r] & 0}$$
with $\mu'p'=\theta'$. Then we have split exact sequences
$$\xymatrix{0\ar[r] & \mathcal{C}(C',-)\ar[r]^{\mathcal{C}(p',-)} & \mathcal{C}(C,-)\ar[r]^{\mathcal{C}(\mu'',-)} & \mathcal{C}(C'',-)\ar[r] & 0}$$

$$\xymatrix{0\ar[r] & \mathcal{C}(C'',-)\ar[r]^{\mathcal{C}(p'',-)} & \mathcal{C}(C,-)\ar[r]^{\mathcal{C}(\mu',-)} & \mathcal{C}(C',-)\ar[r] & 0}$$
Since $\mathcal{C}(\mu',-)$ is an epimorphism and $\mu'p'=\theta'$, we have that $\mathcal{C}(\mu'',-)=\mathrm{Coker}\Big(\mathcal{C}(p',-)\circ \mathcal{C}(\mu',-)\Big)=\mathrm{Coker}(\mathcal{C}(\theta',-))=\mathrm{Coker}(\psi\eta)$.\\
Therefore,  we have isomorphism $\lambda:\mathcal{C}(C'',-)\longrightarrow  \frac{\mathcal{C}(C,-)}{I_{\mathcal{B}}(C,-)}$ such that the following diagram commutes
$$\xymatrix{\mathcal{C}(C,-)\ar[r]^{\psi\eta}\ar@{=}[d] & \mathcal{C}(C,-)\ar[r]^{\rho}\ar@{=}[d] &  \frac{\mathcal{C}(C,-)}{I_{\mathcal{B}}(C,-)}\ar[r]& 0\\
\mathcal{C}(C,-)\ar[r]^{\mathcal{C}(\theta',-)} & \mathcal{C}(C,-)\ar[r]^{\mathcal{C}(\mu'',-)}  & \mathcal{C}(C'',-)\ar[r]\ar[u]^{\lambda}  & 0}$$
Then, there exists isomorphism $\delta:\mathcal{C}(C',-)\longrightarrow I_{\mathcal{B}}(C,-)$ such that the following diagram commutes
$$\xymatrix{0\ar[r] & I_{\mathcal{B}}(C,-)\ar[r]^{\psi} & \mathcal{C}(C,-)\ar[r]^{\rho}\ar@{=}[d] &  \frac{\mathcal{C}(C,-)}{I_{\mathcal{B}}(C,-)}\ar[r] & 0\\
0\ar[r] & \mathcal{C}(C',-)\ar[r]^{\mathcal{C}(p',-)}\ar[u]^{\delta}  & \mathcal{C}(C,-)\ar[r]^{\mathcal{C}(\mu'',-)}  & \mathcal{C}(C'',-)\ar[r]\ar[u]^{\lambda} & 0}$$
For $B\in \mathcal{B}$ we have that $\frac{\mathcal{C}(C,-)}{I_{\mathcal{B}}(C,-)}(B)=\frac{\mathcal{C}(C,B)}{I_{\mathcal{B}}(C,B)}=\frac{\mathcal{C}(C,B)}{\mathcal{C}(C,B)}=0$.
Hence we conclude that $\mathcal{C}(C'',B)=0$ for all $B\in \mathcal{B}$. That is,
$C''\in {}^{\perp_{0}}\mathcal{B}=\{X\in \mathcal{C}\mid \mathcal{C}(X,B)=0\,\,\forall B\in \mathcal{B}\}$.
By Yoneda's Lemma we have isomorphism
$$Y:\mathrm{Hom}_{\mathrm{Mod}(\mathcal{C})}\Bigg(\mathcal{C}(C',-),I_{\mathcal{B}}(C,-)\Bigg)\longrightarrow I_{\mathcal{B}}(C,C')$$
such that $Y(\delta)=\alpha:=\delta_{C'}(1_{C'})\in I_{\mathcal{B}}(C,C')$.
Since $\psi_{C'}$ is the inclusion, by the above diagram we have that
$$\alpha=\psi_{C'}\delta_{C'}(1_{C'})=\mathcal{C}(p',-)_{C'}(1_{C'})=p'.$$
Now, since $I_{\mathcal{B}}$ is an ideal, we have that $p'\mu'=1_{C'}\in I_{\mathcal{B}}(C',C')$.
Then, there exists $B\in \mathcal{B}$ such that $1_{C'}=ba$ with $a:C'\rightarrow B$ and $b:B\rightarrow C'$. Therefore, we have an idempotent
$$\gamma:=ab:B\longrightarrow B.$$
It is easy to see that $Ker(1-\gamma)=a$. By Lemma \ref{splitidem}, we have that $C'$ is a direct sumand of $B\in \mathcal{B}$ and since $\mathcal{B}=\mathrm{add}(\mathcal{B})$ we conclude that $C'\in \mathcal{B}$.\\
Hence, we have proved that for each $C\in \mathcal{C}$ there exists $C''\in {}^{\perp_{0}}\mathcal{B}=\{X\in \mathcal{C}\mid \mathcal{C}(X,B)=0\,\,\forall B\in \mathcal{B}\}$ and  $C'\in \mathcal{B}$ such that
$$C\simeq C'\oplus C''.$$
Now, we have that ${}^{\perp_{0}}\mathcal{B}\cap \mathcal{B}=\{0\}$, then we conclude that an indecomposable object in $\mathcal{C}$ belongs to ${}^{\perp_{0}}\mathcal{B}$ or to $\mathcal{B}$. Since $\mathcal{C}$ is Krull-Schmidt, we have an splitting torsion pair $( {}^{\perp_{0}}\mathcal{B}, \mathcal{B})$ in the sense of Definition \ref{defisplitting}. Hence, by
\cite[Proposition 7.1]{LGOS1}, we have an equivalence of categories
$$\mathcal{C}\simeq \left[ \begin{smallmatrix}
 \mathcal{B} & 0 \\ 
\overline{\mathrm{Hom}} &  {}^{\perp_{0}}\mathcal{B}
\end{smallmatrix}\right].$$
\end{proof}
We have our second result that characterizes triangular matrix categories via recollements.

\begin{theorem}\label{segundorsult}
Suppose we have a recollement
$$\xymatrix{\mathrm{Mod}(\mathcal{C}/I_{\mathcal{B}})\ar[rr]|{\mathrm{res}_{\mathcal{C}}}  &  & \mathrm{Mod}(\mathcal{C})\ar[rr]|{\mathrm{res}_{\mathcal{B}}}\ar@<-2ex>[ll]_{\mathcal{C}/I_{\mathcal{B}}\otimes_{\mathcal{C}}-}\ar@<2ex>[ll]^{\mathcal{C}(\mathcal{C}/I_{\mathcal{B}},-)} &   & \mathrm{Mod}(\mathcal{B})\ar@<-2ex>[ll]_{\mathcal{C}\otimes_{\mathcal{B}}}\ar@<2ex>[ll]^{\mathcal{B}(\mathcal{C},-)}}$$
such that $\mathcal{C}/I_{\mathcal{B}}\otimes_{\mathcal{C}}-$ is exact and $\mathcal{B}$ is contravariantly finite subcategory of $\mathcal{C}$. Then there exists equivalence
$$\Xi:\mathcal{C}\longrightarrow \left[ \begin{smallmatrix}
 \mathcal{B}^{\perp_{0}} & 0 \\ 
\overline{\mathrm{Hom}} &  \mathcal{B}
\end{smallmatrix}\right].$$
\end{theorem}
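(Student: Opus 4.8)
The plan is to imitate the proof of Theorem \ref{primerequivalencia}, but carried out inside $\mathrm{Mod}(\mathcal{C}^{op})$ instead of $\mathrm{Mod}(\mathcal{C})$. The crucial difference is that exactness of the \emph{left} adjoint $\mathcal{C}/I_{\mathcal{B}}\otimes_{\mathcal{C}}-$ only forces the relevant functors to be \emph{flat} rather than projective, and it is precisely the contravariant finiteness of $\mathcal{B}$ which lets us upgrade flatness to projectivity.

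First I would unwind the hypothesis. By Remark \ref{2funtores}, for $N\in\mathrm{Mod}(\mathcal{C})$ and $C\in\mathcal{C}$ one has $(\mathcal{C}/I_{\mathcal{B}}\otimes_{\mathcal{C}}N)(C)=\frac{\mathcal{C}(-,C)}{I_{\mathcal{B}}(-,C)}\otimes_{\mathcal{C}}N$, and since exactness of a sequence in $\mathrm{Mod}(\mathcal{C}/I_{\mathcal{B}})$ is tested objectwise, the assumption that $\mathcal{C}/I_{\mathcal{B}}\otimes_{\mathcal{C}}-$ is exact says precisely that $\frac{\mathcal{C}(-,C)}{I_{\mathcal{B}}(-,C)}$ is a flat $\mathcal{C}^{op}$-module for every $C$ (the same reasoning already used in the proof of Proposition \ref{segundorecolle}). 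Next, since $\mathcal{B}$ is contravariantly finite, for each $C$ choose a right $\mathcal{B}$-approximation $f_{C}\colon B_{C}\to C$ with $B_{C}\in\mathcal{B}$. As every morphism into $C$ that factors through $\mathcal{B}$ factors through $f_{C}$, we get $I_{\mathcal{B}}(-,C)=\mathrm{Im}\big(\mathcal{C}(-,f_{C})\colon\mathcal{C}(-,B_{C})\to\mathcal{C}(-,C)\big)$, so that $\mathcal{C}(-,B_{C})\to\mathcal{C}(-,C)\to\frac{\mathcal{C}(-,C)}{I_{\mathcal{B}}(-,C)}\to 0$ is exact; hence $\frac{\mathcal{C}(-,C)}{I_{\mathcal{B}}(-,C)}$ is a finitely presented flat $\mathcal{C}^{op}$-module, and therefore projective in $\mathrm{Mod}(\mathcal{C}^{op})$ (a finitely presented flat functor is a direct summand of a finitely generated projective one).

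With projectivity available, the remainder is the formal dual of the proof of Theorem \ref{primerequivalencia}. The short exact sequence $0\to I_{\mathcal{B}}(-,C)\to\mathcal{C}(-,C)\to\frac{\mathcal{C}(-,C)}{I_{\mathcal{B}}(-,C)}\to 0$ splits in $\mathrm{Mod}(\mathcal{C}^{op})$, producing an idempotent endomorphism of $\mathcal{C}(-,C)$; by Yoneda it is induced by an idempotent $\theta'\colon C\to C$, and Lemma \ref{splitidem} gives a decomposition $C\simeq C'\oplus C''$ together with isomorphisms $\mathcal{C}(-,C')\simeq I_{\mathcal{B}}(-,C)$ and $\mathcal{C}(-,C'')\simeq\frac{\mathcal{C}(-,C)}{I_{\mathcal{B}}(-,C)}$. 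Evaluating the second isomorphism at $B\in\mathcal{B}$ yields $\mathcal{C}(B,C'')\simeq\frac{\mathcal{C}(B,C)}{I_{\mathcal{B}}(B,C)}=0$, so $C''\in\mathcal{B}^{\perp_{0}}$; and Yoneda turns the first isomorphism into an element of $I_{\mathcal{B}}(C',C)$ which, by the splitting, is the canonical section $C'\to C$, so that $1_{C'}\in I_{\mathcal{B}}(C',C')$ factors through some $B\in\mathcal{B}$, whence $C'$ is a direct summand of $B$ and $C'\in\mathrm{add}(\mathcal{B})=\mathcal{B}$.

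Finally, since $\mathcal{B}\cap\mathcal{B}^{\perp_{0}}=\{0\}$ and $\mathcal{C}$ is Krull-Schmidt, the decompositions $C\simeq C'\oplus C''$ just obtained show that $(\mathcal{B},\mathcal{B}^{\perp_{0}})$ is a splitting torsion pair in the sense of Definition \ref{defisplitting}, condition (ii) being exactly the definition of $\mathcal{B}^{\perp_{0}}$; then \cite[Proposition 7.1]{LGOS1} yields the desired equivalence $\Xi\colon\mathcal{C}\longrightarrow\left[\begin{smallmatrix}\mathcal{B}^{\perp_{0}} & 0\\ \overline{\mathrm{Hom}} & \mathcal{B}\end{smallmatrix}\right]$. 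The only step that goes beyond a routine dualization of Theorem \ref{primerequivalencia} is the passage from flat to projective, and this is exactly where the hypothesis that $\mathcal{B}$ is contravariantly finite is used.
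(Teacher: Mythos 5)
Your proof is correct and follows essentially the same route as the paper's: contravariant finiteness of $\mathcal{B}$ gives finite presentation of $\frac{\mathcal{C}(-,C)}{I_{\mathcal{B}}(-,C)}$, exactness of $\mathcal{C}/I_{\mathcal{B}}\otimes_{\mathcal{C}}-$ gives flatness, together these give projectivity (the paper invokes its Proposition \ref{plano=proy} for this step, while you state the fact directly), and the rest dualizes the argument of Theorem \ref{primerequivalencia} to produce the splitting torsion pair $(\mathcal{B},\mathcal{B}^{\perp_{0}})$ and apply \cite[Proposition 7.1]{LGOS1}. The only cosmetic difference is that you spell out why a right $\mathcal{B}$-approximation yields the epimorphism $\mathcal{C}(-,B_{C})\twoheadrightarrow I_{\mathcal{B}}(-,C)$, whereas the paper cites \cite[Proposition 4.12]{RSS} for the same fact.
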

\begin{proof}
We assert that $(\mathcal{B},\mathcal{B}^{\perp_{0}})$ is a splitting torsion pair. Since $\mathcal{B}$ is contravariantly finite, there exists an epimorphism $\mathcal{C}(-,X)\longrightarrow I_{\mathcal{B}}(-,C)\longrightarrow 0$ (see \cite[Proposition 4.12]{RSS}). Hence we can complete the exact sequence
$$\xymatrix{\mathcal{C}(-,X)\ar[r] & \mathcal{C}(-,C)\ar[r] & \mathcal{C}(-,C)/I_{\mathcal{B}}(-,C)\ar[r] & 0}$$
That is, we get that $\frac{\mathcal{C}}{I_{\mathcal{B}}}(-,C)$ is finitely presented and since $\mathcal{C}/I_{\mathcal{B}}\otimes_{\mathcal{C}}-$ is exact we conclude that $\frac{\mathcal{C}}{I_{\mathcal{B}}}(-,C)$ is a flat module for all $C\in \mathcal{C}$. Then, by Proposition \ref{plano=proy}, we get that $\mathcal{C}(-,C)/I_{\mathcal{B}}(-,C)$ is a finitely generated projective module.  Hence, we have that the following exact sequence splits in $\mathrm{Mod}(\mathcal{C}^{op})$:
$$\xymatrix{0\ar[r] & I_{\mathcal{B}}(-,C)\ar[r]  & \mathcal{C}(-,C)\ar[r] & \frac{\mathcal{C}(-,C)}{I_{\mathcal{B}}(-,C)}\ar[r] & 0}$$ 
Similarly to the proof of Proposition \ref{primerequivalencia}, we conclude that there exists $C', C''$ such that $C=C'\oplus C''$  where $\gamma:C'\longrightarrow C$ and  $p:C\longrightarrow C''$  are the inclusion and projection of $C'$ and $C''$ and such that there exists isomorphisms $\lambda$ and $\delta$ such that the following diagram commutes
$$\xymatrix{0\ar[r] & I_{\mathcal{B}}(-,C)\ar[r]^{\psi} & \mathcal{C}(-,C)\ar[r]^{\rho} & \frac{\mathcal{C}(-,C)}{I_{\mathcal{B}}(-,C)}\ar[r] & 0\\
0\ar[r] & \mathcal{C}(-,C')\ar[r]^{\mathcal{C}(-,\gamma)}\ar[u]_{\lambda}  & \mathcal{C}(-,C)\ar[r]^{\mathcal{C}(-,p)}\ar@{=}[u] & \mathcal{C}(-,C'')\ar[r]\ar[u]^{\delta} & 0}$$ 
We assert that  $C'\in \mathcal{B}$ and $C''\in \mathcal{B}^{\perp_{0}}$.  Indeed, for $Z\in \mathcal{B}$, we have that $I_{\mathcal{B}}(Z,C)=\mathcal{C}(Z,C)$. Hence, $0= \frac{\mathcal{C}(Z,C)}{I_{\mathcal{B}}(Z,C)}\simeq \mathcal{C}(Z,C'')$ and thus $C''\in \mathcal{B}^{\perp_{0}}$.\\
On the other hand, by Yoneda's isomorphism
$$Y:\mathrm{Hom}_{\mathrm{Mod}(\mathcal{C})}\Big(\mathcal{C}(-,C'),I_{\mathcal{B}}(-,C)\Big)\longrightarrow I_{\mathcal{B}}(C',C)$$
we have that $Y(\lambda)=\lambda_{C'}(1_{C'})\in I_{\mathcal{B}}(C',C).$ Now, since $\psi_{C'}$ is the inclusion, by the above diagram we have that
$$\lambda_{C'}(1_{C'})=\psi_{C'}(\lambda_{C'}(1_{C'})=\mathcal{C}(-,\gamma)(1_{C'})=\gamma$$
Consequently, $\gamma\in I_{\mathcal{B}}(C',C)$. Since $\gamma:C'\longrightarrow C=C'\oplus C''$ is the inclusion of $C'$, there exists $q:C\longrightarrow C'$ such that $q\gamma=1_{C'}$. Hence, we conclude that $1_{C'}\in I_{\mathcal{B}}(C',C')$. Thus, there exists $\gamma_{1}$ and $\gamma_{2}$ such that the following diagram commutes
$$\xymatrix{C'\ar[rr]^{1}\ar[dr]_{\gamma_{1}} & & C'\\
& B\ar[ur]_{\gamma_{2}} & }$$
with $B\in B$. We get that $\theta:=\gamma_{1}\gamma_{2}:B\longrightarrow B$ is idempotent.
It is easy to see that $\mathrm{Ker}(1-\theta)=\gamma_{1}$. By Lemma \ref{splitidem}, we have that $C'$ is a direct summand of $B\in \mathcal{B}$ and since $\mathcal{B}=\mathrm{add}(\mathcal{B})$ we conclude that $C'\in \mathcal{B}$.\\
Therefore, for all $C\in \mathcal{C}$ we have that  $C=C'\oplus C''$ with $C'\in \mathcal{B}$ and $C''\in \mathcal{B}^{\perp_{0}}$.\\
Since $\mathcal{C}$ is Krull-Schmidt, we have an splitting torsion pair $( \mathcal{B}, \mathcal{B}^{\perp_{0}})$ in the sense of Definition \ref{defisplitting}. Then, by \cite[Proposition 7.1]{LGOS1}, we have an equivalence of categories
$$\mathcal{C}\simeq \left[ \begin{smallmatrix}
\mathcal{B}^{\perp_{0}}& 0 \\ 
\overline{\mathrm{Hom}} &  \mathcal{B}
\end{smallmatrix}\right].$$
\end{proof}

\begin{lemma}\label{exactIm=ker}
Consider the recollement
$$\xymatrix{\mathrm{Mod}(\mathcal{C}/I_{\mathcal{B}})\ar[rr]|{\mathrm{res}_{\mathcal{C}}}  &  & \mathrm{Mod}(\mathcal{C})\ar[rr]|{\mathrm{res}_{\mathcal{B}}}\ar@<-2ex>[ll]_{\mathcal{C}/I_{\mathcal{B}}\otimes_{\mathcal{C}}-}\ar@<2ex>[ll]^{\mathcal{C}(\mathcal{C}/I_{\mathcal{B}},-)} &   & \mathrm{Mod}(\mathcal{B})\ar@<-2ex>[ll]_{\mathcal{C}\otimes_{\mathcal{B}}}\ar@<2ex>[ll]^{\mathcal{B}(\mathcal{C},-)}}$$
If $\mathcal{C}/I_{\mathcal{B}}\otimes_{\mathcal{C}}-$ is exact, then  $\mathrm{Ker}(\mathcal{C}/I_{\mathcal{B}}\otimes_{\mathcal{C}}-)=\mathrm{Im}((\mathcal{C}\otimes_{\mathcal{B}}-))$.
\end{lemma}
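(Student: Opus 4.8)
The plan is to argue abstractly from the recollement of Proposition~\ref{recollementfunctor}, abbreviating $i^{*}:=\mathcal{C}/I_{\mathcal{B}}\otimes_{\mathcal{C}}-$, $i_{*}:=\mathrm{res}_{\mathcal{C}}$, $j_{!}:=\mathcal{C}\otimes_{\mathcal{B}}-$ and $j^{*}:=\mathrm{res}_{\mathcal{B}}$, so that the assertion reads $\mathrm{Ker}(i^{*})=\mathrm{Im}(j_{!})$. The inclusion $\mathrm{Im}(j_{!})\subseteq\mathrm{Ker}(i^{*})$ is formal and does not use the exactness hypothesis: for $N\in\mathrm{Mod}(\mathcal{B})$ and any $L\in\mathrm{Mod}(\mathcal{C}/I_{\mathcal{B}})$ the adjunctions $(i^{*},i_{*})$ and $(j_{!},j^{*})$ give $\mathrm{Hom}(i^{*}j_{!}N,L)\cong\mathrm{Hom}(j_{!}N,i_{*}L)\cong\mathrm{Hom}(N,j^{*}i_{*}L)$, and $j^{*}i_{*}=0$ by (R2); choosing $L=i^{*}j_{!}N$ and invoking Yoneda forces $i^{*}j_{!}N=0$.

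For the reverse inclusion I would fix $M\in\mathrm{Mod}(\mathcal{C})$ with $i^{*}M=0$ and prove that the counit $\varepsilon_{M}\colon j_{!}j^{*}M\to M$ is an isomorphism; then $M\cong j_{!}(j^{*}M)\in\mathrm{Im}(j_{!})$. The background facts I would use are: $j^{*}$ is exact (it is simultaneously a left adjoint of $j_{*}$ and a right adjoint of $j_{!}$); $i^{*}$ is right exact (a left adjoint); $j^{*}\varepsilon_{M}$ is an isomorphism (triangle identity and $j^{*}j_{!}\cong\mathrm{id}$, Proposition~\ref{Rproposition1}(2)); and $i^{*}i_{*}\cong\mathrm{id}$ (the counit of $(i^{*},i_{*})$ is invertible since $i_{*}$ is a full embedding by (R3)). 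Step one: $\varepsilon_{M}$ is an epimorphism. Put $C=\mathrm{Coker}(\varepsilon_{M})$; applying the exact $j^{*}$ gives $j^{*}C=0$, hence $C\cong i_{*}C'$ for some $C'$ by (R2), and applying the right exact $i^{*}$ to $M\to C\to 0$ shows $i^{*}C$ is a quotient of $i^{*}M=0$, so $C'\cong i^{*}i_{*}C'=i^{*}C=0$, i.e. $C=0$. Step two: $\varepsilon_{M}$ is a monomorphism. Put $K=\mathrm{Ker}(\varepsilon_{M})$; applying $j^{*}$ to $0\to K\to j_{!}j^{*}M\xrightarrow{\varepsilon_{M}}M\to 0$ and using that $j^{*}\varepsilon_{M}$ is an isomorphism gives $j^{*}K=0$, so $K\cong i_{*}K'$. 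Here I use the hypothesis: since $i^{*}=\mathcal{C}/I_{\mathcal{B}}\otimes_{\mathcal{C}}-$ is \emph{exact}, applying it to the same short exact sequence produces an exact sequence $0\to i^{*}K\to i^{*}j_{!}j^{*}M\to i^{*}M$; but $i^{*}j_{!}=0$ and $i^{*}M=0$, whence $i^{*}K=0$ and then $K'\cong i^{*}i_{*}K'=i^{*}K=0$, i.e. $K=0$.

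All of this except the last step is routine recollement bookkeeping. The one genuine use of the hypothesis --- and the point I expect to require care --- is the final application of $i^{*}$ to the kernel sequence, where one needs $i^{*}$ to preserve the monomorphism $K\hookrightarrow j_{!}j^{*}M$: a tensor functor is only right exact a priori, so this is exactly where the statement could fail, and exactness of $\mathcal{C}/I_{\mathcal{B}}\otimes_{\mathcal{C}}-$ is precisely what forces $\mathrm{Ker}(i^{*})$ to collapse onto $\mathrm{Im}(j_{!})$. Beyond that, I only need to make sure the auxiliary identities $j^{*}j_{!}\cong\mathrm{id}$, $i^{*}i_{*}\cong\mathrm{id}$, $j^{*}i_{*}=0$ and $i^{*}j_{!}=0$ are cited or verified cleanly.
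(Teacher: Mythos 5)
Your argument is correct, and it takes a genuinely different route from the paper's. The paper quotes \cite[Proposition 4.9]{Franjou} to obtain, for any $A$, an exact sequence
$i_{*}(L_{1}i^{*}A)\to j_{!}j^{*}A\to A\to i_{*}i^{*}A\to 0$;
specializing to $A\in\mathrm{Ker}(i^{*})$ kills the last term, and then the hypothesis that $i^{*}$ is exact gives $L_{1}i^{*}=0$, so the sequence collapses to an isomorphism $j_{!}j^{*}A\cong A$. You instead prove directly that the counit $\varepsilon_{M}$ is an isomorphism in two steps, using only the standard recollement identities ($j^{*}$ exact, $j^{*}j_{!}\cong\mathrm{id}$, $i^{*}i_{*}\cong\mathrm{id}$, $j^{*}i_{*}=0$, $i^{*}j_{!}=0$) plus the exactness of $i^{*}$ in the final step. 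In effect you have re-derived by hand the exactness of the Franjou--Pirashvili sequence at $j_{!}j^{*}A$ in this particular situation, rather than citing it. The paper's proof is shorter but rests on a black-box reference; yours is longer but self-contained and keeps visible exactly where the hypothesis on $i^{*}$ enters (namely, in preserving the monomorphism $K\hookrightarrow j_{!}j^{*}M$), which you correctly flag as the one nontrivial point. Both proofs are valid; the choice is a matter of taste between brevity via citation and transparency via elementary bookkeeping.
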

\begin{proof}
We know that always happens that $\mathrm{Im}((\mathcal{C}\otimes_{\mathcal{B}}-))\subseteq \mathrm{Ker}(\mathcal{C}/I_{\mathcal{B}}\otimes_{\mathcal{C}}-)$. Let $A\in  \mathrm{Ker}(\mathcal{C}/I_{\mathcal{B}}\otimes_{\mathcal{C}}-)$. By \cite[Proposition 4.9]{Franjou}, we have the following exact sequence
$$\xymatrix{i_{\ast}(L_{1}i^{\ast}(A))\ar[r] & j_{!}j^{\ast}A\ar[r] & A\ar[r] & 0}$$
where $i_{\ast}=res_{\mathcal{C}}$, $L_{1}i^{\ast}$ is the first derived functor of $i^{\ast}=\mathcal{C}/I_{\mathcal{B}}\otimes_{\mathcal{C}}-$, $j_{!}=\mathcal{C}\otimes_{\mathcal{B}}-$ and $j^{\ast}=res_{\mathcal{B}}$.
Since $i^{\ast}$ is exact, we have that $L_{1}i^{\ast}(A)=0$ and thus $j_{!}j^{\ast}A\simeq  A$. Hence, $A\in \mathrm{Im}((\mathcal{C}\otimes_{\mathcal{B}}-))$. Proving that $\mathrm{Ker}(\mathcal{C}/I_{\mathcal{B}}\otimes_{\mathcal{C}}-)=\mathrm{Im}((\mathcal{C}\otimes_{\mathcal{B}}-))$.
\end{proof}

\begin{proposition}\label{ciertaspropiedades}
Consider the recollement
$$\xymatrix{\mathrm{Mod}(\mathcal{C}/I_{\mathcal{B}})\ar[rr]|{\mathrm{res}_{\mathcal{C}}}  &  & \mathrm{Mod}(\mathcal{C})\ar[rr]|{\mathrm{res}_{\mathcal{B}}}\ar@<-2ex>[ll]_{\mathcal{C}/I_{\mathcal{B}}\otimes_{\mathcal{C}}-}\ar@<2ex>[ll]^{\mathcal{C}(\mathcal{C}/I_{\mathcal{B}},-)} &   & \mathrm{Mod}(\mathcal{B})\ar@<-2ex>[ll]_{\mathcal{C}\otimes_{\mathcal{B}}}\ar@<2ex>[ll]^{\mathcal{B}(\mathcal{C},-)}}$$
 Let $(\mathcal{X},\mathcal{Y},\mathcal{Z})$ be the TTF associated to the recollement, where $\mathcal{X}=\mathrm{Ker}(\mathcal{C}/I_{\mathcal{B}}\otimes_{\mathcal{C}}-)$, $\mathcal{Y}=\mathrm{Im}(res_{\mathcal{C}})\simeq \mathrm{Mod}(\mathcal{C}/I_{\mathcal{B}})$  and $\mathcal{Z}=\mathrm{Ker}(\mathcal{C}(\mathcal{C}/I_{\mathcal{B}},-))$.
Then, the following are equivalent.
\begin{enumerate}
\item [(a)] $\mathcal{C}/I_{\mathcal{B}}\otimes_{\mathcal{C}}-$ is exact.

\item [(b)]  $\mathcal{C}(\mathcal{C}/I_{\mathcal{B}},- )\circ (\mathcal{C}\otimes_{\mathcal{B}}-)=0$.

\item [(c)]  $\mathcal{X}\subseteq \mathcal{Z}$.

\item [(d)] $(\mathcal{X},\mathcal{Y})$ is a hereditary torsion pair.
\end{enumerate}
\end{proposition}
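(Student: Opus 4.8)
The plan is to work with the TTF triple $(\mathcal{X},\mathcal{Y},\mathcal{Z})$ supplied by Proposition~\ref{recolleTTF}, using throughout that both $(\mathcal{X},\mathcal{Y})$ and $(\mathcal{Y},\mathcal{Z})$ are torsion pairs: thus $\mathcal{X}$ is a torsion class (closed under quotients, extensions, coproducts), $\mathcal{Y}$ is closed under subobjects \emph{and} under quotients, $\mathcal{Z}$ is closed under subobjects and extensions, and $\mathcal{X}\cap\mathcal{Y}=\mathcal{Y}\cap\mathcal{Z}=0$. Abbreviating $i^{\ast}=\mathcal{C}/I_{\mathcal{B}}\otimes_{\mathcal{C}}-$, $i_{\ast}=\mathrm{res}_{\mathcal{C}}$, $i^{!}=\mathcal{C}(\mathcal{C}/I_{\mathcal{B}},-)$, $j_{!}=\mathcal{C}\otimes_{\mathcal{B}}-$, $j^{\ast}=\mathrm{res}_{\mathcal{B}}$, one has $\mathcal{X}=\mathrm{Ker}(i^{\ast})$, $\mathcal{Y}=\mathrm{Im}(i_{\ast})=\mathrm{Ker}(j^{\ast})$ and $\mathcal{Z}=\mathrm{Ker}(i^{!})$, and I will establish the three biconditionals $(a)\Leftrightarrow(d)$, $(c)\Leftrightarrow(d)$ and $(b)\Leftrightarrow(c)$.

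For $(a)\Rightarrow(d)$ I would argue that if $i^{\ast}$ is exact and $A'\hookrightarrow A$ with $A\in\mathcal{X}$, then $i^{\ast}A'\hookrightarrow i^{\ast}A=0$ forces $A'\in\mathcal{X}$, so $(\mathcal{X},\mathcal{Y})$ is hereditary. For the converse $(d)\Rightarrow(a)$, since $i^{\ast}$ is a left adjoint it is automatically right exact, so it suffices to show it preserves monomorphisms; here I would use that, $i_{\ast}$ being fully faithful with essential image $\mathcal{Y}$, the unit $A\to i_{\ast}i^{\ast}A$ is precisely the $\mathcal{Y}$-reflection $A\to A/t(A)$ for the torsion pair $(\mathcal{X},\mathcal{Y})$. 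For a monomorphism $u\colon A'\to A$ the kernel of $i_{\ast}i^{\ast}u\colon A'/t(A')\to A/t(A)$ is $u^{-1}(t(A))/t(A')$, which lies in $\mathcal{X}$ (a quotient of the subobject $u^{-1}(t(A))$ of $t(A)\in\mathcal{X}$, using $(d)$) and simultaneously in $\mathcal{Y}$ (a subobject of $A'/t(A')$), hence is $0$; as $i_{\ast}$ is faithful and exact, $i^{\ast}u$ is monic. This is the classical fact that the localization functor attached to a hereditary torsion pair is exact; cf.\ \cite{Stentrom}.

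The equivalence $(c)\Leftrightarrow(d)$ is then purely torsion-theoretic. For $(d)\Rightarrow(c)$: any $f\colon Y\to X$ with $Y\in\mathcal{Y}$, $X\in\mathcal{X}$ has image both a subobject of $X$ (so in $\mathcal{X}$, by $(d)$) and a quotient of $Y$ (so in $\mathcal{Y}$), hence $\mathrm{Im}(f)\in\mathcal{X}\cap\mathcal{Y}=0$; thus $\mathcal{X}\subseteq\mathcal{Y}^{\perp_{0}}=\mathcal{Z}$. For $(c)\Rightarrow(d)$: given $X'\hookrightarrow X$ with $X\in\mathcal{X}$, decompose $0\to t(X')\to X'\to X'/t(X')\to 0$ relative to $(\mathcal{X},\mathcal{Y})$; then $X/t(X')$ is a quotient of $X$, hence in $\mathcal{X}$, and $X'/t(X')\hookrightarrow X/t(X')$ is a monomorphism from $\mathcal{Y}$ into $\mathcal{X}$, which vanishes since $(c)$ gives $\mathrm{Hom}(\mathcal{Y},\mathcal{X})=0$; hence $X'=t(X')\in\mathcal{X}$. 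Likewise $(c)\Rightarrow(b)$ is immediate: in any recollement $i^{\ast}j_{!}=0$, so $\mathrm{Im}(j_{!})\subseteq\mathcal{X}$, and $(c)$ gives $\mathrm{Im}(j_{!})\subseteq\mathcal{Z}=\mathrm{Ker}(i^{!})$, i.e.\ $i^{!}\circ j_{!}=0$.

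The step that needs genuine work is $(b)\Rightarrow(c)$, and I expect it to be the main obstacle. The idea: for $A\in\mathcal{X}$ the counit $\epsilon_{A}\colon j_{!}j^{\ast}A\to A$ is an epimorphism (its cokernel is $i_{\ast}i^{\ast}A=0$, by the standard right-hand exact sequence of the recollement, cf.\ the sequence used in the proof of Lemma~\ref{exactIm=ker}); since $j^{\ast}\epsilon_{A}$ is an isomorphism by the triangle identity (as $j_{!}$ is fully faithful) and $j^{\ast}$ is exact, the kernel $K$ of $\epsilon_{A}$ lies in $\mathrm{Ker}(j^{\ast})=\mathcal{Y}$. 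Assuming $(b)$, the object $j_{!}j^{\ast}A$ lies in $\mathcal{Z}$; pulling back the exact sequence $0\to K\to j_{!}j^{\ast}A\to A\to 0$ along the inclusion of the $\mathcal{Y}$-torsion subobject $t(A)\hookrightarrow A$ (torsion pair $(\mathcal{Y},\mathcal{Z})$) yields $0\to K\to P\to t(A)\to 0$ with $P\subseteq j_{!}j^{\ast}A$, so $P\in\mathcal{Z}$ (subobject of a $\mathcal{Z}$-object) and $P\in\mathcal{Y}$ (extension of $t(A)\in\mathcal{Y}$ by $K\in\mathcal{Y}$); hence $P\in\mathcal{Y}\cap\mathcal{Z}=0$, forcing $t(A)=0$ and $A\in\mathcal{Z}$. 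This yields $\mathcal{X}\subseteq\mathcal{Z}$, closing the loop. Apart from this pullback-and-extension argument and the reflection identity used in $(d)\Rightarrow(a)$, the remaining steps are routine torsion-theoretic bookkeeping with the two torsion pairs of the TTF triple.
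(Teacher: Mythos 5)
Your argument is correct, and it takes a genuinely different route from the paper's. The paper's proof leans on external machinery at two key points: it cites \cite[Proposition 8.8]{Franjou} to dispose of $(a)\Leftrightarrow(b)$, then invokes Lemma~\ref{exactIm=ker} (hence the derived-functor sequence from \cite[Proposition 4.9]{Franjou}) to deduce $\mathcal{X}=\mathrm{Im}(j_!)$ and so $(b)\Rightarrow(c)$; for $(d)\Rightarrow(a)$ it assembles a $3\times3$ commutative diagram and runs the snake lemma on the first two columns, then uses exactness of $i_*$ and fullness/faithfulness to kill the kernel $K$. You instead work entirely within the torsion-theoretic structure of the TTF triple: your $(b)\Rightarrow(c)$ isolates the object $P=\epsilon_A^{-1}(t(A))$ and shows $P\in\mathcal{Y}\cap\mathcal{Z}=0$ by combining closure under subobjects (for $\mathcal{Z}$) and under extensions (for $\mathcal{Y}$), with the only recollement input being that $j^*\epsilon_A$ is an isomorphism; your $(d)\Rightarrow(a)$ replaces the snake-lemma diagram with the direct computation that $\ker(i_*i^*u)=u^{-1}(t(A))/t(A')$ lies in $\mathcal{X}\cap\mathcal{Y}=0$, which is the classical Stenstr\"{o}m argument that a hereditary torsion pair has exact associated quotient functor. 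The paper's proof is shorter because it offloads work onto cited results; yours is longer but self-contained and exposes the purely lattice-theoretic content of the equivalences, requiring from the recollement only that $i_*$ is exact and fully faithful, $j_!$ fully faithful, $j^*$ exact, and $i^*j_!=0$ --- which could be a minor advantage if one wanted to state the result in a setting slightly more general than functor-category recollements.

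Two small points worth spelling out if you write this up. In $(d)\Rightarrow(a)$ you assert that the unit $\lambda_A\colon A\to i_*i^*A$ is the $\mathcal{Y}$-reflection; this is true but should be justified, e.g.\ by the exact sequence $j_!j^*A\to A\to i_*i^*A\to 0$ together with the observation that $\mathrm{Im}(\mu_A)$ is a quotient of $j_!j^*A\in\mathcal{X}$ and $i_*i^*A\in\mathcal{Y}$, so that $\mathrm{Im}(\mu_A)=t(A)$ --- this is essentially Proposition~\ref{propsegundorecolle}(a), though stated there only under the hypothesis that $i^*$ is exact (which you cannot assume at that stage). In $(b)\Rightarrow(c)$, $\mathcal{Z}$ being closed under subobjects requires knowing $\mathcal{Z}$ is the torsion-free class of $(\mathcal{Y},\mathcal{Z})$; this is part of Proposition~\ref{recolleTTF}, which you correctly cite, but it is worth stating explicitly since closure under subobjects is exactly what makes $P\in\mathcal{Z}$.
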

\begin{proof}
$(a)\Longleftrightarrow (b)$ follows from Proposition 8.8 in p. 54 in \cite{Franjou}.\\
$(b)\Longrightarrow (c)$.  Suppose that $\mathcal{C}(\mathcal{C}/I_{\mathcal{B}},- )\circ (\mathcal{C}\otimes_{\mathcal{B}}-)=0$. We have already proved that this is equivalent to the fact that $\mathcal{C}/I_{\mathcal{B}}\otimes_{\mathcal{C}}-$ is exact. Hence, by Lemma \ref{exactIm=ker}, we conclude that
$$\mathcal{X}=\mathrm{Ker}(\mathcal{C}/I_{\mathcal{B}}\otimes_{\mathcal{C}}-)=\mathrm{Im}((\mathcal{C}\otimes_{\mathcal{B}}-))\subseteq \mathrm{Ker}(\mathcal{C}(\mathcal{C}/I_{\mathcal{B}},-))=\mathcal{Z}$$
$(c)\Longrightarrow (b)$. We have that $\mathrm{Im}((\mathcal{C}\otimes_{\mathcal{B}}-))\subseteq \mathrm{Ker}(\mathcal{C}/I_{\mathcal{B}}\otimes_{\mathcal{C}}-)=\mathcal{X}\subseteq \mathcal{Z}=\mathrm{Ker}(\mathcal{C}(\mathcal{C}/I_{\mathcal{B}},-))$. Hence $\mathcal{C}(\mathcal{C}/I_{\mathcal{B}},- )\circ (\mathcal{C}\otimes_{\mathcal{B}}-)=0$.\\
$(a) \Longrightarrow (d)$. Suppose that $\mathcal{C}/I_{\mathcal{B}}\otimes_{\mathcal{C}}-$ is exact. 
Let $M\in \mathcal{X}=\mathrm{Ker}(\mathcal{C}/I_{\mathcal{B}}\otimes_{\mathcal{C}}-)$ and $N$ a submodule of $M$.
Since $\mathcal{X}$ is closed under quotients, we have the following exact sequence
$$\xymatrix{0\ar[r] & N\ar[r] & M\ar[r] & M/N\ar[r] & 0}$$
with $M, M/N\in \mathcal{X}$. Hence, by applying the funtor $\mathcal{C}/I_{\mathcal{B}}\otimes_{\mathcal{C}}-$ we obtain the exact sequence
$$\xymatrix{0\ar[r] & \mathcal{C}/I_{\mathcal{B}}\otimes_{\mathcal{C}}N\ar[r] & \mathcal{C}/I_{\mathcal{B}}\otimes_{\mathcal{C}}M=0\ar[r] & \mathcal{C}/I_{\mathcal{B}}\otimes_{\mathcal{C}}M/N=0\ar[r] & 0}$$
We conclude that $N\in  \mathcal{X}=\mathrm{Ker}(\mathcal{C}/I_{\mathcal{B}}\otimes_{\mathcal{C}}-)$, proving that $\mathcal{X}$ is closed under subobjects and hence $(\mathcal{X},\mathcal{Y})$ is a hereditary torsion pair.\\
$(d) \Longrightarrow (a)$. For simplicity we write $i_{\ast}=\mathrm{res}_{\mathcal{C}}$,  $i^{\ast}=\mathcal{C}/I_{\mathcal{B}}\otimes_{\mathcal{C}}-$, $j_{!}=\mathcal{C}\otimes_{\mathcal{B}}-$ and $j^{\ast}=\mathrm{res}_{\mathcal{B}}$.
For $M\in \mathrm{Mod}(\mathcal{C})$, we have the exact sequence (see for example \cite[Proposition 2.8]{Psaro3})
$$\xymatrix{0\ar[r] & \mathrm{Ker}(\mu_{M})\ar[r] & j_{!}j^{\ast}(M)\ar[r]^(.6){\mu_{M}} & M\ar[r]^(.4){\lambda_{M}} & i_{\ast}i^{\ast}(M)\ar[r] & 0}$$
where $\mathrm{Ker}(\mu_{M})\in \mathrm{Im}\Big(\mathrm{res}_{\mathcal{C}}-)\Big)=\mathcal{Y}$. Recall that we always have that $\mathrm{Im}((\mathcal{C}\otimes_{\mathcal{B}}-))\subseteq \mathrm{Ker}(\mathcal{C}/I_{\mathcal{B}}\otimes_{\mathcal{C}}-)$ (see for example \cite[Remark 2.4 (ii)]{Psaro3}).\\
We have that
$j_{!}j^{\ast}(M)\in \mathrm{Im}(j_{!})\subseteq \mathrm{Ker}(i^{\ast})=\mathcal{X}$. Since $(\mathcal{X},\mathcal{Y})$  is hereditary we have that $\mathcal{X}$ is closed under subobjects and hence $\mathrm{Ker}(\mu_{M})\in \mathcal{X}\cap \mathcal{Y}=\{0\}$. Thus, for each $M\in \mathrm{Mod}(\mathcal{C})$ we have the exact sequence
$$\xymatrix{0\ar[r] & j_{!}j^{\ast}(M)\ar[r]^(.6){\mu_{M}} & M\ar[r]^(.4){\lambda_{M}} & i_{\ast}i^{\ast}(M)\ar[r] & 0.}$$
Let 
$$\xymatrix{0\ar[r] & M_{1}\ar[r]^{\alpha} & M_{2}\ar[r]^{\beta}  & M_{3}\ar[r] & 0}$$ 
be an exact sequence in $\mathrm{Mod}(\mathcal{C})$.
Since $i^{\ast}$ and $j_{!}$ are right exact and $i_{\ast}$, $j^{\ast}$ are exact, we have the commutative diagram
$$\xymatrix{ & & 0\ar[d] & & \\
0\ar[r] & j_{!}j^{\ast}(M_{1})\ar[r]^(.6){\mu_{M_{1}}}\ar[d]^{j_{!}j^{\ast}(\alpha)} & M_{1}\ar[r]^(.4){\lambda_{M_{1}}}\ar[d]^{\alpha} & i_{\ast}i^{\ast}(M_{1})\ar[r]\ar[d]^{i_{\ast}i^{\ast}(\alpha)} & 0\\
0\ar[r] & j_{!}j^{\ast}(M_{2})\ar[r]^(.6){\mu_{M_{2}}}\ar[d]^{j_{!}j^{\ast}(\beta)} & M_{2}\ar[r]^(.4){\lambda_{M_{2}}}\ar[d]^{\beta} & i_{\ast}i^{\ast}(M_{2})\ar[r]\ar[d]^{i_{\ast}i^{\ast}(\beta)}  & 0\\
0\ar[r] & j_{!}j^{\ast}(M_{3})\ar[r]^(.6){\mu_{M_{3}}}\ar[d] & M_{3}\ar[r]^(.4){\lambda_{M_{3}}}\ar[d] & i_{\ast}i^{\ast}(M_{3})\ar[r]\ar[d] & 0\\
&0 & 0 & 0}$$

By the snake Lemma applied to the first two columns, we have that the exact sequence
$$\xymatrix{0=\mathrm{Ker}(\mu_{M_{3}})\ar[r] & \mathrm{Coker}(\mu_{M_{1}})\ar[r] & \mathrm{Coker}(\mu_{M_{2}})\ar[r] & \mathrm{Coker}(\mu_{M_{3}})}$$
Hence, $i_{\ast}i^{\ast}(\alpha)$ is a momomorphism an hence  we conclude that we have the exact  sequence
$$\xymatrix{0\ar[r] & i_{\ast}i^{\ast}(M_{1})\ar[r]^{i_{\ast}i^{\ast}(\alpha)} & i_{\ast}i^{\ast}(M_{2})\ar[r]^{i_{\ast}i^{\ast}(\beta)} & i_{\ast}i^{\ast}(M_{3})\ar[r] & 0.}$$
Now, since $i^{\ast}$ is right exact we have the exact sequence
$$\xymatrix{0\ar[r] & K\ar[r] & i^{\ast}(M_{1})\ar[r]^{i^{\ast}(\alpha)} & i^{\ast}(M_{2})\ar[r]^{i^{\ast}(\beta)} & i^{\ast}(M_{3})\ar[r] & 0.}$$
By applying the exact functor $i_{\ast}$, we have the exact sequence 
$$\xymatrix{0\ar[r] & i_{\ast}(K)\ar[r] & i_{\ast}i^{\ast}(M_{1})\ar[r]^{i_{\ast}i^{\ast}(\alpha)} & i_{\ast}i^{\ast}(M_{2})\ar[r]^{i_{\ast}i^{\ast}(\beta)} & i_{\ast}i^{\ast}(M_{3})\ar[r] & 0}$$
Hence, we conclude that $i_{\ast}(K)=0$. Since $i_{\ast}$ is full and faithful, we conclude that $K=0$. Proving that $i^{\ast}=\mathcal{C}/I_{\mathcal{B}}\otimes_{\mathcal{C}}-$ is exact.
\end{proof}

Dually, we have the following

\begin{proposition}
Consider the recollement
$$\xymatrix{\mathrm{Mod}(\mathcal{C}/I_{\mathcal{B}})\ar[rr]|{\mathrm{res}_{\mathcal{C}}}  &  & \mathrm{Mod}(\mathcal{C})\ar[rr]|{\mathrm{res}_{\mathcal{B}}}\ar@<-2ex>[ll]_{\mathcal{C}/I_{\mathcal{B}}\otimes_{\mathcal{C}}-}\ar@<2ex>[ll]^{\mathcal{C}(\mathcal{C}/I_{\mathcal{B}},-)} &   & \mathrm{Mod}(\mathcal{B})\ar@<-2ex>[ll]_{\mathcal{C}\otimes_{\mathcal{B}}}\ar@<2ex>[ll]^{\mathcal{B}(\mathcal{C},-)}}$$
 Let $(\mathcal{X},\mathcal{Y},\mathcal{Z})$ be the TTF associated to the recollement, where $\mathcal{X}=\mathrm{Ker}(\mathcal{C}/I_{\mathcal{B}}\otimes_{\mathcal{C}}-)$, $\mathcal{Y}=\mathrm{Im}(res_{\mathcal{C}})\simeq \mathrm{Mod}(\mathcal{C}/I_{\mathcal{B}})$  and $\mathcal{Z}=\mathrm{Ker}(\mathcal{C}(\mathcal{C}/I_{\mathcal{B}},-))$.
Then, the following are equivalent.
\begin{enumerate}
\item [(a)] $\mathcal{C}(\mathcal{C}/I_{\mathcal{B}},-)$ is exact.

\item [(b)]  $(\mathcal{C}/I_{\mathcal{B}}\otimes_{\mathcal{C}}-) \circ (\mathcal{B}(\mathcal{C},-))=0$.

\item [(c)]  $\mathcal{Z}\subseteq \mathcal{X}$.

\item [(d)] $(\mathcal{Y},\mathcal{Z})$ is a cohereditary torsion pair.
\end{enumerate}
\end{proposition}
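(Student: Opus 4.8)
The plan is to prove this proposition by dualizing, line by line, the argument of Proposition \ref{ciertaspropiedades}. Adopt the recollement notation $i_{\ast}=\mathrm{res}_{\mathcal{C}}$, $i^{\ast}=\mathcal{C}/I_{\mathcal{B}}\otimes_{\mathcal{C}}-$, $i^{!}=\mathcal{C}(\mathcal{C}/I_{\mathcal{B}},-)$, $j^{\ast}=\mathrm{res}_{\mathcal{B}}$, $j_{!}=\mathcal{C}\otimes_{\mathcal{B}}-$ and $j_{\ast}=\mathcal{B}(\mathcal{C},-)$, so that $\mathcal{X}=\mathrm{Ker}(i^{\ast})$, $\mathcal{Y}=\mathrm{Im}(i_{\ast})$ and $\mathcal{Z}=\mathrm{Ker}(i^{!})$; recall that $i^{!}$ and $j_{\ast}$ are left exact, $i^{\ast}$ and $j_{!}$ right exact, while $i_{\ast}$ and $j^{\ast}$ are exact, that $i^{!}j_{\ast}=0$ in any recollement, and that by Proposition \ref{recolleTTF} the pair $(\mathcal{Y},\mathcal{Z})$ is a torsion pair, hence $\mathcal{Y}\cap\mathcal{Z}=\{0\}$ and $\mathcal{Z}$ is closed under subobjects. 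The equivalence $(a)\Leftrightarrow(b)$ is the statement dual to the one used in Proposition \ref{ciertaspropiedades} (the dual of \cite[Proposition 8.8]{Franjou}): $i^{!}$ is exact if and only if $i^{\ast}\circ j_{\ast}=0$. I would simply invoke it.

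For $(b)\Leftrightarrow(c)$ I would first record the dual of Lemma \ref{exactIm=ker}: if $i^{!}$ is exact, then $\mathrm{Ker}(i^{!})=\mathrm{Im}(j_{\ast})$. The inclusion $\mathrm{Im}(j_{\ast})\subseteq\mathrm{Ker}(i^{!})$ holds in any recollement; conversely, for $A\in\mathrm{Ker}(i^{!})$ the exact sequence dual to \cite[Proposition 4.9]{Franjou}, namely $0\to i_{\ast}i^{!}(A)\to A\to j_{\ast}j^{\ast}(A)\to i_{\ast}(R^{1}i^{!}(A))\to 0$, collapses when $i^{!}$ is exact (so $R^{1}i^{!}=0$) to an isomorphism $A\cong j_{\ast}j^{\ast}(A)\in\mathrm{Im}(j_{\ast})$. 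Granting this: if $(b)$ holds then by $(a)\Leftrightarrow(b)$ the functor $i^{!}$ is exact, so $\mathcal{Z}=\mathrm{Im}(j_{\ast})$, and $(b)$ gives $\mathrm{Im}(j_{\ast})\subseteq\mathrm{Ker}(i^{\ast})=\mathcal{X}$, whence $\mathcal{Z}\subseteq\mathcal{X}$; conversely, from $\mathrm{Im}(j_{\ast})\subseteq\mathcal{Z}\subseteq\mathcal{X}=\mathrm{Ker}(i^{\ast})$ one gets $i^{\ast}j_{\ast}=0$.

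For $(a)\Rightarrow(d)$: $(\mathcal{Y},\mathcal{Z})$ is already a torsion pair and $\mathcal{Z}$ is closed under subobjects, so it remains to check that $\mathcal{Z}$ is closed under quotients. Given an epimorphism $M\to M/N$ with $M\in\mathcal{Z}$, we have $N\in\mathcal{Z}$, and applying the exact functor $i^{!}$ to $0\to N\to M\to M/N\to 0$ yields $0\to 0\to 0\to i^{!}(M/N)\to 0$, so $M/N\in\mathrm{Ker}(i^{!})=\mathcal{Z}$; thus $(\mathcal{Y},\mathcal{Z})$ is cohereditary.

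The only direction requiring genuine work is $(d)\Rightarrow(a)$, which I would mirror step by step from the corresponding part of Proposition \ref{ciertaspropiedades}, working on the cokernel side rather than the kernel side. First, since $\mathcal{Z}$ is now closed under quotients, in the general exact sequence $0\to i_{\ast}i^{!}(M)\to M\xrightarrow{\nu_{M}}j_{\ast}j^{\ast}(M)\to \mathrm{Coker}(\nu_{M})\to 0$ the cokernel lies in $\mathrm{Im}(i_{\ast})=\mathcal{Y}$, while $j_{\ast}j^{\ast}(M)\in\mathrm{Im}(j_{\ast})\subseteq\mathcal{Z}$ forces $\mathrm{Coker}(\nu_{M})\in\mathcal{Z}$; hence $\mathrm{Coker}(\nu_{M})\in\mathcal{Y}\cap\mathcal{Z}=\{0\}$, producing a short exact sequence $0\to i_{\ast}i^{!}(M)\to M\to j_{\ast}j^{\ast}(M)\to 0$, natural in $M$. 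Then, for an arbitrary short exact sequence $0\to M_{1}\to M_{2}\to M_{3}\to 0$, I would assemble the $3\times 3$ diagram with these short exact sequences as rows; since $i_{\ast}$, $j^{\ast}$ are exact and $i^{!}$, $j_{\ast}$ left exact, the outer columns $i_{\ast}i^{!}(M_{\bullet})$ and $j_{\ast}j^{\ast}(M_{\bullet})$ are left exact while the middle column is exact, and the long exact homology sequence of the resulting short exact sequence of three term complexes (the dual of the snake lemma step in Proposition \ref{ciertaspropiedades}) shows that $0\to i_{\ast}i^{!}(M_{1})\to i_{\ast}i^{!}(M_{2})\to i_{\ast}i^{!}(M_{3})\to 0$ is exact. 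Finally, writing $K=\mathrm{Coker}\big(i^{!}(M_{2})\to i^{!}(M_{3})\big)$, left exactness of $i^{!}$ gives an exact sequence $0\to i^{!}(M_{1})\to i^{!}(M_{2})\to i^{!}(M_{3})\to K\to 0$; applying the exact and faithful functor $i_{\ast}$ and comparing with the sequence just obtained forces $i_{\ast}(K)=0$, hence $K=0$, so $i^{!}$ sends epimorphisms to epimorphisms and, being left exact, is exact. The main obstacle here is entirely bookkeeping: one must carefully replace each use of right exactness of $i^{\ast}$ or $j_{!}$ in the original proof by left exactness of $i^{!}$ or $j_{\ast}$, and run the $3\times 3$/homology argument through cokernels instead of kernels; no new idea is needed.
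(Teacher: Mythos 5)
Your proposal is correct and follows exactly the route the paper intends: the paper's own proof consists of the single phrase ``Dual to Proposition~\ref{ciertaspropiedades},'' and your argument is precisely that dualization carried out in full, with the right substitutions ($i^{\ast}\rightsquigarrow i^{!}$, $j_{!}\rightsquigarrow j_{\ast}$, kernel $\rightsquigarrow$ cokernel, hereditary $\rightsquigarrow$ cohereditary) and the correct use of the dual snake/long-exact-sequence step to obtain surjectivity of $i_{\ast}i^{!}(M_{2})\to i_{\ast}i^{!}(M_{3})$ and then $K=0$.
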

\begin{proof}
Dual to Proposition \ref{ciertaspropiedades}.
\end{proof}

\begin{proposition}\label{propsegundorecolle}
Consider the recollement
$$\xymatrix{\mathrm{Mod}(\mathcal{C}/I_{\mathcal{B}})\ar[rr]|{\mathrm{res}_{\mathcal{C}}}  &  & \mathrm{Mod}(\mathcal{C})\ar[rr]|{\mathrm{res}_{\mathcal{B}}}\ar@<-2ex>[ll]_{\mathcal{C}/I_{\mathcal{B}}\otimes_{\mathcal{C}}-}\ar@<2ex>[ll]^{\mathcal{C}(\mathcal{C}/I_{\mathcal{B}},-)} &   & \mathrm{Mod}(\mathcal{B})\ar@<-2ex>[ll]_{\mathcal{C}\otimes_{\mathcal{B}}}\ar@<2ex>[ll]^{\mathcal{B}(\mathcal{C},-)}}$$
such that $\mathcal{C}/I_{\mathcal{B}}\otimes_{\mathcal{C}}-$ is exact.\\ 
Consider the torsion pair $(\mathcal{X},\mathcal{Y})$ where
$\mathcal{X}=\mathrm{Ker}(\mathcal{C}/I_{\mathcal{B}}\otimes_{\mathcal{C}}-)=\mathrm{Im}((\mathcal{C}\otimes_{\mathcal{B}}-))\simeq \mathrm{Mod}(\mathcal{B})$ and $\mathcal{Y}=\mathrm{Im}(\mathrm{res}_{\mathcal{C}})\simeq  \mathrm{Mod}(\mathcal{C}/I_{\mathcal{B}})$.  For simplicity let  $i^{\ast}:=\mathcal{C}/I_{\mathcal{B}}\otimes_{\mathcal{C}}-$, $i_{\ast}:=\mathrm{res}_{\mathcal{C}}$, $j_{!}:=\mathcal{C}\otimes_{\mathcal{B}}-$ and $j^{!}=\mathrm{res}_{\mathcal{B}}$.\\
Let $\mu:j_{!}\circ j^{!}\longrightarrow 1$ the counit of the adjoint pair $(j_{!},j^{!})$ and $\lambda:1\longrightarrow i_{\ast} \circ i^{\ast}$ the unit of the adjoint pair $(i^{\ast},i_{\ast})$.\\
\begin{enumerate}
\item [(a)] For any $M\in \mathrm{Mod}(\mathcal{C})$ canonical sequence associated to the torsion pair $(\mathcal{X},\mathcal{Y})$ is the following
$$\xymatrix{0\ar[r] & j_{!}j^{!}(M)\ar[r]^(.6){\mu_{M}} & M\ar[r]^(.4){\lambda_{M}} & i_{\ast}i^{\ast}(M)\ar[r] & 0}$$

\item [(b)] The  radical associated to the torsion pair $(\mathcal{X},\mathcal{Y})$  is just $\tau:=j_{!}j^{!}$.

\item [(c)] For all $C\in \mathcal{C}$, we have that $I_{\mathcal{B}}(C,-)\simeq \tau(\mathcal{C}(C,-))=j_{!}j^{}\mathcal{C}(C,-)$.

\item [(d)] For all $C\in \mathcal{C}$, we have that $I_{\mathcal{B}}(C,-)=\mathrm{Tr}_{\mathcal{Q}}(\mathcal{C}(C,-))$ where $\mathcal{Q}:=\Big\{\mathcal{C}(B,-)\mid B\in \mathcal{B}\Big\}$.
\end{enumerate}
\end{proposition}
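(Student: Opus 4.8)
The plan is to derive part (a) from the general structure of the recollement together with the exactness hypothesis, and then to obtain (b), (c) and (d) as consequences. Throughout I write $i^\ast,i_\ast,j_!,j^!$ for the four displayed functors, as in the statement.

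\emph{Parts (a) and (b).} First I would recall the natural exact sequence attached to any $M\in\mathrm{Mod}(\mathcal{C})$ by a recollement (this is the sequence already used in the proof of Proposition~\ref{ciertaspropiedades}, cf.\ \cite[Proposition~2.8]{Psaro3}):
\[
0\longrightarrow \mathrm{Ker}(\mu_M)\longrightarrow j_!j^!(M)\xrightarrow{\ \mu_M\ } M\xrightarrow{\ \lambda_M\ } i_\ast i^\ast(M)\longrightarrow 0,
\]
with $\mathrm{Ker}(\mu_M)\in\mathrm{Im}(i_\ast)=\mathcal{Y}$. Since $i^\ast=\mathcal{C}/I_{\mathcal{B}}\otimes_{\mathcal{C}}-$ is exact, Proposition~\ref{ciertaspropiedades} tells us $(\mathcal{X},\mathcal{Y})$ is a \emph{hereditary} torsion pair, so $\mathcal{X}$ is closed under subobjects; and by Lemma~\ref{exactIm=ker}, $j_!j^!(M)\in\mathrm{Im}(j_!)=\mathcal{X}$. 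Hence the subobject $\mathrm{Ker}(\mu_M)$ of $j_!j^!(M)$ lies in $\mathcal{X}\cap\mathcal{Y}=0$, so $\mu_M$ is a monomorphism and we obtain the short exact sequence displayed in (a), with left term in $\mathcal{X}$ and right term in $\mathcal{Y}$. Uniqueness of the canonical sequence associated to the torsion pair $(\mathcal{X},\mathcal{Y})$ then identifies it with this one, proving (a). Part (b) is immediate: the torsion radical of $(\mathcal{X},\mathcal{Y})$ sends $M$ to its largest subobject lying in $\mathcal{X}$, which by (a) is $j_!j^!(M)$, functorially in $M$.

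\emph{Part (c).} Here I would apply (a) and (b) to $M=\mathcal{C}(C,-)$. Using the description in Remark~\ref{2funtores} together with Proposition~\ref{AProposition0}(3) (tensoring a $\mathcal{C}^{op}$-module with a representable), one computes, for $C'\in\mathcal{C}$,
\[
\big(i^\ast\mathcal{C}(C,-)\big)(C')=\tfrac{\mathcal{C}(-,C')}{I_{\mathcal{B}}(-,C')}\otimes_{\mathcal{C}}\mathcal{C}(C,-)=\tfrac{\mathcal{C}(C,C')}{I_{\mathcal{B}}(C,C')},
\]
i.e.\ $i^\ast\mathcal{C}(C,-)=(\mathcal{C}/I_{\mathcal{B}})(C,-)$, hence $i_\ast i^\ast\mathcal{C}(C,-)=\mathcal{C}(C,-)/I_{\mathcal{B}}(C,-)$, and via the $(i^\ast,i_\ast)$-adjunction and Yoneda's lemma the unit $\lambda_{\mathcal{C}(C,-)}$ is the canonical projection. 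So the short exact sequence of (a) for $M=\mathcal{C}(C,-)$ coincides with $0\to I_{\mathcal{B}}(C,-)\to\mathcal{C}(C,-)\to\mathcal{C}(C,-)/I_{\mathcal{B}}(C,-)\to 0$, giving $\tau(\mathcal{C}(C,-))=j_!j^!\mathcal{C}(C,-)\simeq I_{\mathcal{B}}(C,-)$.

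\emph{Part (d).} This I would prove by a direct Yoneda computation, independent of the above: a morphism $\mathcal{C}(B,-)\to\mathcal{C}(C,-)$ with $B\in\mathcal{B}$ is, by Yoneda, of the form $\mathcal{C}(h,-)$ for a unique $h\in\mathcal{C}(C,B)$, and its image at $C'$ is $\{\,g h\mid g\in\mathcal{C}(B,C')\,\}\subseteq I_{\mathcal{B}}(C,C')$; summing these over all $B\in\mathcal{B}$ and all $h$, and using $\mathcal{B}=\mathrm{add}(\mathcal{B})$ to rewrite any finite sum $\sum_i g_i h_i$ (with $h_i\colon C\to B_i$) as a single factorization through $\bigoplus_i B_i\in\mathcal{B}$, one gets exactly $I_{\mathcal{B}}(C,-)$, so $\mathrm{Tr}_{\mathcal{Q}}(\mathcal{C}(C,-))=I_{\mathcal{B}}(C,-)$. (Alternatively, (d) follows from (c) and $j_!(\mathcal{B}(B,-))=\mathcal{C}(B,-)$, by Proposition~\ref{Rproposition1}(3), since then $\mathrm{Im}(\mu_{\mathcal{C}(C,-)})$ is the $\mathcal{Q}$-trace.) The step I expect to be the main obstacle is the bookkeeping in (c): matching the abstract data $i_\ast i^\ast\mathcal{C}(C,-)$ and the unit $\lambda_{\mathcal{C}(C,-)}$ with the concrete quotient $\mathcal{C}(C,-)\twoheadrightarrow\mathcal{C}(C,-)/I_{\mathcal{B}}(C,-)$ on the nose rather than merely up to a non-canonical isomorphism; once that identification is pinned down, everything else is either uniqueness of torsion decompositions or Yoneda.
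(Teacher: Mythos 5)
Your proposal is correct and the overall architecture matches the paper's: parts (a)--(b) hinge on the Psaroudakis four-term sequence and the vanishing of $\mathrm{Ker}(\mu_M)$, and (c)--(d) specialize this to $M=\mathcal{C}(C,-)$. Where you diverge is mainly in (c) and (d), and the divergences are worth noting. In (a) you route the vanishing of $\mathrm{Ker}(\mu_M)$ through Proposition~\ref{ciertaspropiedades} (hereditarity of $(\mathcal{X},\mathcal{Y})$); the paper instead applies $i^{\ast}$ directly to the monomorphism $\mathrm{Ker}(\mu_M)\hookrightarrow j_!j^!(M)$ and uses $i^{\ast}j_!=0$ — an equivalent one-line inlining of the same idea. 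In (c), you try to identify the unit $\lambda_{\mathcal{C}(C,-)}$ with the quotient map $\mathcal{C}(C,-)\twoheadrightarrow \mathcal{C}(C,-)/I_{\mathcal{B}}(C,-)$ "on the nose," and rightly flag this as the delicate step; the paper avoids it entirely. Since the claim is only $I_{\mathcal{B}}(C,-)\simeq\tau(\mathcal{C}(C,-))$, it suffices to show the explicit sequence $0\to I_{\mathcal{B}}(C,-)\to\mathcal{C}(C,-)\to\mathcal{C}(C,-)/I_{\mathcal{B}}(C,-)\to 0$ is \emph{a} torsion sequence for $(\mathcal{X},\mathcal{Y})$ and appeal to the essential uniqueness of such a decomposition. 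Membership of the tail in $\mathcal{Y}=\mathrm{Im}(\mathrm{res}_{\mathcal{C}})$ is clear; for the head, the paper notes $i^{\ast}I_{\mathcal{B}}(C,-)=I_{\mathcal{B}}(C,-)/I_{\mathcal{B}}\!\cdot\! I_{\mathcal{B}}(C,-)=0$ because $I_{\mathcal{B}}$ is an idempotent ideal, so $I_{\mathcal{B}}(C,-)\in\mathcal{X}$. This is cleaner and makes the bookkeeping worry moot; you should adopt it. In (d) your direct Yoneda computation is fine but unnecessary: the paper observes that the torsion radical is $\mathrm{Tr}_{\mathcal{X}}(-)$, that $j_!=\mathcal{C}\otimes_{\mathcal{B}}-$ carries the projective generators $\mathcal{B}(B,-)$ of $\mathrm{Mod}(\mathcal{B})$ to $\mathcal{C}(B,-)$ (Proposition~\ref{Rproposition1}(3)), hence $\mathcal{Q}$ is a set of projective generators of $\mathcal{X}$, so $\mathrm{Tr}_{\mathcal{X}}=\mathrm{Tr}_{\mathcal{Q}}$; this gives (d) directly from (c) without any new computation — which is essentially your "alternative" route, and is the one to prefer since it leverages (c) rather than reproving it.
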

\begin{proof}
$(a)$. We known that there exists exact sequence (see for example \cite[Proposition 2.8]{Psaro3})
$$\xymatrix{0\ar[r] & \mathrm{Ker}(\mu_{M})\ar[r] & j_{!}j^{!}(M)\ar[r]^{\mu_{M}} & M\ar[r]^{\lambda_{M}} & i_{\ast}i^{\ast}(M)\ar[r] & 0}$$
where $\mathrm{Ker}(\mu_{M})\in \mathrm{Im}(i_{\ast})=\mathcal{Y}$. Now, since $i^{\ast}$ is exact and $i^{\ast}j_{!}=0$, we have the following exact sequence
$$\xymatrix{0\ar[r] & i^{\ast}(\mathrm{Ker}(\mu_{M}))\ar[r] & i^{\ast}(j_{!}j^{!}(M))=0}$$
Hence $\mathrm{Ker}(\mu_{M})\in \mathrm{Ker}(i^{\ast})=\mathcal{X}$. Hence we have that $\mathrm{Ker}(\mu_{M})\in \mathcal{X}\cap \mathcal{Y}=0$  hence we have the exact sequence
$$\xymatrix{0\ar[r] & j_{!}j^{!}(M)\ar[r]^{\mu_{M}} & M\ar[r]^{\lambda_{M}} & i_{\ast}i^{\ast}(M)\ar[r] & 0}$$
where $j_{!}j^{!}(M)\in \mathrm{Im}(j_{!})=\mathrm{Ker}(i^{\ast})=\mathcal{X}$ and $i_{\ast}i^{\ast}(M)\in \mathrm{Im}(i_{\ast})=\mathcal{Y}$. We conclude that the above exact sequence  is  the canonical sequence associated to the torsion pair $(\mathcal{X},\mathcal{Y})$.\\
$(b)$. Due to the exact sequence in the item (a), we have that the radical associated to the torsion pair $(\mathcal{X},\mathcal{Y})$  is just $\tau:=j_{!}j^{!}$.\\
$(c)$. Consider the exact sequence 
$$\xymatrix{0\ar[r] & I_{\mathcal{B}}(C,-)\ar[r] & \mathcal{C}(C,-)\ar[r] & \frac{\mathcal{C}(C,-)}{I_{\mathcal{B}}(C,-)}\ar[r] & 0.}$$
We know that $ \mathcal{C}/I_{\mathcal{B}}\otimes_{\mathcal{C}}I_{\mathcal{B}}(C,-)=\frac{I_{\mathcal{B}}(C,-)}{I_{\mathcal{B}}\cdot I_{\mathcal{B}}(C,-)}=0$ since $I_{\mathcal{B}}$ is idempotent.
Hence $I_{\mathcal{B}}(C,-)\in \mathrm{Ker}(i^{\ast})=\mathcal{X}$, clearly we have that $\frac{\mathcal{C}(C,-)}{I_{\mathcal{B}}(C,-)}\in\mathcal{Y}$.
Hence we have that the above exact sequence is isomorphic to the one given by the torsion pair $(\mathcal{X},\mathcal{Y})$ hence we conclude that $I_{\mathcal{B}}(C,-)\simeq \tau(\mathcal{C}(C,-))=j_{!}j^{}\mathcal{C}(C,-))$.\\
$(d)$. By general theory we have that $\tau(M)=\mathrm{Tr}_{\mathcal{X}}(M)$. Since $\mathcal{X}\simeq \mathrm{Mod}(\mathcal{B})$ via $j_{!}=\mathcal{C}\otimes_{\mathcal{B}}-$ and $\mathcal{P}:=\{\mathcal{B}(B,-)\}_{B\in \mathcal{B}}$ is a set of projective generators of $\mathrm{Mod}(\mathcal{B})$, we have that 
$$\mathcal{Q}:=\Big\{\mathcal{C}\otimes_{\mathcal{B}}\mathcal{B}(B,-)=\mathcal{C}(B,-)\mid B\in \mathcal{B}\Big\}$$
is a set of projective generators of $\mathcal{X}$. Hence for $M\in \mathcal{X}$ there exists an epimorphism
$$\Big(\bigoplus_{B\in \mathcal{B}}\mathcal{C}(B,-)\Big)^{(I)}\longrightarrow M$$
Therefore, we conclude that
$$\tau(M)=\mathrm{Tr}_{\mathcal{X}}(M)=\mathrm{Tr}_{\mathcal{Q}}(M).$$
In particular, we have that $I_{\mathcal{B}}(C,-)=\tau(\mathcal{C}(C,-))=\mathrm{Tr}_{\mathcal{Q}}(\mathcal{C}(C,-).$
\end{proof}

\begin{proposition}
Consider the recollement
$$\xymatrix{\mathrm{Mod}(\mathcal{C}/I_{\mathcal{B}})\ar[rr]|{\mathrm{res}_{\mathcal{C}}}  &  & \mathrm{Mod}(\mathcal{C})\ar[rr]|{\mathrm{res}_{\mathcal{B}}}\ar@<-2ex>[ll]_{\mathcal{C}/I_{\mathcal{B}}\otimes_{\mathcal{C}}-}\ar@<2ex>[ll]^{\mathcal{C}(\mathcal{C}/I_{\mathcal{B}},-)} &   & \mathrm{Mod}(\mathcal{B}).\ar@<-2ex>[ll]_{\mathcal{C}\otimes_{\mathcal{B}}}\ar@<2ex>[ll]^{\mathcal{B}(\mathcal{C},-)}}$$
 Let $(\mathcal{X},\mathcal{Y},\mathcal{Z})$ be the TTF associated to the recollement, where $\mathcal{X}=\mathrm{Ker}(\mathcal{C}/I_{\mathcal{B}}\otimes_{\mathcal{C}}-)$, $\mathcal{Y}=\mathrm{Im}(res_{\mathcal{C}})\simeq \mathrm{Mod}(\mathcal{C}/I_{\mathcal{B}})$  and $\mathcal{Z}=\mathrm{Ker}(\mathcal{C}(\mathcal{C}/I_{\mathcal{B}},-))$. If $\mathcal{X}=\mathcal{Z}$ hence 
\begin{enumerate}
\item [(a)] $\mathrm{Mod}(\mathcal{C})\simeq \mathrm{Mod}(\mathcal{B})\times \mathrm{Mod}(\mathcal{C}/I_{\mathcal{B}}).$

\item [(b)] $\mathcal{C}\simeq \left[ \begin{smallmatrix}
\mathcal{B} & 0 \\ 
0 &  {}^{\perp_{0}}\mathcal{B}
\end{smallmatrix}\right].$
\end{enumerate}
 \end{proposition}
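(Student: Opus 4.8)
Here is the line of attack I would follow.

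\medskip

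\noindent\textbf{Step 1: symmetrize the recollement.} The key observation is that the hypothesis $\mathcal{X}=\mathcal{Z}$ is exactly what makes \emph{both} left adjoints exact. Indeed $\mathcal{X}\subseteq\mathcal{Z}$ gives, by Proposition~\ref{ciertaspropiedades} (the implication $(c)\Rightarrow(a)$), the exactness of $i^{\ast}:=\mathcal{C}/I_{\mathcal{B}}\otimes_{\mathcal{C}}-$, while $\mathcal{Z}\subseteq\mathcal{X}$ gives, by the dual of Proposition~\ref{ciertaspropiedades}, the exactness of $i^{!}:=\mathcal{C}(\mathcal{C}/I_{\mathcal{B}},-)$. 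Consequently, by Proposition~\ref{ciertaspropiedades}$(d)$ and its dual, both $(\mathcal{X},\mathcal{Y})$ and $(\mathcal{Y},\mathcal{Z})=(\mathcal{Y},\mathcal{X})$ are torsion pairs in $\mathrm{Mod}(\mathcal{C})$. From this I extract the two orthogonality relations $\mathrm{Hom}(\mathcal{X},\mathcal{Y})=0=\mathrm{Hom}(\mathcal{Y},\mathcal{X})$ (hence $\mathcal{X}\cap\mathcal{Y}=0$) and the fact that each of $\mathcal{X}$, $\mathcal{Y}$ is closed under subobjects and quotients (being simultaneously a torsion and a torsion-free class).

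\medskip

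\noindent\textbf{Step 2: proof of (a).} Let $t$ and $s$ be the radicals of $(\mathcal{X},\mathcal{Y})$ and $(\mathcal{Y},\mathcal{X})$, so for $M\in\mathrm{Mod}(\mathcal{C})$ one has $t(M)\in\mathcal{X}$, $M/t(M)\in\mathcal{Y}$, $s(M)\in\mathcal{Y}$ and $M/s(M)\in\mathcal{X}$. Then $t(M)\cap s(M)$ is a subobject of an object of $\mathcal{X}$ and of an object of $\mathcal{Y}$, hence lies in $\mathcal{X}\cap\mathcal{Y}=0$; dually $M/(t(M)+s(M))$ is a quotient of an object of $\mathcal{X}$ and of an object of $\mathcal{Y}$, hence is $0$. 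Thus $M=t(M)\oplus s(M)$ functorially, and since all cross $\mathrm{Hom}$'s vanish, $M\mapsto(t(M),s(M))$ is an equivalence $\mathrm{Mod}(\mathcal{C})\simeq\mathcal{X}\times\mathcal{Y}$. Finally $\mathcal{Y}=\mathrm{Im}(\mathrm{res}_{\mathcal{C}})\simeq\mathrm{Mod}(\mathcal{C}/I_{\mathcal{B}})$, and since $i^{\ast}$ is exact, Lemma~\ref{exactIm=ker} gives $\mathcal{X}=\mathrm{Im}(\mathcal{C}\otimes_{\mathcal{B}}-)$; as $\mathcal{C}\otimes_{\mathcal{B}}-$ is a full embedding (condition (R3) of Definition~\ref{recolldef}), it induces $\mathrm{Mod}(\mathcal{B})\simeq\mathcal{X}$. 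Combining, $\mathrm{Mod}(\mathcal{C})\simeq\mathrm{Mod}(\mathcal{B})\times\mathrm{Mod}(\mathcal{C}/I_{\mathcal{B}})$.

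\medskip

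\noindent\textbf{Step 3: proof of (b).} Since $i^{!}$ is exact, Theorem~\ref{primerequivalencia} applies and yields an equivalence $\mathcal{C}\simeq\left[\begin{smallmatrix}\mathcal{B}&0\\ \overline{\mathrm{Hom}}&{}^{\perp_{0}}\mathcal{B}\end{smallmatrix}\right]$ attached to the splitting torsion pair $({}^{\perp_{0}}\mathcal{B},\mathcal{B})$, where the bimodule records the $\mathrm{Hom}$'s not forced to vanish, namely $\overline{\mathrm{Hom}}(U,T)=\mathrm{Hom}_{\mathcal{C}}(T,U)$ for $U\in{}^{\perp_{0}}\mathcal{B}$, $T\in\mathcal{B}$. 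So it remains to show $\overline{\mathrm{Hom}}=0$. For $T\in\mathcal{B}$ every morphism out of $T$ factors through $\mathcal{B}$ (through $T$ itself), so $\mathcal{C}(T,-)=I_{\mathcal{B}}(T,-)$, and since $I_{\mathcal{B}}$ is idempotent $i^{\ast}\big(\mathcal{C}(T,-)\big)=I_{\mathcal{B}}(T,-)/I_{\mathcal{B}}\!\cdot\! I_{\mathcal{B}}(T,-)=0$, i.e. $\mathcal{C}(T,-)\in\mathcal{X}$ (cf. the proof of Proposition~\ref{propsegundorecolle}$(c)$). For $U\in{}^{\perp_{0}}\mathcal{B}$ one has $\mathrm{Hom}_{\mathcal{C}}(U,B)=0$ for all $B\in\mathcal{B}$, so $\mathcal{C}(U,-)$ annihilates $I_{\mathcal{B}}$ and hence $\mathcal{C}(U,-)\in\mathrm{Im}(\mathrm{res}_{\mathcal{C}})=\mathcal{Y}$. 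By Yoneda,
$$\overline{\mathrm{Hom}}(U,T)=\mathrm{Hom}_{\mathcal{C}}(T,U)\cong\mathrm{Hom}_{\mathrm{Mod}(\mathcal{C})}\big(\mathcal{C}(U,-),\mathcal{C}(T,-)\big)=0,$$
the last equality because $\mathcal{C}(U,-)\in\mathcal{Y}$, $\mathcal{C}(T,-)\in\mathcal{X}$ and $\mathrm{Hom}(\mathcal{Y},\mathcal{X})=0$. Therefore $\overline{\mathrm{Hom}}=0$ and $\mathcal{C}\simeq\left[\begin{smallmatrix}\mathcal{B}&0\\0&{}^{\perp_{0}}\mathcal{B}\end{smallmatrix}\right]$.

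\medskip

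\noindent The delicate point is the vanishing $\overline{\mathrm{Hom}}=0$ in Step~3: this is where one genuinely uses the full hypothesis $\mathcal{X}=\mathcal{Z}$ (through the exactness of $i^{!}$ together with the extra orthogonality $\mathrm{Hom}(\mathcal{Y},\mathcal{X})=0$) and not merely one of the two inclusions; everything else is formal bookkeeping with the torsion pairs of the recollement and with the idempotent ideal $I_{\mathcal{B}}$.
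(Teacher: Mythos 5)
Your proof is correct, and it takes a genuinely different — and in places more self-contained — route than the paper. For part (a), the paper invokes the fact that $\mathcal{X}=\mathcal{Z}$ together with $(\mathcal{C}/I_{\mathcal{B}}\otimes_{\mathcal{C}}-)\circ\mathcal{B}(\mathcal{C},-)=0$ forces exactness of $\mathcal{C}(\mathcal{C}/I_{\mathcal{B}},-)$ and then simply cites Psaroudakis' characterization of a \emph{splitting recollement} (Proposition 3.9 of \cite{Psaro3}) to obtain $\mathrm{Mod}(\mathcal{C})\simeq\mathrm{Mod}(\mathcal{B})\times\mathrm{Mod}(\mathcal{C}/I_{\mathcal{B}})$; you instead build the product decomposition by hand, observing that $\mathcal{X}$ and $\mathcal{Y}$ are each simultaneously a torsion and a torsion-free class (hence closed under both subobjects and quotients) and deducing the internal splitting $M=t(M)\oplus s(M)$, which together with the two-sided orthogonality yields the equivalence directly — a more elementary argument that avoids citing the splitting-recollement machinery. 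For part (b), the paper passes through the comma-category description of $\mathrm{Mod}(\Lambda)$ from \cite[Proposition 5.4]{LGOS1}, identifies the gluing functor $\mathbb{F}\simeq\mathcal{C}(\mathcal{C}/I_{\mathcal{B}},-)\circ(\mathcal{C}\otimes_{\mathcal{B}}-)$, uses the hypothesis to get $\mathbb{F}=0$, and then concludes $E=0$ and hence $\overline{\mathrm{Hom}}=0$ by comparing with Definition 3.1 of \cite{LGOS1}; you instead prove the vanishing directly at the level of $\mathcal{C}$, using Yoneda to translate $\mathrm{Hom}_{\mathcal{C}}(T,U)\cong\mathrm{Hom}_{\mathrm{Mod}(\mathcal{C})}(\mathcal{C}(U,-),\mathcal{C}(T,-))$ and the placements $\mathcal{C}(T,-)\in\mathcal{X}$ (via idempotency of $I_{\mathcal{B}}$), $\mathcal{C}(U,-)\in\mathcal{Y}$ (since it kills $I_{\mathcal{B}}$), together with $\mathrm{Hom}(\mathcal{Y},\mathcal{X})=0$. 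Your route for (b) is cleaner and sidesteps the auxiliary functors $\mathbb{F}$ and $E$; the only caveat is the parenthetical identification $\overline{\mathrm{Hom}}(U,T)=\mathrm{Hom}_{\mathcal{C}}(T,U)$, which should be read as ``$\overline{\mathrm{Hom}}(U,T)$ is a quotient of $\mathrm{Hom}_{\mathcal{C}}(T,U)$'' according to \cite[Definition 3.1]{LGOS1} — this does not affect the conclusion since you in fact show $\mathrm{Hom}_{\mathcal{C}}(T,U)=0$.
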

 \begin{proof}
$(a)$. We have that $\mathrm{Im}(\mathcal{B}(\mathcal{C},-))\subseteq \mathrm{Ker}(\mathcal{C}(\mathcal{C}/I_{\mathcal{B}},-))=\mathcal{Z}=\mathcal{X}=\mathrm{Ker}(\mathcal{C}/I_{\mathcal{B}}\otimes_{\mathcal{C}}-)$.
Hence, $$(\mathcal{C}/I_{\mathcal{B}}\otimes_{\mathcal{C}}-) \circ (\mathcal{B}(\mathcal{C},-))=0.$$
By \cite[ Proposition 8.8]{Franjou}, we have that $\mathcal{C}(\mathcal{C}/I_{\mathcal{B}},-)$ is an exact functor. Then, by  Proposition 3.9 in p. 87 in \cite{Psaro3}, we have a splitting recollement and hence
$$\mathrm{Mod}(\mathcal{C})\simeq \mathrm{Mod}(\mathcal{B})\times \mathrm{Mod}(\mathcal{C}/I_{\mathcal{B}}).$$
$(b)$.  In item (a) we showed that $\mathcal{C}(\mathcal{C}/I_{\mathcal{B}},-)$ is an exact functor. Then, by  Proposition \ref{primerequivalencia}, we have an equivalence $$\mathcal{C}\simeq \left[ \begin{smallmatrix}
 \mathcal{B} & 0 \\ 
M &  {}^{\perp_{0}}\mathcal{B}
\end{smallmatrix}\right],$$
where $M:=\overline{\mathrm{Hom}}$. Recall that $\mathcal{C}/I_{\mathcal{B}}\simeq {}^{\perp_{0}}\mathcal{B}$.\\
Hence, the given recollement is equivalent to the one given in the Proposition  \ref{primerrecolle}. By  \cite[ Proposition 5.4]{LGOS1}, we get that $\mathrm{Mod}(\mathcal{C})$ is equivalent to the comma category $\Big(\mathbb{F}\mathrm{Mod}(\mathcal{B}), \mathrm{Mod}({}^{\perp_{0}}\mathcal{B})\Big)$. It can be seen that $\mathbb{F}\simeq \Big(\mathcal{C}(\mathcal{C}/I_{\mathcal{B}},-)\Big)\circ \Big(\mathcal{C}\otimes_{\mathcal{B}}-\Big)=0$.
By  the construction of  $\mathbb{F}$ in  \cite{LGOS1} in p. 861, we have that $\mathbb{F}\circ Y=E$ where $E:\mathcal{B}\longrightarrow \mathrm{Mod}({}^{\perp_{0}}\mathcal{B})$ is the functor defined in definition 3.1 in  \cite{LGOS1} and $Y$ is the contravariant Yoneda functor. Hence we conclude that $E=0$ and hence $M=\overline{\mathrm{Hom}}=0$ (see  \cite[Definition 3.1]{LGOS1}). Then  $\mathcal{C}\simeq \left[ \begin{smallmatrix}
 \mathcal{B} & 0 \\ 
0 &  {}^{\perp_{0}}\mathcal{B}
\end{smallmatrix}\right]\simeq   \mathcal{B} \times  ({}^{\perp_{0}}\mathcal{B})\simeq   \mathcal{B} \times (\mathcal{C}/I_{\mathcal{B}})$.

 \end{proof}

The following result  is due to  Zimmermann (see Lemma 2.1.9 and  Corollary 2.1.10 in \cite{Trlifaj}).

\begin{lemma}\label{LemZimm}
Let $\mathcal{A}$ be an additive category with splitting idempotents. Suppose that $\mathcal{C}$ 
is a class of objects from $\mathcal{A}$ closed under finite direct sums and direct summands.
\begin{enumerate}
\item [(a)] Let $f\in \mathrm{Hom}(M,C)$ be a $\mathcal{C}$-preenvelope of $M$. Let $E=\mathrm{End}(C)$ and $I=\{g\in E\mid gf=0\}$. Assume that idempotents lift modulo $\mathrm{Rad}(E)$ and that there exists a left idel $J$ of $E$ such that $I+J=E$ and $I\cap J\subseteq \mathrm{Rad}(E)$. Then $M$ has a $\mathcal{C}$-envelope.

\item [(b)] Let $f\in \mathrm{Hom}(C,M)$ be a $\mathcal{C}$-precover of $M$. Let $E=\mathrm{End}(C)$ and $I=\{g\in E\mid fg=0\}$. Assume that idempotents lift modulo $\mathrm{Rad}(E)$ and that there exists a right ideal $J$ of $E$ such that $I+J=E$ and $I\cap J\subseteq \mathrm{Rad}(E)$. Then $M$ has a $\mathcal{C}$-cover.
\end{enumerate}
\end{lemma}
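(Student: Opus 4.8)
After fixing the approximation the statement is purely ring-theoretic, so I would treat (a) in detail and note that (b) is its formal dual, with ``left ideal'' replaced by ``right ideal'', $\mathcal{C}$-preenvelope by $\mathcal{C}$-precover, and the composite $gf$ by $fg$ throughout. Write $E=\mathrm{End}(C)$, $N=\mathrm{Rad}(E)$ and $\bar{E}=E/N$. The starting point is the elementary observation that a $\mathcal{C}$-preenvelope $f\colon M\to C$ is a $\mathcal{C}$-envelope exactly when every $g\in E$ with $gf=f$ is an automorphism of $C$; since those $g$ form the coset $1+I$ and $I$ is a left ideal, this happens precisely when $I\subseteq N$. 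In general $I\not\subseteq N$, so the plan is to pass to a direct summand of $C$: I will produce an idempotent $e=e^{2}\in E$ such that the corestriction of $ef$ to $eC$ is a $\mathcal{C}$-envelope of $M$. Here $eC$ is an honest object of $\mathcal{A}$ because idempotents split in $\mathcal{A}$, and $eC\in\mathcal{C}$ because $\mathcal{C}$ is closed under direct summands.

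To build $e$, I would use $I+J=E$ to write $1=u+v$ with $u\in I$ and $v\in J$. Then $v^{2}-v=-vu=-uv$, and since $I,J$ are left ideals this element lies in $I\cap J\subseteq N$; hence $\bar{v}\in\bar{E}$ is idempotent, and the hypothesis that idempotents lift modulo $N$ yields an idempotent $e\in E$ with $\bar{e}=\bar{v}$. Setting $n:=v-e\in N$, one computes $ef=(1-u-n)f=f-nf=(1-n)f$, and $1-n$ is a unit of $E$; therefore $g:=ef$ differs from $f$ by a unit of $E$ and is again a $\mathcal{C}$-preenvelope, and since $g$ factors through the summand $eC$ it corestricts to a $\mathcal{C}$-preenvelope $g_{0}\colon M\to eC$.

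It remains to check $g_{0}$ is a $\mathcal{C}$-envelope. Under the identification $\mathrm{End}(eC)\cong eEe$ the obstruction ideal $I_{0}=\{h\in\mathrm{End}(eC)\mid hg_{0}=0\}$ corresponds to $eEe\cap I$, and since $\mathrm{Rad}(eEe)=eNe$, the criterion of the first paragraph shows that $g_{0}$ is an envelope as soon as $eEe\cap I\subseteq N$. So take $x=exe\in I$; because $\bar{e}=\bar{v}\in\bar{J}$ we get $\bar{x}=\bar{v}\,\bar{x}\,\bar{v}\in\bar{E}\bar{v}\subseteq\bar{J}$, while $\bar{x}\in\bar{I}$ since $x\in I$, so $\bar{x}\in\bar{I}\cap\bar{J}$. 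Now the transversality hypothesis $I\cap J\subseteq N$ forces $\bar{I}\cap\bar{J}=0$: indeed $N=NE=N(I+J)=NI+NJ\subseteq(N\cap I)+(N\cap J)\subseteq N$, hence $(N\cap I)+(N\cap J)=N$, and then a short diagram chase using $I\cap J\subseteq N$ gives $(I+N)\cap(J+N)=N$, i.e. $\bar{I}\cap\bar{J}=0$. Thus $\bar{x}=0$, i.e. $x\in N$, and $eEe\cap I\subseteq N$ as required, so $g_{0}\colon M\to eC$ is the desired $\mathcal{C}$-envelope.

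The hard part is the ring-theoretic core of the last two paragraphs: extracting the idempotent $e$ from the relation $1=u+v$ by means of the idempotent-lifting hypothesis, and the transversality computation showing that the sum $I+J=E$ becomes direct modulo $\mathrm{Rad}(E)$ so that $eEe\cap I$ is trapped inside the radical. This is essentially Zimmermann's argument; see \cite[Lemma~2.1.9 and Corollary~2.1.10]{Trlifaj}. Everything else — the characterization of envelopes via the coset $1+I$, the description of the obstruction ideal of a corestriction, the identity $\mathrm{Rad}(eEe)=e\,\mathrm{Rad}(E)\,e$, and the dualization giving (b) — is routine.
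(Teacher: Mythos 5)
The paper does not actually prove this lemma: immediately before the statement it says ``The following result is due to Zimmermann (see Lemma 2.1.9 and Corollary 2.1.10 in \cite{Trlifaj})'' and then records the statement with no proof at all. So there is no in-paper argument to compare against. Your proof plan correctly reconstructs Zimmermann's argument as it appears in the cited reference: the reduction of the envelope property to $I\subseteq\mathrm{Rad}(E)$ via the coset $1+I$ (which uses that $I$ is a left ideal), the extraction of the idempotent $e$ from $1=u+v$ using $v^2-v=-uv=-vu\in I\cap J\subseteq\mathrm{Rad}(E)$ and idempotent lifting, the passage to the summand $eC$ via $ef=(1-n)f$ with $1-n$ a unit, and the transversality computation showing $\bar I\cap\bar J=0$ so that $eEe\cap I\subseteq e\,\mathrm{Rad}(E)\,e=\mathrm{Rad}(eEe)$. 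All steps check out, including the small point that $g_0=\pi\circ ef$ remains a preenvelope because $\iota$ is split mono and $1-n$ is invertible, and that $eC\in\mathcal{C}$ because $\mathcal{C}$ is closed under direct summands. In short, your proposal supplies a correct proof of a result the paper only cites.
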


\begin{corollary}
Let $\mathcal{A}$ be an additive category with splitting idempotents. Suppose that $\mathcal{C}$ is a 
is a class of objects from $\mathcal{A}$ closed under finite direct sums and direct summands.
\begin{enumerate}
\item [(a)] Assume that $M$ has a $\mathcal{C}$-preenvelope $f\in \mathrm{Hom}(M,C)$ such that $R=\mathrm{End}(C)$ is a semiperfect ring. Then $M$ has  a $\mathcal{C}$-envelope.

\item [(b)] Assume that $M$ has a $\mathcal{C}$-precover $f\in \mathrm{Hom}(C,M)$ such that $\mathrm{End}(C)$ is a semiperfect ring. Then $M$ has  a $\mathcal{C}$-cover.
\end{enumerate}
\end{corollary}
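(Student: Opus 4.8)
The plan is to deduce both statements directly from Zimmermann's Lemma \ref{LemZimm} by checking its hypotheses. Write $E=\mathrm{End}(C)$ (this is the ring denoted $R$ in part (a)), and recall that a semiperfect ring satisfies: idempotents lift modulo $\mathrm{Rad}(E)$; the quotient $\overline{E}:=E/\mathrm{Rad}(E)$ is semisimple artinian; and $\mathrm{Rad}(E)$ is a superfluous submodule of the cyclic (hence finitely generated) left and right $E$-module $E$, by Nakayama's lemma. Thus the only hypothesis of Lemma \ref{LemZimm} that is not immediate is the existence of a suitable one-sided ideal $J$ with $I+J=E$ and $I\cap J\subseteq \mathrm{Rad}(E)$, where $I$ is the one-sided ideal of $E$ attached to $f$.

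For part (a) I would set $I=\{g\in E\mid gf=0\}$, a left ideal of $E$. Its image $\overline{I}$ in $\overline{E}$ is a left ideal of the semisimple ring $\overline{E}$, hence a direct summand; so one can choose an idempotent $\overline{\varepsilon}\in\overline{E}$ with $\overline{I}=\overline{E}\,\overline{\varepsilon}$, lift it to an idempotent $\varepsilon\in E$ (possible since $E$ is semiperfect), and put $J:=E(1-\varepsilon)$. Then $\overline{J}=\overline{E}(1-\overline{\varepsilon})$, so $\overline{I}+\overline{J}=\overline{E}$, i.e. $I+J+\mathrm{Rad}(E)=E$; since $\mathrm{Rad}(E)$ is superfluous in ${}_{E}E$ this forces $I+J=E$. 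Moreover $\overline{I\cap J}\subseteq \overline{I}\cap\overline{J}=\overline{E}\,\overline{\varepsilon}\cap\overline{E}(1-\overline{\varepsilon})=0$, so $I\cap J\subseteq\mathrm{Rad}(E)$. Now Lemma \ref{LemZimm}(a) applies and yields a $\mathcal{C}$-envelope of $M$. Part (b) is handled symmetrically, replacing $I$ by the right ideal $\{g\in E\mid fg=0\}$, writing $\overline{I}=\overline{\varepsilon}\,\overline{E}$ for an idempotent $\overline{\varepsilon}$, lifting to $\varepsilon\in E$, setting $J:=(1-\varepsilon)E$, and invoking Lemma \ref{LemZimm}(b).

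I do not expect a genuine obstacle: the content is entirely the structure theory of semiperfect rings, and the single point requiring a little care is that $J$ must be chosen so that $I+J=E$ and $I\cap J\subseteq\mathrm{Rad}(E)$ hold \emph{simultaneously} — which is precisely what the complementary idempotent $1-\varepsilon$ provides, once we know idempotents lift and $\overline{E}$ is semisimple. One should also note that $\mathcal{C}$ being closed under finite direct sums and direct summands — the standing hypothesis of Lemma \ref{LemZimm} — is exactly the hypothesis imposed on $\mathcal{C}$ in the corollary, so no extra assumption is needed.
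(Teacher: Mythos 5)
Your proof is correct. Note, however, that the paper itself gives no proof of this corollary: both Lemma \ref{LemZimm} and the corollary are stated as known results attributed to Zimmermann, with a pointer to Lemma 2.1.9 and Corollary 2.1.10 of G\"obel--Trlifaj \cite{Trlifaj}. So there is no in-paper argument to compare against.

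That said, your deduction is exactly the standard one and matches what the cited reference does. The hypotheses of Lemma \ref{LemZimm} are idempotent lifting modulo $\mathrm{Rad}(E)$ and the existence of a one-sided ideal $J$ with $I+J=E$ and $I\cap J\subseteq \mathrm{Rad}(E)$. Semiperfectness of $E$ hands you the first directly, and the second you manufacture by passing to the semisimple quotient $\overline{E}=E/\mathrm{Rad}(E)$: every left ideal of $\overline E$ is of the form $\overline{E}\,\overline{\varepsilon}$ for an idempotent $\overline{\varepsilon}$, you lift $\overline{\varepsilon}$ to an idempotent $\varepsilon\in E$, and $J:=E(1-\varepsilon)$ works. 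The two points you correctly flag as needing care --- (i) that $I+J+\mathrm{Rad}(E)=E$ upgrades to $I+J=E$ because $\mathrm{Rad}(E)$ is superfluous in ${}_E E$ (Nakayama for the cyclic module ${}_E E$), and (ii) that $\overline{I\cap J}\subseteq\overline I\cap\overline J=0$ gives $I\cap J\subseteq\mathrm{Rad}(E)$ --- are both handled correctly. Part (b) is the mirror image with right ideals and $J:=(1-\varepsilon)E$, as you say. No gaps.
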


\begin{lemma}\label{lematorpair}
Let $\mathcal{D}$ be a Grothendieck abelian category. Let  $(\mathcal{T},\mathcal{F})$ be a torsion pair in $\mathcal{D}$ such that   $\mathcal{T}$ and $\mathcal{F}$ are abelian subcategories of $\mathcal{D}$.  The following holds.
\begin{enumerate}
\item [(a)] $\mathcal{T}$ is closed under subobjects and $\mathcal{F}$ is closed under quotients. That is $(\mathcal{T},\mathcal{F})$ is a hereditary and cohereditary torsion pair.

\item [(b)] There exists subcategory  $\mathcal{Z}$ of $\mathcal{D}$ such that $(\mathcal{T},\mathcal{F},\mathcal{Z})$ is a TTF.

\item [(c)] If $\mathcal{T}$  is closed under products, there exists subcategory $\mathcal{W}$ of $\mathcal{D}$ such that $(\mathcal{W},\mathcal{T},\mathcal{F})$ is a TTF.
\end{enumerate}
\end{lemma}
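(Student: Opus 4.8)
The plan is to establish (a) first — the hereditary/cohereditary assertion — and then obtain (b) and (c) as formal consequences of (a) together with the standard closure criteria for torsion pairs recalled after the definition of torsion pair (and in \cite{Stentrom}). Throughout I use that $\mathcal{T}$ being an abelian subcategory of $\mathcal{D}$ means in particular that $\mathcal{T}$ is closed, inside $\mathcal{D}$, under kernels of morphisms between objects of $\mathcal{T}$ (and dually $\mathcal{F}$ is closed under such cokernels), and that $\mathcal{T}\cap\mathcal{F}=0$ since an object lying in both would be a quotient of itself with zero identity map.

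For the first half of (a), take $T\in\mathcal{T}$ and a monomorphism $S\hookrightarrow T$ in $\mathcal{D}$. Let $0\to tS\to S\to S/tS\to 0$ be the canonical sequence for $(\mathcal{T},\mathcal{F})$, so $tS\in\mathcal{T}$ and $S/tS\in\mathcal{F}$. Since $\mathcal{T}$ is closed under quotients, both $T/tS$ and $T/S$ lie in $\mathcal{T}$, and the canonical epimorphism $g\colon T/tS\to T/S$ has kernel $S/tS$. Because $\mathcal{T}$ is an abelian subcategory, $\mathrm{Ker}(g)=S/tS\in\mathcal{T}$, hence $S/tS\in\mathcal{T}\cap\mathcal{F}=0$ and so $S=tS\in\mathcal{T}$; thus $(\mathcal{T},\mathcal{F})$ is hereditary. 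For the second half, given $F\in\mathcal{F}$ and an epimorphism $F\twoheadrightarrow Q$ with kernel $K$, note $K\in\mathcal{F}$ (torsion-free classes are closed under subobjects), let $F'\subseteq F$ be the preimage of the torsion subobject $tQ\subseteq Q$, and observe $F'\in\mathcal{F}$ and $tQ\cong F'/K=\mathrm{Coker}(K\hookrightarrow F')\in\mathcal{F}$ since $\mathcal{F}$ is an abelian subcategory; therefore $tQ\in\mathcal{T}\cap\mathcal{F}=0$ and $Q\in\mathcal{F}$, so $(\mathcal{T},\mathcal{F})$ is cohereditary. This proves (a).

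For (b), by (a) the class $\mathcal{F}$ is closed under quotients, and as a torsion-free class it is closed under extensions, subobjects and products. Since $\mathcal{D}$ is a Grothendieck category, the canonical morphism $\coprod_i F_i\to\prod_i F_i$ is a monomorphism (it is the filtered colimit of the split monomorphisms $\prod_{i\in J}F_i\to\prod_i F_i$ indexed by the finite subsets $J$), so for any family in $\mathcal{F}$ the coproduct $\coprod_i F_i$ is a subobject of $\prod_i F_i\in\mathcal{F}$ and hence lies in $\mathcal{F}$; thus $\mathcal{F}$ is closed under coproducts too. Being closed under quotients, extensions and coproducts, $\mathcal{F}$ is a torsion class, so there is $\mathcal{Z}$ with $(\mathcal{F},\mathcal{Z})$ a torsion pair, and $(\mathcal{T},\mathcal{F},\mathcal{Z})$ is a TTF-triple. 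For (c), by (a) the class $\mathcal{T}$ is closed under subobjects, as a torsion class it is closed under extensions, and by hypothesis it is closed under products; hence $\mathcal{T}$ is a torsion-free class, so there is $\mathcal{W}$ with $(\mathcal{W},\mathcal{T})$ a torsion pair, and $(\mathcal{W},\mathcal{T},\mathcal{F})$ is a TTF-triple.

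I expect the genuine content to sit entirely in part (a): the subtle point is to exhibit the short exact sequences $0\to S/tS\to T/tS\to T/S\to 0$ and $0\to K\to F'\to tQ\to 0$ in which every term other than the ``unwanted'' torsion or torsion-free part lies in $\mathcal{T}$ (resp.\ $\mathcal{F}$), so that the abelian-subcategory hypothesis forces that part into $\mathcal{T}\cap\mathcal{F}=0$. Once (a) is in hand, (b) and (c) are routine, the only extra observation being that in a Grothendieck category a torsion-free class that is also closed under quotients is automatically closed under coproducts.
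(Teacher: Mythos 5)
Your proof is correct and uses essentially the same mechanism as the paper's: the key move in (a) is that an abelian full subcategory (with exact inclusion) is closed under kernels and cokernels of morphisms between its objects, and (b),(c) then follow from the standard closure criteria for torsion/torsion-free classes together with the embedding $\coprod_i F_i\hookrightarrow\prod_i F_i$ available in a Grothendieck category. The only difference is cosmetic: in (a) the paper applies the kernel/cokernel closure directly to $Y\twoheadrightarrow Y/X$ (resp.\ $N\hookrightarrow Z$) to conclude $X\in\mathcal{T}$ (resp.\ $Z/N\in\mathcal{F}$), whereas you take one extra step through the torsion subobject $tS$ (resp.\ $tQ$) and invoke $\mathcal{T}\cap\mathcal{F}=0$; both are valid.
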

\begin{proof}
$(a)$. Let $Y\in \mathcal{T}$ and $i:X\longrightarrow Y$ be a monomorphism in $\mathcal{D}$. Consider the epimorphism $\pi:Y\longrightarrow Y/X$ in $\mathcal{D}$. Since $(\mathcal{T},\mathcal{F})$ is a torsion pair, we have that $\mathcal{T}$ is closed under quotients, coproducts and extensions. So we have that $Y/X\in \mathcal{T}$.
Since $\mathcal{T}$ is an abelian full subcategory of $\mathcal{D}$, we have that the inclusion funtor
$I:\mathcal{T}\longrightarrow \mathcal{D}$ is an exact funtor.\\
Since $\mathcal{T}$ is abelian, we have an exact sequence in $\mathcal{T}$:
$$\xymatrix{0\ar[r] & K\ar[r] & Y\ar[r] & Y/X\ar[r] & 0}$$
Since $I$ is an exact functor, we have that
$$\xymatrix{0\ar[r] & K\ar[r] & Y\ar[r] & Y/X\ar[r] & 0}$$
is exact in $\mathcal{D}$. Hence, $X\simeq K\in \mathcal{T}$. That is $\mathcal{T}$ is closed under subobjects.\\
Let us see now that $\mathcal{F}$ is closed under quotients.
Let $Z\in \mathcal{F}$ and $i:N\longrightarrow Z$ a monomorphism. Since $\mathcal{F}$ is a torsion-free class we have that $N\in \mathcal{F}$.  Since $\mathcal{F}$ is an abelian subcategory of $\mathcal{D}$ we have that the inclusion functor is exact. We have an exact sequence in $\mathcal{F}$:
$$\xymatrix{0\ar[r] & N\ar[r] & Z\ar[r] & Q\ar[r] & 0.}$$
Hence the last exact sequence is exact in $\mathcal{D}$ and hence we conclude that
$Z/N\simeq Q\in \mathcal{F}$. So, $\mathcal{F}$ is closed under quotients.\\
$(b)$  Since $(\mathcal{T},\mathcal{F})$ is a torsion pair, we have that $\mathcal{F}$ is closed under subobjects, products and  extensions. Now, let $\{C_{i}\}_{i\in I}$ be a family of objects in $\mathcal{F}$, we have a monomorphism (see \cite[Proposition 1.1]{MitBook} in p. 81)
$$\mu:\bigoplus_{i\in I}C_{i}\longrightarrow \prod_{i\in I}C_{i}.$$
Since $\mathcal{F}$ is closed under subobjects and products, we have that $\bigoplus_{i\in I}C_{i}\in \mathcal{F}$. By item (a) we have that $\mathcal{F}$ is closed under quotients. Hence $\mathcal{F}$ is closed under quotients, coproducts and extensions. Hence, there exists  $\mathcal{Z}$ such that $(\mathcal{F},\mathcal{Z})$ is a torsion pair.\\
$(c)$. By item $(a)$, we have that $\mathcal{T}$ is closed under subobjects. By hypothesis, we have that $\mathcal{T}$ is closed under products. Since  $(\mathcal{T},\mathcal{F})$ is a torsion pair in $\mathrm{Mod}(\mathcal{C})$ we also have that $\mathcal{T}$  is closed under extensions. Hence $\mathcal{T}$ is a torsion-free class for some torsion theory $(\mathcal{W},\mathcal{T})$. Then we have that $(\mathcal{W},\mathcal{T},\mathcal{F})$ is a TTF torsion theory.
\end{proof}

In the following results we will give another characterization of triangular matrix categories, which is a generalization of the  Proposition 3.6 in \cite{LipingLi}.

\begin{definition}\label{definitionXY}
Let $\mathcal{C}$ be a Krull-Schmidt category with splitting idempotents. Suppose there exists a torsion pair
$(\mathcal{T},\mathcal{F})$ in $\mathrm{Mod}(\mathcal{C})$ where $\mathcal{T}$ and $\mathcal{F}$ are nontrivial abelian subcategories of $\mathrm{Mod}(\mathcal{C})$. Let $\tau$ the radical associated to the torsion pair $(\mathcal{T},\mathcal{F})$. For  each $C\in \mathcal{C}$ consider its decomposition into irreducibles $C=\bigoplus_{i=1}^{n}C_{i}.$
We define the following
$$\mathcal{X}:=\Big\{C=\bigoplus_{i=1}^{n}C_{i}\in \mathcal{C}\,\, \Big|\,\,\,  \frac{\mathcal{C}(C_{i},-)}{\tau(\mathcal{C}(C_{i},-))}\neq 0 \,\, \forall i\Big\},$$
$$\mathcal{Y}:=\Big\{C\in \mathcal{C}\,\, \Big| \,\, \,  \mathcal{C}(C,-)\in \mathcal{T}\Big\}=\Big\{C\in \mathcal{C}\,\, \Big| \,\, \,  \frac{\mathcal{C}(C,-)}{\tau(\mathcal{C}(C,-))}=0\Big\}.$$
\end{definition}

\begin{lemma}\label{lemamorcero}
Let $\mathcal{C}$ be a Krull-Schmidt category with splitting idempotents. Suppose there exists a torsion pair
$(\mathcal{T},\mathcal{F})$ in $\mathrm{Mod}(\mathcal{C})$ where $\mathcal{T}$ and $\mathcal{F}$ are nontrivial abelian subcategories of $\mathrm{Mod}(\mathcal{C})$. Let $\tau$ the radical associated to the torsion pair $(\mathcal{T},\mathcal{F})$.\\
Let $C\in \mathcal{C}$ indecomposable such that  $\frac{\mathcal{C}(C,-)}{\tau(\mathcal{C}(C,-))}\neq 0$. Hence,
$$\mathrm{Hom}_{\mathrm{Mod}(\mathcal{C})}(\mathcal{C}(C,-)),M)=0,\quad \quad \forall M\in \mathcal{T}.$$
\end{lemma}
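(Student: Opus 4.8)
The plan is to play a structural property of the representable functor $P:=\mathcal{C}(C,-)$ off against the fact, provided by Lemma~\ref{lematorpair}, that the torsion pair $(\mathcal{T},\mathcal{F})$ is cohereditary.

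First I would record the needed property of $P$. Since $C$ is indecomposable in the Krull--Schmidt category $\mathcal{C}$, the ring $E:=\mathrm{End}_{\mathcal{C}}(C)$ is local and nonzero (note $C=0$ is impossible under the hypothesis $P/\tau(P)\neq 0$), and by Yoneda's lemma $\mathrm{End}_{\mathrm{Mod}(\mathcal{C})}(P)$ is then local as well. If $F\subsetneq P$ is a proper subfunctor then $F(C)\subsetneq E$, for otherwise $1_{C}\in F(C)$ would generate $P$; hence $F(C)\subseteq\mathrm{rad}(E)$, the unique maximal left ideal of $E$, and then for every object $X$, every $g\in F(X)$ and every $h\in\mathcal{C}(X,C)$ one has $h\circ g=F(h)(g)\in F(C)\subseteq\mathrm{rad}(E)$, so $g$ lies in $\mathrm{rad}(P)$, the radical of $P$ (the subfunctor of radical maps out of $C$). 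Thus $\mathrm{rad}(P)$ is the largest proper subfunctor of $P$ (it is itself proper because $\mathrm{rad}(P)(C)=\mathrm{rad}(E)\neq E$), and in particular $\mathrm{rad}(P)$ is superfluous: any subfunctor $G$ with $G+\mathrm{rad}(P)=P$ must equal $P$. This is just the familiar statement that $\mathcal{C}(C,-)$ has a simple top and a superfluous radical, which I would cite.

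Next I would use cohereditariness. By Lemma~\ref{lematorpair}(a) the pair $(\mathcal{T},\mathcal{F})$ is hereditary and cohereditary, so $\mathcal{T}$ is closed under subobjects and quotients while $\mathcal{F}$ is closed under quotients. From this I would deduce the key claim: for every epimorphism $\pi\colon P\twoheadrightarrow I$ with $I\in\mathcal{T}$, the restriction $\tau(P)\hookrightarrow P\xrightarrow{\pi}I$ is still an epimorphism. Indeed, letting $J\subseteq I$ be its image, $J$ is a quotient of $\tau(P)\in\mathcal{T}$, so $J\in\mathcal{T}$; and $I/J$ is at once a quotient of $I\in\mathcal{T}$ and a quotient of $P/\tau(P)\in\mathcal{F}$ (because $P\to I/J$ annihilates $\tau(P)$, hence factors through $P/\tau(P)$), whence $I/J\in\mathcal{T}\cap\mathcal{F}=0$ and $J=I$.

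Finally I would assemble the pieces. Given $M\in\mathcal{T}$ and $f\colon P\to M$, set $I:=\mathrm{Im}(f)$; since $\mathcal{T}$ is closed under subobjects, $I\in\mathcal{T}$, and the corestriction $\bar f\colon P\twoheadrightarrow I$ is an epimorphism, so by the previous step $\tau(P)\to I$ is surjective, i.e. $\tau(P)+\mathrm{Ker}(\bar f)=P$. The hypothesis $P/\tau(P)\neq 0$ says $\tau(P)$ is a proper subfunctor, hence $\tau(P)\subseteq\mathrm{rad}(P)$, so $\mathrm{rad}(P)+\mathrm{Ker}(\bar f)=P$; superfluity of $\mathrm{rad}(P)$ forces $\mathrm{Ker}(\bar f)=P$, that is $f=0$, and therefore $\mathrm{Hom}_{\mathrm{Mod}(\mathcal{C})}(\mathcal{C}(C,-),M)=0$. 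I expect the step requiring the most care to be the surjectivity of $\tau(P)\to I$: it is exactly there that the hypothesis that $\mathcal{T}$ and $\mathcal{F}$ are abelian subcategories enters, via Lemma~\ref{lematorpair}(a), making $\mathcal{F}$ closed under quotients (equivalently, making the torsion radical $\tau$ right exact); without it one cannot rule out that $f$ is nonzero on $\tau(P)$ and the argument breaks down.
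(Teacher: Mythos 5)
Your proof is correct, and it takes a genuinely different route from the paper's. The paper reaches the same destination by a more machinery-heavy path: it invokes Zimmermann's lemma (cited as Lemma~\ref{LemZimm}) together with semiperfectness of $\mathrm{End}_{\mathcal{C}}(C)$ to assert that $P/\tau(P)$ has a projective cover and that, by indecomposability of $P=\mathcal{C}(C,-)$ and $P/\tau(P)\neq 0$, the quotient $\pi\colon P\to P/\tau(P)$ \emph{is} that cover; it then factors $f|_{\tau(P)}$ through its image $L\subseteq M$, produces a morphism $h\colon P/\tau(P)\to M/L$ in $\mathrm{Hom}(\mathcal{F},\mathcal{T})=0$, and finishes with the snake lemma plus essentiality of $\pi$. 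Your argument replaces the projective-cover citation by a direct and elementary verification that $\mathrm{rad}(P)$ is the largest proper subfunctor (exploiting the locality of $\mathrm{End}_{\mathcal{C}}(C)$ through Yoneda), hence superfluous, and that $\tau(P)\subseteq\mathrm{rad}(P)$; and it replaces the snake-lemma diagram chase by the observation that $I/J\in\mathcal{T}\cap\mathcal{F}=0$, which is exactly the concrete instance of $\mathrm{Hom}(\mathcal{F},\mathcal{T})=0$ underlying the paper's step. Both approaches feed on the same hypothesis in Lemma~\ref{lematorpair}(a) (cohereditariness of $\mathcal{F}$, equivalently closure of $\mathcal{F}$ under quotients, together with closure of $\mathcal{T}$ under subobjects), and on the same structural input (local endomorphism ring of $C$). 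What your version buys is a shorter, self-contained argument that avoids both Zimmermann's theorem and the snake lemma, at the cost of requiring the reader to accept the folklore description of the radical of a representable functor at an indecomposable; what the paper's buys is that the projective-cover language generalizes more readily to non-indecomposable $C$ (where $\mathrm{End}(C)$ is semiperfect rather than local), a flexibility the paper does not actually exploit here.
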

\begin{proof}
Let $C\in \mathcal{C}$ indecomposable. Consider the canonical exact sequence
$$\xymatrix{0\ar[r] & \tau(\mathcal{C}(C,-))\ar[r]^{i}& \mathcal{C}(C,-)\ar[r]^{\pi} & \frac{\mathcal{C}(C,-)}{\tau(\mathcal{C}(C,-))}\ar[r] & 0}$$
where $\tau(\mathcal{C}(C,-))\in \mathcal{T}$ and  $0\neq \frac{\mathcal{C}(C,-)}{\tau(\mathcal{C}(C,-))}\in \mathcal{F}$.
Since $\mathrm{End}( \mathcal{C}(C,-), \mathcal{C}(C,-))=\mathrm{End}_{\mathcal{C}}(C,C)$ is semiperfect (see \cite[Corollary 4.4]{Krause1} in p. 546) and $ \mathcal{C}(C,-)$ is projective, by a result due to  Zimmermann (see Lemma \ref{LemZimm}), we have that $ \frac{\mathcal{C}(C,-)}{\tau(\mathcal{C}(C,-))}$ has a projective cover in $\mathrm{Mod}(\mathcal{C})$. Since $\mathcal{C}(C,-)$ is indecomposable, we conclude that if $\frac{\mathcal{C}(C,-)}{\tau(\mathcal{C}(C,-))}\neq 0$, then $\pi$ is the projective cover of $\frac{\mathcal{C}(C,-)}{\tau(\mathcal{C}(C,-))}$  (see also \cite[Proposition 2.1(c)]{AusM2}).\\

Let $f:\mathcal{C}(C,-)\longrightarrow M$ be with $M\in \mathcal{T}$. We consider the morphism $fi:\tau(\mathcal{C}(C,-))\longrightarrow M$ and its factorization through its image 
$$\xymatrix{\tau(\mathcal{C}(C,-))\ar[r]^(.6){p} & L\ar[r]^{u} & M.}$$ Hence, we have the following commutative diagram

$$\xymatrix{0\ar[r] & \tau(\mathcal{C}(C,-))\ar[r]^{i}\ar[d]^{p} & \mathcal{C}(C,-)\ar[r]^{\pi}\ar[d]^{f} & \frac{\mathcal{C}(C,-)}{\tau(\mathcal{C}(C,-))}\ar[r] & 0\\
0\ar[r] & L\ar[r]^{u} & M\ar[r] & \frac{M}{L}\ar[r] & 0}$$

So, there exists $h: \frac{\mathcal{C}(C,-)}{\tau(\mathcal{C}(C,-))}\longrightarrow \frac{M}{L}$ such that the above diagrama commutes. As in the proof of Prop. 3.6 in  \cite{LipingLi}, we have that $\mathrm{Hom}(\mathcal{F},\mathcal{T})=0$. Since $\frac{\mathcal{C}(C,-)}{\tau(\mathcal{C}(C,-))}\in \mathcal{F}$ and $\frac{M}{L}\in \mathcal{T}$ (since $\mathcal{T}$ is closed under quotients, see Lemma \ref{lematorpair}),  we conclude that $h=0$.\\

Since  $p$ is an epimorphism, by the snake Lemma we have the following commutative diagram
$$\xymatrix{ & 0\ar[d] & 0\ar[d] & 0\ar[d]\\
0\ar[r] & \mathrm{Ker}(p)\ar[r]\ar[d]^{\alpha} & \mathrm{Ker}(f)\ar[r]^(.4){\theta}\ar[d]^{\beta} & \mathrm{Ker}(h)=\frac{\mathcal{C}(C,-)}{\tau(\mathcal{C}(C,-))}\ar[r]\ar@{=}[d] & 0\\
0\ar[r] & \tau(\mathcal{C}(C,-))\ar[r]^{i} & \mathcal{C}(C,-)\ar[r]^{\pi}& \frac{\mathcal{C}(C,-)}{\tau(\mathcal{C}(C,-))}\ar[r] & 0.}$$
Then, we have  $\theta$ is an epimorphism and $ \theta=\pi\beta$. We conclude that  $\beta$ is an epimorphism  and thus an isomorphism (since $\pi$ is essential because $\pi$ is the projective cover of  $\frac{\mathcal{C}(C,-)}{\tau(\mathcal{C}(C,-))}\neq 0$.) Therefore,
$f=0$  since $f\beta=0$.
\end{proof}

\begin{lemma}\label{novacios}
Consider the classes $\mathcal{X}$ and $\mathcal{Y}$ given un Definition \ref{definitionXY}. Then $\mathcal{Y}\neq \emptyset$, $\mathcal{X}\neq \emptyset$ and  $\mathrm{Hom}(\mathcal{Y},\mathcal{X})=0$.
\end{lemma}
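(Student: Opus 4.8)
The plan is to reduce everything, via Yoneda's lemma, to statements about $\mathrm{Hom}$-groups in $\mathrm{Mod}(\mathcal{C})$ between representable functors, and then invoke Lemma \ref{lemamorcero}. The starting observation is that, since $\mathcal{C}$ is Krull--Schmidt, every object is a finite direct sum of indecomposables, and an indecomposable object $C$ lies in exactly one of $\mathcal{X}$ or $\mathcal{Y}$ according to whether $\frac{\mathcal{C}(C,-)}{\tau(\mathcal{C}(C,-))}$ is nonzero or zero; so the indecomposable objects of $\mathcal{C}$ are partitioned between $\mathcal{X}$ and $\mathcal{Y}$. (If $\mathcal{C}$ had no nonzero objects there would be no nontrivial torsion pair in $\mathrm{Mod}(\mathcal{C})$, so under the standing hypotheses $\mathcal{C}$ does have indecomposable objects, and the content of the lemma is really that each of $\mathcal{X},\mathcal{Y}$ contains a nonzero indecomposable, since the zero object trivially belongs to both.)

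To show $\mathcal{X}$ contains a nonzero (indecomposable) object I would argue by contradiction: if no indecomposable lies in $\mathcal{X}$, then $\mathcal{C}(C,-)\in\mathcal{T}$ for every indecomposable $C$, hence for every object $C\in\mathcal{C}$ because $\mathcal{T}$ is closed under finite coproducts. As the representables generate $\mathrm{Mod}(\mathcal{C})$ and $\mathcal{T}$ is closed under arbitrary coproducts and under quotients, this forces $\mathcal{T}=\mathrm{Mod}(\mathcal{C})$, whence $\mathcal{F}=\mathcal{T}^{\perp_{0}}=0$, contradicting nontriviality of $\mathcal{F}$. Dually, to show $\mathcal{Y}$ contains a nonzero indecomposable: if no indecomposable lies in $\mathcal{Y}$, then every indecomposable $C$ satisfies $\frac{\mathcal{C}(C,-)}{\tau(\mathcal{C}(C,-))}\neq 0$, so Lemma \ref{lemamorcero} gives $\mathrm{Hom}_{\mathrm{Mod}(\mathcal{C})}(\mathcal{C}(C,-),M)=0$ for every $M\in\mathcal{T}$; by Yoneda this reads $M(C)=0$ for all indecomposable $C$, and since $M$ is additive and every object of $\mathcal{C}$ is a finite sum of indecomposables, $M=0$, i.e. $\mathcal{T}=0$, again a contradiction.

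For the vanishing $\mathrm{Hom}_{\mathcal{C}}(\mathcal{Y},\mathcal{X})=0$, take $Y\in\mathcal{Y}$ and $X=\bigoplus_{i=1}^{n}X_{i}\in\mathcal{X}$ with each $X_{i}$ indecomposable. By Yoneda,
$$\mathrm{Hom}_{\mathcal{C}}(Y,X)\;\cong\;\mathrm{Hom}_{\mathrm{Mod}(\mathcal{C})}\big(\mathcal{C}(X,-),\mathcal{C}(Y,-)\big)\;\cong\;\bigoplus_{i=1}^{n}\mathrm{Hom}_{\mathrm{Mod}(\mathcal{C})}\big(\mathcal{C}(X_{i},-),\mathcal{C}(Y,-)\big),$$
using $\mathcal{C}(X,-)\cong\bigoplus_{i}\mathcal{C}(X_{i},-)$. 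Now $Y\in\mathcal{Y}$ means $M:=\mathcal{C}(Y,-)\in\mathcal{T}$, while by definition of $\mathcal{X}$ each $X_{i}$ is indecomposable with $\frac{\mathcal{C}(X_{i},-)}{\tau(\mathcal{C}(X_{i},-))}\neq 0$; hence Lemma \ref{lemamorcero} yields $\mathrm{Hom}_{\mathrm{Mod}(\mathcal{C})}(\mathcal{C}(X_{i},-),M)=0$ for every $i$, so $\mathrm{Hom}_{\mathcal{C}}(Y,X)=0$.

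I do not expect a serious obstacle: the only analytic input — that a representable at an indecomposable which is not torsion admits no nonzero morphism into a torsion module — is exactly Lemma \ref{lemamorcero}, and the rest is formal bookkeeping with the Yoneda isomorphism $\mathrm{Hom}_{\mathrm{Mod}(\mathcal{C})}(\mathcal{C}(X,-),\mathcal{C}(Y,-))\cong\mathcal{C}(Y,X)$, the fact that representables generate $\mathrm{Mod}(\mathcal{C})$, and the closure properties of a torsion class. The one point deserving a line of care is the reading of the statements "$\mathcal{X}\neq\emptyset$" and "$\mathcal{Y}\neq\emptyset$" as the existence of a \emph{nonzero} indecomposable object in each class, which is what the argument above actually delivers.
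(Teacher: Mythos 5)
Your proof is correct and follows essentially the same route as the paper's: a proof by contradiction for each nonemptiness claim using the partition of indecomposables between $\mathcal{X}$ and $\mathcal{Y}$, Lemma \ref{lemamorcero} as the key input for the $\mathcal{Y}$-emptiness direction and for the Hom-vanishing, and Yoneda for translating between morphisms in $\mathcal{C}$ and natural transformations of representables. The only cosmetic variation is in the argument that $\mathcal{X}\neq\emptyset$: the paper directly evaluates $M(C)\cong\mathrm{Hom}(\mathcal{C}(C,-),M)=0$ for $M\in\mathcal{F}$ and concludes $\mathcal{F}=0$, whereas you deduce $\mathcal{T}=\mathrm{Mod}(\mathcal{C})$ from closure of a torsion class under coproducts and quotients and then conclude $\mathcal{F}=\mathcal{T}^{\perp_0}=0$; both are fine and neither buys anything over the other.
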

\begin{proof}
Suppose that $C\in \mathcal{Y}$ for all $C\in \mathcal{C}$ indecomposable. Then, $\mathcal{C}(C,-)\in \mathcal{T}$ for all $C\in \mathcal{C}$ (not necessarely indecomposable). Let $M\in \mathcal{F}$ be and $C\in \mathcal{C}$. Hence,  since $(\mathcal{T},\mathcal{F})$ is a torsion pair we have that
$$M(C)\simeq \mathrm{Hom}_{\mathrm{Mod}(\mathcal{C})}(\mathcal{C}(C,-),M)=0.$$
Hence, $M=0$ and thus $\mathcal{F}=0$, which is a contradiction. Hence, there  exists indecomposable $C\in \mathcal{C}$ such that $C\notin \mathcal{Y}$ and thus $C\in \mathcal{X}$. Proving that $\mathcal{X}\neq \emptyset$.\\
Now, suppose that $C\in \mathcal{X}$ for all $C\in \mathcal{C}$ indecomposable. Let $M\in \mathcal{T}$ and $C\in \mathcal{C}$ an indecomposable. Since $C\in \mathcal{X}$, we have that $\frac{\mathcal{C}(C,-)}{\tau(\mathcal{C}(C,-))}\neq 0$,  hence by lemma \ref{lemamorcero}, we have that
 $$M(C)\simeq \mathrm{Hom}(\mathcal{C}(C,-), M)=0.$$
 Next, for arbitrary $C$ consider its decomposition into indecomposables $C=\bigoplus_{i=1}^{n}C_{i}$. Hence, we have that $M(C)=\bigoplus_{i=1}^{n}M(C_{i})=0$. Thus, $M=0$ and thus $\mathcal{T}=0$, which is a contradition. Hence, there exists indecomposable $C$ that not belongs to $\mathcal{X}$,  and thus $C\in \mathcal{Y}$. Proving that $\mathcal{Y}\neq \emptyset$.\\
We proceed to show that $\mathrm{Hom}(\mathcal{Y},\mathcal{X})=0$.
Indeed, let $C'\in \mathcal{Y}$ and $C\in \mathcal{X}$.  Consider the decomposition of $C$ into indecomposables $C=\bigoplus_{i=1}^{n}C_{i}$. Since $C\in \mathcal{X}$, we have that $\frac{\mathcal{C}(C_{i},-)}{\tau(\mathcal{C}(C_{i},-))}\neq 0$ for all $i$. Hence, by Yoneda's Lemma, Lemma \ref{lemamorcero} and the fact that $\mathcal{C}(C',-)\in \mathcal{T}$ we have that
\begin{align*}
\mathrm{Hom}(C',C)=\mathrm{Hom}\Big(\mathcal{C}(C,-),\mathcal{C}(C',-)\Big) & =\mathrm{Hom}\Big(\bigoplus_{i=1}^{n}\mathcal{C}(C_{i},-),\mathcal{C}(C',-)\Big)\\
& =\bigoplus_{i=1}^{n}\mathrm{Hom}\Big(\mathcal{C}(C_{i},-),\mathcal{C}(C',-)\Big)\\
& =0.
\end{align*}
\end{proof}

We now give a characterization of triangular matrix categories in terms of torsion pairs.

\begin{theorem}\label{mismoteorema}
Let $\mathcal{C}$ be a Krull-Schmidt category with splitting idempotents. Suppose there exists a torsion pair
$(\mathcal{T},\mathcal{F})$ in $\mathrm{Mod}(\mathcal{C})$ where $\mathcal{T}$ and $\mathcal{F}$ are nontrivial abelian subcategories of $\mathrm{Mod}(\mathcal{C})$. Hence, we have an equivalence
$$\mathcal{C}\simeq \left[ \begin{smallmatrix}
 \mathcal{B} & 0 \\ 
\overline{\mathrm{Hom}} &  {}^{\perp_{0}}\mathcal{B}
\end{smallmatrix}\right]$$
for some subcategory $\mathcal{B}$ of $\mathcal{C}$.
\end{theorem}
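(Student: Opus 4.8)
The plan is to build the category $\mathcal{B}$ explicitly from the torsion pair and then invoke the earlier machinery. Starting from the torsion pair $(\mathcal{T},\mathcal{F})$ with radical $\tau$, I would take $\mathcal{B}:=\mathcal{X}$ where $\mathcal{X}$ and $\mathcal{Y}$ are the full subcategories of $\mathcal{C}$ defined in Definition \ref{definitionXY}. By Lemma \ref{novacios} we already know $\mathcal{X}\neq\emptyset$, $\mathcal{Y}\neq\emptyset$ and $\mathrm{Hom}_{\mathcal{C}}(\mathcal{Y},\mathcal{X})=0$. The first task is to show that $\mathcal{X}$ is closed under finite direct sums and direct summands, so that $\mathcal{B}=\mathrm{add}(\mathcal{B})$; this is essentially built into the definition of $\mathcal{X}$ (it is phrased via the indecomposable summands $C_i$), and closure under summands is immediate while closure under sums uses that $\mathcal{C}(C\oplus C',-)=\mathcal{C}(C,-)\oplus\mathcal{C}(C',-)$ together with the fact that $\tau$ is a radical, hence additive on such decompositions.

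The key structural step is to prove that every indecomposable object of $\mathcal{C}$ lies in $\mathcal{X}$ or in $\mathcal{Y}$, and then that $(\mathcal{Y},\mathcal{X})$ is a splitting torsion pair in the sense of Definition \ref{defisplitting}. For an indecomposable $C$, either $\frac{\mathcal{C}(C,-)}{\tau(\mathcal{C}(C,-))}=0$, in which case $C\in\mathcal{Y}$, or $\frac{\mathcal{C}(C,-)}{\tau(\mathcal{C}(C,-))}\neq 0$, in which case $C\in\mathcal{X}$ by definition (a single indecomposable summand with nonzero quotient). So condition (i) of Definition \ref{defisplitting} holds for the pair $(\mathcal{Y},\mathcal{X})$, and condition (ii), i.e. $\mathrm{Hom}_{\mathcal{C}}(Y,-)\mid_{\mathcal{X}}=0$ for $Y\in\mathcal{Y}$, is precisely the content of Lemma \ref{novacios}. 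Since $\mathcal{C}$ is Krull-Schmidt, an arbitrary object decomposes as a finite sum of indecomposables, each in $\mathcal{X}$ or $\mathcal{Y}$; regrouping gives $C\cong C_{\mathcal{X}}\oplus C_{\mathcal{Y}}$ with $C_{\mathcal{X}}\in\mathcal{X}$, $C_{\mathcal{Y}}\in\mathcal{Y}$, and since $\mathcal{X}\cap\mathcal{Y}=\{0\}$ (an object in both would have a projective representable functor equal to its own torsion part with zero quotient, forcing it to be zero) this decomposition is compatible with the Hom-vanishing, so $(\mathcal{Y},\mathcal{X})$ is a genuine splitting torsion pair.

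With $(\mathcal{Y},\mathcal{X})$ established as a splitting torsion pair, the conclusion follows by citing \cite[Proposition 7.1]{LGOS1} exactly as in the proofs of Theorems \ref{primerequivalencia} and \ref{segundorsult}: this yields an equivalence $\mathcal{C}\simeq\left[\begin{smallmatrix}\mathcal{X} & 0\\ \overline{\mathrm{Hom}} & \mathcal{Y}\end{smallmatrix}\right]$, and by the Remark following Definition \ref{defisplitting} we have $\mathcal{Y}={}^{\perp_0}\mathcal{X}$, so setting $\mathcal{B}:=\mathcal{X}$ gives precisely $\mathcal{C}\simeq\left[\begin{smallmatrix}\mathcal{B} & 0\\ \overline{\mathrm{Hom}} & {}^{\perp_0}\mathcal{B}\end{smallmatrix}\right]$. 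I expect the main obstacle to be the careful verification that $\mathcal{X}$ is closed under direct summands and that the decomposition $C\cong C_{\mathcal{X}}\oplus C_{\mathcal{Y}}$ is genuinely functorial/unique enough to feed into \cite[Proposition 7.1]{LGOS1} — in particular making sure the definition of $\mathcal{X}$ (quotient nonzero on \emph{every} indecomposable summand) interacts correctly with the Krull-Schmidt decomposition, and that the semiperfectness input behind Lemma \ref{lemamorcero} (hence the essentiality of the projective cover) is really what forces the Hom-vanishing needed for condition (ii). Once those points are pinned down, the rest is a direct appeal to the already-proven propositions.
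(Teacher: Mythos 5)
Your proposal is correct and follows essentially the same route as the paper's proof: take $\mathcal{B}:=\mathcal{X}$ from Definition \ref{definitionXY}, invoke Lemma \ref{novacios} for nonemptiness and the Hom-vanishing, observe that Krull--Schmidt puts every indecomposable in $\mathcal{X}$ or $\mathcal{Y}$ so that $(\mathcal{Y},\mathcal{X})$ is a splitting torsion pair, and then cite \cite[Proposition 7.1]{LGOS1} together with the Remark after Definition \ref{defisplitting} to identify $\mathcal{Y}={}^{\perp_0}\mathcal{X}$. You flag a few verification points (closure of $\mathcal{X}$ under summands, the $\mathcal{X}\cap\mathcal{Y}=\{0\}$ check) that the paper passes over silently, but the argument is the same.
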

\begin{proof}
Consider the following subcategories of $\mathcal{C}$:
$$\mathcal{X}:=\Big\{C=\bigoplus_{i=1}^{n}C_{i}\in \mathcal{C}\,\, \Big|\,\,\,  \frac{\mathcal{C}(C_{i},-)}{\tau(\mathcal{C}(C_{i},-))}\neq 0 \,\, \forall i\Big\},$$
$$\mathcal{Y}:=\Big\{C\in \mathcal{C}\,\, \Big| \,\, \,  \mathcal{C}(C,-)\in \mathcal{T}\Big\}=\Big\{C\in \mathcal{C}\,\, \Big| \,\, \,  \frac{\mathcal{C}(C,-)}{\tau(\mathcal{C}(C,-))}=0\Big\}.$$
By Lemma \ref{novacios}, we have that $\mathcal{Y}\neq \emptyset$, $\mathcal{X}\neq \emptyset$ and  $\mathrm{Hom}(\mathcal{Y},\mathcal{X})=0$. It is clear that if $C$ is an indecomposable object in $\mathcal{C}$, we have that $C\in \mathcal{X}$ or $C\in \mathcal{Y}$. Hence, we have that $(\mathcal{Y},\mathcal{X})$ is a splitting torsion pair in $\mathcal{C}$. Hence, by
\cite[Proposition 7.1]{LGOS1}, we have an equivalence of categories
$$\mathcal{C}\simeq \left[ \begin{smallmatrix}
 \mathcal{X} & 0 \\ 
\overline{\mathrm{Hom}} &  \mathcal{Y}
\end{smallmatrix}\right].$$
\end{proof}

\section{Appendix: On flat modules an finitely presented modules}
\subsection{Introduction}
In this appendix we recollect some well known results of the finitely presented functors which are folklore and well known for experts but we could not find a suitable reference for the proofs of such a results. First we recall, that we have  the following classical result is due to H. Lenzing in \cite{Lenzing1}.

\begin{proposition}
\begin{enumerate}
\item [(a)] A right R-module $A$ is finitely generated if
and only if for any collection $\{C_{\alpha}\}$ of left $R$-modules the natural map
$$A\otimes \prod C_{\alpha}\longrightarrow \prod (A\otimes C_{\alpha})$$
is surjective.
\item [(b)] A right R-module $A$ is finitely presented if
and only if for any collection $\{C_{\alpha}\}$ of left $R$-modules the natural map
$$A\otimes \prod C_{\alpha}\longrightarrow \prod (A\otimes C_{\alpha})$$
is an isomorphism.
\end{enumerate}
\end{proposition}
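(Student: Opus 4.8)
The plan is to prove (a) and (b) in parallel, the point being that the canonical map is already an isomorphism on finite free modules and is natural in the first variable. Concretely, for $F=R^{n}$ free of finite rank the map $\nu_{F}\colon F\otimes\prod_{\alpha}C_{\alpha}\to\prod_{\alpha}(F\otimes C_{\alpha})$ is an isomorphism, since a finite direct sum commutes both with $\otimes$ and with arbitrary products; I will write $\nu_{M}$ for the canonical map attached to a module $M$, and use that $\nu_{M}$ is natural in $M$ together with the exactness of products and the right exactness of $\otimes$.

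For the ``only if'' implications I would use a presentation of $A$. If $A$ is finitely generated, fix an epimorphism $p\colon R^{n}\to A$ and compare $p\otimes\mathrm{id}$ with $\prod_{\alpha}(p\otimes\mathrm{id})$ in the evident commutative square; the bottom map is surjective (a product of the surjections $R^{n}\otimes C_{\alpha}\to A\otimes C_{\alpha}$), the top map is surjective by right exactness, and since $\nu_{R^{n}}$ is an isomorphism a one-line diagram chase forces $\nu_{A}$ to be surjective. If $A$ is finitely presented, pick $R^{m}\to R^{n}\to A\to 0$, apply $-\otimes\prod_{\alpha}C_{\alpha}$ and $\prod_{\alpha}(-\otimes C_{\alpha})$ to obtain two right-exact rows, and conclude $\nu_{A}$ is an isomorphism by a right-exact five-lemma argument: surjectivity of $\nu_{R^{m}}$ together with bijectivity of $\nu_{R^{n}}$ give injectivity of $\nu_{A}$, and bijectivity of $\nu_{R^{n}}$ gives surjectivity.

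For the ``if'' implications I would feed back the surjectivity criterion and use test families made of copies of ${}_{R}R$. Assuming $\nu_{A}$ is always surjective, choose a generating set $\{a_{i}\}_{i\in I}$ of $A$ and apply the hypothesis to the family $(C_{i})_{i\in I}$ with $C_{i}={}_{R}R$, so that $\nu_{A}$ becomes the surjection $A\otimes R^{I}\to\prod_{i\in I}(A\otimes R)=A^{I}$; lifting $(a_{i})_{i\in I}$ to a finite sum $\sum_{k=1}^{r}b_{k}\otimes x_{k}$ and reading off the $i$-th coordinate gives $a_{i}=\sum_{k}b_{k}(x_{k})_{i}$, hence $A=\sum_{k=1}^{r}b_{k}R$ is finitely generated; this is (a). For (b), assume $\nu_{A}$ is always an isomorphism; by (a) just proved $A$ is finitely generated, so fix $0\to K\to F\to A\to 0$ with $F$ finite free and a generating set $\{k_{\lambda}\}_{\lambda\in\Lambda}$ of $K$. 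Apply the hypothesis to the family $(R)_{\lambda\in\Lambda}$ and use $\nu_{F}\colon F\otimes R^{\Lambda}\xrightarrow{\ \sim\ }F^{\Lambda}$ to choose $\xi\in F\otimes R^{\Lambda}$ with $\nu_{F}(\xi)=(k_{\lambda})_{\lambda}$. Since each $k_{\lambda}$ lies in $\ker(F\to A)$, naturality of $\nu$ shows the image of $\xi$ in $A\otimes R^{\Lambda}$ maps under the injective $\nu_{A}$ to $0$, hence is $0$; so $\xi$ lifts to $\eta=\sum_{s=1}^{m}k'_{s}\otimes r_{s}\in K\otimes R^{\Lambda}$, and comparing coordinates again yields $k_{\lambda}=\sum_{s=1}^{m}k'_{s}(r_{s})_{\lambda}$ in $K$ for all $\lambda$, so $K$ is finitely generated and $A$ finitely presented.

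The routine items — exactness of products, right exactness of $\otimes$, the five-lemma chases, and the identifications $F\otimes R^{I}\cong F^{I}$ for $F$ finite free — I would dispatch in a line each. The one delicate point is the ``if'' half of (b): one should not tensor the sequence $0\to K\to F\to A\to 0$ directly, since that introduces a $\mathrm{Tor}_{1}^{R}(A,-)$ term which in general does not commute with products; instead one must exploit the injectivity of $\nu_{A}$ on the single chosen family $(R)_{\lambda\in\Lambda}$ to annihilate the image of $\xi$ without resolving $A$ any further. Getting exactly that step clean is what I expect to be the main obstacle.
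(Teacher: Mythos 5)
Your proof is correct. A few remarks on how it compares with what the paper actually does.

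The paper does not prove this ring-level statement; it cites Lenzing and instead proves the $\mathrm{Mod}(\mathcal{C})$-generalization in Propositions \ref{criteriofg} and \ref{SatzLenzing2}, to which your argument should be compared. For the ``fg/fp implies surjective/iso'' directions and for the ``surjective implies fg'' direction your proof is essentially the specialization of the paper's to the ring $R$: the truncated five-lemma chase on the rows coming from a finite presentation, and, for the converse of (a), testing against the family $(R)_{i\in I}$ indexed by a generating set and reading off the coordinates of a lift (the paper's version tests against $\prod_{i}\mathcal{C}(C_{i},-)$ and then runs the same coordinate argument through Yoneda).

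Where you genuinely diverge is the converse of (b). The paper \emph{does} tensor the sequence $0\to K\to P\to F\to 0$ against the product, obtaining a diagram whose top row is only right exact and whose bottom row is a short exact sequence of products; the snake lemma then yields that $\lambda_{1}:K\otimes\prod\to\prod K(C_{i})$ is an epimorphism, and one invokes the already-proved criterion to conclude $K$ is finitely generated. Your caution that this ``introduces a $\mathrm{Tor}_{1}$ term'' is unfounded: nothing in that argument needs the leftmost arrow of the top row to be a monomorphism, and the snake lemma in the truncated form (right-exact top, left-exact bottom) applies cleanly. That said, your alternative is perfectly valid and a little more hands-on: you test once against $(R)_{\lambda\in\Lambda}$ indexed by a generating set of $K$, use injectivity of $\nu_{A}$ plus naturality to land $\xi$ in $\ker(p\otimes\mathrm{id})$, lift it to $K\otimes R^{\Lambda}$ by right exactness, and read off coordinates to get a finite generating set for $K$. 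This avoids the snake lemma and, pedagogically, makes visible exactly which instance of the hypothesis is being consumed, at the cost of being less immediately transportable to the functor-category setting (where the paper's diagrammatic version generalizes verbatim). Both arguments are correct; you would do well to delete or soften the sentence warning against tensoring the short exact sequence, since the snake-lemma route is in fact sound and is what the paper uses.
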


By reviewing the paper \cite{Lenzing1}, we have the following remark, which is interesting but it is written in German, and that is why we are reproducing here.

\begin{Remark}
In Satz 1 in  the article \cite{Lenzing1}, H. Lenzing  proved that $A$ is finitely generated if and only if
$$ A\otimes R^{I}\longrightarrow (A\otimes R)^{I}=A^{I}$$ is surjective for all set $I$.\\
Moreover, in Remark 1 in  \cite{Lenzing1},  it is stated the following: For a module $A$ the following conditions are equivalent
\begin{enumerate}
\item [(a)]  $ A\otimes R^{I}\longrightarrow (A\otimes R)^{I}=A^{I}$ is surjective for all $I$.
\item [(b)] $A^{I}=\bigcup_{N} N^{I}$  where $N$ run over all the finitely generated submodules of $A$.
\item [(c)] Each family  $(x_{\alpha})_{\alpha\in I}\in A^{I}$ is contained in a finitely generated submodule of $A$.
\end{enumerate}
\end{Remark}

In this appendix we will give a proof of Lenzing's result in the context of $\mathrm{Mod}(\mathcal{C})$ for an additive category $\mathcal{C}$ (this results must be known for the experts).\\

\subsection{Flat modules and finitely presented modules}

The following is the generalization of Satz 1 in \cite{Lenzing1}.

\begin{proposition}\label{criteriofg}
Let $F\in \mathrm{Mod}(\mathcal{C}^{op})$ be and consider the functor  $F\otimes_{\mathcal{C}}-:\mathrm{Mod}(\mathcal{C})\longrightarrow \mathrm{Ab}$. The following conditions are equivalent.
\begin{enumerate}
\item [(a)] For every family of objets $\{M_{i}\}_{i\in I}$ in $\mathrm{Mod}(\mathcal{C})$ the canonical morphism $\delta:F\otimes_{\mathcal{C}}(\prod_{i\in I}M_{i})\longrightarrow \prod_{i\in I} F\otimes_{\mathcal{C}}M_{i}$ is surjective. 

\item [(b)]  $F$ is finitely generated.
\end{enumerate}
\end{proposition}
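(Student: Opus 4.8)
The plan is to adapt Lenzing's classical argument to the functor category $\mathrm{Mod}(\mathcal{C})$, using the Yoneda lemma and the basic properties of $\otimes_{\mathcal{C}}$ recorded in Proposition \ref{AProposition0}.

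For the direction $(b)\Rightarrow(a)$: if $F$ is finitely generated, there is an epimorphism $p:\coprod_{k=1}^{n}\mathcal{C}(-,C_{k})\to F\to 0$ in $\mathrm{Mod}(\mathcal{C}^{op})$. Since $-\otimes_{\mathcal{C}}M$ is right exact (Proposition \ref{AProposition0}(1)(b)) and preserves finite sums, and since $\mathcal{C}(-,C_{k})\otimes_{\mathcal{C}}M = M(C_{k})$ (Proposition \ref{AProposition0}(3)), applying $-\otimes_{\mathcal{C}}(\prod_i M_i)$ and $-\otimes_{\mathcal{C}}M_i$ gives a commutative square comparing $F\otimes_{\mathcal{C}}(\prod_i M_i)$ with $\prod_i(F\otimes_{\mathcal{C}}M_i)$, whose left vertical map is the natural isomorphism $\bigl(\bigoplus_{k}\mathcal{C}(-,C_k)\bigr)\otimes_{\mathcal{C}}(\prod_i M_i)\cong \bigoplus_k \prod_i M_i(C_k)$ composed with the obvious iso $\bigoplus_k\prod_i M_i(C_k)\cong\prod_i\bigoplus_k M_i(C_k)$ (finite direct sum commutes with products). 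A diagram chase with the two horizontal epimorphisms and this isomorphism forces $\delta$ to be surjective.

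For the direction $(a)\Rightarrow(b)$: this is the substantive implication and I expect it to be the main obstacle, exactly as in Lenzing's original proof. The idea is to test surjectivity of $\delta$ on a cleverly chosen family. For each finitely generated subfunctor $N\subseteq F$ form the quotient $F/N$; let $I$ be the (possibly large) index set of all such $N$ and set $M_i := F/N_i$. One then considers the element of $\prod_{i\in I}(F\otimes_{\mathcal{C}}M_i)=\prod_i\bigl(F\otimes_{\mathcal{C}}(F/N_i)\bigr)$ whose $i$-th coordinate is the image under $F\otimes_{\mathcal{C}}(F\twoheadrightarrow F/N_i)$ of the ``diagonal'' class coming from the identity — more precisely, one uses that $F\otimes_{\mathcal{C}}F$ receives a canonical element representing $1_F$ via the description of tensor products by generators and relations, and pushes it forward. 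Surjectivity of $\delta$ produces a single element $\xi\in F\otimes_{\mathcal{C}}(\prod_i F/N_i)$ mapping onto this family. Because tensor products are computed by finitely many generators and relations, $\xi$ already lives in $F'\otimes_{\mathcal{C}}(\prod_i F/N_i)$ for some finitely generated subfunctor $F'\subseteq F$; tracking this $F'$ through the coordinate $i_0$ corresponding to $N_{i_0}=F'$ shows that the image of $1_{F/F'}$ in $F\otimes_{\mathcal{C}}(F/F')$ is zero, and then a right-exactness argument (applying $F\otimes_{\mathcal{C}}-$ to $0\to F'\to F\to F/F'\to 0$) forces $F/F'=0$, i.e. $F=F'$ is finitely generated.

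The delicate points to get right are: (i) the precise construction of the canonical ``identity'' element in $F\otimes_{\mathcal{C}}F$ and of its pushforwards, which requires the explicit presentation of $\otimes_{\mathcal{C}}$ by generators $m\otimes x$ with $m\in F(C)$, $x\in M(C)$ modulo the bilinearity-and-naturality relations; (ii) the ``finite support'' argument showing any element of $F\otimes_{\mathcal{C}}(\prod_i M_i)$ comes from a finitely generated subfunctor of $F$, which again is immediate from the generators-and-relations description once one notes only finitely many $m\otimes x$ appear; and (iii) checking that the evaluation of $\delta$ on $\xi$ really does recover the intended diagonal family, which is a naturality bookkeeping matter. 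None of these is deep, but assembling them cleanly is where the real work lies; everything else is formal manipulation with right-exact functors preserving sums.
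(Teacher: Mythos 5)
Your direction $(b)\Rightarrow(a)$ is fine and matches the paper's argument in spirit: present $F$ as a quotient of a finite sum of representables, use $\mathcal{C}(-,C)\otimes_{\mathcal{C}}M\cong M(C)$, commute a finite direct sum past the product, and conclude by a diagram chase.

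The direction $(a)\Rightarrow(b)$ as you sketch it, however, does not typecheck. You propose testing surjectivity on the family $M_i := F/N_i$ where the $N_i$ are finitely generated subfunctors of $F$. But $F$ lives in $\mathrm{Mod}(\mathcal{C}^{op})$, hence so do the quotients $F/N_i$, while the functor $F\otimes_{\mathcal{C}}-$ accepts arguments only from $\mathrm{Mod}(\mathcal{C})$. The expressions $F\otimes_{\mathcal{C}}(F/N_i)$ and the ``canonical element of $F\otimes_{\mathcal{C}}F$ representing $1_F$'' are therefore undefined: $F$ is not a left $\mathcal{C}$-module, so you cannot tensor it (or its quotients) against itself over $\mathcal{C}$. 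This is not a bookkeeping slip that naturality would fix; the entire test family and the ``diagonal element'' live on the wrong side, so the strategy collapses at the outset. (Your auxiliary claim that elements of $F\otimes_{\mathcal{C}}(\prod_i M_i)$ are supported on a finitely generated subfunctor of $F$ is reasonable in itself, since elements of the coend are finite sums $\sum m_k\otimes x_k$, but it cannot rescue an ill-typed construction.)

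The paper's argument avoids this by choosing the test family inside $\mathrm{Mod}(\mathcal{C})$: starting from a presentation $\bigoplus_{i}\mathcal{C}(-,C_i)\twoheadrightarrow F$ in $\mathrm{Mod}(\mathcal{C}^{op})$, it takes $M_i:=\mathcal{C}(C_i,-)\in\mathrm{Mod}(\mathcal{C})$, exploits $F\otimes_{\mathcal{C}}\mathcal{C}(C_i,-)\cong F(C_i)$, and then reads the surjectivity of $\delta$ (combined with a representable cover $\bigoplus_\lambda\mathcal{C}(C_\lambda,-)\twoheadrightarrow\prod_i\mathcal{C}(C_i,-)$ and the finite support of an element of the coproduct) through Yoneda to produce a single representable $\mathcal{C}(-,C')$ surjecting onto $F$. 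That is the categorical analogue of Lenzing's $R^I$ and of his extraction of a finite generating set from a preimage of the diagonal, and it is the route you should take instead.
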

\begin{proof}
$(a)\Longrightarrow (b)$. We have an epimorphism  in $\mathrm{Mod}(\mathcal{C}^{op})$:
$$\xymatrix{ \bigoplus_{i\in I}\mathcal{C}(-,C_{i})\ar[r]^(.7){\varphi} & F\ar[r] & 0.}$$
Consider $\prod_{i\in I}\mathcal{C}(C_{i},-)\in \mathrm{Mod}(\mathcal{C})$ and $\pi_{i}:\prod_{i\in I}\mathcal{C}(C_{i},-)\longrightarrow  \mathcal{C}(C_{i},-)$ the canonical projections. Then we have a family of morphisms in $\text{Ab}$:
$$\left\{F\otimes \pi_{i}:F\otimes \Big(\prod_{i\in I}\mathcal{C}(C_{i},-)\Big)\longrightarrow F\otimes \mathcal{C}(C_{i},-)\right\}_{i\in I}$$
Then, there exists a unique morphism  $\delta:F\otimes (\prod_{i\in I}\mathcal{C}(C_{i},-))\longrightarrow \prod_{i\in I}(F\otimes \mathcal{C}(C_{i},-))$  such that the following diagram commutes
$$\xymatrix{F\otimes \Big(\prod_{i\in I}\mathcal{C}(C_{i},-)\Big)\ar[rr]^{\delta}\ar[dr]_{F\otimes \pi_{i}} & & \prod_{i\in I}(F\otimes \mathcal{C}(C_{i},-))\ar[dl]_{p_{i}}\\
 & F\otimes \mathcal{C}(C_{i},-) &}$$
 where $p_{i}: \prod_{i\in I}(F\otimes \mathcal{C}(C_{i},-))\longrightarrow F\otimes \mathcal{C}(C_{i},-)$ are the canonical projections of the product in $\text{Ab}$. By hipothesis, we have that $\delta$ is surjective.\\
On the other hand, we have an epimorphism
$$\xymatrix{\bigoplus_{\lambda \in \Lambda}\mathcal{C}(C_{\lambda},-)\ar[r]^{\Theta} & \prod_{i\in I}\mathcal{C}(C_{i},-)\ar[r] & 0}$$
Since $F\otimes_{\mathcal{C}}-$ is right exact we have the epimorphism
$$\xymatrix{ F\otimes \Big(\bigoplus_{\lambda \in \Lambda}\mathcal{C}(C_{\lambda},-)\Big)\ar[r]^{F\otimes \Theta} & F\otimes (\prod_{i\in I}\mathcal{C}(C_{i},-))\ar[r] & 0.}$$
Since $F\otimes_{\mathcal{C}}-$ preserve arbitrary colimits and we have that $F\otimes \Big(\bigoplus_{\lambda \in \Lambda}\mathcal{C}(C_{\lambda},-)\Big)\simeq \bigoplus_{\lambda \in \Lambda}F\otimes\mathcal{C}(C_{\lambda},-)$.
Hence, we have an epimorphism
$$\xymatrix{ \bigoplus_{\lambda \in \Lambda}F\otimes_{\mathcal{C}}\mathcal{C}(C_{\lambda},-)\ar[rr]^{\delta\circ (F\otimes \Theta)}&&  \prod_{i\in I}F\otimes_{\mathcal{C}}\mathcal{C}(C_{i},-)\ar[r] & 0}$$
Since $F\otimes_{\mathcal{C}}\mathcal{C}(C_{\lambda},-)=F(C_{\lambda})$ and $F\otimes_{\mathcal{C}}\mathcal{C}(C_{i},-)=F(C_{i})$, we identify the above epimorphism with
$$\xymatrix{ \bigoplus_{\lambda \in \Lambda}F(C_{\lambda})\ar[r]^{\Gamma} & \prod_{i\in I}F(C_{i})\ar[r] & 0}$$
where $\Gamma=\delta\circ (F\otimes \Theta)$.\\
Let $u_{i}:\mathcal{C}(-,C_{i})\longrightarrow \bigoplus_{i\in I}\mathcal{C}(-,C_{i})$ be the canonical inclusions of the coproduct in $\mathrm{Mod}(\mathcal{C}^{op})$. Now, via the Yoneda's isomorphism we have isomorphisms
$$Y_{i}:\mathrm{Hom}_{\mathrm{Mod}(\mathcal{C}^{op})}\Big(\mathcal{C}(-,C_{i}),F\Big)\longrightarrow F(C_{i}).$$
Now, we have the isomorphism
$$\xymatrix{\Delta:\mathrm{Hom}\Big(\bigoplus_{i\in I}\mathcal{C}(-,C_{i}),F\Big)\ar[r] & \prod_{i\in I}\mathrm{Hom}\Big(\mathcal{C}(-,C_{i}),F\Big)\ar[r] & \prod_{i\in I}F(C_{i}).}$$
Then the morphism  $\varphi$ given in the diagram
$$\xymatrix{ \bigoplus_{i\in I}\mathcal{C}(-,C_{i})\ar[r]^(.7){\varphi} & F\ar[r] & 0}$$
determines an element 
$$x_{\varphi}=(x_{i})_{i\in I}\in \prod_{i\in I}F(C_{i})$$
where $x_{i}:=Y_{i}(\varphi_{i})=(\varphi_{i})_{C_{i}}(1_{C_{i}})$ with $\varphi_{i}=\varphi \circ u_{i}$ for all $i\in I$.\\
Since $\Gamma$ is surjective, there exists $y\in  \bigoplus_{\lambda \in \Lambda}F(C_{\lambda})$ such that $\Gamma(y)=x_{\varphi}$. Hence, $y=(y_{\lambda})_{\in \Lambda}$ is such that $y_{\lambda}=0$ for infinitely many $\lambda\in \Lambda$ and $y_{\lambda}\neq 0$ for a finite set. Suppose that
$y_{\lambda_{j}}\neq 0$ for $y_{\lambda_{j}}\in F(C_{\lambda_{j}})$ with $j=1,\cdots, n$. 
Then we have morphism
$$\xymatrix{\bigoplus_{j=1}^{n}F(C_{\lambda_{j}})\ar[r]^{\mu} & \bigoplus_{\lambda \in \Lambda}F(C_{\lambda})\ar[r]^{\Gamma} & \prod_{i\in I}F(C_{i})}$$
where $\mu$ is the canonical inclusion and such that there exists $y'=(y_{1},\cdots, y_{n})\in \bigoplus_{j=1}^{n}F(C_{\lambda_{j}})$ satisfying that $\mu(y')=y$. Since $\mathcal{C}$ is additive we can consider $C':=\bigoplus_{j=1}^{n}C_{\lambda_{j}}\in \mathcal{C}$, thus $F(C')\simeq \bigoplus_{j=1}^{n}F(C_{\lambda_{j}})$.\\
Consider the inclusion $u:\bigoplus_{j=1}^{n}\mathcal{C}(C_{\lambda_{j}},-)\longrightarrow  \bigoplus_{\lambda \in \Lambda}\mathcal{C}(C_{\lambda},-)$, hence we have the morphism 

$$\xymatrix{\mathcal{C}(C',-)=\bigoplus_{j=1}^{n}\mathcal{C}(C_{\lambda_{j}},-)\ar[r]^(.6){u}& \bigoplus_{\lambda \in \Lambda}\mathcal{C}(C_{\lambda},-)\ar[r]^(.5){\Theta} & \prod_{i\in I}\mathcal{C}(C_{i},-)\ar[d]^{\pi_{i}}\\
& &\mathcal{C}(C_{i},-) }$$

By Yoneda's, for each $i\in I$ there exists a morphism $\gamma_{i}:C_{i}\longrightarrow C'$ such that 
$\mathcal{C}(\gamma_{i},-)=\pi_{i}\Theta u $.
Then 
$$F(\gamma_{i})=F\otimes \mathcal{C}(\gamma_{i},-)=(F\otimes \pi_{i})\circ (F\otimes \Theta)\circ (F\otimes u).$$
Thus, since  $F$ preserve colimits and by the equality above we have the commutative diagram

$$\xymatrix{F(C')=\bigoplus_{j=1}^{n}F(C_{\lambda_{j}})\ar[r]^(.6){\mu=F\otimes u}\ar[dr]_{F(\gamma_{i})=F\otimes \mathcal{C}(\gamma_{i},-)\,\,\,\,\,\,\,\,\,\,\,} & \bigoplus_{\lambda \in \Lambda}F(C_{\lambda})\ar[r]^(.4){F\otimes\Theta} &  F\otimes \Big(\prod_{i\in I}\mathcal{C}(-,C_{i})\Big)\ar[d]^(.6){\delta}\ar[dl]^{F\otimes \pi_{i}}\\
 & F(C_{i}) &  \prod_{i\in I}F(C_{i})\ar[l]^{p_{i}} & & }$$ 
where $p_{i}$ is the $i-th$ projection (recall that $F\in \mathrm{Mod}(\mathcal{C}^{op})$, that is $F$ is contravariant). By construction, we have that $y'\in F(C')=\bigoplus_{j=1}^{n}F(C_{\lambda_{j}})$ satisfies that $(p_{i}\circ\delta\circ (F\otimes\Theta)\circ \mu)(y')=x_{i}$ and thus $F(\gamma_{i})(y')=x_{i}$.\\
By Yoneda's isomorphism 
$$Y:\mathrm{Hom}_{\mathrm{Mod}(\mathcal{C}^{op})}\Big(\mathcal{C}(-,C'),F\Big)\longrightarrow F(C')$$
there exists an element $\eta:\mathcal{C}(-,C')\longrightarrow F$ such that $Y(\eta)=y'$. We also have the following commutative diagram
$$\xymatrix{\mathrm{Hom}\Big(\mathcal{C}(-,C'),F\Big)\ar[rr]^{\mathrm{Hom}(\mathcal{C}(-,\gamma_{i}),F)}\ar[d]^{Y} & & \mathrm{Hom}\Big(\mathcal{C}(-,C_{i}),F\Big)\ar[d]^{Y_{i}}\\
F(C')\ar[rr]^{F(\gamma_{i})} & & F(C_{i})}$$
Therefore we have that 
$$Y_{i}\Big(\mathrm{Hom}\Big(\mathcal{C}(-,\gamma_{i}),F\Big)(\eta)\Big)=F(\gamma_{i})(Y(\eta))=F(\gamma_{i})(y')=x_{i}.$$
Since $Y_{i}$ is an isomorphism and $\varphi_{i}=\varphi\circ u_{i}$ satisfies that $Y_{i}(\varphi_{i})=x_{i}$, we conclude that $$\mathrm{Hom}(\mathcal{C}(-,\gamma_{i}),F)(\eta)=\varphi_{i}.$$

That is the following diagrama commutes
$$\xymatrix{\mathcal{C}(-,C_{i})\ar[r]^{\varphi_{i}}\ar[d]_{\mathcal{C}(-,\gamma_{i})} & F\\
\mathcal{C}(-,C')\ar[ur]_{\eta} & }$$

The family $\{\mathcal{C}(-,\gamma_{i}):\mathcal{C}(-,C_{i})\longrightarrow \mathcal{C}(-,C')\}_{i\in I}$ determines a morphism $\gamma:\bigoplus_{i\in I}\mathcal{C}(-,C_{i})\longrightarrow \mathcal{C}(-,C')$ such that  $\gamma\circ u_{i}=\mathcal{C}(-,\gamma_{i})$ for all $i\in I$. Then, the following diagram commutes

$$\xymatrix{\bigoplus_{i\in I}\mathcal{C}(-,C_{i})\ar[rr]^{\varphi}\ar[d]_{\gamma}  & & F\\
\mathcal{C}(-,C')\ar[urr]_{\eta} & }$$
Since $\varphi$ is an epimorphism we have that $\eta$ is an epimorphism. Hence $F$ es finitely generated.\\

$(b)\Longrightarrow (a)$. 
Consider $\prod_{i\in I}M_{i}\in \mathrm{Mod}(\mathcal{C})$ and $\pi_{i}:\prod_{i\in I}M_{i}\longrightarrow  M_{i}$ the canonical projections. Then we have a family of morphisms in $\text{Ab}$:
$$\left\{F\otimes \pi_{i}:F\otimes \Big(\prod_{i\in I}M_{i}\Big)\longrightarrow F\otimes M_{i}\right\}_{i\in I}$$
Then, there exists a unique morphism  $\delta:F\otimes (\prod_{i\in I}M_{i})\longrightarrow \prod_{i\in I}(F\otimes M_{i})$  such that the following diagram commutes
$$\xymatrix{F\otimes \Big(\prod_{i\in I}M_{i}\Big)\ar[rr]^{\delta}\ar[dr]_{F\otimes \pi_{i}} & & \prod_{i\in I}(F\otimes M_{i})\ar[dl]_{p_{i}}\\
 & F\otimes M_{i} &}$$
 where $p_{i}: \prod_{i\in I}(F\otimes M_{i})\longrightarrow F\otimes M_{i}$ are the canonical projections of the product in $\text{Ab}$. Suppose that $F$ is finitely generated. Hence there exists an epimorphism
$$\eta:\mathcal{C}(-,C)\longrightarrow F$$ Hence we have a commutative diagram
$$\xymatrix{\mathcal{C}(-,C)\otimes (\prod_{i\in I}M_{i})\ar[rr]^{\eta\otimes ( \prod_{i\in I}M_{i})}\ar[d]^{\lambda}& & F\otimes (\prod_{i\in I}M_{i})\ar[r]\ar[d]^{\delta} & 0\\
\prod_{i\in I}(F\otimes \mathcal{C}(C_{i},-))\ar[rr]_{\prod_{i\in I}(\eta \otimes M_{i})} & &  \prod_{i\in I}(F\otimes M_{i})\ar[r] & 0}$$
where $\lambda$ is an isomorphism and $\prod_{i\in I}(\eta \otimes M_{i})$ is an epimorphism since it is product of epimorphisms. We conclude that $\delta$ is an epimorphism.
\end{proof}

The following  is a generalization of  the Satz 2 in \cite{Lenzing1}.

\begin{proposition}\label{SatzLenzing2}
Let $F \in \mathrm{Mod}(\mathcal{C})$ be. Then, $F\otimes-$ preserve products if and only if $F$ is finitely presented.
\end{proposition}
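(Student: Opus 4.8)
The plan is to prove the two implications separately; "finitely presented $\Rightarrow$ preserves products" is short, while the converse carries the weight.

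For the first implication, fix a presentation $P_1\to P_0\to F\to 0$ with $P_0,P_1$ finitely generated projective. Given a family $\{N_i\}_{i\in I}$, write $N=\prod_iN_i$ and compare the sequence $P_1\otimes N\to P_0\otimes N\to F\otimes N\to 0$ with the product over $i$ of the sequences $P_1\otimes N_i\to P_0\otimes N_i\to F\otimes N_i\to 0$. The top sequence is exact at its last two terms because $-\otimes-$ is right exact (Proposition \ref{AProposition0}); the bottom one is, because products are exact in $\mathbf{Ab}$. The two left vertical comparison maps are isomorphisms, since tensoring with a finitely generated projective commutes with products: for a representable this is $(-,C)\otimes_{\mathcal C}\prod_iN_i=(\prod_iN_i)(C)=\prod_iN_i(C)$ by Proposition \ref{AProposition0}(3), and the general case follows by additivity. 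Passing to cokernels (equivalently, the five lemma) then shows that the canonical map $\delta\colon F\otimes N\to\prod_i(F\otimes N_i)$ is an isomorphism, i.e. $F\otimes-$ preserves products.

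For the converse, note first that if $F\otimes-$ preserves products then the maps $\delta$ are in particular surjective, so $F$ is finitely generated by Proposition \ref{criteriofg}; choose an epimorphism $\mathcal C(C_0,-)\to F$ with kernel $K$, i.e. a short exact sequence $0\to K\to\mathcal C(C_0,-)\to F\to 0$. It then suffices to prove that $K$ is finitely generated, for then this sequence exhibits $F$ as finitely presented; and by Proposition \ref{criteriofg} that reduces to checking that $\delta_K\colon K\otimes\prod_iN_i\to\prod_i(K\otimes N_i)$ is surjective for every family. Running the comparison diagram above on the sequence $0\to K\to\mathcal C(C_0,-)\to F\to 0$ — where now the middle vertical map is an isomorphism because $\mathcal C(C_0,-)$ is representable and the right vertical map is an isomorphism by hypothesis — a diagram chase produces, for each $\eta\in\prod_i(K\otimes N_i)$, an element $w$ with $\delta_K(w)-\eta$ lying in $\ker\bigl(\prod_i(K\otimes N_i)\to\prod_i(\mathcal C(C_0,-)\otimes N_i)\bigr)=\prod_i\ker\bigl(K\otimes N_i\to\mathcal C(C_0,-)\otimes N_i\bigr)$, the last group being $\prod_i\mathrm{Tor}^{\mathcal C}_1(F,N_i)$ since $\mathcal C(C_0,-)$ is projective. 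Tracking this through (splitting the four-term exact rows into short exact ones), the surjectivity of $\delta_K$ becomes equivalent to the surjectivity of the comparison map $\mathrm{Tor}^{\mathcal C}_1(F,\prod_iN_i)\to\prod_i\mathrm{Tor}^{\mathcal C}_1(F,N_i)$.

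The hard part is this last step, and I would handle it by iteration rather than by a single chase: write $K$ as the filtered union of its finitely generated subfunctors $K_\alpha$, so that $F=\varinjlim_\alpha\mathcal C(C_0,-)/K_\alpha$ with every $\mathcal C(C_0,-)/K_\alpha$ finitely presented, hence — by the implication already proved — with product-preserving tensor functor. Since tensor products commute with filtered colimits, and since filtered colimits are exact in $\mathrm{Mod}(\mathcal C)$ while products are exact in $\mathbf{Ab}$, the hypothesis that $\delta$ is an isomorphism for all families can be fed into the interchange map $\varinjlim_\alpha\prod_i\bigl(\tfrac{\mathcal C(C_0,-)}{K_\alpha}\otimes N_i\bigr)\to\prod_i\varinjlim_\alpha\bigl(\tfrac{\mathcal C(C_0,-)}{K_\alpha}\otimes N_i\bigr)$ to force, for a suitable choice of the $N_i$, that the chain $\{K_\alpha\}$ stabilizes; then $K=K_\alpha$ for some $\alpha$, so $K$ is finitely generated and $F$ is finitely presented. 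This is essentially Lenzing's original argument for Satz 2 in \cite{Lenzing1}, which transports to $\mathrm{Mod}(\mathcal C)$ once one knows that $\mathrm{Mod}(\mathcal C)$ is a Grothendieck category with exact products and that finitely generated projectives interact well with products via Proposition \ref{AProposition0}(3).
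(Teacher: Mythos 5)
Your forward implication agrees with the paper's: present $F$ by representables, tensor the presentation against $\prod_i N_i$, compare with the product of the individual tensored presentations, and conclude by the five lemma using the fact that representables tensor as evaluations.

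The converse is where there is a genuine gap. You correctly reduce to showing that the kernel $K$ of a chosen epimorphism $\mathcal C(C_0,-)\twoheadrightarrow F$ is finitely generated, and that by Proposition~\ref{criteriofg} this amounts to surjectivity of $\delta_K\colon K\otimes\prod_iN_i\to\prod_i(K\otimes N_i)$ for suitable families. But you then run the comparison diagram for an \emph{arbitrary} family $\{N_i\}$, where the bottom row is exact only up to $\prod_i\mathrm{Tor}_1^{\mathcal C}(F,N_i)$, and you correctly observe that surjectivity of $\delta_K$ is then equivalent to surjectivity of $\mathrm{Tor}_1(F,\prod_iN_i)\to\prod_i\mathrm{Tor}_1(F,N_i)$ --- a statement no easier than what you started with. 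The Lenzing-style filtration $K=\varinjlim K_\alpha$ you propose to handle it is only a sketch: the crucial step, that the hypothesis ``forces, for a suitable choice of the $N_i$, that the chain $\{K_\alpha\}$ stabilizes,'' is exactly the content of Lenzing's Mittag-Leffler argument and is not supplied, so as written the converse is not proved.

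The paper sidesteps the $\mathrm{Tor}$ obstruction entirely by a choice you did not make: it tests $\delta_K$ only against families of \emph{representables} $\{\mathcal C(-,C_i)\}_{i\in I}$. Then the bottom row of the comparison diagram is
\begin{equation*}
0\longrightarrow \prod_{i\in I}K(C_i)\longrightarrow \prod_{i\in I}\mathcal C(X,C_i)\longrightarrow \prod_{i\in I}F(C_i)\longrightarrow 0,
\end{equation*}
which is short exact because it is just a product of evaluations (equivalently, because the $\mathcal C(-,C_i)$ are projective, so all the relevant $\mathrm{Tor}_1$ groups vanish). With $\lambda_2$ an isomorphism (since $\mathcal C(X,-)$ is representable) and $\lambda_3$ an isomorphism (the hypothesis), a direct chase gives that $\lambda_1$ is surjective. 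This is exactly the surjectivity needed in the proof of Proposition~\ref{criteriofg} ($(a)\Rightarrow(b)$), which itself only ever uses products of representables, so $K$ is finitely generated and the argument closes. Restricting to representable test objects is the key move you are missing; once you make it, the filtered-colimit machinery becomes unnecessary.
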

\begin{proof}
Suppose that $F$ is finitely presented. Then we have exact sequence
$$\xymatrix{\mathcal{C}(X,-)\ar[r] & \mathcal{C}(Y,-)\ar[r] & F\ar[r] & 0.}$$
For $\prod_{i\in I}M_{i}$, we have the following diagram
$$\xymatrix{\mathcal{C}(X,-)\otimes (\prod_{i\in I}M_{i})\ar[r]\ar[d]^{\lambda_{1}} & \mathcal{C}(Y,-)\otimes (\prod_{i\in I}M_{i})\ar[r]\ar[d]^{\lambda_{2}} & F\otimes (\prod_{i\in I}M_{i})\ar[r]\ar[d]^{\lambda_{3}} & 0\\
\prod_{i\in I} (\mathcal{C}(X,-)\otimes M_{i})\ar[r] & \prod_{i\in I} (\mathcal{C}(X,-)\otimes M_{i})\ar[r] & \prod_{i\in I} (F\otimes M_{i})\ar[r] & 0   }$$
where $\lambda_{1}$ and $\lambda_{2}$ are isomorphisms and thus $\lambda_{3}$ is an isomorphism.\\
Suppose that $F\otimes-$ preserve products, by Proposition \ref{criteriofg}, we get that $F$ is finitely generated. Thus, we get exact sequence
$$\xymatrix{0\ar[r] & K\ar[r] & \mathcal{C}(X,-)\ar[r] & F\ar[r] & 0.}$$
Let us see that $K$ is finitely generated. Indeed, we consider $\prod_{i\in I}\mathcal{C}(-,C_{i})$, and we obtain the following commutative and exact diagram
$$\scalebox{0.8}{\xymatrix{ & K\otimes ( \prod_{i\in I}\mathcal{C}(-,C_{i}))\ar[r]\ar[d]^{\lambda_{1}} & \mathcal{C}(X,-)\otimes (\prod_{i\in I}\mathcal{C}(-,C_{i}))\ar[r]\ar[d]^{\lambda_{2}} & F\otimes (\prod_{i\in I}\mathcal{C}(-,C_{i}))\ar[r]\ar[d]^{\lambda_{3}} & 0\\ 
0\ar[r] & \prod_{i\in I} K(C_{i})\ar[r] & \prod_{i\in I} \mathcal{C}(X,C_{i})\ar[r] & \prod_{i\in I} F(C_{i})\ar[r] & 0   }}$$
Since $\lambda_{2}$ and  $\lambda_{3}$ are isomorphisms, we get that  $\lambda_{1}$ is an epimorphism (by Snake's Lemma) and thus by Proposition \ref{criteriofg}, we have that $K$ is finitely generated, proving that $F$ is finitely presented.
\end{proof}

In the following we will give a proof of the Proposition \ref{Janettheorem} which is stated in \cite[Theorem 4.4 in p. 289]{Janet} since we could not find a proof in the literature. In order to do this, first we recall the following construction.

Let $\mathcal{C}$ be a preadditive category, we recall the construction of the functor 
$(-)^{\ast}:\mathrm{Mod}(\mathcal{C})\longrightarrow \mathrm{Mod}(\mathcal{C}^{op})$
which is a generalization of the functor $\mathrm{Mod}(A)\longrightarrow \mathrm{Mod}(A^{op})$ given by $M\mapsto \mathrm{Hom}_{A}(M,A)$ for all the $A$-modules $M$.\\
Indeed, for each $\mathcal{C}$-module $M$ we define $M^{\ast}:\mathcal{C}\longrightarrow \mathbf{Ab}$ given by $M^{\ast}(C)=\mathrm{Hom}_{\mathrm{Mod}(\mathcal{C})}(M,\mathrm{Hom}_{\mathcal{C}}(C,-))$. Clearly $M^{\ast}$ is a $\mathcal{C}^{op}$-module. In this way we obtain a contravariant functor $(-)^{\ast}:\mathrm{Mod}(\mathcal{C})\longrightarrow \mathrm{Mod}(\mathcal{C}^{op})$ given by $M\mapsto M^{\ast}$.\\
If $M=\mathrm{Hom}_{\mathcal{C}}(C,-)$ it can be seen that $M^{\ast}=\mathrm{Hom}_{\mathcal{C}}(-,C)$, we refer the reader to p. 336-337 in section 6 in \cite{AusVarietyI} for more details.
We have the following result given in p. 340 in \cite{AusVarietyI}.

\begin{proposition}\label{morfismo canonico}
Let $F\in \mathrm{Mod}(\mathcal{C}^{op})$ be. There exists a canonical morphism of functors
$$\beta^{F^{\ast}}:-\otimes_{\mathcal{C}}F^{\ast}\longrightarrow \mathrm{Hom}_{\mathrm{Mod}(\mathcal{C}^{op})}(F,-).$$
In particular, for $\varphi:G\longrightarrow H$ a morphism in $\mathrm{Mod}(\mathcal{C}^{op})$, the following diagram commutes
$$\xymatrix{ G \otimes F^{\ast} \ar[rr]^(.5){\varphi\otimes F^{\ast}}\ar[d]_{\beta_{G}^{F^{\ast}}} & & H\otimes F^{\ast}\ar[d]^{\beta_{H}^{F^{\ast}}} \\
\mathrm{Hom}(F, G)\ar[rr]^{\mathrm{Hom}(F, \varphi)} & & \mathrm{Hom}(F,H)}$$
\end{proposition}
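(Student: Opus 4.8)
The plan is to write down $\beta^{F^{\ast}}$ componentwise and then observe that the asserted commutative square is precisely the statement that these components are natural in the second variable. By Proposition \ref{AProposition0}(3) one has $G\otimes_{\mathcal{C}}\mathcal{C}(C,-)=G(C)$ for every $G\in\mathrm{Mod}(\mathcal{C}^{op})$ and every $C\in\mathcal{C}$; hence $G\otimes_{\mathcal{C}}F^{\ast}$ is generated by the elementary tensors $g\otimes\psi$ with $C\in\mathcal{C}$, $g\in G(C)$ and $\psi\in F^{\ast}(C)=\mathrm{Hom}_{\mathrm{Mod}(\mathcal{C}^{op})}\big(F,\mathcal{C}(-,C)\big)$, subject to the defining relations of the tensor product. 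Denoting by $\widehat{g}\colon\mathcal{C}(-,C)\longrightarrow G$ the morphism of $\mathcal{C}^{op}$-modules corresponding to $g$ under the Yoneda isomorphism $\mathrm{Hom}_{\mathrm{Mod}(\mathcal{C}^{op})}\big(\mathcal{C}(-,C),G\big)\cong G(C)$, I would set
$$\beta_{G}^{F^{\ast}}(g\otimes\psi):=\widehat{g}\circ\psi\ \in\ \mathrm{Hom}_{\mathrm{Mod}(\mathcal{C}^{op})}(F,G).$$

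The first step is to check that $\beta_{G}^{F^{\ast}}$ is well defined. Bilinearity in $g$ and in $\psi$ is clear from additivity of the Yoneda isomorphism and bilinearity of composition. For compatibility with the tensor relations, take $f\in\mathcal{C}(C,C')$, $g\in G(C')$ and $\psi\in F^{\ast}(C)$; in $G\otimes_{\mathcal{C}}F^{\ast}$ one has $G(f)(g)\otimes\psi=g\otimes F^{\ast}(f)(\psi)$, and by the definition of $F^{\ast}$ on morphisms $F^{\ast}(f)(\psi)=\mathcal{C}(-,f)\circ\psi$, while naturality of Yoneda gives $\widehat{G(f)(g)}=\widehat{g}\circ\mathcal{C}(-,f)$. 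Hence $\beta_{G}^{F^{\ast}}$ sends both expressions to $\widehat{g}\circ\mathcal{C}(-,f)\circ\psi$, so it factors through the quotient defining the tensor product.

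The second step is naturality in $G$, which is exactly the commutative square in the statement. Let $\varphi\colon G\longrightarrow H$ be a morphism in $\mathrm{Mod}(\mathcal{C}^{op})$. On a generator $g\otimes\psi$ one has $(\varphi\otimes F^{\ast})(g\otimes\psi)=\varphi_{C}(g)\otimes\psi$, and again by naturality of Yoneda $\widehat{\varphi_{C}(g)}=\varphi\circ\widehat{g}$, so
$$\beta_{H}^{F^{\ast}}\big((\varphi\otimes F^{\ast})(g\otimes\psi)\big)=\varphi\circ\widehat{g}\circ\psi=\mathrm{Hom}(F,\varphi)\big(\widehat{g}\circ\psi\big)=\big(\mathrm{Hom}(F,\varphi)\circ\beta_{G}^{F^{\ast}}\big)(g\otimes\psi).$$
Since the elementary tensors generate $G\otimes_{\mathcal{C}}F^{\ast}$, this gives $\beta_{H}^{F^{\ast}}\circ(\varphi\otimes F^{\ast})=\mathrm{Hom}(F,\varphi)\circ\beta_{G}^{F^{\ast}}$, i.e. the square commutes, and therefore $\beta^{F^{\ast}}=\{\beta_{G}^{F^{\ast}}\}_{G\in\mathrm{Mod}(\mathcal{C}^{op})}$ is a morphism of functors $-\otimes_{\mathcal{C}}F^{\ast}\longrightarrow\mathrm{Hom}_{\mathrm{Mod}(\mathcal{C}^{op})}(F,-)$.

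I expect the only genuinely delicate point to be the bookkeeping of variances, since $G$ and $F$ are $\mathcal{C}^{op}$-modules whereas $F^{\ast}$ is a $\mathcal{C}$-module: once one has correctly identified the action of $F^{\ast}$ on morphisms (post-composition with $\mathcal{C}(-,f)$) and has the naturality of the Yoneda isomorphism in both arguments at hand, the verifications are purely formal. A conceptually cleaner but less self-contained alternative is to first treat a representable $F=\mathcal{C}(-,C_{0})$, where $F^{\ast}=\mathcal{C}(C_{0},-)$ and both functors of $G$ are identified with $G(C_{0})$ (via Proposition \ref{AProposition0}(3) and Yoneda) with $\beta^{F^{\ast}}$ the identity, and then transport the construction along a free presentation of $F$; the direct definition above is preferable here, as it also makes the (contravariant) naturality in $F$ transparent.
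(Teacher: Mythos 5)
Your proof is correct, but it takes a genuinely different route from the paper. The paper defines $\beta^{F^\ast}$ abstractly: it sets $\beta_G^{F^\ast}$ equal to the canonical identification $\mathcal{C}(-,C)\otimes_{\mathcal{C}}F^\ast\cong F^\ast(C)=\mathrm{Hom}(F,\mathcal{C}(-,C))$ when $G$ is representable, and then invokes the fact that the representables form a generating set of finitely generated projectives and that $-\otimes_{\mathcal{C}}F^\ast$ preserves colimits to extend this uniquely to all of $\mathrm{Mod}(\mathcal{C}^{op})$ (this is the mechanism made explicit in the subsequent Lemma \ref{cuadradoconmu}, citing Mitchell). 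You instead give an explicit element-wise formula $\beta_G^{F^\ast}(g\otimes\psi)=\widehat{g}\circ\psi$, check that it respects the tensor relations, and verify the naturality square directly by a Yoneda computation — which is precisely the approach you describe as an alternative in your last paragraph, with the roles reversed relative to the paper. Both are valid: the paper's extension-by-generators argument is shorter and avoids manipulating elementary tensors, whereas your construction is self-contained, produces a concrete formula, and makes both the naturality in $G$ and the (contravariant) naturality in $F$ visible at a glance; one should only note that the two definitions agree on representables, which you implicitly confirm since for $G=\mathcal{C}(-,C_0)$ your formula reduces to $g\otimes\psi\mapsto F^\ast(g)(\psi)$, the canonical isomorphism.
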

\begin{proof}
We have two functors
$$-\otimes_{\mathcal{C}}F^{\ast}:\mathrm{Mod}(\mathcal{C}^{op})\longrightarrow \mathrm{Ab}$$
$$\mathrm{Hom}_{\mathrm{Mod}(\mathcal{C}^{op})}(F,-):\mathrm{Mod}(\mathcal{C}^{op})\longrightarrow \mathrm{Ab}$$
We define $$\beta^{F^{\ast}}:-\otimes_{\mathcal{C}}F^{\ast}\longrightarrow \mathrm{Hom}_{\mathrm{Mod}(\mathcal{C}^{op})}(F,-)$$
such that for $G=\mathrm{Hom}_{\mathcal{C}}(-,C)$  we have an isomorphism
$$\beta_{G}^{F^{\ast}}:\mathrm{Hom}_{\mathcal{C}}(-,C)\otimes F^{\ast}\longrightarrow F^{\ast}(C)=\mathrm{Hom}_{\mathrm{Mod}(\mathcal{C}^{op})}(F,\mathrm{Hom}_{\mathcal{C}}(-,C)).$$
Since $\{\mathrm{Hom}_{\mathcal{C}}(-,C)\}_{C\in \mathcal{C}}$ is a set of finitely generated projectives of $\mathrm{Mod}(\mathcal{C}^{op})$, we extend this to a morphism 
$$\beta^{F^{\ast}}:-\otimes_{\mathcal{C}}F^{\ast}\longrightarrow \mathrm{Hom}_{\mathrm{Mod}(\mathcal{C}^{op})}(F,-)$$
In particular, for $\varphi:G\longrightarrow H$ a  morphism in $\mathrm{Mod}(\mathcal{C}^{op})$, the following diagram commutes
$$\xymatrix{ G \otimes F^{\ast} \ar[r]\ar[d] & H\otimes F^{\ast}\ar[d] \\
\mathrm{Hom}(F, G)\ar[r] & \mathrm{Hom}(F,H)}$$
\end{proof}

\begin{proposition}\label{unisoproj}
Let $F\in \mathrm{Mod}(\mathcal{C}^{op})$ be. 
Suppose that $\beta_{F}^{F^{\ast}}:F\otimes_{\mathcal{C}}F^{\ast}\longrightarrow \mathrm{Hom}_{\mathrm{Mod}(\mathcal{C}^{op})}(F,F)$ is an epimorphism. Then
$F$ is a finitely generated projective $\mathcal{C}^{op}$-module.
\end{proposition}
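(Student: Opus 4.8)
The plan is to run the classical ``dual basis'' argument: surjectivity of $\beta_F^{F^\ast}$ forces the identity $1_F$ to decompose as a finite sum of composites exhibiting $F$ as a retract of a finite coproduct of representable $\mathcal{C}^{op}$-modules, and then conclude with properties (2) and (4) of $\mathrm{Mod}(\mathcal{C}^{op})$. First I would use that $\beta_F^{F^\ast}\colon F\otimes_{\mathcal{C}}F^\ast\to\mathrm{Hom}_{\mathrm{Mod}(\mathcal{C}^{op})}(F,F)$ is a surjection of abelian groups to pick $\xi\in F\otimes_{\mathcal{C}}F^\ast$ with $\beta_F^{F^\ast}(\xi)=1_F$. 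By the construction of the tensor product over $\mathcal{C}$, the element $\xi$ is represented by a finite sum $\sum_{i=1}^n x_i\otimes f_i$ with $x_i\in F(C_i)$ and $f_i\in F^\ast(C_i)=\mathrm{Hom}_{\mathrm{Mod}(\mathcal{C}^{op})}\big(F,\mathcal{C}(-,C_i)\big)$, for objects $C_1,\dots,C_n\in\mathcal{C}$; and via Yoneda's lemma each $x_i$ corresponds to a morphism $\widehat{x_i}\colon\mathcal{C}(-,C_i)\to F$ in $\mathrm{Mod}(\mathcal{C}^{op})$.

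The key step is to identify $\beta_F^{F^\ast}$ on these generators. By Proposition \ref{AProposition0}(3), $\beta_{\mathcal{C}(-,C_i)}^{F^\ast}\colon\mathcal{C}(-,C_i)\otimes_{\mathcal{C}}F^\ast\to F^\ast(C_i)$ is the canonical isomorphism, under which the class of $1_{C_i}\otimes f_i$ maps to $f_i$; moreover, since $(\widehat{x_i})_{C_i}(1_{C_i})=x_i$, the morphism $\widehat{x_i}\otimes F^\ast$ sends the class of $1_{C_i}\otimes f_i$ to the class of $x_i\otimes f_i$. Feeding $1_{C_i}\otimes f_i$ through the commutative square of Proposition \ref{morfismo canonico} applied to $\varphi=\widehat{x_i}$ (with $G=\mathcal{C}(-,C_i)$, $H=F$) then yields $\beta_F^{F^\ast}(x_i\otimes f_i)=\mathrm{Hom}(F,\widehat{x_i})(f_i)=\widehat{x_i}\circ f_i$. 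Summing over $i$ gives the relation $\sum_{i=1}^n\widehat{x_i}\circ f_i=1_F$ in $\mathrm{Hom}_{\mathrm{Mod}(\mathcal{C}^{op})}(F,F)$.

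Finally I would let $g\colon F\to\bigoplus_{i=1}^n\mathcal{C}(-,C_i)$ be the morphism with $p_i g=f_i$, where $p_i$ are the projections, and $h\colon\bigoplus_{i=1}^n\mathcal{C}(-,C_i)\to F$ the morphism with $h\iota_i=\widehat{x_i}$, where $\iota_i$ are the coproduct inclusions. Then $hg=\sum_{i=1}^n\widehat{x_i}\circ f_i=1_F$, so $F$ is a direct summand of $\bigoplus_{i=1}^n\mathcal{C}(-,C_i)$, a finite coproduct of representable $\mathcal{C}^{op}$-modules. By properties (2) and (4) of $\mathrm{Mod}(\mathcal{C}^{op})$ this coproduct is a finitely generated projective $\mathcal{C}^{op}$-module, and hence so is its retract $F$.

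The main obstacle is the bookkeeping in the second paragraph: the transformation $\beta^{F^\ast}$ is defined only on representables and then extended, so its value on a general elementary tensor $x\otimes f$ cannot be quoted from a closed formula but must be extracted from the naturality square of Proposition \ref{morfismo canonico}; care is also needed with the variance of $F^\ast$ (a covariant $\mathcal{C}$-module) when identifying $f_i\in F^\ast(C_i)$ with the class of $1_{C_i}\otimes f_i$ under the isomorphism $\mathcal{C}(-,C_i)\otimes_{\mathcal{C}}F^\ast\cong F^\ast(C_i)$.
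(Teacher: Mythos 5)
Your argument is correct and arrives at the same conclusion as the paper—a finite retraction exhibiting $F$ as a direct summand of $\bigoplus_{i=1}^n\mathcal{C}(-,C_i)$—but by a genuinely different route. The paper first fixes an epimorphism $\varphi\colon\bigoplus_{\lambda\in\Lambda}\mathcal{C}(-,C_\lambda)\to F$, observes that $\beta_F^{F^\ast}\circ(\varphi\otimes F^\ast)$ is then surjective onto $\mathrm{Hom}(F,F)$, lifts $1_F$ to an element of $\bigoplus_\lambda F^\ast(C_\lambda)$, and obtains finiteness from the finite support of an element of that coproduct; the retraction is then read off by chasing the two naturality squares $(\ast)$ and $(\ast\ast)$, and the paper never needs to evaluate $\beta_F^{F^\ast}$ on an elementary tensor. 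You instead pull $1_F$ back directly to $\xi\in F\otimes_{\mathcal{C}}F^\ast$, obtain finiteness from the intrinsic fact that an element of the coend is a finite sum $\sum_{i=1}^n x_i\otimes f_i$, and then make the ``dual basis'' formula $\beta_F^{F^\ast}(x_i\otimes f_i)=\widehat{x_i}\circ f_i$ explicit by applying the naturality square of Proposition \ref{morfismo canonico} with $\varphi=\widehat{x_i}$. Your identification of the formula is correct: under $\beta_{\mathcal{C}(-,C_i)}^{F^\ast}$ the class of $1_{C_i}\otimes f_i$ goes to $F^\ast(1_{C_i})(f_i)=f_i$, and $\widehat{x_i}\otimes F^\ast$ sends it to $x_i\otimes f_i$, so commutativity gives exactly $\widehat{x_i}\circ f_i$. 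Your version is the classical dual basis lemma and has the advantage of surfacing the explicit formula for $\beta_F^{F^\ast}$; the paper's version avoids evaluating $\beta$ on elementary tensors at the cost of introducing an auxiliary coproduct and a second commutative diagram. Both are correct and of comparable length.
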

\begin{proof}
We have an epimorhism $$\xymatrix{ \bigoplus_{\lambda\in \Lambda}\mathcal{C}(-,C_{\lambda})\ar[r]^(.7){\varphi} & F\ar[r] & 0.}$$
Let $G:= \bigoplus_{\lambda\in \Lambda}\mathcal{C}(-,C_{\lambda})$ be. By Proposition \ref{morfismo canonico}, we have the following commutative diagram 
$$(\ast):\xymatrix{\Big(\bigoplus_{\lambda\in \Lambda}\mathcal{C}(-,C_{\lambda})\Big)\otimes F^{\ast} \ar[rr]^(.6){\varphi\otimes  F^{\ast}}\ar[d]_{\beta_{G}^{F^{\ast}}} &  & F\otimes F^{\ast} \ar[r]\ar[d]^{\beta_{F}^{F^{\ast}}} & 0\\
\mathrm{Hom}_{\mathrm{Mod}(\mathcal{C}^{op})}\Big(F, \bigoplus_{\lambda\in \Lambda}\mathcal{C}(-,C_{\lambda})\Big)\ar[rr]^(.6){\mathrm{Hom}(F,\varphi)} &  & \mathrm{Hom}_{\mathrm{Mod}(\mathcal{C}^{op})}(F,F)}$$
If $\beta_{F}^{F^{\ast}}$ is an epimorphism, we have that $\beta_{F}^{F^{\ast}}\circ (\varphi\otimes F^{\ast})$ is an epimorphism. Hence, there exists $y\in   \Big(\bigoplus_{\lambda\in \Lambda}\mathcal{C}(-,C_{\lambda})\Big)\otimes F^{\ast}$ such that 
$$(\beta_{F}^{F^{\ast}}\circ (\varphi\otimes F^{\ast}))(y)=1_{F}.$$
But $\Big(\bigoplus_{\lambda\in \Lambda}\mathcal{C}(-,C_{\lambda})\Big)\otimes F^{\ast}=\bigoplus_{\lambda\in \Lambda}\mathcal{C}(-,C_{\lambda})\otimes F^{\ast}=\bigoplus_{\lambda\in \Lambda}F^{\ast}(C_{\lambda})$, hence we can think that $y\in \bigoplus_{\lambda\in \Lambda}F^{\ast}(C_{\lambda})$.\\
Hence, $y=(y_{\lambda})_{\in \Lambda}$ is such that $y_{\lambda}=0$ for infinitely many $\lambda\in \Lambda$ and $y_{\lambda}\neq 0$ for a finite set. Suppose that
$y_{\lambda_{j}}\neq 0$ for $y_{\lambda_{j}}\in F^{\ast}(C_{\lambda_{j}})$ with $j=1,\cdots, n$. 
Then we have morphism
$$\xymatrix{\bigoplus_{j=1}^{n}F^{\ast}(C_{\lambda_{j}})\ar[r]^{\mu} & \bigoplus_{\lambda \in \Lambda}F^{\ast}(C_{\lambda})}$$
where $\mu$ is the canonical inclusion and such that there exists $y'=(y_{1},\cdots, y_{n})\in \bigoplus_{j=1}^{n}F^{\ast}(C_{\lambda_{j}})$ satisfying that $\mu(y')=y$.\\
Now, we have the morphism 
$$\xymatrix{\bigoplus_{j=1}^{n}\mathcal{C}(-,C_{\lambda_{j}})\ar[r]^(.5){u}& \bigoplus_{\lambda \in \Lambda}\mathcal{C}(-,C_{\lambda}).}$$
Let $H:=\bigoplus_{j=1}^{n}\mathcal{C}(-,C_{\lambda_{j}})$ be.  Since $-\otimes_{\mathcal{C}}F^{\ast}$ preserve colimits, by  Proposition \ref{morfismo canonico} we have the following commutative diagram

$$\scalebox{0.9}{\xymatrix{(\ast\ast): \Big(\bigoplus_{j=1}^{n}\mathcal{C}(-,C_{\lambda_{j}})\Big)\otimes F^{\ast}\ar[rr]^{u\otimes F^{\ast}=\mu}\ar[d]^{\beta_{H}^{F^{\ast}}} & &\Big(\bigoplus_{\lambda\in \Lambda}\mathcal{C}(-,C_{\lambda})\Big)\otimes F^{\ast}\ar[d]^{\beta_{G}^{F^{\ast}}}\\
\mathrm{Hom}_{\mathrm{Mod}(\mathcal{C}^{op})}\Big(F, \bigoplus_{j=1}^{n}\mathcal{C}(-,C_{\lambda_{j}})\Big)\ar[rr]^(.5){\mathrm{Hom}(F,u)} & &  \mathrm{Hom}_{\mathrm{Mod}(\mathcal{C}^{op})}\Big(F,\bigoplus_{\lambda\in \Lambda}\mathcal{C}(-,C_{\lambda})\Big)}}$$

By the diagrams $(\ast)$ and $(\ast\ast)$, we have that $y'\in \Big(\bigoplus_{j=1}^{n}\mathcal{C}(-,C_{\lambda_{j}})\Big)\otimes F^{\ast}$ satisfies that $(\beta_{F}^{F^{\ast}}\circ (\varphi\otimes F^{\ast})\circ \mu)(y')=1_{F}$.
Thus, $\Big(\mathrm{Hom}(F,\varphi)\circ \mathrm{Hom}(F,u)\circ \beta_{H}^{F^{\ast}}\Big)(y')=1_{F}$.\\
Therefore, $\beta_{H}^{F^{\ast}}(y')\in \mathrm{Hom}_{\mathrm{Mod}(\mathcal{C}^{op})}\Big(F, \bigoplus_{j=1}^{n}\mathcal{C}(-,C_{\lambda_{j}})\Big)$ is such that the following diagram commutes
$$\xymatrix{F\ar[rr]^{1}\ar[dr]_{\beta_{H}^{F^{\ast}}(y')} & & F\\
 & 	\bigoplus_{j=1}^{n}\mathcal{C}(-,C_{\lambda_{j}})\ar[ur]_{\varphi\circ u} & }$$
 Hence, $F$ is a direct summand of $\bigoplus_{j=1}^{n}\mathcal{C}(-,C_{\lambda_{j}})$ and we conclude that $F$ is a finitely generated projective $\mathcal{C}^{op}$-module.
\end{proof}

\begin{lemma}\label{cuadradoconmu}
Let $\eta:L\longrightarrow F$ be a morphism  in $\mathrm{Mod}(\mathcal{C}^{op})$ and $\eta^{\ast}:F^{\ast}\longrightarrow L^{\ast}$ the corresponding morphism in $\mathrm{Mod}(\mathcal{C})$. Then, we can extend
$\eta^{\ast}$ to a natural transformation 
$$-\otimes_{\mathcal{C}}\eta^{\ast}:-\otimes_{\mathcal{C}}F^{\ast}\longrightarrow -\otimes_{\mathcal{C}}L^{\ast}.$$
such that the following diagram commutes
$$\xymatrix{\otimes_{\mathcal{C}}F^{\ast}\ar[rr]^{\beta^{F^{\ast}}}\ar[d]_{-\otimes_{\mathcal{C}}\eta^{\ast}} & & \mathrm{Hom}_{\mathrm{Mod}(\mathcal{C}^{op})}(F,-)\ar[d]^{\mathrm{Hom}_{\mathrm{Mod}(\mathcal{C}^{op})}(\eta,-)}\\
\otimes_{\mathcal{C}}L^{\ast}\ar[rr]^{\beta^{L^{\ast}}} & & \mathrm{Hom}_{\mathrm{Mod}(\mathcal{C}^{op})}(L,-)}$$
\end{lemma}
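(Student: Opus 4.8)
The plan is to take for the natural transformation $-\otimes_{\mathcal{C}}\eta^{\ast}$ the obvious candidate coming from functoriality of the tensor product in its second argument, and then to verify the square by reducing to representable $\mathcal{C}^{op}$-modules, exactly as $\beta^{F^{\ast}}$ and $\beta^{L^{\ast}}$ themselves are built in Proposition \ref{morfismo canonico}. First I would set, for each $G\in\mathrm{Mod}(\mathcal{C}^{op})$, the component $(-\otimes_{\mathcal{C}}\eta^{\ast})_{G}:=\mathrm{id}_{G}\otimes_{\mathcal{C}}\eta^{\ast}\colon G\otimes_{\mathcal{C}}F^{\ast}\longrightarrow G\otimes_{\mathcal{C}}L^{\ast}$. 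Naturality in $G$ is immediate from bifunctoriality of $\otimes_{\mathcal{C}}$: for $\varphi\colon G\to H$ the square with horizontal arrows $\varphi\otimes_{\mathcal{C}}F^{\ast}$, $\varphi\otimes_{\mathcal{C}}L^{\ast}$ and vertical arrows $\mathrm{id}\otimes_{\mathcal{C}}\eta^{\ast}$ commutes because both composites equal $\varphi\otimes_{\mathcal{C}}\eta^{\ast}$. So $-\otimes_{\mathcal{C}}\eta^{\ast}$ is a well-defined natural transformation $-\otimes_{\mathcal{C}}F^{\ast}\Rightarrow -\otimes_{\mathcal{C}}L^{\ast}$, and on a representable $G=\mathcal{C}(-,C)$ it is carried by the isomorphism $\mathcal{C}(-,C)\otimes_{\mathcal{C}}F^{\ast}\cong F^{\ast}(C)$ of Proposition \ref{AProposition0}(3) to $\eta^{\ast}_{C}\colon F^{\ast}(C)\to L^{\ast}(C)$; this is the sense in which it extends $\eta^{\ast}$.

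Next I would prove the square commutes. Both composites $\mathrm{Hom}_{\mathrm{Mod}(\mathcal{C}^{op})}(\eta,-)\circ\beta^{F^{\ast}}$ and $\beta^{L^{\ast}}\circ(-\otimes_{\mathcal{C}}\eta^{\ast})$ are natural transformations with common domain $-\otimes_{\mathcal{C}}F^{\ast}$, which by Proposition \ref{AProposition0} is right exact and preserves arbitrary coproducts. Hence it carries any epimorphism $\bigoplus_{i}\mathcal{C}(-,C_{i})\twoheadrightarrow G$ onto an epimorphism and identifies $\bigl(\bigoplus_{i}\mathcal{C}(-,C_{i})\bigr)\otimes_{\mathcal{C}}F^{\ast}$ with the coproduct $\bigoplus_{i}\bigl(\mathcal{C}(-,C_{i})\otimes_{\mathcal{C}}F^{\ast}\bigr)$ via the inclusions; a routine diagram chase then shows that two natural transformations out of $-\otimes_{\mathcal{C}}F^{\ast}$ which agree on all representable $\mathcal{C}^{op}$-modules already agree on every $G$ (agreement on representables forces agreement on coproducts of representables, and then the epimorphism onto $G$ forces agreement on $G$). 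So it suffices to check the square for $G=\mathcal{C}(-,C)$.

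For $G=\mathcal{C}(-,C)$, by the construction in Proposition \ref{morfismo canonico} the maps $\beta^{F^{\ast}}_{\mathcal{C}(-,C)}$ and $\beta^{L^{\ast}}_{\mathcal{C}(-,C)}$ are the canonical isomorphisms onto $F^{\ast}(C)=\mathrm{Hom}_{\mathrm{Mod}(\mathcal{C}^{op})}(F,\mathcal{C}(-,C))$ and $L^{\ast}(C)=\mathrm{Hom}_{\mathrm{Mod}(\mathcal{C}^{op})}(L,\mathcal{C}(-,C))$; the left vertical becomes $\eta^{\ast}_{C}$, and by the definition of the functor $(-)^{\ast}$ on morphisms this component is precisely $\mathrm{Hom}_{\mathrm{Mod}(\mathcal{C}^{op})}(\eta,\mathcal{C}(-,C))$, i.e. exactly the right vertical evaluated at $\mathcal{C}(-,C)$. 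Thus the square commutes on representables, hence everywhere. The only delicate point is the bookkeeping in this last paragraph: one must state explicitly that, under the Yoneda identification $\mathcal{C}(-,C)\otimes_{\mathcal{C}}F^{\ast}\cong F^{\ast}(C)$, the map induced by $\eta^{\ast}$ is literally $\eta^{\ast}_{C}$, and that $\eta^{\ast}_{C}=\mathrm{Hom}_{\mathrm{Mod}(\mathcal{C}^{op})}(\eta,\mathcal{C}(-,C))$ by the very definition of $(-)^{\ast}$ recalled in the Appendix — the whole commutativity rests on these two identifications, while the reduction to representables is formal once the right-exactness and coproduct-preservation of $-\otimes_{\mathcal{C}}F^{\ast}$ from Proposition \ref{AProposition0} are invoked.
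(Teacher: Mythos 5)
Your proof is correct and arrives at the same conclusion by essentially the same strategy (define on representables, reduce to representables), but the tools differ in a way worth noting. The paper defines $-\otimes_{\mathcal{C}}\eta^{\ast}$ only on the subcategory $\mathcal{P}$ of representables and invokes \cite[Theorem 6.1]{Mitchell} twice: once to extend the transformation uniquely to all of $\mathrm{Mod}(\mathcal{C}^{op})$ using that $-\otimes_{\mathcal{C}}F^{\ast}$ is colimit-preserving and $\mathcal{P}$ is a generating family of finitely generated projectives, and again to conclude that commutativity of the square on $\mathcal{P}$ propagates to all objects. You instead observe that $\otimes_{\mathcal{C}}$ is already a bifunctor, so $(\mathrm{id}_{G}\otimes\eta^{\ast})_{G}$ is a well-defined natural transformation with no extension step needed, and you then replace the second appeal to Mitchell by an explicit argument: right-exactness and coproduct-preservation of $-\otimes_{\mathcal{C}}F^{\ast}$ send the cover $\bigoplus_{i}\mathcal{C}(-,C_{i})\twoheadrightarrow G$ to an epimorphism, so two natural transformations out of $-\otimes_{\mathcal{C}}F^{\ast}$ that agree on representables must agree everywhere. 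Your version is slightly more self-contained (it unwinds the content of Mitchell's theorem in this special case), and it makes visible that the extension in the paper's construction is not really creating new data but just recognizing the bifunctorial map; the paper's version is shorter because it delegates the bookkeeping to Mitchell's theorem. Both hinge on the same two identifications you single out at the end — that $\mathcal{C}(-,C)\otimes_{\mathcal{C}}F^{\ast}\cong F^{\ast}(C)$ turns the left vertical into $\eta^{\ast}_{C}$, and that $\eta^{\ast}_{C}=\mathrm{Hom}_{\mathrm{Mod}(\mathcal{C}^{op})}(\eta,\mathcal{C}(-,C))$ by the very definition of $(-)^{\ast}$ — and you state them correctly.
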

\begin{proof}
Recall that for $C\in \mathcal{C}$ we have that
$$\eta^{\ast}_{C}:F^{\ast}(C)=\mathrm{Hom}_{\mathrm{Mod}(\mathcal{C}^{op})}\Big(F,\mathcal{C}(-,C)\Big)\longrightarrow L^{\ast}(C)=\mathrm{Hom}_{\mathrm{Mod}(\mathcal{C}^{op})}\Big(L,\mathcal{C}(-,C)\Big)$$
is just defined as $\eta^{\ast}_{C}:=\mathrm{Hom}_{\mathrm{Mod}(\mathcal{C}^{op})}\Big(\eta,\mathcal{C}(-,C)\Big).$
Let $\mathcal{P}:=\{\mathcal{C}(-,C)\}_{C\in \mathcal{C}}\subseteq \mathrm{Mod}(\mathcal{C}^{op})$ be.
Since $\mathcal{C}(-,C)\otimes_{\mathcal{C}}F^{\ast}\simeq F^{\ast}(C)$ and $\mathcal{C}(-,C)\otimes_{\mathcal{C}}L^{\ast}\simeq L^{\ast}(C)$ we define $\mathcal{C}(-,C)\otimes\eta^{\ast}:=\eta^{\ast}_{C}.$   In this way we have a natural transformation
$$-\otimes_{\mathcal{C}}\eta^{\ast}: (-\otimes_{\mathcal{C}}F^{\ast})|_{\mathcal{P}}\longrightarrow (-\otimes_{\mathcal{C}}L^{\ast})|_{\mathcal{P}}.$$
Since $\{\mathcal{C}(-,C)\}_{C\in \mathcal{C}}$ is a set of generators which are finitely generated and projectives and 
$-\otimes_{\mathcal{C}}F^{\ast}$ preserve colimits, by \cite[Theorem 6.1]{Mitchell}, we extend this to unique a natural transformation:
$$-\otimes_{\mathcal{C}}\eta^{\ast}:-\otimes_{\mathcal{C}}F^{\ast}\longrightarrow -\otimes_{\mathcal{C}}L^{\ast}.$$

For $G=\mathrm{Hom}_{\mathcal{C}}(-,C)$ we have the following commutative diagram

$$\xymatrix{\mathrm{Hom}_{\mathrm{Mod}(\mathcal{C}^{op})}\Big(F,\mathcal{C}(-,C)\Big)\ar[rr]^{\beta_{G}^{F^{\ast}}=1}\ar[d]_{\eta_{C}^{\ast}}& & \mathrm{Hom}_{\mathrm{Mod}(\mathcal{C}^{op})}\Big(F,\mathcal{C}(-,C)\Big)\ar[d]^{\mathrm{Hom}_{\mathrm{Mod}(\mathcal{C}^{op})}\Big(\eta,\mathcal{C}(-,C)\Big)}\\
\mathrm{Hom}_{\mathrm{Mod}(\mathcal{C}^{op})}\Big(L,\mathcal{C}(-,C)\Big)\ar[rr]_{\beta_{G}^{L^{\ast}}=1}& &\mathrm{Hom}_{\mathrm{Mod}(\mathcal{C}^{op})}\Big(L,\mathcal{C}(-,C)\Big)}$$
Since, $\{\mathcal{C}(-,C)\}_{C\in \mathcal{C}}$ is a set of generators which are finitely generated and projectives of $\mathrm{Mod}(\mathcal{C}^{op})$ and 
$-\otimes_{\mathcal{C}}F^{\ast}$ preserve colimits, by \cite[Theorem 6.1]{Mitchell},  we have that  the following diagram commutes
$$\xymatrix{\otimes_{\mathcal{C}}F^{\ast}\ar[rr]^{\beta^{F^{\ast}}}\ar[d]_{-\otimes_{\mathcal{C}}\eta^{\ast}} & & \mathrm{Hom}_{\mathrm{Mod}(\mathcal{C}^{op})}(F,-)\ar[d]^{\mathrm{Hom}_{\mathrm{Mod}(\mathcal{C}^{op})}(\eta,-)}\\
\otimes_{\mathcal{C}}L^{\ast}\ar[rr]^{\beta^{L^{\ast}}} & & \mathrm{Hom}_{\mathrm{Mod}(\mathcal{C}^{op})}(L,-).}$$
\end{proof}

We will give a proof of the following result since we could not find a prove in the literature.

\begin{proposition}$\textnormal{\cite[Theorem 4.4 in p. 289]{Janet}}$\label{Janettheorem}
Let $F\in \mathrm{Mod}(\mathcal{C}^{op})$. The following are equivalent
\begin{enumerate}
\item [(a)] $F\otimes_{\mathcal{C}}-:\mathrm{Mod}(\mathcal{C})\longrightarrow \mathrm{Ab}$  preserve inverse limits.

\item [(b)] $\alpha_{G}:F\otimes G^{\ast} \longrightarrow \mathrm{Hom}(G,F)$ is an isomorphism for all $G\in \mathrm{Mod}(\mathcal{C}^{op})$.

\item [(c)] $\alpha_{G}:F\otimes G^{\ast}\longrightarrow \mathrm{Hom}(G,F)$ is an epimorphism for all $G\in \mathrm{Mod}(\mathcal{C}^{op})$.

\item [(d)] $\alpha_{F}:F\otimes F^{\ast}\longrightarrow \mathrm{Hom}(F,F)$ is an epimorphism.

\item [(e)] $F$ is a finitely generated projective $\mathcal{C}^{op}$-module.
\end{enumerate}

\end{proposition}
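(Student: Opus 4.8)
The plan is to prove the chain of implications $(a)\Rightarrow(b)\Rightarrow(c)\Rightarrow(d)\Rightarrow(e)\Rightarrow(a)$, relying on the canonical morphism $\beta^{F^\ast}$ of Proposition \ref{morfismo canonico}, the criterion of Proposition \ref{unisoproj}, and the characterization of finitely presented functors in Proposition \ref{SatzLenzing2}. Throughout I will write $\alpha_G=\beta_G^{F^\ast}\colon F\otimes_{\mathcal C}G^\ast\to \mathrm{Hom}_{\mathrm{Mod}(\mathcal C^{op})}(G,F)$; note that this is the transpose of $\beta^{F^\ast}$ under the variable swap, and that for $G=\mathcal{C}(-,C)$ representable it is the Yoneda isomorphism $F\otimes\mathcal{C}(-,C)^\ast=F(C)\xrightarrow{\sim}\mathrm{Hom}(\mathcal{C}(-,C),F)$.

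For $(a)\Rightarrow(b)$: pick a presentation $\bigoplus_{j}\mathcal{C}(-,D_j)\to\bigoplus_i\mathcal{C}(-,C_i)\to G\to 0$. Applying $(-)^\ast$ sends coproducts to products, so $G^\ast$ fits in an exact sequence $0\to G^\ast\to\prod_i\mathcal{C}(-,C_i)^\ast\to\prod_j\mathcal{C}(-,D_j)^\ast$, i.e.\ $0\to G^\ast\to\prod_i\mathcal{C}(C_i,-)\to\prod_j\mathcal{C}(D_j,-)$. Now tensor with $F$ on the left: since $F\otimes_{\mathcal C}-$ is right exact in general and preserves inverse limits by hypothesis (hence is left exact, hence exact), it preserves this kernel and the products, giving $F\otimes G^\ast\xrightarrow{\sim}\ker\big(\prod_i F\otimes\mathcal{C}(C_i,-)\to\prod_j F\otimes\mathcal{C}(D_j,-)\big)$. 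On the representable factors $F\otimes\mathcal{C}(C_i,-)^\ast\cong F\otimes\mathcal{C}(C_i,-)$... more precisely one compares this with $\mathrm{Hom}(G,F)=\ker\big(\prod_i\mathrm{Hom}(\mathcal{C}(-,C_i),F)\to\prod_j\mathrm{Hom}(\mathcal{C}(-,D_j),F)\big)$, and the naturality square of Lemma \ref{cuadradoconmu} identifies the two kernels via the Yoneda isomorphisms on each factor; a diagram chase (five lemma on the two kernel presentations) shows $\alpha_G$ is an isomorphism. The implications $(b)\Rightarrow(c)$ and $(c)\Rightarrow(d)$ are trivial specializations. The implication $(d)\Rightarrow(e)$ is exactly Proposition \ref{unisoproj}.

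For $(e)\Rightarrow(a)$: if $F$ is a direct summand of a finite sum $\bigoplus_{k=1}^n\mathcal{C}(-,C_k)$, then since the property ``$F\otimes_{\mathcal C}-$ preserves inverse limits'' is closed under finite direct sums and direct summands (inverse limits commute with finite products, and a summand of a functor preserving limits does so), it suffices to check it for a single representable $\mathcal{C}(-,C)$. But $\mathcal{C}(-,C)\otimes_{\mathcal C}M=M(C)$ by Proposition \ref{AProposition0}(3), and evaluation at $C$ is exact and preserves all limits in $\mathrm{Mod}(\mathcal C)$ since limits there are computed pointwise. This closes the cycle.

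The main obstacle will be the diagram chase in $(a)\Rightarrow(b)$: one must be careful that the identification of $\mathrm{Hom}(G,F)$ as the kernel of $\prod_i\mathrm{Hom}(\mathcal{C}(-,C_i),F)\to\prod_j\mathrm{Hom}(\mathcal{C}(-,D_j),F)$ is compatible, via Lemma \ref{cuadradoconmu}, with the corresponding description of $F\otimes G^\ast$ as a kernel — in other words, that the maps $\alpha_{\mathcal{C}(-,C_i)}$ assemble into a morphism of the two kernel diagrams, not merely a termwise isomorphism. Once the naturality of $\beta^{F^\ast}$ (Proposition \ref{morfismo canonico}) is applied to the two structure maps of the presentation of $G$, the comparison of kernels is forced and the five lemma finishes it; the rest of the argument is formal.
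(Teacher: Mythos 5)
Your proposal is correct and follows essentially the same route as the paper: take a presentation of $G$ by coproducts of representables, note that $(-)^\ast$ turns it into a kernel sequence of products of representables, use that $F\otimes_{\mathcal C}-$ preserves kernels and products (hypothesis (a)), invoke the naturality from Lemma \ref{cuadradoconmu}, and compare the two left-exact rows; the remaining implications are the same trivial specializations, Proposition \ref{unisoproj}, and the easy $(e)\Rightarrow(a)$. One small slip: the morphism should be written $\beta^{G^\ast}_F$, not $\beta^{F^\ast}_G$ (it is $\beta^{G^\ast}$ evaluated at $F$, with domain $F\otimes G^\ast$ and codomain $\mathrm{Hom}(G,F)$), though your description of its domain, codomain, and Yoneda specialization makes clear you had the right map in mind.
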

\begin{proof}
$(a)\Rightarrow (b)$. We consider the contravariant functor
$$(-)^{\ast}:\mathrm{Mod}(\mathcal{C}^{op})\longrightarrow \mathrm{Mod}(\mathcal{C}).$$
Then, we have contravariant functors 
$$(F\otimes_{\mathcal{C}}-)\circ (-)^{\ast}:\mathrm{Mod}(\mathcal{C}^{op})\longrightarrow \mathrm{Ab}$$
$$\mathrm{Hom}_{\mathrm{Mod}(\mathcal{C}^{op})}(-,F):\mathrm{Mod}(\mathcal{C}^{op})\longrightarrow \mathrm{Ab}$$
We will construct an isomorphism
$$\alpha:(F\otimes_{\mathcal{C}}-)\circ (-)^{\ast} \longrightarrow \mathrm{Hom}_{\mathrm{Mod}(\mathcal{C}^{op})}(-,F).$$
For this, we consider $G\in \mathrm{Mod}(\mathcal{C}^{op})$. We have exact sequence in $\mathrm{Mod}(\mathcal{C}^{op})$
$$\xymatrix{H\ar[r]^{\eta} & L\ar[r]^(.6){\varphi} & G\ar[r] & 0.}$$
with $L:=\bigoplus_{j\in J}\mathcal{C}(-,C_{j})$ and $H:=\bigoplus_{i\in I}\mathcal{C}(-,C_{i})$.
By applying  the funtor $(-)^{\ast}$, we have the exact sequence in $\mathrm{Mod}(\mathcal{C})$:
$$\xymatrix{0\ar[r] & G^{\ast}\ar[r]^{\varphi^{\ast}}  & L^{\ast}\ar[r]^{\eta^{\ast}} & H^{\ast}}$$
If $F\otimes_{\mathcal{C}}-$ preserve inverse limits in particular it preserve kernels, then we have the following exact sequence
$$\xymatrix{0\ar[r] & F\otimes G^{\ast}\ar[r]^{F\otimes \varphi^{\ast}}  & F\otimes L^{\ast}\ar[r]^{F\otimes \eta^{\ast}}  & F\otimes H^{\ast}}$$
Since $\mathrm{Hom}(-,F)$ is left exact we have exact sequence
$$\xymatrix{0\ar[r] & \mathrm{Hom}(G,F)\ar[r]^(.5){\mathrm{Hom}(\varphi,F)} & \mathrm{Hom}(L,F)\ar[r]^{\mathrm{Hom}(\eta,F)}  &  \mathrm{Hom}(H,F)}$$
On the other hand, by Lemma \ref{cuadradoconmu}, we have the following commutative diagram
$$\xymatrix{-\otimes G^{\ast}\ar[rr]^{-\otimes \varphi^{\ast}}\ar[d]^{\beta^{G^{\ast}}}  && -\otimes L^{\ast}\ar[rr]^{-\otimes \eta^{\ast}}\ar[d]^{\beta^{L^{\ast}}}  && -\otimes H^{\ast}\ar[d]^{\beta^{H^{\ast}}}\\
\mathrm{Hom}(G,-)\ar[rr]^{\mathrm{Hom}(\varphi,-)} & & \mathrm{Hom}(L,-)\ar[rr]^{\mathrm{Hom}(\eta,-)}  & &\mathrm{Hom}(H,-)}$$
Hence evaluating in $F$ we have the following commutative diagram

$$\xymatrix{0\ar[r] & F\otimes G^{\ast}\ar[r]^(.4){F\otimes \varphi^{\ast}}\ar[d]^{\beta^{G^{\ast}}_{F}}  & F\otimes \Big(\prod_{j\in J}\mathcal{C}(C_{j},-) \Big)\ar[r]^{F\otimes \eta^{\ast}}\ar[d]^{\beta^{L^{\ast}}_{F}}  & F\otimes \Big(\prod_{i\in I}\mathcal{C}(C_{i},-)\Big)\ar[d]^{\beta^{H^{\ast}}_{F}}\\
0\ar[r] & \mathrm{Hom}(G,F)\ar[r]^(.4){\mathrm{Hom}(\varphi,F)} & \mathrm{Hom}(\bigoplus_{j\in J}\mathcal{C}(-,C_{j}),F)\ar[r]^{\mathrm{Hom}(\eta,F)}  &  \mathrm{Hom}(\bigoplus_{i\in I}\mathcal{C}(-,C_{i}),F) }$$
since $L^{\ast}=\prod_{j\in J}\mathcal{C}(C_{j},-)$ and  $H^{\ast}=\prod_{i\in I}\mathcal{C}(C_{i},-)$.\\
Now, since $F\otimes -$ preserve inverse limits, in particular preserve products  and hence
$$F\otimes \Big(\prod_{j\in J}\mathcal{C}(C_{j},-)\Big)\simeq \prod_{j\in J} (F\otimes \mathcal{C}(C_{j},-))\simeq \prod_{j\in J} F(C_{j}).$$
We also have that
$$ \mathrm{Hom}\Big(\bigoplus_{j\in J}\mathcal{C}(-,C_{j}),F\Big)\simeq \prod_{j\in J}\mathrm{Hom}(\mathcal{C}(-,C_{j}),F)\simeq \prod_{j\in J} F(C_{j})$$
We conclude that $\beta^{H^{\ast}}_{F}$ and $\beta^{L^{\ast}}_{F}$ are isomorphisms.
Hence, we have isomorphism $\beta^{G^{\ast}}_{F}:F\otimes G^{\ast}\longrightarrow \mathrm{Hom}(G,F)$.\\
In this way, we define isomorphism
$$\alpha:(F\otimes_{\mathcal{C}}-)\circ (-)^{\ast} \longrightarrow \mathrm{Hom}_{\mathrm{Mod}(\mathcal{C}^{op})}(-,F).$$ as $\alpha_{G}:=\beta^{G^{\ast}}_{F}$ for all $G\in \mathrm{Mod}(\mathcal{C}^{op})$. We have proved that
$(a)\Rightarrow (b)$.\\
The implications $(b)\Rightarrow (c)$, $(c)\Rightarrow (d)$ are trivial.\\
The implication $(d)\Rightarrow (e)$ is just the Proposition \ref{unisoproj}.
Finally, the implication $(e)\Rightarrow (a)$ is easy, since for $F=\mathrm{Hom}_{\mathcal{C}}(-,C)$ is trivial and hence for a direct summand of a representable.
\end{proof}

\begin{Remark}
We have that $\beta^{G^{\ast}}_{F}:F\otimes G^{\ast}\longrightarrow \mathrm{Hom}(G,F)$ is such that its image consist of the natural transformations from $G$ to $F$ that factor through a representable functor (see the last rows in p. 288 in the article \cite{Janet}).
\end{Remark}

\begin{proposition}\label{plano=proy}
Let $F\in \mathrm{Mod}(\mathcal{C})$ a finitely presented flat module.Then $F$ is a finitely generated projective module.
\end{proposition}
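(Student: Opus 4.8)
The plan is to reduce the statement to Proposition \ref{Janettheorem}, which characterizes finitely generated projective $\mathcal{C}^{op}$-modules as exactly those $F$ for which $F\otimes_{\mathcal{C}}-$ preserves inverse limits. Here, however, the module lives in $\mathrm{Mod}(\mathcal{C})$ rather than $\mathrm{Mod}(\mathcal{C}^{op})$; so first I would simply invoke the symmetric version of Proposition \ref{Janettheorem}, obtained by replacing $\mathcal{C}$ with $\mathcal{C}^{op}$ throughout (all the auxiliary constructions — the functor $(-)^{\ast}$, the canonical map $\beta^{F^{\ast}}$, Lemma \ref{cuadradoconmu}, Proposition \ref{unisoproj} — are stated for an arbitrary preadditive category, so this is legitimate). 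Thus it suffices to show that, for a finitely presented flat $F\in\mathrm{Mod}(\mathcal{C})$, the functor $F\otimes_{\mathcal{C}}-$ preserves inverse limits.

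Next I would observe that preserving inverse limits amounts to preserving products and kernels. Since $F$ is flat, $F\otimes_{\mathcal{C}}-$ is exact, so in particular it preserves kernels; it remains to handle products. This is where Proposition \ref{SatzLenzing2} enters: it states precisely that $F\otimes_{\mathcal{C}}-$ preserves products if and only if $F$ is finitely presented. Since $F$ is assumed finitely presented, $F\otimes_{\mathcal{C}}-$ preserves products. Combining the two, $F\otimes_{\mathcal{C}}-$ preserves all inverse limits.

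Finally, applying the (opposite-category version of the) equivalence $(a)\Leftrightarrow(e)$ in Proposition \ref{Janettheorem} to $F$, we conclude that $F$ is a finitely generated projective $\mathcal{C}$-module, as desired. The argument is therefore essentially a bookkeeping assembly of three earlier results; the only mild subtlety — and the step I would be most careful to state cleanly — is the passage between $\mathrm{Mod}(\mathcal{C})$ and $\mathrm{Mod}(\mathcal{C}^{op})$ when quoting Proposition \ref{Janettheorem} and Proposition \ref{SatzLenzing2}, making sure that ``flat'' and ``finitely presented'' are the versions appropriate to the side on which $F$ sits and that the tensor product $F\otimes_{\mathcal{C}}-$ is interpreted consistently (note Proposition \ref{SatzLenzing2} is already stated for $F\in\mathrm{Mod}(\mathcal{C})$, so no translation is needed there, whereas Proposition \ref{Janettheorem} must be read with $\mathcal{C}$ replaced by $\mathcal{C}^{op}$).
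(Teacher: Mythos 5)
Your proposal is correct and follows essentially the same route as the paper's proof: exactness of $F\otimes_{\mathcal{C}}-$ (from flatness) gives preservation of kernels, Proposition \ref{SatzLenzing2} (from finite presentation) gives preservation of products, so $F\otimes_{\mathcal{C}}-$ preserves inverse limits, and then Proposition \ref{Janettheorem} yields that $F$ is finitely generated projective. Your explicit attention to the $\mathcal{C}$ versus $\mathcal{C}^{op}$ bookkeeping when invoking Proposition \ref{Janettheorem} is a small refinement the paper's proof elides, but the underlying argument is identical.
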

\begin{proof}
Since $F$ is flat, we have that $F\otimes_{\mathcal{C}}-$ is exact. Now, since $F$ is finitely presented we get that $F$ preserve arbitrary products by  Proposition \ref{SatzLenzing2}.\\
We know, that inverse limits are computed through kernels and products. Hence we conclude that $F$ preserve inverse limits. Therefore, by Theorem \ref{Janettheorem}, we conclude that   $F$ is a finitely generated projective module.
\end{proof}


\section*{Acknowledgements}
{The authors would like to thank the SECIHTI Project CBF-2023-2024-2630, and the Research Support Office of the Metropolitan Autonomous University (DAI UAM) for the support granted. }

{\footnotesize

}

{\footnotesize

\vskip3mm \noindent {\bf Martha Lizbeth Shaid Sandoval Miranda}:\\ 
{\it Departamento de Matem\'aticas, Universidad Aut\'onoma Metropolitana Unidad Iztapalapa\\
Av. San Rafael Atlixco 186, Col. Vicentina Iztapalapa 09340, M\'exico, Ciudad de M\'exico.\\ {\tt marlisha@xanum.uam.mx, marlisha@ciencias.unam.mx}}

\vskip3mm \noindent {\bf Valente Santiago Vargas}:\\ 
{\it Departamento de Matem\'aticas, Facultad de Ciencias, Universidad Nacional Aut\'onoma de M\'exico\\
Circuito Exterior, Ciudad Universitaria
C.P. 04510, Ciudad de M\'exico, MEXICO.\\ {\tt valente.santiago@ciencias.unam.mx}}

\vskip3mm \noindent  {\bf Edgar Omar Velasco P\'aez}:\\ 
{\it Facultad de Ciencias, Universidad Nacional Aut\'onoma de M\'exico\\
Circuito Exterior, Ciudad Universitaria,
C.P. 04510, Ciudad de M\'exico, MEXICO.\\ {\tt edgar\underline{\,\,}bkz13@ciencias.unam.mx }

 \noindent Departamento de Matem\'aticas, Universidad Aut\'onoma Metropolitana Unidad Iztapalapa\\
Av. San Rafael Atlixco 186, Col. Vicentina Iztapalapa 09340, M\'exico, Ciudad de M\'exico.
\\ {\tt edgar\underline{\,\,}bkz13@xanum.uam.mx }} }

\end{document}